\numberwithin{equation}{section}
\definecolor{burntumber}{rgb}{0.54, 0.2, 0.14}
\definecolor{coolblack}{rgb}{0.0, 0.18, 0.39}
\definecolor{mygreen}{rgb}{0.0, 0.4, 0.0}
\setlist[enumerate]{leftmargin=2pc}
\setlist[itemize]{leftmargin=2pc}
\def\l@subsection{\@tocline{2}{0pt}{2pc}{5pc}{}}
\theoremstyle{plain}
\newtheorem{thm}{Theorem}[section]
\newtheorem{lemma}[thm]{Lemma}
\newtheorem{cor}[thm]{Corollary}
\newtheorem{prop}[thm]{Proposition}
\newtheorem{introthm}{Theorem}
\newtheorem{introcor}[introthm]{Corollary}
 \theoremstyle{definition}
\newtheorem{Def}[thm]{Definition}
\newtheorem{rmk}[thm]{Remark}
\newtheorem{?}[thm]{Problem}
\newtheorem{ex}[thm]{Example}
\newcommand{\zz}{\mathbb{Z}}
\newcommand{\qq}{\mathbb{Q}}
\newcommand{\im}{\operatorname{im}}
\newcommand{\id}{\textup{id}}
\newcommand{\res}{\big|}
\newcommand{\Gdisp}{G\textup{-Disp}_{\underline{W},\mu}}
\newcommand{\GdispBA}{G\textup{-Disp}_{\underline{W}(B/A),\mu}}
\newcommand{\overbar}[1]{\mkern 1.5mu\overline{\mkern-1.5mu#1\mkern-1.5mu}\mkern 1.5mu}
\newcommand{\uS}{\underline{S}}
\newcommand{\Spec}{\textup{Spec }}
\newcommand{\Spf}{\textup{Spf }}
\newcommand{\icris}{{i_{\text{CRIS}}}}
\newcommand{\Z}{\textup{Z}}
\newcommand{\Fil}{\textup{Fil} \hspace{.15em}}
\newcommand{\marginalfootnote}[1]{
	\footnote{#1}
	\marginpar[{\hfill\sf\thefootnote}]{{\sf\thefootnote}}}
\newcommand{\edit}[1]{\marginalfootnote{#1}}
\LetLtxMacro\Oldedit\edit
\newcommand{\EnableEdits}{%
	\LetLtxMacro\edit\Oldedit%
}
\begin{document}
\begin{abstract}
	Let $G$ be a reductive group scheme over the $p$-adic integers, and let $\mu$ be a minuscule cocharacter for $G$. In the Hodge-type case, we construct a functor from nilpotent $(G,\mu)$-displays over $p$-nilpotent rings $R$ to formal $p$-divisible groups over $R$ equipped with crystalline Tate tensors. When $R/pR$ has a $p$-basis \'etale locally, we show that this defines an equivalence between the two categories. The definition of the functor relies on the construction of a $G$-crystal associated with any adjoint nilpotent $(G,\mu)$-display, which extends the construction of the Dieudonn\'e crystal associated with a nilpotent Zink display.
	As an application, we obtain an explicit comparison between the Rapoport-Zink functors of Hodge type defined by Kim and by B\"ultel and Pappas.
\end{abstract}

\title{$G$-displays of Hodge type and formal $p$-divisible groups}

\author{Patrick Daniels}
\thanks{Research partially supported by NSF DMS-1801352 and DMS-1840234}
\address{Department of Mathematics, University of Michigan, Ann Arbor, MI 48109}

\email{pjdaniel@umich.edu}
 \subjclass[2020]{Primary: 14L05. Secondary: 14F30}

\maketitle
\tableofcontents
\section{Introduction}
Fix a prime $p$, and let $G$ be a smooth affine group scheme over $\zz_p$ whose generic fiber is reductive. This paper contributes to the search for what it means to endow a $p$-divisible group with $G$-structure. When $G$ is a classical group coming from a local Shimura datum of EL- or PEL-type, to equip a $p$-divisible group with $G$-structure is to decorate it with additional structures coming from the data which cuts out $G$ inside of some general linear group, such as a polarization or an action by a semisimple algebra. Moduli spaces of $p$-divisible groups with additional structure define Rapoport-Zink formal schemes, whose rigid analytic generic fibers determine local analogs of Shimura varieties. 

Recently, Scholze and Weinstein \cite{SW2020} have developed a general theory of local Shimura varieties. Unlike in the EL- and PEL-type cases, however, the general theory takes place entirely in the generic fiber and leaves open the question of whether there exist formal schemes which act as integral models. One would expect moduli spaces of $p$-divisible groups with $G$-structure to define integral models in all cases, as they do in the EL- and PEL-type cases. However, due to the lack of a natural tensor product on the category of $p$-divisible groups, the traditional methods for defining $G$-structure do not simply carry over to this case. In particular, any kind of Tannakian approach is not straightforward.

In this paper we restrict our focus to the case where the pair $(G,\mu)$ is of Hodge type, i.e., where there is a closed embedding $\eta: G \hookrightarrow \textup{GL}(\Lambda)$ such that the cocharacter $\eta \circ \mu$ is conjugate to a standard minuscule cocharacter for $\textup{GL}(\Lambda)$. In this case, there are two approaches to endowing $p$-divisible groups with $G$-structure which have enjoyed some success in providing functor of points descriptions of Rapoport-Zink formal schemes. In the first approach, one uses the embedding $G \hookrightarrow \textup{GL}(\Lambda)$ to define additional structures on tensor powers of the Dieudonn\'e crystal of the given $p$-divisible group. In the second, one replaces $p$-divisible groups with Zink's displays, which are linear-algebraic objects and therefore more readily equipped with $G$-structure. The main result of this paper is that, at least under certain restrictions on the base ring (see Theorem \ref{thm-1} below), these two approaches are equivalent.

Let us describe the two approaches in more detail. If $(G,\mu)$ is of Hodge type, then $G$ is the element-wise stabilizer in $\textup{GL}(\Lambda)$ of a finite collection $\underline{s} = (s_1, \dots, s_r)$ of elements of the total tensor algebra of $\Lambda \oplus \Lambda^\vee$, which we denote by $\Lambda^\otimes$. We call the tuple $\underline{G} = (G,\mu,\Lambda,\eta, \underline{s})$ a local Hodge embedding datum. If $X$ is a $p$-divisible group over a $p$-nilpotent $\zz_p$-algebra $R$, then a crystalline Tate tensor is a morphism of crystals $t: \mathbbm{1} \to \mathbb{D}(X)^\otimes$ over $\Spec R$ which preserves the Hodge filtrations and which is equivariant for the action of the Frobenius, up to isogeny. Here $\mathbbm{1}$ denotes the unit object in the tensor category of crystals of finite locally free $\mathcal{O}_{\Spec{R}/\zz_p}$-modules, and $\mathbb{D}(X)$ denotes the covariant Dieudonn\'e crystal of $X$ as in \cite{BBM1982}. A $p$-divisible group with $(\underline{s},\mu)$-structure over $R$ is a pair $(X,\underline{t})$ consisting of a $p$-divisible group $X$ over $\Spec R$ whose Hodge filtration is \'etale locally determined by $\mu$, and a collection $\underline{t} = (t_1, \dots, t_r)$  of crystalline Tate tensors which are fppf-locally identified with $\underline{s}$ (see Definition \ref{def-smu}). The main theorem of \cite{Kim2018} (see also \cite{HP2017}) states that, when $G$ is reductive and $(G,\mu)$ is of Hodge type, a Rapoport-Zink formal scheme can be defined which is roughly a moduli space of $p$-divisible groups with $(\underline{s},\mu)$-structure. 

On the other hand, the idea of using group-theoretic analogs of Zink's displays to define Rapoport-Zink spaces originally appears in \cite{BP2017}. There, a theory of $(G,\mu)$-displays is developed for pairs $(G,\mu)$ such that $G$ is reductive and $\mu$ is minuscule, and the theory is used to give a purely group-theoretic definition of Rapoport-Zink formal schemes of Hodge type. Subsequently, Lau generalized the theory of $(G,\mu)$-displays \cite{Lau2018}, and an equivalent Tannakian framework was developed in the author's previous paper \cite{Daniels2019}. Denote by $\textup{Disp}(\underline{W}(R))$ the category of higher displays over the Witt frame for $R$ as in \cite{Lau2018}. In the Tannakian framework, we say a $(G,\mu)$-display over $\underline{W}(R)$ is an exact tensor functor $\textup{Rep}_{\zz_p}G \to \textup{Disp}(\underline{W}(R))$ such that, for every representation, the structure of the corresponding display is fpqc-locally governed by the cocharacter $\mu$ (see Definitions \ref{def-typemu} and \ref{def-gmu}). This formulation is essential to the results of this paper, as it allows for a close connection with Zink's original theory of displays \cite{Zink2002}, and therefore also with $p$-divisible groups.

When $G = \textup{GL}_h$ and $\mu = \mu_{d,h}$ is the cocharacter $t \mapsto (1^{(d)},t^{(h-d)})$, $(G,\mu)$-displays are nothing but Zink displays of height $h$ and dimension $d$. When $(G,\mu)$ is of Hodge type, the embedding $\eta: G \hookrightarrow \textup{GL}(\Lambda)$ induces a functor from $(G,\mu)$-displays to Zink displays, and in this case we say a $(G,\mu)$-display is nilpotent with respect to $\eta$ if the corresponding Zink display is nilpotent in the sense of \cite{Zink2002}. The following is the main result of this paper, see Theorem \ref{mainthm}.

\begin{introthm}\label{thm-1}
	Let $G$ be a reductive group scheme over $\zz_p$, and let $\mu$ be a minuscule cocharacter for $G$. For every $p$-nilpotent $\zz_p$-algebra $R$, there is a functor
	\begin{align*}
	\textup{BT}_{\underline{G},R}: 
	\left( 
	\begin{array}{c}
	\textup{$(G,\mu)$-displays over $\underline{W}(R)$} \\ \textup{which are nilpotent with respect to $\eta$}
	\end{array}
	\right)
	\to \left(
	\begin{array}{c}
	\textup{formal $p$-divisible groups over $R$} \\
	\textup{with $(\underline{s},\mu)$-structure}
	\end{array}
	\right).
	\end{align*}
	If $R/pR$ has a $p$-basis \'etale locally, then $\textup{BT}_{\underline{G},R}$ is an equivalence of categories.
\end{introthm}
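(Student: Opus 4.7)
The plan is to construct $\textup{BT}_{\underline{G},R}$ by combining Zink's equivalence with the $G$-crystal formalism introduced earlier in the paper. Given a nilpotent $(G,\mu)$-display $\mathcal{P}\colon \textup{Rep}_{\zz_p} G \to \textup{Disp}(\un{W}(R))$, evaluating at $\Lambda$ via $\eta$ produces a nilpotent Zink display $\mathcal{P}(\Lambda)$, and applying Zink's functor yields a formal $p$-divisible group $X$ over $R$. The Tate tensors are produced via the $G$-crystal $\mathbb{D}_\mathcal{P}\colon \textup{Rep}_{\zz_p}G \to \textup{Crys}(\Spec R/\zz_p)$: each $G$-invariant tensor $s_i\colon \mathbbm{1}\to \Lambda^\otimes$ pushes forward to a morphism of crystals $t_i\colon \mathbbm{1}\to \mathbb{D}_\mathcal{P}(\Lambda)^\otimes$, and a comparison result between the $G$-crystal and Zink's covariant Dieudonn\'e crystal identifies $\mathbb{D}_\mathcal{P}(\Lambda)$ with $\mathbb{D}(X)$, so each $t_i$ becomes a morphism $\mathbbm{1}\to \mathbb{D}(X)^\otimes$. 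Frobenius-equivariance and compatibility with the Hodge filtration are automatic from the exact-tensor-functoriality of $\mathbb{D}_\mathcal{P}$, while the $(\underline{s},\mu)$-type conditions on $(X,\underline{t})$ are verified fpqc-locally on a cover trivializing $\mathcal{P}$.

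Full faithfulness reduces to three ingredients. First, a morphism of $(G,\mu)$-displays is determined by its value at $\Lambda$, because $\eta$ is a closed immersion and tensor functoriality turns $G$-equivariance into a property rather than additional data. Second, by Zink's equivalence on nilpotent displays, morphisms of Zink displays correspond bijectively to morphisms of the associated formal $p$-divisible groups. Third, the functoriality of $\mathbb{D}_\mathcal{P}$ and the identification $\mathbb{D}_\mathcal{P}(\Lambda)\simeq \mathbb{D}(X)$ guarantee that $G$-equivariance on the display side translates precisely into preservation of Tate tensors on the $p$-divisible group side, completing the argument.

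Essential surjectivity under the \'etale-local $p$-basis hypothesis is the main obstacle. Given $(X,\underline{t})$, Zink's inverse functor produces a nilpotent Zink display $\mathcal{Q}$ with $\textup{BT}(\mathcal{Q})\simeq X$, and the task is to refine $\mathcal{Q}$ to an exact tensor functor $\textup{Rep}_{\zz_p}G \to \textup{Disp}(\un{W}(R))$ of type $\mu$. The $p$-basis hypothesis enters precisely to bridge the gap between crystalline Tate tensors, which a priori live on the Dieudonn\'e crystal, and display tensors on the Zink module: \'etale locally it ensures that evaluating the crystal on the PD thickening $\un{W}(R) \twoheadrightarrow R$ recovers the underlying module of $\mathcal{Q}$, so each $t_i$ promotes to a tensor on $\mathcal{Q}$ itself. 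These tensors cut out a $G$-torsor inside the $\textup{GL}(\Lambda)$-torsor of frames of $\mathcal{Q}$, and a Tannakian reconstruction along the lines of \cite{Daniels2019} produces the desired $(G,\mu)$-display. The real difficulty is verifying descent: one must check that the \'etale-local $G$-reduction glues to a global $(G,\mu)$-display of type $\mu$ and that the resulting exact tensor functor recovers $\underline{t}$ under $\textup{BT}_{\underline{G},R}$, which is exactly what the \'etale-local $p$-basis condition is designed to guarantee through the good behavior of the crystalline--Zink comparison.
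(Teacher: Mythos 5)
Your overall architecture coincides with the paper's: evaluate at $(\Lambda,\eta)$, pass through Zink--Lau to get $X$, push the tensors $s_i$ through the $G$-crystal, identify $\mathbb{D}(\mathscr{P})^\eta$ with $\mathbb{D}(X)$, prove full faithfulness via the torsor of tensor-preserving trivializations, and prove essential surjectivity by reconstructing a $(G,\mu)$-display from the $G$-reduction cut out by the tensors. However, there is one genuine gap: the claim that Frobenius-equivariance of the $t_i$ is ``automatic from the exact-tensor-functoriality of $\mathbb{D}_{\mathcal{P}}$'' is false as stated. The $G$-crystal is a tensor functor into $\textup{LFCrys}(\Spec R/\zz_p)$, a category with no Frobenius; the Frobenius on $\mathbb{D}(\mathscr{P})^{\eta}$ is \emph{extra} structure imported only through the identification with the Dieudonn\'e crystal of the Zink display, and it is not a natural transformation of the $G$-crystal. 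Verifying $\mathbb{F}_i\circ t_i=t_i$ (Proposition \ref{prop-frobeq} in the paper) requires real work: one reduces to the banal case, writes the Frobenius explicitly as $\eta(\bar U)\circ(\id\oplus p\cdot\id)$ (Lemma \ref{lem-explicitfrob}), observes that the $\mu$-weight-zero property of $s_i$ makes the powers of $p$ coming from the filtration and from the dual cancel exactly, and uses $G$-invariance of $s_i$ to conclude $\eta(i)(\bar U)$ fixes it; moreover, since the statement is about morphisms of isocrystals, one needs a faithfulness lemma for evaluation on a completed PD-envelope (Lemma \ref{lem-faithful}) to reduce to this computation. By contrast, your claim that Hodge-filtration compatibility is formal is essentially correct, since $t_i$ is induced by a morphism of displays $\un{S}\to\un{M}^{\eta(i)}$ and the Hodge filtration is functorial and multiplicative.

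A secondary imprecision concerns where the $p$-basis hypothesis enters. Evaluating $\mathbb{D}(\un{P})$ on $W(R)\to R$ recovers $\tau^\ast M$ unconditionally (Zink's \cite[Prop.~53]{Zink2002}), so the tensors always descend to $W(R)$-module maps $(t_i)_{W(R)}\colon W(R)\to(\tau^\ast M)(i)$ without any hypothesis on $R$; what fails in general is the \emph{converse}, namely that a morphism of crystals is determined by its value on $W(R)\to R$. The $p$-basis condition is used exactly to prove this faithfulness (Lemma \ref{lem-tensorsagree}, by reduction to perfect rings via faithfulness of passage to the perfect closure), which is what lets one conclude that the crystalline tensors of the reconstructed $(G,\mu)$-display agree with the given $\un{t}$ knowing only that they agree after evaluation at $W(R')\to R'$. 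Your closing sentence gestures at the right place, but the earlier sentence attributes the hypothesis to a step that needs no hypothesis at all. Finally, within the reconstruction itself you should be aware that two nontrivial verifications are hidden in ``the tensors cut out a $G$-torsor'': that the torsor of tensor-preserving trivializations over $W(R)$ is \'etale-locally nonempty (a formal-smoothness lifting argument along $W_n(R)\to W_{n-1}(R)$ using the fppf-triviality condition in the definition of $(\un{s},\mu)$-structure), and that the Frobenius matrix $U_\beta$ of the Zink display actually lies in $G(W(R))$ rather than merely in $\textup{GL}(\Lambda_{W(R)})$, which uses the Frobenius-equivariance of the tensors in an essential way (Lemma \ref{lem-Ubeta}).
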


In particular, the equivalence in Theorem \ref{thm-1} holds when $R$ is a field of characteristic $p$, or when $R/pR$ is a regular, Noetherian, and $F$-finite (the latter by \cite[Lem. 2.1]{Lau2018b}; see Definition \ref{def-pbasis} for the definition of a $p$-basis). When $G = \textup{GL}_h$ and $\mu = \mu_{d,h}$, the equivalence in question holds for arbitrary $p$-nilpotent rings $R$ by a theorem of Zink and Lau (see \cite{Zink2002} and \cite{Lau2010}). Hence the main result of this paper can be seen as a group-theoretic generalization of the theorem of Zink and Lau (but note that the theorem of Zink and Lau is an invaluable input in the proof). Let us also mention that a similar result is proven in the case where $G$ and $\mu$ come from an EL-type local Shimura datum in \cite[\textsection 5.4]{Daniels2019}.

Given a $(G,\mu)$-display $\mathscr{P}$ which is nilpotent with respect to $\eta$, it is straightforward to obtain a formal $p$-divisible group: one takes the $p$-divisible group $X$ associated with the Zink display induced by the embedding $\eta:G \hookrightarrow \textup{GL}(\Lambda)$. The primary difficulty lies in determining an $(\underline{s},\mu)$-structure on $X$. This is resolved by the main innovation of this paper, which is the association of a $G$-crystal to the $(G,\mu)$-display $\mathscr{P}$. We summarize the properties of this $G$-crystal in the following theorem, which is an amalgamation of the results in \textsection \ref{sub-crystalGdisp} and \textsection \ref{sub-hodge}. Let $\textup{LFCrys}(\Spec R/\zz_p)$ denote the category of crystals in locally free $\mathcal{O}_{\Spec R/\zz_p}$-modules as in \cite{BBM1982}. 

\begin{introthm}\label{thm-Gcrystal}
	Let $R$ be a $p$-nilpotent $\zz_p$-algebra. Suppose $\mathscr{P}$ is a $(G,\mu)$-display over $\underline{W}(R)$ which is nilpotent with respect to $\eta$. Then there exists an exact tensor functor
	\begin{align*}
	\mathbb{D}(\mathscr{P}): \textup{Rep}_{\zz_p} G \to \textup{LFCrys}(\Spec R/\zz_p), \  (V,\pi) \mapsto \mathbb{D}(\mathscr{P})^\pi
	\end{align*}
	such that the following properties hold:
	\begin{enumerate}[$($i$)$]
		\item The association $\mathscr{P} \mapsto \mathbb{D}(\mathscr{P})$ is functorial in $\mathscr{P}$ and compatible with base change.
		\item If $\Z_{\eta}(\mathscr{P})$ is the nilpotent Zink display associated with $\mathscr{P}$ via the embedding $\eta$, then there is a natural isomorphism of crystals
		\begin{align*}
		\mathbb{D}(\mathscr{P})^\eta \cong \mathbb{D}(\Z_\eta(\mathscr{P})),
		\end{align*}
		where $\mathbb{D}(\Z_\eta(\mathscr{P}))$ denotes the crystal associated with $\Z_\eta(\mathscr{P})$ as in \textup{\cite{Zink2002}}.
	\end{enumerate}
\end{introthm}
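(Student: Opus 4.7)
The plan is to construct the $G$-crystal by evaluating $\mathscr{P}$ on PD thickenings of $\Spec R$. Given a PD thickening $(U,T,\gamma)$ with $U \hookrightarrow \Spec R$, the aim is to produce, for each $\pi \in \textup{Rep}_{\zz_p} G$, a finite locally free $\mathcal{O}_T$-module $\mathbb{D}(\mathscr{P})^\pi(T)$, natural in $\pi$ and in the thickening. The Tannakian definition of a $(G,\mu)$-display already assigns to each $\pi$ a display $\mathscr{P}^\pi$ over $\underline{W}(R)$ with underlying projective $W(R)$-module $P^\pi$, so the task is to transfer this package across the PD thickening in a canonical way and then reduce to $T$.

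The natural definition is: lift $\mathscr{P}|_U$ to a $(G,\mu)$-display $\widetilde{\mathscr{P}}$ over $\underline{W}(T)$, take the underlying module of $\widetilde{\mathscr{P}}^\pi$, and reduce along $W(T) \to T$. To make this well-defined, I would establish a $G$-equivariant rigidity theorem extending Zink's: if $\mathscr{P}$ is nilpotent with respect to $\eta$, then for any two lifts $\widetilde{\mathscr{P}}_1, \widetilde{\mathscr{P}}_2$ over $\underline{W}(T)$, the modules $\widetilde{\mathscr{P}}^\pi_i \otimes_{W(T)} T$ are canonically isomorphic, compatibly with $\pi$ and with the tensor structure. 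Under this rigidity the evaluation $\mathbb{D}(\mathscr{P})^\pi(T)$ is unambiguous, and its assembly into a crystal on $\Spec R/\zz_p$ is formal (transition maps are forced by naturality).

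The rigidity step is the crux. My plan for it is to reduce to the non-equivariant case via $\eta$. Zink's theory applied to the nilpotent display $\mathscr{P}^\eta = \Z_\eta(\mathscr{P})$ supplies a canonical isomorphism of the underlying modules of two lifts after reduction to $T$; equivalently, the Hodge filtration of the Zink crystal of $\Z_\eta(\mathscr{P})$ is independent of the lift. To upgrade to a $G$-equivariant statement, I would argue that since $\eta$ is faithful, a lift of $\mathscr{P}$ is pinned down, up to the ambiguity already absorbed by Zink's theorem, by a lift of $\Z_\eta(\mathscr{P})$ together with lifts of the tensors $\underline{s}$; and the tensors lift canonically because they are horizontal for the crystalline connection on $\mathbb{D}(\Z_\eta(\mathscr{P}))^\otimes$. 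This uses that $\mathscr{P}^\eta$ is the Tannakian image of the embedding $\eta$, so the $G$-structure is visible on tensor powers of $P^\eta$, where Zink's crystal already behaves functorially.

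Once rigidity is in place, the remaining properties are formal. Functoriality of $\mathscr{P} \mapsto \mathbb{D}(\mathscr{P})$ and compatibility with base change along $R \to R'$ are built into the pointwise construction, since morphisms of $(G,\mu)$-displays lift to morphisms of lifts over $\underline{W}(T)$ uniquely modulo the same rigidity. Exactness and the tensor structure of $\pi \mapsto \mathbb{D}(\mathscr{P})^\pi$ come from the exact tensor structure of $\pi \mapsto \mathscr{P}^\pi$ at the level of displays, transported through base change to $W(T)$ and then to $T$. Property (ii) is essentially a tautology: at $\pi = \eta$, the above recipe is literally Zink's construction of the crystal of $\Z_\eta(\mathscr{P})$. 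The step I expect to be the main obstacle is the Tannakian upgrade of Zink's rigidity — namely, showing that a canonical identification of underlying $T$-modules of lifts in one faithful representation propagates coherently to all of $\textup{Rep}_{\zz_p}G$ while respecting the $(G,\mu)$-structure; this is where the nilpotence hypothesis on $\eta$ must be leveraged, presumably via the fact that nilpotence with respect to $\eta$ implies an adjoint nilpotence condition sufficient to trivialize the deformation obstructions for the $G$-torsor part of $\mathscr{P}$.
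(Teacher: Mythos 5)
Your overall architecture --- evaluate on PD-thickenings by lifting $\mathscr{P}$, use a uniqueness/rigidity statement to make the value well-defined, then reduce to $T$ --- is the right shape, and you correctly identify that nilpotence with respect to $\eta$ must be converted into an adjoint nilpotence condition to control the lifting (this is Lemma \ref{lem-wrteta} in the text). But the crux of your argument, the rigidity step, has a genuine gap. You propose to lift $\mathscr{P}|_U$ to a $(G,\mu)$-display over the absolute Witt frame $\underline{W}(T)$ and then show that any two lifts have canonically isomorphic reductions, by reducing to Zink via $\eta$ and arguing that ``the tensors lift canonically because they are horizontal for the crystalline connection on $\mathbb{D}(\Z_\eta(\mathscr{P}))^\otimes$.'' This is circular: the horizontality of the tensors --- i.e., that the $s_i$ induce morphisms of crystals $\mathbbm{1} \to \mathbb{D}(\mathscr{P})^{\eta(i)}$ --- is an \emph{output} of the construction of the $G$-crystal, not an input one can use to build it. Indeed, even after the crystal is constructed, the Frobenius-equivariance of these tensors is not formal and requires a separate computation (Proposition \ref{prop-frobeq}). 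There is also a secondary issue: a $(G,\mu)$-display lift over $\underline{W}(T)$ requires lifting the Hodge filtration ($P_\mu$-structure), so existence of such lifts is itself not automatic.

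The paper resolves this differently, and the difference is where the real technical content lies. Instead of the absolute frame $\underline{W}(B)$, one lifts to the \emph{relative} Witt frame $\underline{W}(B/A)$, where adjoint nilpotent $(G,\mu)$-displays lift uniquely up to unique isomorphism --- this is Lau's unique lifting lemma (Proposition \ref{prop-lifting}), which gives the uniqueness for the whole $G$-display at once rather than representation by representation, so no tensor-by-tensor rigidity argument is needed. To use that lemma in the Tannakian framework one needs the equivalence of Theorem \ref{thm-equiv} between Tannakian $(G,\mu)$-displays and torsor-theoretic $G$-displays of type $\mu$ over $\underline{W}(B/A)$, which in turn rests on the descent result for the relative Witt frame proved in Appendix \ref{section-appendix}. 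Your proposal does not engage with this frame or with the descent input, and without them the well-definedness of $\mathbb{D}(\mathscr{P})^\pi_{B/A}$ is not established. (If you wanted to keep your formulation with lifts over $\underline{W}(B)$, the correct way to prove your rigidity claim is to push any such lift forward along the frame morphism $\underline{W}(B) \to \underline{W}(B/A)$ and invoke uniqueness there --- but that is exactly the paper's route.) Your claim that property (ii) is a near-tautology at $\pi = \eta$ is correct once the construction is in place; this is Lemma \ref{lem-comparecrystals}.
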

 
Once such a crystal is constructed, it is not difficult to obtain an $(\underline{s},\mu)$-structure on the $p$-divisible group $X$ associated with $\mathscr{P}$. Indeed, by viewing the tensors $s_i$ as morphisms of representations from the trivial representation to $\Lambda^\otimes$, we can use functoriality of the $G$-crystal in representations and its compatibility with the tensor product to obtain morphisms $t_i:\mathbbm{1} \to (\mathbb{D}(\mathscr{P})^\eta)^\otimes$. By Theorem \ref{thm-Gcrystal}, we can replace $(\mathbb{D}(\mathscr{P})^\eta)^\otimes$ with $\mathbb{D}(\Z_\eta(\mathscr{P}))^\otimes$, which is in turn isomorphic to $\mathbb{D}(X)^\otimes$ by the theory of Zink and Lau (Lemma \ref{lem-zinkdieudonne}). With some work (see Proposition \ref{prop-frobeq}) one can show that the resulting morphisms of crystals $t_i: \mathbbm{1} \to \mathbb{D}(X)^\otimes$ are crystalline Tate tensors.

The proof of Theorem \ref{thm-1} then proceeds in two steps. First, the case where $pR = 0$ is dealt with using the strategy of \cite[Thm. 5.15]{Daniels2019}, see Proposition \ref{prop-pR=0}. The case of general $R$ is then reduced to this one using analogs Grothendieck-Messing theory developed in the settings of $G$-displays and of $p$-divisible groups with $(\underline{s},\mu)$-structure, respectively.

The construction of the crystal in Theorem \ref{thm-Gcrystal} requires a technical result about $(G,\mu)$-displays which may be of independent interest. As a starting point we recall that by \cite[Thm. 3.16]{Daniels2019}, if $R$ is a $p$-nilpotent $\zz_p$-algebra, $(G,\mu)$-displays over the Witt frame $\underline{W}(R)$ in the Tannakian framework are equivalent to $G$-displays of type $\mu$ over $\underline{W}(R)$ defined using the torsor-theoretic framework of \cite{Lau2018}. More generally, if $\underline{S}$ is an \'etale sheaf of frames over $\Spec R$, we can define a category of $G$-displays of type $\mu$ over $\underline{S}(R)$ as in \cite{Lau2018}, and a category of $(G,\mu)$-displays over $\underline{S}(R)$ following the Tannakian formulation of \cite{Daniels2019}. We say $\underline{S}$ satisfies descent for modules if finite projective graded modules over the graded ring $S$ form an \'etale stack over $\Spec{R}$. The following theorem (Theorem \ref{thm-equiv} below), which is essentially a generalization of \cite[Thm. 3.16]{Daniels2019}, is critical to our construction of a $G$-crystal for a $(G,\mu)$-display.

\begin{introthm}\label{thm-2}
	If $\underline{S}$ is an \'etale sheaf of frames on $\Spec{R}$ which satisfies descent for modules, then there is an equivalence of categories
	\begin{align*}
	\big(\textup{$(G,\mu)$-displays over $\underline{S}(R)$}\big) \xrightarrow{\sim} \big(\textup{$G$-displays of type $\mu$ over $\underline{S}(R)$}\big).		
	\end{align*}
\end{introthm}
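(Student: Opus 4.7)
The plan is to follow the strategy of \cite[Thm. 3.16]{Daniels2019}, which handles the case $\underline{S} = \underline{W}$, and identify precisely where the descent-for-modules hypothesis enters so that the argument can be transported verbatim from the Witt frame setting to an arbitrary étale sheaf of frames satisfying this descent property. Throughout, I would exploit the fact that both sides of the desired equivalence are defined sheaf-theoretically in $R$ (the right-hand side by construction, the left-hand side because $\textup{Disp}(\underline{S}(R))$ is controlled by finite projective graded $S$-modules).

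First I would construct the forgetful functor from $G$-displays of type $\mu$ to $(G,\mu)$-displays. Given a $G$-display $\mathcal{P}$ of type $\mu$ over $\underline{S}(R)$, which consists (in the framework of \cite{Lau2018}) of a pair of $G$-torsors with a Frobenius-type isomorphism governed by $\mu$, associate to each representation $(V,\pi) \in \textup{Rep}_{\zz_p} G$ the display obtained by the contracted product along $\pi$. Exactness in $\textup{Rep}_{\zz_p} G$ follows from the fact that associated-bundle constructions are exact on torsors; the tensor structure is the usual identification of associated bundles for a tensor product of representations; and the condition that each display be étale-locally of type determined by $\mu$ follows from the hypothesis on $\mathcal{P}$ combined with functoriality of the cocharacter filtration under $\pi$.

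Next I would construct a quasi-inverse. Given a $(G,\mu)$-display $\mathscr{P}: \textup{Rep}_{\zz_p} G \to \textup{Disp}(\underline{S}(R))$, post-compose with the forgetful functors to finite projective graded $S$-modules (both in the usual and Frobenius-twisted sense) to produce a pair of exact tensor functors into $\textup{Rep}_{\zz_p} G$-modules. By standard Tannakian reconstruction applied fpqc-locally on $R$ -- so that the $\mu$-condition trivializes $\mathscr{P}$ and the target category becomes a category of modules over a ring -- these functors are pro-representable by a pair of $G$-torsors. The Frobenius-compatibility data in the definition of $\mathscr{P}$ then produces the required isomorphism between them, and the $\mu$-type condition on $\mathscr{P}$ translates into the $\mu$-type condition on the resulting torsor-theoretic object.

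The main obstacle is promoting the étale-local $G$-torsor produced by Tannakian reconstruction to a global $G$-torsor over $\Spec R$, and this is exactly where the descent-for-modules hypothesis on $\underline{S}$ enters. Once finite projective graded $S$-modules form an étale stack, Tannakian descent (taking fiber functors valued in each representation) upgrades to descent for $G$-torsors, since a $G$-torsor can be recovered from its associated tensor functor on representations. Verifying that the two constructions are quasi-inverse is then straightforward: on objects it amounts to identifying a torsor with its functor of associated bundles and the tensor functor with its Tannakian reconstruction; on morphisms it reduces to the analogous statement for each fixed representation, where it holds by the equivalence for $\textup{GL}$-displays already built into the definitions. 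Modulo the descent step, the remaining verifications are formal and proceed identically to the Witt-frame case treated in \cite{Daniels2019}.
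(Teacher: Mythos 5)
Your proposal is correct and follows essentially the same route as the paper: the argument there likewise transports \cite[Thm.~3.16]{Daniels2019} by reducing everything to the Tannakian correspondence between graded fiber functors of type $\mu$ and $G(\underline{S})_\mu$-torsors (Theorem \ref{thm-isom}), with the descent-for-modules hypothesis entering through the statement that graded fiber functors form an \'etale stack (Lemma \ref{lem-gffstack}) and through the resulting descent for displays, which is what lets Frobenius-compatibility of morphisms be checked \'etale-locally. The only organizational difference is the direction in which the functor is built: the paper constructs $\mathscr{P}\mapsto(Q_{\mathscr{P}},\alpha_{\mathscr{P}})$ via Isom-sheaves and then proves full faithfulness and \'etale-local essential surjectivity, rather than exhibiting the contracted-product functor explicitly; if you do take your route, note that the forward (associated-bundle) functor already needs descent for modules, not only the quasi-inverse, since the contracted product of $Q$ with $\mathscr{C}(\underline{S})_\mu(V,\pi)$ is a priori only an \'etale sheaf of graded modules that is locally finite projective, and one must descend it to an object of $\textup{PGrMod}(S(R))$ (also, for a general $\underline{S}$ the trivialization is \'etale-local rather than fpqc-local).
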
 

In particular, in Appendix \ref{section-appendix}, we prove that the sheaf on $\Spec A$ associated with the relative Witt frame $\underline{W}(B/A)$ for a $p$-adic PD-thickening $B \to A$ satisfies descent for modules. Other sheaves of frames which satisfy descent for modules are those associated with $p$-adic frames as in \cite[Def. 4.2.1]{Lau2018}. Examples of $p$-adic frames include the Zink frame $\mathbb{W}(R)$ for an admissible local ring $R$ \cite[Ex. 2.1.13]{Lau2018} and its relative analog associated with a PD-thickening $B \to R$, as well as the truncated Witt frames $\underline{W}_n(R)$ over an $\mathbb{F}_p$-algebra $R$ \cite[Ex. 2.1.6]{Lau2018}. 

Let us briefly sketch the construction of the $G$-crystal. Given a $(G,\mu)$-display $\mathscr{P}$ over $R$, we obtain a corresponding $G$-display of type $\mu$ over $\underline{W}(R)$ using \cite[Thm. 3.16]{Daniels2019}. Moreover, if $B \to R$ is a PD-thickening, then the work of Lau (see Proposition \ref{prop-lifting}) allows us to lift the $G$-display of type $\mu$ over $\underline{W}(R)$ to a $G$-display of type $\mu$ over the relative Witt frame $\underline{W}(B/R)$. Since the relative Witt frame satisfies descent for modules (see Proposition \ref{prop-descent}), the above theorem applies, and we obtain a $(G,\mu)$-display over $\underline{W}(B/R)$, which is, in particular, an exact tensor functor from $\textup{Rep}_{\zz_p} G$ to the category of displays over $\underline{W}(B/R)$. Given any representation, we can obtain from such an object a $B$-module, which we denote $\mathbb{D}(\mathscr{P})_{B/R}^\rho$. The functor which assigns to $(V,\pi)$ the crystal $(B \to R) \mapsto \mathbb{D}(\mathscr{P})^\pi_{B/R}$ is the desired $G$-crystal.

As a consequence of the Theorem \ref{thm-1} we obtain an explicit relationship between the Rapoport-Zink functors of Hodge type defined in \cite{Kim2018} and in \cite{BP2017}. To be more specific, let $k$ be an algebraic closure of $\mathbb{F}_p$, suppose $(G,\{\mu\},[b])$ is an integral local Shimura datum which is unramified of Hodge type, and let $\underline{G} = (G,\mu, \Lambda, \eta, \underline{s})$ be a local Hodge embedding datum. Given a good choice of $\mu$ and $b$, we can define a $(G,\mu)$-display $\mathscr{P}_0$ which is nilpotent with respect to $\eta$, and we denote by $(X_0, \underline{t_0})$ its associated formal $p$-divisible group with $(\underline{s},\mu)$-structure. Denote by $\textup{Nilp}_{W(k)}^\textup{fsm}$ the category of $p$-nilpotent $W(k)$-algebras which are formally smooth and formally finitely generated over $W(k)/p^mW(k)$ for some $m$. To the datum $(\underline{G},b)$ we can associate two Rapoport-Zink functors on $\textup{Nilp}_{W(k)}^\textup{fsm}$. The first, denoted $\textup{RZ}_{\underline{G},b}^{p\textup{-div,fsm}}$, assigns to a $p$-nilpotent $W(k)$-algebra the set of isomorphism classes of triples $(X,\underline{t},\iota)$, where $(X,\underline{t})$ is a $p$-divisible group with $(\underline{s},\mu)$-structure over $A$, and $\iota$ is a quasi-isogeny over $\Spf A/pA$ between $X$ and $X_0$ which respects the tensors modulo an ideal of definition. The second, denoted $\textup{RZ}_{G,\mu,b}^{\textup{disp}}$, assigns to such rings the set of isomorphism classes of pairs $(\mathscr{P},\rho)$ consisting of a $(G,\mu)$-display $\mathscr{P}$ over $\underline{W}(R)$ and a $G$-quasi-isogeny $\rho$ between $\mathscr{P}$ and $\mathscr{P}_0$ which is defined over $\Spf A/pA$ (see \textsection \ref{sub-rzspaces} for details). 

By \cite[Lem. 2.1]{Lau2018}, if $A$ is an object in $\textup{Nilp}_{W(k)}^\textup{fsm}$, then $A/pA$ has a $p$-basis \'etale locally, so the equivalence of Theorem \ref{thm-1} holds. As a result, we obtain the following corollary (see Theorem \ref{thm-RZfunctors}).

\begin{introcor}\label{introcor1}
	The functors $\textup{RZ}_{\underline{G},b}^{p\textup{-div,fsm}}$ and $\textup{RZ}_{G,\mu,b}^{\textup{disp,fsm}}$ on $\textup{Nilp}_{W(k)}^\textup{fsm}$ are naturally isomorphic.
\end{introcor}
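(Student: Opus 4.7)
The plan is to deduce the corollary from Theorem \ref{thm-1} by checking that, over any $A \in \textup{Nilp}_{W(k)}^\textup{fsm}$, the equivalence $\textup{BT}_{\underline{G},A}$ carries the data parameterized by $\textup{RZ}_{G,\mu,b}^{\textup{disp,fsm}}$ to the data parameterized by $\textup{RZ}_{\underline{G},b}^{p\textup{-div,fsm}}$ in a functorial way. The first input is Lau's observation (cited after Theorem \ref{thm-1}) that for such $A$ the quotient $A/pA$ has a $p$-basis \'etale locally, so Theorem \ref{thm-1} provides an equivalence between nilpotent $(G,\mu)$-displays over $\underline{W}(A)$ and formal $p$-divisible groups over $A$ with $(\underline{s},\mu)$-structure. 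In particular, the distinguished object $\mathscr{P}_0$ corresponds under $\textup{BT}_{\underline{G},A}$ (or rather the version of Theorem \ref{thm-1} for the base field $k$, together with base change from $W(k)/p$ to $A/pA$) to the pair $(X_0,\underline{t_0})$ by construction.

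Next, I would match quasi-isogenies. On the display side, a $G$-quasi-isogeny $\rho$ between $\mathscr{P}$ and $\mathscr{P}_0$ over $A/pA$ is a tensor isomorphism between the associated $(G,\mu)$-displays after inverting $p$. Via $\eta$ it induces an ordinary quasi-isogeny $\Z_\eta(\rho)$ between the underlying Zink displays, and hence via the Zink--Lau theorem (Lemma \ref{lem-zinkdieudonne}) a quasi-isogeny $\iota: X \to X_0$ of the associated formal $p$-divisible groups over $\Spf A/pA$. To check that $\iota$ carries the tensors into each other modulo an ideal of definition, I would use Theorem \ref{thm-Gcrystal}: the crystalline tensors $\underline{t}$ on $\mathbb{D}(X)^\otimes$ come from the evaluation of the $G$-crystal $\mathbb{D}(\mathscr{P})$ on the tensors $\underline{s}$, so tensor-equivariance of $\rho$ propagates through functoriality of $\mathbb{D}(-)$ in $\mathscr{P}$ (and the Frobenius-equivariance statement Proposition \ref{prop-frobeq}) to the statement that $\iota$ preserves the $\underline{t}$'s after reduction. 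Conversely, starting with $(X,\underline{t},\iota)$, the equivalence part of Theorem \ref{thm-1} produces $\mathscr{P}$ from $(X,\underline{t})$, and the tensor-preserving quasi-isogeny $\iota$ yields, via fully faithfulness of $\textup{BT}_{\underline{G},A/pA}$ after inverting $p$, a $G$-quasi-isogeny $\rho:\mathscr{P} \to \mathscr{P}_0$.

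The assignment $(\mathscr{P},\rho) \mapsto (X,\underline{t},\iota)$ is functorial in $A$ because $\textup{BT}_{\underline{G},-}$ is compatible with base change (Theorem \ref{thm-Gcrystal}(i) ensures the same for the crystal, hence for the tensors), and isomorphism classes are preserved because the functor is an equivalence on each $A$. Thus on isomorphism-class-valued points we obtain a natural isomorphism $\textup{RZ}_{G,\mu,b}^{\textup{disp,fsm}}(A) \cong \textup{RZ}_{\underline{G},b}^{p\textup{-div,fsm}}(A)$, proving the corollary.

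I expect the main obstacle to be the precise bookkeeping around quasi-isogenies and the tensor-preserving condition modulo an ideal of definition: $G$-quasi-isogenies are defined in the display world after inverting $p$ on the \emph{Witt-vector} frame, while the tensor compatibility for $(X,\underline{t},\iota)$ is formulated modulo an ideal of definition of $A$, so one must carefully reconcile the two using the crystalline interpretation provided by Theorem \ref{thm-Gcrystal} and the fact that nilpotent displays reduce well modulo such ideals. Once this dictionary is fixed, the isomorphism of functors is a formal consequence of Theorem \ref{thm-1} applied uniformly in $A$.
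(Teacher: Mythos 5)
Your overall strategy coincides with the paper's: construct a natural transformation by sending $(\mathscr{P},\rho)$ to $(\textup{BT}_{\underline{G}}(\mathscr{P}),\iota_\rho)$, use Lau's $p$-basis criterion to invoke the equivalence of Theorem \ref{thm-1} on $\textup{Nilp}_{W(k)}^{\textup{fsm}}$, and reverse the construction on quasi-isogenies. However, the two steps you yourself flag as "the main obstacle" are precisely where the real content lies, and the mechanisms you propose for them do not apply as stated. First, for the forward direction you claim that tensor-equivariance of $\rho$ "propagates through functoriality of $\mathbb{D}(-)$ in $\mathscr{P}$" via Theorem \ref{thm-Gcrystal}(i). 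But $\rho$ is a $G$-quasi-isogeny, i.e.\ an isomorphism of $G$-\emph{iso}displays, not a morphism of $(G,\mu)$-displays, so Lemma \ref{lem-crystalfunctorial} does not apply to it. The paper (Lemma \ref{lem-RZnattrans}) has to work for this: it reduces to closed points of $\Spec A/I$ using \cite[Lem.~3.2.8]{HP2017}, invokes Lemma \ref{lem-tensorsagree} (fields have $p$-bases) to reduce to checking the identity $\mathbb{D}(\iota)\circ t_i = t_{0,i}$ after evaluation on $W(R)\to R$, identifies $\tau^\ast\mathscr{P}(s_i)$ with $(t_i)_{W(R)}$ by fpqc descent to the banal case, and identifies $\rho^{\eta(i)}$ with $\mathbb{D}(\iota)_{W(R)/R}(i)$ by reducing to the case where $\rho^\eta$ is an isogeny via \cite[Prop.~66]{Zink2002} and using the natural isomorphism $\Phi\circ\textup{BT}\cong\textup{id}$.

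Second, for the converse direction you appeal to "fully faithfulness of $\textup{BT}_{\underline{G},A/pA}$ after inverting $p$" to produce $\rho$ from $\iota$. No such statement is available: the morphisms in $\textup{fpdiv}_{\underline{s},\mu}$ are isomorphisms only, and no isogeny category of $p$-divisible groups with $(\underline{s},\mu)$-structure, nor an extension of $\textup{BT}_{\underline{G}}$ to one, is defined or proved fully faithful anywhere. The paper instead first uses \cite[Lem.~4.6.3]{Kim2018} to replace the ideal of definition by $(p)$, then works \'etale-locally: a tensor-compatible trivialization $\Lambda_{W(A/I^n)[1/p]}\xrightarrow{\sim}\tau^\ast M^\eta[1/p]$ identifies $\mathbb{D}(\iota_n)$ with an element $g_n\in\textup{GL}(\Lambda_{W(A/I^n)[1/p]})$, and the identity $\mathbb{D}(\iota_n)\circ t_i=t_{0,i}$ forces $g_n\in G(W(A/I^n)[1/p])$, which is what defines the $G$-quasi-isogeny $\rho_n$. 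Both of these steps need to be supplied for your argument to close; as written the proposal identifies the right skeleton but leaves the load-bearing verifications unproved.
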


It follows from Corollary \ref{introcor1} that the formal schemes defined by Kim \cite{Kim2018} and B\"ultel and Pappas \cite{BP2017} which represent these functors are isomorphic. This was already known by \cite[Remark 5.2.7]{BP2017}, but the geometric method of proof offered in \textit{loc. cit.} differs from the explicit comparison of functors given here. 

Let us give a brief outline of the paper. In the first section we review the definitions of displays and frames as in \cite{Lau2018}, and the crystalline theory of $p$-divisible groups and displays, following especially \cite{BBM1982}, \cite{Zink2002}, and \cite{Lau2013}. In \textsection \ref{section-Gdisplays} we recall basic notions about $G$-displays of type $\mu$ and $(G,\mu)$-displays, and we prove Theorem \ref{thm-2}. By results in Appendix \ref{section-appendix}, the theorem applies in particular in the case of relative Witt frames, which is in turn crucial for \textsection \ref{section-crystals}, where we construct the $G$-crystal associated with a $(G,\mu)$-display over the Witt frame and prove the collection of results which comprise Theorem \ref{thm-Gcrystal}. In \textsection \ref{section-main} we prove Theorem \ref{thm-1}, and derive consequences for the study of Rapoport-Zink spaces of Hodge type and for the deformation theory of $p$-divisible groups with crystalline Tate tensors.

\subsection{Acknowledgements}
I would like to thank Thomas Haines for his helpful suggestions on an early draft of this paper, and Eike Lau for explaining some aspects of his work to me. Moreover, I'd like to thank the anonymous referee for their extremely careful reading of this manuscript and for the many helpful comments they provided. 

\subsection{Conflict of interest statement} The author states that there is no conflict of interest.

\subsection{Data availability statement} Data sharing is not applicable to this article as no datasets were generated or analyzed during the study.

\subsection{Notation}
\begin{itemize}[leftmargin=*]
	\item Throughout the paper, fix a prime $p$ and a finite field $k_0$ of characteristic $p$ and cardinality $q = p^\ell$.
	 	
	\item A ring or abelian group will be called $p$-adic if it is complete and separated with respect to the $p$-adic topology. 
	
	\item If $f: A \to B$ is a ring homomorphism and $M$ is an $A$-module, we write $f^\ast M$ for $M \otimes_{A,f} B$. If $f$ is understood, we write $M_B = M\otimes_A B$ as well. If $X$ is a $p$-divisible group over $\Spec A$, we often write $X \otimes_A B$ for the base change of $X$ to $B$.
	
	\item If $f: A \to B$ is a ring homomorphism, $M$ is an $A$-module, and $N$ is a $B$-module, we say a map $\alpha: M \to N$ is $f$-linear if $\alpha(a\cdot m) = f(a) \cdot \alpha(m)$ for $a \in A$, $m \in M$. In this case we write $\alpha^\sharp$ for the linearization $f^\ast M \to N$ given by $m \otimes b \mapsto \alpha(m)\cdot b$. We say $\alpha$ is an $f$-linear bijection if $\alpha^\sharp$ is a $B$-module isomorphism. 
	
	\item If $R$ is a commutative ring, denote by $\textup{Mod}(R)$ the category of $R$-modules.
	
	\item For a $\zz_p$-algebra $\mathcal{O}$, denote by \text{Nilp}$_{\mathcal{O}}$ the category of $\mathcal{O}$-algebras in which $p$ is nilpotent. We will refer to such an $\mathcal{O}$-algebra as a $p$-nilpotent $\mathcal{O}$-algebra.
	
	\item Let $\bigoplus S_n$ be a $\zz$-graded ring. For a ring homomorphism $\varphi: \bigoplus S_n \to R$, we write $\varphi_n$ for the restriction of $\varphi$ to $S_n$. 
	
	\item Let $R$ be a ring. Denote by $\text{\'Et}_R$ the category of affine \'etale $R$-schemes. We endow this category with a topology by defining a covering of $\Spec{A} \in \text{\'Et}_R$ to be an \'etale covering $\{U_i \to \Spec{A}\}$ such that each $U_i$ is affine. 
	
	\item If $G$ is a sheaf of groups in a topos, denote by $\text{Tors}_G$ the stack of $G$-torsors.
	
	\item If $S$ is any $\zz$-graded ring, denote by $\text{GrMod}(S)$ the category of graded $S$-modules, and by $\text{PGrMod}(S)$ the full subcategory of finite projective graded $S$-modules. By \cite[Lemma 3.0.1]{Lau2018}, this latter category is equivalent to the full subcategory of finitely generated graded $S$-modules which are projective over $S$. 
	
	\item If $R$ is a $p$-adic ring, denote by \text{pdiv}$(R)$ the category of $p$-divisible groups over $R$, and denote by $\text{fpdiv}(R)$ the full subcategory of formal $p$-divisible groups over $R$.
	
	\item If $M$ is a module over a ring $R$, denote by $M^\vee$ its linear dual. 
	
	\item If $X$ is a $p$-divisible group over a ring $R$, denote by $X^D$ its Serre dual.
	
	\item Let $R$ be a $p$-adic $W(k_0)$-algebra. If $A$ is an $R$-algebra, a $p$-adic PD-thickening of $A$ over $R$ is a surjective ring homomorphism $B \to A$ such that $B$ is a $p$-adic $W(k_0)$-algebra and such that the kernel $J$ of $B \to A$ is equipped with divided powers $\delta$ which are compatible with the canonical divided powers on $pW(k_0)$.
	
	\item A PD-morphism between PD-thickenings $B\to A$ and $B'\to A'$ with divided powers $\delta$ and $\delta'$ on their respective kernels is a pair of homomorphisms $\varphi: B \to B'$ and $\psi: A \to A'$ such that the obvious diagram commutes, and such that $\delta_n'(\varphi(x)) = \varphi(\delta_n(x))$ for all $x \in J$ and all $n$.
\end{itemize}
\section{Preliminaries}
\subsection{Frames, graded modules, and displays}\label{sub-frames}
We review the basic definitions and properties of (higher) frames and displays following \cite{Lau2018} and \cite[\textsection 2]{Daniels2019}. In particular, we recall the definition of the Witt frame over a $p$-nilpotent ring $R$ (Example \ref{ex-wittframe}) and the relative Witt frame associated with a $p$-adic PD-thickening $B \to A$ (Example \ref{ex-relwittframe}). Moreover, we make explicit the connection between the theory of displays presented here and the theory of windows (see Lemma \ref{lem-windows}). 

\begin{Def}\label{def-frame}
	A \textit{frame} $\underline{S} = (S,\sigma,\tau)$ is a triple consisting of a $\zz$-graded ring $S$ and two ring homomorphisms $\sigma, \tau: S \to S_0$ satisfying the following properties.
	\begin{enumerate}[(i)]
		\item The endomorphism $\tau_0$ of $S_0$ is the identity, and $\tau_{-n}: S_{-n} \to S_0$ is a bijection for all $n \ge 1$.
		\item The endomorphism $\sigma_0$ of $S_0$ induces the $p$-power Frobenius $s \mapsto s^p$ on $S_0 / pS_0$, and if $t$ is the unique element in $S_{-1}$ such that $\tau_{-1}(t) = 1,$ then $\sigma_{-1}(t) = p$.
		\item We have $p \in \textup{Rad}(S_0)$. 
	\end{enumerate}
\end{Def}

We say that $\underline{S}$ is a frame for $R = S_0 / \tau(S_1)$. The conditions in the definition imply that $\tau$ acts on $S_1$ as multiplication by $t$, so we will usually write $\tau(S_1) = tS_1$. 

\begin{Def} Let $\underline{S} = (S,\sigma,\tau)$ be a frame for $R$. A \textit{display} over $\underline{S}$ is a pair $\underline{M} = (M,F)$ consisting of a finite projective graded $S$-module $M$ and a $\sigma$-linear bijection $F:M \to \tau^\ast M$.
\end{Def}

\begin{Def}
	A \textit{standard datum} for a display is a pair $(L, \Phi)$ consisting of a finite projective graded $S_0$-module $L$ and a $\sigma$-linear automorphism $\Phi: L \to L$. 
\end{Def}

From a standard datum $(L, \Phi)$ we define a display $(M,F)$ where $M = L \otimes_{S_0} S$ and $F(x \otimes s) = \sigma(s) \Phi(x)$. If every $M$ in $\textup{PGrMod}(S)$ is of the form $M = L \otimes_{S_0} S$ for a finite projective $S_0$-module $L$, then every display is isomorphic to one defined by a standard datum, see \cite[\textsection 3.4]{Lau2018}. In particular, by \cite[Lem. 3.1.4]{Lau2018}, this occurs if every finite projective $R$-module lifts to $S_0$.

Let us denote by $\textup{Disp}(\uS)$ the category of displays over $\uS$. If $\uS \to \uS'$ is a frame homomorphism, then we obtain a base change functor 
\begin{align*}
\textup{Disp}(\uS) \to \textup{Disp}(\uS'), \ \underline{M} \mapsto \underline{M} \otimes_{\uS} \uS'.
\end{align*} 
The category $\textup{Disp}(\uS)$ has a tensor product given by $(M,F) \otimes(M',F') = (M\otimes_S M', F \otimes F')$, which makes it into an exact rigid tensor category with unit object $\underline{S} = (S,\sigma)$. 

For any display $\underline{M}$ over a frame $\underline{S}$, there exists a canonical descending filtration on $\overline{M}:=\tau^\ast M\otimes_{S_0} R$, called the Hodge filtration of $\underline{M}$, see \cite[\textsection 5.2]{Lau2018}. Let us recall the definition. Denote by $\theta_n: M_n \to \tau^\ast M$ the composition $M_n \hookrightarrow M \to \tau^\ast M$, and by $\bar{\theta}_n$ the composition 
\begin{align*}
M_n \xrightarrow{\theta_n} \tau^\ast M \to \tau^\ast M \otimes_{S_0} R.
\end{align*}
The $n$th piece of the \textit{Hodge filtration} is given by $\im(\bar{\theta}_n)$, and is denoted $\Fil^n(\underline{M})$:
\begin{align}\label{eq-hodgedisp}
\textup{Fil}^n(\underline{M}) := \textup{im}(\bar{\theta}_n) \subset \overline{M}.
\end{align} 
Since $\bar{\theta}_n$ factors through $t: M_n \to M_{n-1}$, we have $\textup{Fil}^n(\underline{M}) \subset \textup{Fil}^{n-1}(\underline{M})$, so $(\textup{Fil}^n(\underline{M}))_{n \in \zz}$ defines a descending filtration on $\overline{M}$. If $(L,\Phi)$ is a standard datum for $\underline{M}$, with $L = \bigoplus_i L_i$, then 
\begin{align*}
\textup{Fil}^n(\underline{M}) = \bigoplus_{i \ge n} L_i \otimes_{S_0} R.
\end{align*}

\begin{rmk}\label{rmk-hodge} 
	The Hodge filtration is functorial in $\underline{M}$, meaning that any morphism of displays $\varphi: \underline{M} \to \underline{M'}$ induces a morphism of $R$-modules $\bar{\varphi}:\overline{M} \to \overline{M'}$ which sends $\textup{Fil}^n(\underline{M})$ into $\textup{Fil}^n(\underline{M'})$. Moreover, the Hodge filtration is compatible with tensor products of displays, i.e., we have
	\begin{align}\label{eq-hodgetensor}
	\textup{Fil}^n(\underline{M} \otimes \underline{M'}) = \sum_{j+k = n} \textup{Fil}^j(\underline{M}) \otimes \textup{Fil}^k(\underline{M'}).
	\end{align}
\end{rmk}

Let us briefly review the connection between (higher) displays over frames and windows over 1-frames. Recall the following definition (see \cite[Def. 2.2.1]{Lau2018}).

\begin{Def}
	A $1$-frame $\mathcal{S} = (S_0 \supset I, \sigma_0, \dot{\sigma})$ consists of a ring $S_0$, an ideal $I \subset S_0$, a ring endomorphism $\sigma_0$ of $S_0$, and a $\sigma_0$-linear map $\dot{\sigma}: I \to S_0$ such that
	\begin{enumerate}[(i)]
		\item $\sigma_0: S_0 \to S_0$ is a lift of the Frobenius on $S_0/pS_0$,
		\item $\sigma_0(a) = p\dot{\sigma}(a)$ for $a \in I$, 
		\item $p \in \textup{Rad}(S_0)$.
	\end{enumerate}
	We say that $\mathcal{S}$ is a 1-frame for $R = S_0/I$.
\end{Def}

A frame $\underline{S}$ is said to extend the 1-frame $\mathcal{S}$ if $t:S_1 \to S_0$ is injective, $I = tS_1$, and $\dot{\sigma}(ta) = \sigma_1(a)$ for $a \in S_1$. 

\begin{Def}
	Let $\mathcal{S}$ be a $1$-frame. A window over $\mathcal{S}$ is a quadruple $\underline{P} = (P,\Fil P,F_0,F_1)$ consisting of a finitely generated projective $S_0$-module $P$, an $S_0$-submodule $\Fil P \subseteq P$, and two $\sigma_0$-linear maps $F_0:P \to P$ and $F_1: \Fil P \to P$ such that
	\begin{enumerate}[(i)]
		\item there is a decomposition $P = L_0 \oplus L_1$ with $\Fil P = L_0 \oplus IL_1$,
		\item $F_1(ax) = \dot{\sigma}(a)F_0(x)$ for $a \in I$ and $x \in P$,
		\item $F_0(x) = pF_1(x)$ for $x \in \Fil P$, 
		\item $F_0(P)+F_1(\Fil P)$ generates $P$ as an $S_0$-module.
	\end{enumerate}
\end{Def}

Because there is no surjectivity condition on $\dot{\sigma}$ in the definition of a 1-frame, the definition of windows that appears here differs slightly from others in the literature, see \cite[Rmk. 2.11]{Lau2010}. By \cite[Lem. 2.6]{Lau2010}, if $P = L_0 \oplus L_1$ is a finite projective $S_0$-module and $\Fil P = IL_0 \oplus L_1$, then the set of $\mathcal{S}$-window structures on $P$ and $\Fil P$ is mapped bijectively to the set of $\sigma_0$-linear isomorphisms
\begin{align*}
\Psi: L_0 \oplus L_1 \to P.
\end{align*}
The bijection is determined by $\Psi = F_0 \res_{L_0} \oplus F_1 \res_{L_1}$, and the triple $(L_0, L_1, \Psi)$ is called a \textit{normal representation} for $(P, \Fil P, F_0, F_1)$. We remark for later use that, in terms of $\Psi$, the linearization of $F_0$ can be expressed as
\begin{align}\label{eq-F0sharp}
F_0^\sharp = \Psi^\sharp \circ (\id_{L_0} \oplus p\cdot \id_{L_1}).
\end{align}

To any window $\underline{P} = (P, \Fil P, F_0, F_1)$ over $\mathcal{S}$ we can associate an $S_0$-module homomorphism $V^\sharp: P \to \sigma_0^\ast P$ which is uniquely determined by the identities
\begin{align*}
V^\sharp(\xi \cdot F_0(x)) = p\xi \otimes x \text{ and } V^\sharp(\xi \cdot F_1(y)) = \xi \otimes y 
\end{align*}
for $\xi \in S_0, x \in P, y \in \Fil P$. If $(L_0, L_1, \Psi)$ is a normal representation for $\underline{P}$, then 
\begin{align}\label{eq-Vsharp}
V^\sharp = (p\cdot \id_{L_0} \oplus \id_{L_1}) \circ (\Psi^\sharp)^{-1}.
\end{align}
From (\ref{eq-F0sharp}) and (\ref{eq-Vsharp}) we see that $F_0^\sharp \circ V^\sharp = p\cdot \id_{P}$ and $V^\sharp \circ F_0^\sharp = p\cdot \id_{\sigma_0^\ast P}$.

If $\underline{S}$ is a frame for $R$, denote by $\nu$ the ring homomorphism $S \to R$ which extends the projection $S_0 \to R$ by zero on $S_n$ for $n \ne 0$. If $M$ is a finite projective graded module over $S$, then $M \otimes_{S,\nu} R$ is a finite projective graded $R$-module with graded pieces that we denote by $\overline{L}_i$. Recall the following definition, cf. \cite[Def. 2.16]{Daniels2019}.

\begin{Def}
	We say $\underline{M}$ is a \textit{1-display} over $\underline{S}$ if $\overline{L}_i = 0$ for all $i < 0$ and $i > 1$. 
\end{Def}

In the language of \cite{Daniels2019}, $\underline{M}$ is a 1-display if the depth of $M$ is nonnegative and the altitude of $M$ is less than 1. If $(L, \Phi)$ is a standard datum for $\underline{M}$, then $\underline{M}$ is a 1-display if and only if $L_i = 0$ for all $i < 0$ and $i > 1$, see \cite[Lem. 2.7]{Daniels2019}. 

\begin{lemma}\label{lem-windows}
	Suppose $\underline{S}$ is a frame extending the 1-frame $\mathcal{S}$, and suppose that all finitely projective $R$-modules lift to $S_0$. Then the functor
	\begin{align}\label{eq-displaytowindow}
		P_{\underline{S}}: (1\text{-displays over }\underline{S}) \to (\text{Windows over }\mathcal{S}),
	\end{align}
	defined by assigning to a 1-display $\underline{M} = (M,F)$ the window $(P, \Fil P, F_0, F_1)$ with $P = \tau^\ast M, \Fil P = \theta_1(M_1)$, $F_0 = F\res_{M_0} \circ \theta_0^{-1}: P \to P$, and $F_1 = F\res_{M_1} \circ \theta_1^{-1}: \Fil P \to P$ is an equivalence of categories.
\end{lemma}
\begin{proof}
	The proof follows from a straightforward adaptation of the arguments in \cite[Lem. 2.25]{Daniels2019}. 
\end{proof}

One can also prove the lemma using normal representations: if $(L, \Phi)$ is a standard datum for a 1-display over $\underline{S}$, then $L = L_0 \oplus L_1$, so $(L_0, L_1, \Phi)$ is a normal representation for the associated window over $\mathcal{S}$. For use later, let us denote the quasi-inverse functor to $P_{\underline{S}}$ by
\begin{align}\label{eq-windowtodisplay}
M_{\underline{S}}: (\text{Windows over }\mathcal{S}) \to (1\text{-displays over }\underline{S}).
\end{align}

We close this section by discussing a few example of frames and 1-frames that will be of particular importance in what follows. Recall that to give a frame it suffices to specify a triple $(S_{\ge 0}, \sigma, (t_n)_{n \ge 0})$ consisting of a $\zz_{\ge 0}$-graded ring $S_{\ge 0}$, a ring homomorphism $\sigma: S_{\ge 0} \to S_0$, and a maps $t_n: S_{n+1} \to S_n$, see \cite[\textsection 2.1]{Daniels2019} and \cite[Rmk. 2.0.2]{Lau2018}. 

If $R$ is a $\zz_p$-algebra, let $W(R)$ denote the ring of infinite length Witt vectors over $R$. The ring $W(R)$ comes equipped with a ring endomorphism $f_R$, called the Frobenius, and an additive self-map $v_R$, called the Verschiebung. When the ring $R$ is clear from context we will write simply $f$ and $v$ for these maps. Denote by $I(R)$ the kernel of the canonical map $w_0: W(R) \to R$, so $I(R) = v(W(R))$. 

\begin{ex}[The Witt frame]\label{ex-wittframe}
	Let $R$ be a $p$-adic ring. Define a frame $\underline{W}(R)$ from the Witt ring over $R$ as follows. Define $S_0 = W(R)$, and for $n \ge 1$, let $S_n = I(R)$, viewed as an $S_0$-module. We define a $\zz_{\ge 0}$-graded ring structure on $S_{\ge 0} = S_0 \oplus \bigoplus_{n > 0} S_n$ by endowing it with the multiplication $S_n \times S_m \to S_{n +m}$ determined by $(v(a), v(b)) \mapsto v(ab)$ for $n, m \ge 1$. The map $t_0: S_1 \to S_0$ is given by inclusion $I(R) \hookrightarrow W(R)$, and $t_n$ is multiplication by $p$ for $n \ge 1$. We let $\sigma_0 = f_R$, and for $n \ge 1$ we define $\sigma_n(v(s)) = s$ for all $v(s) \in S_n = I(R)$. We will write $S = W(R)^\oplus$ for the resulting $\zz$-graded ring. The corresponding frame $\underline{W}(R) = (W(R)^\oplus, \sigma, \tau)$ is the \textit{Witt frame} for $R$.
	
	The Witt frame extends the Witt $1$-frame $\mathcal{W}(R) = (W(R) \supset I(R), f, v^{-1})$. Windows over the Witt 1-frame are equivalent to $3n$-displays in the sense of \cite{Zink2002}, which we will hereafter refer to as Zink displays. Then by Lemma \ref{lem-windows}, 1-displays over $\underline{W}(R)$ are equivalent to Zink displays.
\end{ex}

Let $B \to A$ be a $p$-adic PD-thickening with kernel $J$. Using the divided powers on $J$, Zink defines an isomorphism of $W(B)$-modules
\begin{align*}
\log: W(J) \xrightarrow{\sim} \prod_{i \in \mathbb{N}} J,
\end{align*}
see \cite[\textsection 1.4]{Zink2002} for details. Denote the image of $\xi \in W(J)$ by $\log(\xi) = [\xi_0, \xi_1, \dots]$. 

\begin{ex}[The relative Witt frame]\label{ex-relwittframe}
	Let $B \to A$ be a $p$-adic PD-thickening. Define a frame $\underline{W}(B/A)$ associated with $B/A$ as follows. For $S_{\ge 0}$ take the $\zz_{\ge 0}$-graded ring with $S_0 = W(B)$, $S_n = I(B) \oplus J$ with $J$ viewed as a $W(B)$-module by restriction of scalars, and multiplication $S_n \times S_m \to S_{n+m}$ for $n, m \ge 1$ defined by $(v(a),x)\cdot (v(b),y) =  (v(ab), xy)$ for $a, b \in W(B)$, $x,y \in J$. The map $t_0: S_1 = I(B) \oplus J \to W(B) = S_0$ is given by $(v(a), x) \mapsto v(a) + \log^{-1}[x,0,0,\dots]$, and $t_n$ for $n \ge 1$ is given by multiplication by $p$ on the first factor and the identity on the second factor. Finally, let $\sigma_0 = f_B$ and for $n \ge 1$ define $\sigma_n(v(a),x) = a$. Denote the resulting $\zz$-graded ring by $W(B/A)^\oplus$. The corresponding frame $\underline{W}(B/A) = (W(B/A)^\oplus, \sigma, \tau)$ is the \textit{relative Witt frame} for $B/A$.
	
	Let $I(B/A)$ denote the kernel of $W(B) \to A$, and denote by $\tilde{v}^{-1}$ the unique extension of $v^{-1}$ to $I(B/A)$ whose restriction to $W(J) = \ker(W(B) \to W(A))$ is given by $[\xi_0, \xi_1, \dots] \mapsto [\xi_1, \xi_2, \dots]$ in logarithmic coordinates. Then $\mathcal{W}(B/A) = (W(B) \supset I(B/A), f, \tilde{v}^{-1})$ is a $1$-frame (see \cite[\textsection 2.2]{Lau2014}), and $\underline{W}(B/A)$ extends $\mathcal{W}(B/A)$.
\end{ex}
\subsection{Recollections on crystals}\label{sub-reviewcrystals}
We review the definitions of crystals and the crystalline site as in \cite{BBM1982}, and we sketch proofs of some standard lemmas which will be useful in \textsection \ref{sub-hodge} when we are checking Frobenius equivariance of certain morphisms of crystals.

For a $W(k_0)$-scheme $X$ in which $p$ is locally nilpotent, denote by $\textup{CRIS}(X/W(k_0))$ the big fppf crystalline site as in \cite{BBM1982}. This is the site whose underlying category is the category of triples $(U,T,\delta)$ where $U \hookrightarrow T$ is a closed immersion of an $X$-scheme $U$ into a $p$-nilpotent $W(k_0)$-scheme $T$ such that the ideal $\mathscr{I}$ of $\mathcal{O}_T$ defining the embedding is equipped with divided powers compatible with the natural divided powers on $pW(k_0)$. If $X = \Spec R$ is affine, we will write $\textup{CRIS}(R/W(k_0))$ to mean $\textup{CRIS}(\Spec R/W(k_0))$. Recall that to give a sheaf on $\textup{CRIS}(X/W(k_0))$ is equivalent to giving, for every triple $(U,T,\delta)$ in $\textup{CRIS}(X/W(k_0))$, an fppf sheaf $\mathscr{F}_T$ on $T$, and for every morphism $(u,v): (U',T,',\delta') \to (U,T,\delta)$, a morphism of sheaves $v^{-1}\mathscr{F}_T \to \mathscr{F}_{T'}$ which satisfies a cocycle condition (see \cite[1.1.3]{BBM1982}). The crystalline structure sheaf for $X$ over $W(k_0)$, denoted by $\mathcal{O}_{X/W(k_0)}$, is defined by the rule $\Gamma((U,T,\delta),\mathcal{O}_{X/W(k_0)}) = \Gamma(T,\mathcal{O}_T).$ If $\mathscr{F}$ is a sheaf of $\mathcal{O}_{X/W(k_0)}$-modules, then for a morphism $(u,v)$ as above the transition morphism $v^{-1}\mathscr{F}_T\to \mathscr{F}_{T'}$ induces a morphism $v^\ast\mathscr{F}_T \to \mathscr{F}_{T'}$. 

\begin{Def}
	A \textit{crystal of finite locally free (resp. locally free, resp. quasi-coherent) $\mathcal{O}_{X/W(k_0)}$-modules} is an $\mathcal{O}_{X/W(k_0)}$-module $\mathscr{F}$ such that for every $(U,T,\delta)$ in $\textup{CRIS}(X/W(k_0))$ the $\mathcal{O}_T$-module $\mathscr{F}_T$ is finite locally free (resp. locally free, resp. quasi-coherent) and for every morphism  $(u,v):(U',T',\delta') \to (U,T,\delta)$, the transition morphism $v^\ast\mathscr{F}_T \to \mathscr{F}_{T'}$ is an isomorphism. 
\end{Def}

We will denote by $\textup{LFCrys}(X/W(k_0))$ the category of crystals of locally free $\mathcal{O}_{X/W(k_0)}$-modules. The full subcategory of crystals of finite locally free $\mathcal{O}_{X/W(k_0)}$-,modules is a rigid exact tensor category which is a full tensor subcategory of the category of crystals in quasi-coherent $\mathcal{O}_{X/W(k_0)}$-modules. The unit object is the crystal $\mathbbm{1}$ which assigns to any $(U,T,\delta)$ the finite locally free $\mathcal{O}_T$-module $\mathcal{O}_T$. If $X = \Spec R$ is affine, we will write $\textup{LFCrys}(R/W(k_0))$ instead of $\textup{LFCrys}(\Spec R/W(k_0))$.

\begin{rmk}\label{rmk-crystals}
	We will often write just $B \to A$ to denote the PD-thickening $(\Spec A, \Spec B, \delta)$. Because fppf sheaves on a scheme $T$ are uniquely determined by their evaluations on affine $T$-schemes, to give a crystal in quasi-coherent $\mathcal{O}_{X/W(k_0)}$-modules, it is enough to give, for every PD-thickening $B\to A$ of $p$-nilpotent $W(k_0)$-algebras over $X$, a $B$-module $M_{B/A}$, and for every morphism $(B'\to A') \to (B\to A)$ of PD-thickenings, an isomorphism
	\begin{align}\label{eq-crystalproperty}
	M_{B/A} \otimes_B B'\xrightarrow{\sim} M_{B'/A'}.
	\end{align}
	These isomorphisms should satisfy the obvious cocycle condition for compositions. The associated crystal is (finite) locally free if each $B$-module $M_{B/A}$ is (finite) projective. 
\end{rmk}

If $(U,T,\delta)$ is an object in $\textup{CRIS}(X/W(k_0))$, then we can view $(U,T,\delta)$ as an object in $\textup{CRIS}(Y/W(k_0))$, denoted $\psi_!(U,T,\delta)$, by viewing $U$ as a $Y$-scheme via $U \to X \to Y$. If $\mathscr{F}$ is a sheaf on $\textup{CRIS}(Y/W(k_0))$, define $\psi^\ast \mathscr{F}$ by 
\begin{align}\label{eq-basechangecrystals}
\psi^\ast\mathscr{F}(U,T,\delta) := \mathscr{F}(\psi_!(U,T,\delta)).
\end{align}
This determines a pullback functor $\textup{Sh}(\textup{CRIS}(Y/W(k_0))) \to \textup{Sh}(\textup{CRIS}(X/W(k_0))$, which preserves the respective categories of crystals.

\begin{Def}
	The category of \textit{isocrystals over $X$}, denoted $\textup{Isoc}(X)$, is the category whose objects are crystals $\mathbb{D}$ in locally free $\mathcal{O}_{X/W(k_0)}$-modules, and whose morphisms are global sections of the Zariski sheaf $\underline{\textup{Hom}}(\mathbb{D},\mathbb{D}')[1/p]$. We will write $\mathbb{D}[1/p]$ for the object $\mathbb{D}$ viewed as an object in $\textup{Isoc}(X)$. When $X = \Spec R$ is affine, we write $\textup{Isoc}(\Spec R) = \textup{Isoc}(R)$.
\end{Def}

\begin{rmk}
	If $X$ is quasi-compact, then $\underline{\textup{Hom}}(\mathbb{D},\mathbb{D}')[1/p]$ can be identified with $\textup{Hom}(\mathbb{D},\mathbb{D}')[1/p]$, i.e. a morphism $\mathbb{D}[1/p] \to \mathbb{D}'[1/p]$ in $\textup{Isoc}(X)$ is an equivalence class of diagrams
	\begin{align*}
	\mathbb{D} \xleftarrow{p^n} \mathbb{D} \xrightarrow{s} \mathbb{D}',
	\end{align*}
	where $s$ is a morphism of crystals of $\mathcal{O}_{X/W(k_0)}$-modules.
\end{rmk}

If $B \to A$ is a $p$-adic PD-thickening, then $B \to A$ can be written as the projective limit of divided power extensions $B_n = B/p^nB \to A/p^nA = A_n$. If $\mathbb{D}$ is a crystal in $\mathcal{O}_{\Spec R/W(k_0)}$-modules and $B \to A$ is a $p$-adic PD-thickening over $R$, we write $\mathbb{D}_{B/A} := \varprojlim \mathbb{D}_{B_n/A_n}$. This defines an evaluation functor 
\begin{align}\label{eq-evalBA}
(-)_{B/A}: \textup{LFCrys}(R/W(k_0)) \to \textup{Mod}(B), \ \mathbb{D}\mapsto \mathbb{D}_{B/A}.
\end{align}
The functor $(-)_{B/A}$ extends naturally to a functor
\begin{align}\label{eq-evalBAiso}
(-)_{B/A}[1/p]:\textup{Isoc}(R) \to \textup{Mod}(B[1/p]), \ \mathbb{D}[1/p] \mapsto (\mathbb{D}_{B/A})[1/p]
\end{align}
which on morphisms is given by the composition
\begin{align*}
\textup{Hom}(\mathbb{D}_1, \mathbb{D}_2) \to \textup{Hom}_B((\mathbb{D}_1)_{B/A},(\mathbb{D}_2)_{B/A})[1/p] \to \textup{Hom}_{B[1/p]}((\mathbb{D}_1)_{B/A}[1/p],(\mathbb{D}_2)_{B/A}[1/p]).
\end{align*}
\begin{rmk} \label{rmk-lastarrow}
	If $\mathbb{D}_1$ is a crystal of finite locally free $\mathcal{O}_{\Spec R/ W(k_0)}$-modules, then $(\mathbb{D}_1)_{B/A}$ is a finite projective $B$-module, and the last arrow is an isomorphism.
\end{rmk}

\begin{rmk}\label{rmk-perfectequiv}
	If $B \to A$ is a PD-thickening over $R$, then $W(B)$ is $p$-adic by \cite[Prop. 3]{Zink2002}, and $W(B) \to A$ is a $p$-adic PD-thickening, see e.g. \cite[\textsection 1G]{Lau2014}. When $pR=0$ and $R$ is perfect, the evaluation functors $(-)_{W(R)/R}$ and $(-)_{W(R)/R}[1/p]$ are equivalences (see e.g., \cite[Prop. 4.5]{Grothendieck1974}).
\end{rmk}

The following lemmas are no doubt well-known to experts, but we could not find a reference, so we sketch proofs for the sake of completeness. Suppose $R$ is an $\mathbb{F}_p$-algebra, and choose a polynomial algebra $W(k_0)[x_\alpha]_{\alpha \in \mathcal{A}}$ surjecting onto $R$. Let $\gamma$ denote the canonical divided powers on $pW(k_0)$, and denote by $D$ the PD-envelope of $W(k_0)[x_\alpha]$ with respect to $K = \ker(W(k_0)[x_\alpha] \to R)$ relative to $(W(k_0), pW(k_0), \gamma)$. Then the kernel $\bar{J}$ of $D \to R$ is equipped with divided powers compatible with those on $pW(k_0)$, and $D_n:=D/p^nD \to R$ is a PD-thickening over $R$ for every $n$. If we denote by $D^\wedge$ the $p$-adic completion of $D$, then $D^\wedge \to R$ defines a $p$-adic PD-thickening, and we can define functors $(-)_{D^\wedge / R}$ and $(-)_{D^\wedge / R} [1/p]$ as in (\ref{eq-evalBA}) and (\ref{eq-evalBAiso}).

\begin{lemma} \label{lem-faithful}
	The functor $(-)_{D^\wedge / R}$ is faithful. Moreover, if $\mathbb{D}_1$ is a crystal of finite locally free $\mathcal{O}_{\Spec R/W(k_0)}$-modules, then the map 
	\begin{align*}
	\textup{Hom}(\mathbb{D}_1,\mathbb{D}_2)[1/p] \to \textup{Hom}_{D^\wedge[1/p]}((\mathbb{D}_1)_{D^\wedge/R}[1/p], (\mathbb{D}_2)_{D^\wedge/R}[1/p])
	\end{align*}
	induced by $(-)_{D^\wedge / R}[1/p]$ is injective.
\end{lemma}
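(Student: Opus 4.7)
The plan is to reduce both assertions to a weak universality of the $p$-adic PD-thickening $D^\wedge \to R$: for any object $(U,T,\delta)$ of $\textup{CRIS}(R/W(k_0))$, after restricting to an affine open $\Spec B \subset T$ with corresponding closed piece $\Spec A \hookrightarrow \Spec B$ of $U$, one can construct a PD-morphism $(D^\wedge \to R) \to (B \to A)$ over $R$. Since $B \to A$ is surjective and $W(k_0)[x_\alpha]$ is polynomial, the composition $W(k_0)[x_\alpha] \to R \to A$ lifts to a $W(k_0)$-algebra map $W(k_0)[x_\alpha] \to B$. The kernel $K$ of $W(k_0)[x_\alpha] \to R$ then lands in the PD-ideal $\ker(B \to A)$, so the universal property of the PD-envelope $D$ furnishes a PD-morphism $D \to B$, which factors through $D^\wedge$ because $p$ is nilpotent on $B$. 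By construction this is compatible with the structure maps down to $R$.

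For any crystal $\mathbb{D}$, the crystal condition (\ref{eq-crystalproperty}) then supplies a functorial-in-$\mathbb{D}$ isomorphism $\mathbb{D}_{D^\wedge/R} \otimes_{D^\wedge} B \xrightarrow{\sim} \mathbb{D}_{B/A}$, so for any morphism $f : \mathbb{D}_1 \to \mathbb{D}_2$ of crystals, $f_{B/A}$ is identified with $f_{D^\wedge/R} \otimes_{D^\wedge} B$. Faithfulness of $(-)_{D^\wedge/R}$ is then immediate: if $f_{D^\wedge/R} = g_{D^\wedge/R}$, then $f_{B/A} = g_{B/A}$ on every affine PD-thickening over $R$, which suffices to conclude $f = g$ as morphisms of sheaves on $\textup{CRIS}(R/W(k_0))$.

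For the second assertion I would invoke Remark \ref{rmk-lastarrow}: the finite projectivity of $(\mathbb{D}_1)_{D^\wedge/R}$ makes
\begin{align*}
\textup{Hom}_{D^\wedge}\bigl((\mathbb{D}_1)_{D^\wedge/R}, (\mathbb{D}_2)_{D^\wedge/R}\bigr)[1/p] \xrightarrow{\sim} \textup{Hom}_{D^\wedge[1/p]}\bigl((\mathbb{D}_1)_{D^\wedge/R}[1/p], (\mathbb{D}_2)_{D^\wedge/R}[1/p]\bigr)
\end{align*}
an isomorphism. Hence a class represented by $f/p^n \in \textup{Hom}(\mathbb{D}_1,\mathbb{D}_2)[1/p]$ maps to zero exactly when $p^N f_{D^\wedge/R} = 0$ for some $N$. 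Base changing along the PD-morphism from $D^\wedge$ to each affine PD-thickening then yields $p^N f_{B/A} = 0$ everywhere, so $p^N f = 0$ globally, and the class $f/p^n = p^N f / p^{n+N}$ already vanishes in $\textup{Hom}(\mathbb{D}_1,\mathbb{D}_2)[1/p]$.

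The main obstacle I anticipate is the first step: carefully threading the various lifts and universal-property invocations so that the resulting PD-morphism $D^\wedge \to B$ really is a morphism of PD-thickenings over $R$, and confirming that only the $p$-nilpotence of $B$ (not a further $p$-adic completeness hypothesis) is needed to pass the map $D \to B$ across the $p$-adic completion. Everything that follows is a formal manipulation of the base-change identity together with the $[1/p]$-localization lemma.
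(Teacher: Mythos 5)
Your proposal is correct and follows essentially the same route as the paper's proof: the lift of $W(k_0)[x_\alpha] \to R \to A$ to $B$ combined with the universal properties of $D$ and $D^\wedge$ gives the PD-morphism $(D^\wedge \to R) \to (B \to A)$, from which faithfulness follows, and the second claim is deduced from the first via Remark \ref{rmk-lastarrow} and the localization argument you spell out. Your explicit handling of the factorization through $D^\wedge$ via $p$-nilpotence of $B$ is exactly the point the paper leaves implicit.
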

\begin{proof}
	The first statement follows from the fact that for any PD-thickening $B \to A$ we can find a lift $W(k_0)[x_\alpha] \to B$ of $R \to A$, so by the universal properties of $D$ and $D^\wedge$ we obtain a PD-morphism $(D^\wedge \to R) \to (B \to A)$. The second statement follows from the first using Remark \ref{rmk-lastarrow} and exactness of localization.	
\end{proof}

As $R$ varies in $\textup{Nilp}_{W(k_0)}$ we obtain fibered categories $\textup{LFCrys}$ and $\textup{Isoc}$ whose fibers over $R$ in $\textup{Nilp}_{W(k_0)}$ are the categories $\textup{LFCrys}(R)$ and $\textup{Isoc}(R)$, respectively.

\begin{lemma}\label{lem-etalelocal}
	The fibered categories $\textup{LFCrys}$ and $\textup{Isoc}$ form stacks for the \'etale topology on $\textup{Nilp}_{W(k_0)}$.
\end{lemma}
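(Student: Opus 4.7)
The plan is to verify the two stack axioms—descent for morphisms (the Hom presheaf is a sheaf) and effective descent for objects—first for $\textup{LFCrys}$, and then to deduce the $\textup{Isoc}$ statement from it.

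The key geometric input is that every PD-thickening $B \to A$ arising here has nilpotent kernel $J$: since the divided powers on $J$ are compatible with the canonical ones on $pW(k_0)$, any $x \in J$ satisfies $x^p = p!\gamma_p(x) \in pB$, and $p$ is nilpotent in $B$, so some power of $J$ vanishes. By topological invariance of the \'etale site under nilpotent thickenings, every \'etale $A$-algebra $A'$ lifts uniquely to an \'etale $B$-algebra $B'$; moreover, the divided powers on $J$ extend uniquely along the flat map $B \to B'$, so that $B' \to A'$ is itself a PD-thickening. Consequently, any \'etale cover $\{R \to R_i\}$ lifts through any PD-thickening $B \to A$ over $R$ to an \'etale cover by PD-thickenings $\{B \to B_i\}$ with $A_i := B_i \otimes_B A$, and analogous lifts exist for the double and triple overlaps $B_{ij}$, $B_{ijk}$.

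For descent of morphisms between crystals $\mathbb{D}, \mathbb{D}'$ on $R$, I would use the description of crystals from Remark \ref{rmk-crystals}: a morphism amounts to a family of $B$-linear maps $\mathbb{D}_{B/A} \to \mathbb{D}'_{B/A}$ compatible with transition isomorphisms. An \'etale cover of $R$ gives rise to an \'etale cover by PD-thickenings as above, and the sheaf property then reduces PD-thickening by PD-thickening to standard fpqc descent for module homomorphisms between finite projective modules. For effective descent of objects, let $(\mathbb{D}_i, \varphi_{ij})$ be a descent datum along $\{R \to R_i\}$. For each PD-thickening $B \to A$ over $R$, lift the cover to $\{B \to B_i\}$ and evaluate to obtain finite projective $B_i$-modules $(\mathbb{D}_i)_{B_i/A_i}$ equipped with descent data over $B_{ij}$ satisfying the cocycle condition over $B_{ijk}$. \'Etale descent for finite projective modules then produces a finite projective $B$-module $\mathbb{D}_{B/A}$. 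For a PD-morphism $(B \to A) \to (B' \to A')$, pulling back the cover and invoking the crystal-compatibility of each $\mathbb{D}_i$ produces a transition isomorphism $\mathbb{D}_{B/A} \otimes_B B' \xrightarrow{\sim} \mathbb{D}_{B'/A'}$, and the cocycle condition in $\textup{CRIS}$ follows from functoriality of descent. The family $\{\mathbb{D}_{B/A}\}$ therefore defines a crystal in finite locally free $\mathcal{O}_{\Spec R/W(k_0)}$-modules restricting to $\mathbb{D}_i$ over each $R_i$.

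For $\textup{Isoc}$, the same argument applies with modules replaced by modules over $B[1/p]$, since \'etale descent for finite projective modules is preserved by inverting $p$; alternatively, one exploits that a morphism in $\textup{Isoc}$ is an equivalence class of a diagram $\mathbb{D} \xleftarrow{p^n} \mathbb{D} \xrightarrow{s} \mathbb{D}'$, each piece of which satisfies descent by the $\textup{LFCrys}$ case. The main obstacle I anticipate is ensuring that the $B$-modules $\mathbb{D}_{B/A}$ produced by descent actually assemble into a crystal—that is, that the transition isomorphisms inherited from the $\mathbb{D}_i$ satisfy the required cocycle condition across varying PD-thickenings in $\textup{CRIS}$. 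This is a diagram-chase rather than a deep obstruction, but it is the only part of the argument that requires real care.
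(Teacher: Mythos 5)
Your proposal is correct and follows essentially the same route as the paper: lift the \'etale cover of $R$ uniquely through each PD-thickening $B \to A$ (using that the kernel is a nil ideal and that divided powers extend along the flat lift), then apply \'etale descent for finite projective modules together with the crystal property (\ref{eq-crystalproperty}), reducing $\textup{Isoc}$ to $\textup{LFCrys}$. One small imprecision: your computation $x^{p^k} \in p^k B$ shows only that $J$ is a nil (locally nilpotent) ideal, not that a fixed power of $J$ vanishes, but this is exactly the hypothesis needed for the unique lifting of \'etale algebras, so nothing is lost.
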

\begin{proof}
	It is enough to show the result for $\textup{LFCrys}$, where the key point is that if $B \to A$ is a PD-thickening over $R$, and $R \to R'$ is \'etale and faithfully flat, then the homomorphism $A \to A' := A\otimes_R R'$ is also \'etale and faithfully flat, so there exists a unique \'etale faithfully flat lift $B \to B'$ with $B' \to A'$ a PD-thickening over $R'$. The result follows from \'etale descent for modules over rings along with the crystal property (\ref{eq-crystalproperty}).
\end{proof}

We will eventually want to consider $p$-nilpotent $W(k_0)$-algebras which have a $p$-basis \'etale locally. For the convenience of the reader, we recall the definition of a $p$-basis.

\begin{Def}\label{def-pbasis}
	Let $R$ be an $\mathbb{F}_p$-algebra. A \textit{$p$-basis} for $R$ is a subset $\{x_\alpha\}$ of $R$ such that the set of monomials $x^J$ for $J$ running over the multi-indices $J = (i_\alpha), 0 \le i_\alpha < p$, provides a basis for $R$ viewed as an $R$-module over itself via the Frobenius.
\end{Def}

For example, any field of characteristic $p$ or any regular local ring which is essentially of finite type over a field of characteristic $p$ has a $p$-basis (see \cite[Ex. 1.1.2]{BM1990}). We say that an $\mathbb{F}_p$-algebra $R$ has a $p$-basis \'etale locally if there is some faithfully flat \'etale ring homomorphism $R \to R'$ where $R'$ has a $p$-basis. One reason for the usefulness of the existences of a $p$-basis is the following lemma. Recall that the \textit{perfect closure} of an $\mathbb{F}_p$-algebra is the colimit of infinitely many copies of $R$ along the Frobenius morphism $x \mapsto x^p$. 

\begin{lemma}\label{lem-pbasisuseful}
	Let $R$ be an $k_0$-algebra which admits a $p$-basis, and let $R^\textup{perf}$ be the perfect closure of $R$. Then $R \to R^\textup{perf}$ is faithfully flat, and the base change functor $\textup{LFCrys}(R/W(k_0)) \to \textup{LFCrys}(R^\textup{perf}/W(k_0))$ $($see $($\ref{eq-basechangecrystals}$))$ is faithful.
\end{lemma}
\begin{proof}
	If $R$ has a $p$-basis then the Frobenius $\phi: R \to R$ is faithfully flat, since $R$ is free viewed as a module over itself via $\phi$. Thus $R^\textup{perf}$ is faithfully flat, since it is a colimit of faithfully flat $R$-algebras. The second part follows from \cite[Lem. 7.5]{Lau2018b}; we give the argument for completeness. If $(x_i)_{i\in I}$ is a $p$-basis for $R$, then $R^\textup{perf} = R[(x_i^{1/p^\infty})_{i \in I}]$, and for a PD-thickening $B \to A$ over $R$, we have $A\otimes_R R^{\textup{perf}} = A[(a_i^{1/p^\infty})_{i\in I}]$, where $a_i$ is the image of $x_i$ in $A$. If $b_i \in B$ is a lift of $a_i$, then the divided powers extend to $B[(b_i^{1/p^\infty})_{i\in I}] \to A[(a_i^{1/p^\infty})_{i\in I}]$ by flatness. Thus the result follows from faithfully flat descent for modules over rings along with the crystal property (\ref{eq-crystalproperty}).
\end{proof}

The following lemma will be useful in the proof of Theorem \ref{thm-1}.

\begin{lemma}\label{lem-tensorsagree}
	Suppose $pR=0$ and that $R$ has a $p$-basis \'etale locally. Let $\mathbb{D}$ and $\mathbb{D}'$ be crystals in locally free $\mathcal{O}_{\Spec R/ W(k_0)}$-modules, and let 
	\begin{align*}
		t_1, t_2: \mathbb{D} \to \mathbb{D}'
	\end{align*}
	be two morphisms of crystals. Then $t_1 = t_2$ if and only if their evaluations on $W(R) \to R$ agree.
\end{lemma}
\begin{proof}
	One direction holds by definition, so we only need to prove that $t_1 = t_2$ if their evaluations on $W(R) \to R$ agree. The property of agreeing on $W(R) \to R$ is stable under base change so, by Lemma \ref{lem-etalelocal}, it is enough to assume that $R$ has a $p$-basis. In turn we can use Lemma \ref{lem-pbasisuseful} to reduce to the case where $R$ is perfect. There the result follows because evaluation on $W(R) \to R$ is faithful for perfect rings, see Remark \ref{rmk-perfectequiv}.
\end{proof}

Suppose $R$ is a $p$-nilpotent $W(k_0)$-algebra, and let $R_0 = R/pR$. Then the closed embedding $i: \Spec R_0 \hookrightarrow \Spec R$ induces a morphism of topoi $\icris = (\icris_\ast, i_{\text{CRIS}}^\ast)$ between sheaves on $\textup{CRIS}(R_0/W(k_0))$ and sheaves on $\textup{CRIS}(R /W(k_0))$. By \cite[IV, Thm. 1.4.1]{Berthelot1974}, the functors $\icris_\ast$ and $i_{\text{CRIS}}^\ast$ are quasi-inverse to one another, and induce an equivalence of categories
\begin{align}\label{eq-crystalequiv}
\textup{LFCrys}(R_0/W(k_0)) \xrightarrow{\sim} \textup{LFCrys}(R/W(k_0)).
\end{align}
This equivalence extends to an equivalence $\textup{Isoc}(R_0) \xrightarrow{\sim} \textup{Isoc}(R)$. 

Let $R_0$ be an $\mathbb{F}_p$-algebra, and let $\phi_0$ denote the $p$-power Frobenius $r \mapsto r^p$ of $R_0$. If $B \to A$ is a PD-thickening over $R_0$, we write $\phi_!(B/A)$ for the PD-thickening $B \to A$ where $A$ is viewed as an $R_0$-algebra via restriction of scalars along $\phi_0$. For any crystal $\mathbb{D}$ in $\mathcal{O}_{\Spec R_0/W(k_0)}$-modules, we define the value of the Frobenius pullback $\phi_0^\ast \mathbb{D}$ on a $p$-adic PD-thickening $B \to A$ over $R_0$ by 
\begin{align}\label{eq-shriek}
(\phi_0^\ast \mathbb{D})_{B/A} := \mathbb{D}_{{\phi_0}_!(B/A)}.
\end{align}
If $\sigma: B \to B$ is a lift of the Frobenius of $A$ which preserves the divided powers, then $\sigma$ induces a PD-morphism $(B \to A) \to {\phi_0}_!(B \to A)$, so by the crystal property we obtain
\begin{align}\label{eq-frobcrystal}
(\phi_0^\ast \mathbb{D})_{B/A} \xrightarrow{\sim} \sigma^\ast( \mathbb{D}_{B/A}).
\end{align}

More generally, if $R$ is a $p$-nilpotent $W(k_0)$-algebra and $\mathbb{D}$ is a crystal in locally free $\mathcal{O}_{\Spec R/W(k_0)}$-modules, then we can use the equivalence (\ref{eq-crystalequiv}) to define the Frobenius pullback $\phi^\ast\mathbb{D}$ of $\mathbb{D}$. Explicitly,
\begin{align*}
\phi^\ast \mathbb{D} := \icris_\ast(\phi_0^\ast i_{\text{CRIS}}^\ast \mathbb{D}).
\end{align*}
\subsection{The crystals associated with $p$-divisible groups and displays}\label{sub-zinkcrystals}
We recall the crystals associated with $p$-divisible groups and to nilpotent Zink displays, and we discuss the connection between the two. Our main reference for the crystals associated with $p$-divisible groups is \cite{BBM1982}. For more information on the crystals associated with nilpotent Zink displays, we refer the reader to \cite[\textsection 2.2]{Zink2002} and \cite[\textsection 2.4]{Lau2018}.

If $X$ is a $p$-divisible group over a $p$-nilpotent $W(k_0)$-algebra $R$, denote by $\mathbb{D}(X)$ the \textit{covariant} Dieudonn\'e crystal of $X$ as in \cite{BBM1982}. In fact, the crystal associated with $X$ as defined in \textit{loc. cit.} is contravariant, so to obtain a covariant crystal we define $\mathbb{D}(X)$ to be the contravariant Dieudonn\'e crystal associated with $X^D$. Equivalently, by the crystalline duality theorem \cite[\textsection 5.3]{BBM1982}, $\mathbb{D}(X)$ is the dual of the contravariant Dieudonn\'e crystal associated with $X$. 

The Dieudonn\'e crystal $\mathbb{D}(X)$ is a crystal of finite locally free $\mathcal{O}_{\Spec R/W(k_0)}$-modules, and the sections of $\mathbb{D}(X)$ over the trivial PD-thickening $\id_R: R \to R$ are equipped with a filtration by finite projective $R$-modules
\begin{align}\label{eq-hodgeX}
\text{Fil}^0(\mathbb{D}(X)) = \mathbb{D}(X)_{R/R}\supset \text{Fil}^1(\mathbb{D}(X)) = \text{Lie}(X^D)^\vee \supset \textup{Fil}^2(\mathbb{D}(X)) = 0,
\end{align}
called the Hodge filtration of $X$, which makes the following sequence exact
\begin{align}\label{eq-hodgeseqX}
0 \to \textup{Fil}^1(\mathbb{D}(X)) \to \mathbb{D}(X)_{R/R} \to \textup{Lie}(X) \to 0.
\end{align}
\begin{Def}
	A \textit{Dieudonn\'e crystal} over $R$ is a triple $(\mathbb{D},\mathbb{F},\mathbb{V})$, where $\mathbb{D}$ is a crystal of finite locally free $\mathcal{O}_{\Spec R/W(k_0)}$-modules, and
	\begin{align*}
	\mathbb{F}: \phi^\ast \mathbb{D} \to \mathbb{D} \text{ and } \mathbb{V}: \mathbb{D} \to \phi^\ast \mathbb{D}
	\end{align*}
	are morphisms of crystals such that $\mathbb{F} \circ \mathbb{V} = p\cdot \id_{\mathbb{D}}$ and $\mathbb{V} \circ \mathbb{F} = p \cdot \id_{\phi^\ast \mathbb{D}}$. 
\end{Def}
If $X$ is a $p$-divisible group over an $\mathbb{F}_p$-algebra $R_0$, denote by $X^{(p)}$ the $p$-divisible group $X \otimes_{R_0, \phi_0} R_0$ obtained by base change along $\phi_0$. We obtain a Dieudonn\'e crystal structure on $\mathbb{D}(X)$ by taking $\mathbb{F}$ and $\mathbb{V}$ to be induced from the Verschiebung and Frobenius
\begin{align*}
V_{X}: X^{(p)} \to X, \ F_X: X \to X^{(p)},
\end{align*}
respectively. Let us emphasize that since we are using the covariant Dieudonn\'e crystal, $X \mapsto \mathbb{D}(X)$ sends the Frobenius of $X$ to the Verschiebung of $\mathbb{D}(X)$ and the Verschiebung of $X$ to the Frobenius of $\mathbb{D}(X)$. More generally, if $R$ is a $p$-nilpotent $W(k_0)$-algebra, then we obtain $\mathbb{F}$ and $\mathbb{V}$ on $\mathbb{D}(X)$ by taking the unique maps lifting the Frobenius and Verschiebung for $i_{\textup{CRIS}}^\ast \mathbb{D}(X)$ along the equivalence (\ref{eq-crystalequiv}). 

The unit object $\mathbbm{1}$ in the rigid tensor category of finite locally free crystals in $\mathcal{O}_{\Spec R/ W(k_0)}$-modules is given by the crystal $\mathbb{D}(\mu_{p^\infty})$ associated with the multiplicative $p$-divisible group $\mu_{p^\infty}$ over $R$ (here we use $\mathbb{D}(\mu_{p^\infty})$ because we normalize Dieudonn\'e theory covariantly; in this way we have the same unit object as in \cite{HP2017} and \cite{Kim2018}). It follows that $\mathbbm{1}$ is endowed with the structure of a Dieudonn\'e crystal. Explicitly, we have a canonical isomorphism $\phi^\ast\mathbbm{1} \cong \mathbbm{1}$, and with respect to this isomorphism we take $\mathbb{F} = \id_{\mathbbm{1}}$ and $\mathbb{V} = p \cdot \id_{\mathbbm{1}}$. We will also endow the sections of $\mathbbm{1}$ over $\id_R: R \to R$ with the filtration
\begin{align}\label{eq-hodge1}
\textup{Fil}^0(\mathbbm{1}) = R \supset \textup{Fil}^1(\mathbbm{1}) = 0.
\end{align}
We will refer to this as the Hodge filtration for $\mathbbm{1}$.

Let $R$ be a $p$-nilpotent $\zz_p$-algebra, and denote by $\textup{Zink}(R)$ the category of Zink displays over $R$, which is equivalent to the category of windows over $\mathcal{W}(R)$ and to the category of 1-displays over $\underline{W}(R)$ by Lemma \ref{lem-windows}. Denote by $\textup{nZink}(R)$ the full subcategory of nilpotent Zink displays (see \cite[Def. 11]{Zink2002}). If a Zink display $\underline{P}$ over $R$ is nilpotent, then we can associate to $\underline{P}$ a formal $p$-divisible group $\textup{BT}_R(\underline{P})$. By the main theorems of \cite{Zink2002} and \cite{Lau2008}, $\textup{BT}_R$ defines an equivalence of categories between nilpotent Zink displays and formal $p$-divisible groups over $R$. When the ring $R$ is clear from context, we will sometimes omit the subscript from $\textup{BT}_R$.

An explicit quasi-inverse functor $\Phi_R$ for $\textup{BT}_R$ is defined in \cite[Prop. 2.1]{Lau2013}. Let us briefly review its definition. As a first step one defines a functor from $p$-divisible groups over $R$ to the category of filtered $F$-$V$-modules over $R$. Here a \textit{filtered $F$-$V$-module} over $R$ is a quadruple $(P, \Fil P, F^\sharp, V^\sharp)$, where $P$ is a finite projective $W(R)$-module with a filtration $I(R)P \subseteq \Fil P \subseteq P$ such that $P / \Fil P$ is projective over $R$, and where $F^\sharp: f^\ast P \to P \text{ and } V^\sharp: P \to f^\ast P $ are $W(R)$-module homomorphisms such that $F^\sharp \circ V^\sharp = p \cdot\id_{P}$ and $V^\sharp \circ F^\sharp = p \cdot \id_{f^\ast P}$.

Let $R_0 = R/pR$, so the kernel of $W(R) \to R_0$ is naturally equipped with divided powers, making $W(R) \to R_0$ into a $p$-adic PD-thickening over $R_0$ (see Remark \ref{rmk-perfectequiv}). If $X$ is a $p$-divisible group over $R$, and $X_0 = X \otimes_R R_0$, then since the Frobenius for $W(R)$ is compatible with the PD-structure on the kernel of $W(R) \to R_0$ (see \cite[\textsection 1G]{Lau2014}), by (\ref{eq-frobcrystal}) we have
\begin{align*}
(\phi_0^\ast \mathbb{D}(X_0))_{W(R)/R_0} \cong f^\ast(\mathbb{D}(X_0)_{W(R)/R_0}).
\end{align*} 
Hence if we take $P = \mathbb{D}(X_0)_{W(R)/R_0}$, the evaluation of $\mathbb{F}$ and $\mathbb{V}$ for $\mathbb{D}(X_0)$ on $W(R) \to R_0$ induce homomorphisms $F^\sharp$ and $V^\sharp$ as in the above definition. Moreover, the natural identification $\mathbb{D}(X)_{W(R)/R} \cong \mathbb{D}(X_0)_{W(R)/R_0}$ provides us with a map $P \to \textup{Lie}(X)$ via the composition
\begin{align*}
	P \xrightarrow{\sim} \mathbb{D}(X)_{W(R)/R} \twoheadrightarrow \mathbb{D}(X)_{R/R} \twoheadrightarrow \textup{Lie}(X).
\end{align*}
It follows that we can define a filtered $F$-$V$-module associated with $X$ by $(P, \Fil P, F^\sharp, V^\sharp)$, with $\Fil P = \ker(P \to \textup{Lie}(X))$. As in \cite{Lau2013} we write $\Theta_R$ for the functor which assigns a filtered $F$-$V$-module to a $p$-divisible group. 

We also have a faithful functor $\Upsilon_R$ from Zink displays over $R$ to filtered $F$-$V$-modules over $R$, defined by assigning to the Zink display $\underline{P} = (P, \Fil P, F_0, F_1)$ the filtered $F$-$V$-module $(P, \Fil P, F_0^\sharp, V^\sharp)$, where $F_0^\sharp: f^\ast P \to P$ is the linearization of $F_0$ and $V^\sharp$ is the homomorphism $P \to f^\ast P$ associated with $\underline{P}$ by \cite[Lem. 10]{Zink2002} (see \textsection \ref{sub-frames}). By \cite[Prop. 2.1]{Lau2013}, there is a unique functor 
\begin{align}\label{eq-laufunctor}
\Phi_R : \textup{pdiv}(R) \to \textup{Zink}(R)
\end{align}
which is compatible with base change and for which there is a natural isomorphism of functors $\Theta_R \cong \Upsilon_R \circ \Phi_R$. The restriction of $\Phi_R$ to formal $p$-divisible groups is an equivalence by \cite[Thm. 5.1]{Lau2013}, and $\Phi_R$ provides a quasi-inverse to $\textup{BT}_R$ by \cite[Lem. 8.1]{Lau2013}. 

If $\underline{P}$ is a Zink display over an $\mathbb{F}_p$-algebra $R_0$, define $\underline{P}^{(p)} = (P^{(p)}, \Fil P^{(p)}, F_0^{(p)}, F_1^{(p)})$ to be the base change of $\underline{P}$ along the $p$-power Frobenius $\phi_0: R_0 \to R_0$. By definition of base change for displays, we have $P^{(p)} = f^\ast P$. By \cite[Ex. 23]{Zink2002}, $F_0^\sharp$ and $V^\sharp$ induce functorial morphisms of Zink displays 
\begin{align}\label{eq-frver}
\textup{Ver}_{\underline{P}}: \underline{P}^{(p)} \to \underline{P} \text{ and } \textup{Fr}_{\underline{P}}: \underline{P} \to \underline{P}^{(p)}, 
\end{align}
respectively. If $X$ is a $p$-divisible group over $R_0$, then one sees from the definition of $\Theta_{R_0}$ and the faithfulness of $\Upsilon_{R_0}$ that
\begin{align}\label{eq-fv}
\Phi_{R_0}(V_{X}) = \textup{Ver}_{\Phi_{R_0}(X)} \text{ and }\Phi_{R_0}(F_{X}) = \textup{Fr}_{\Phi_{R_0}(X)}.
\end{align}

Let us now recall the definition of the crystal associated with a nilpotent Zink display. Let $B \to A$ be a $p$-adic PD-thickening over $R$. Then the natural morphism of 1-frames $\mathcal{W}(B/A) \to \mathcal{W}(A)$ induces an equivalence of categories between nilpotent windows over $\mathcal{W}(B/A)$ and nilpotent Zink displays over $A$ by \cite[Thm. 44]{Zink2002} (see also \cite[Prop. 10.4]{Lau2010}, and for the definition of nilpotence in this generality see \cite[\textsection 10.3]{Lau2010}). It follows that if $\underline{P}$ is a nilpotent Zink display over $R$, and  $\underline{P}_A$ is the base change of $\underline{P}$ to $\mathcal{W}(A)$, then for any $p$-adic PD-thickening $B \to A$ over $R$ there is a unique (up to unique isomorphism which lifts the identity) lift of $\underline{P}_A$ to $\mathcal{W}(B/A)$. Denote this lift by $\underline{\tilde{P}} = (\tilde{P}, \Fil \tilde{P}, \tilde{F}_0, \tilde{F}_1)$. The evaluation of the Dieudonn\'e crystal $\mathbb{D}(\underline{P})$ associated with $\underline{P}$ on $B\to A$ is
\begin{align*}
\mathbb{D}(\underline{P})_{B/A} := \tilde{P} / I(B) \tilde{P}.
\end{align*}
In particular, if $\underline{P} = (P, \Fil P, F_0, F_1)$, then $\mathbb{D}(\underline{P})_{R/R} = P / I(R) P$. We refer to the filtration
\begin{align*}
\textup{Fil}^0(\mathbb{D}(\underline{P})) = P / I(R) P \supset \text{Fil}^1(\mathbb{D}(\underline{P})) = \Fil P / I(R) P \supset \textup{Fil}^2(\mathbb{D}(\underline{P})) = 0
\end{align*}
as the Hodge filtration of $\mathbb{D}(\underline{P})$ (or of $\underline{P})$, and we observe that the following sequence is exact
\begin{align*}
0 \to \textup{Fil}^1(\mathbb{D}(\underline{P})) \to \mathbb{D}(\underline{P})_{R/R} \to P / \Fil P \to 0.
\end{align*}
We will sometimes denote $P / \Fil P$ by $\text{Lie}(\underline{P})$. If $\underline{P} = \Phi_R(X)$ for a formal $p$-divisible group $X$ over $R$, then by definition of $\Phi_R$ we have $\textup{Lie}(\underline{P}) = P / \Fil P \cong \textup{Lie}(X)$. If $\underline{P}$ is the Zink display associated with a higher display $\underline{M}$ by Lemma \ref{lem-windows}, then 
\begin{align}\label{eq-samehodge}
\textup{Fil}^i(\underline{M}) = \textup{Fil}^i(\mathbb{D}(\underline{P})),
\end{align}
for $0 \le i \le 2$, where $\textup{Fil}^i(\underline{M})$ is the Hodge filtration of $\underline{M}$ (see (\ref{eq-hodgedisp})).

The assignment $\underline{P} \mapsto \mathbb{D}(\underline{P})$ is functorial in $\underline{P}$, so if $\underline{P}$ is a Zink display over an $\mathbb{F}_p$-algebra $R_0$, then the maps (\ref{eq-frver}) induce morphisms of crystals
\begin{align}\label{eq-frobdef}
\mathbb{F}: \mathbb{D}(\underline{P}^{(p)}) \to \mathbb{D}(\underline{P}) \text{ and } \mathbb{V}: \mathbb{D}(\underline{P}) \to \mathbb{D}(\underline{P}^{(p)}).
\end{align}
Moreover, as a consequence of the definition of $\mathbb{D}(\underline{P})$ we obtain a canonical isomorphism
\begin{align}\label{eq-crystalbc}
\phi_0^\ast \mathbb{D}(\underline{P}) \cong \mathbb{D}(\underline{P}^{(p)}).
\end{align} 
Hence $\mathbb{D}(\underline{P})$ is canonically endowed with the structure of a Dieudonn\'e crystal. As in the case of $p$-divisible groups, we can use (\ref{eq-crystalequiv}) to lift this structure in the case of $p$-nilpotent $\zz_p$-algebras $R$.

\begin{lemma}\label{lem-zinkdieudonne}
	The functors $\underline{P} \mapsto \mathbb{D}(\underline{P})$ and $\underline{P} \mapsto \mathbb{D}(\textup{BT}_R(\underline{P}))$ from nilpotent Zink displays to crystals in finite locally free $\mathcal{O}_{\Spec R / \zz_p}$-modules are naturally isomorphic. Moreover, the isomorphism is compatible with the Frobenius and Verschiebung maps, and it preserves the Hodge filtration.
\end{lemma}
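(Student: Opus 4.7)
The plan is to leverage Lau's quasi-inverse functor $\Phi_R$ to $\textup{BT}_R$ together with the natural isomorphism $\Theta_R \cong \Upsilon_R \circ \Phi_R$ from \cite[Prop.~2.1]{Lau2013}, which identifies the filtered $F$-$V$-module of a $p$-divisible group with that of its associated Zink display. Writing $X := \textup{BT}_R(\underline{P})$, so that $\Phi_R(X) \cong \underline{P}$ naturally, I first reduce to the case $R = R_0$ an $\mathbb{F}_p$-algebra using the equivalence (\ref{eq-crystalequiv}): both crystals are pulled back from $\textup{CRIS}(R_0/W(k_0))$, since for $\mathbb{D}(\underline{P})$ the definition $\tilde{P}/I(B)\tilde{P}$ depends only on $\underline{P}_{A/pA}$, and for $\mathbb{D}(X)$ this is the standard pullback along $i_{\textup{CRIS}}$.

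For $R_0$ an $\mathbb{F}_p$-algebra, I construct the isomorphism by evaluating on each $p$-adic PD-thickening $B \to A$ over $R_0$ (writing $B_0 := B/pB$). On the Zink side, \cite[Thm.~44]{Zink2002} provides a unique lift $\underline{\tilde{P}}$ of $\underline{P}_A$ to a window over $\mathcal{W}(B/A)$, giving $\mathbb{D}(\underline{P})_{B/A} = \tilde{P}/I(B)\tilde{P}$. On the formal $p$-divisible group side, Grothendieck-Messing provides a (non-unique) lift $\tilde{X}$ of $X_A$ to $B$, and the crystal property yields $\mathbb{D}(X)_{B/A} \cong \mathbb{D}(\tilde{X})_{B/B}$. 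The Zink display $\Phi_B(\tilde{X})$ has underlying $W(B)$-module $\mathbb{D}(\tilde{X})_{W(B)/B_0}$ by construction of $\Phi_B$, and its base change along the natural frame morphism $\mathcal{W}(B) \to \mathcal{W}(B/A)$ (which preserves the underlying $W(B)$-module and enlarges the filtration) is a lift of $\underline{P}_A$; uniqueness forces it to be isomorphic to $\underline{\tilde{P}}$. Reducing modulo $I(B)$ then yields a canonical isomorphism
\begin{align*}
\tilde{P}/I(B)\tilde{P} \;\cong\; \mathbb{D}(\tilde{X})_{B/B} \;\cong\; \mathbb{D}(X)_{B/A}.
\end{align*}

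To promote these pointwise isomorphisms to an isomorphism of crystals I verify compatibility with PD-morphisms $(B', A') \to (B, A)$ using functoriality of $\Phi$, the uniqueness clause in \cite[Thm.~44]{Zink2002} applied to base changes of the chosen lifts, and the crystal property for $\mathbb{D}(X)$. Frobenius and Verschiebung compatibility is then immediate from (\ref{eq-fv}), since $\Phi_{R_0}$ intertwines $F_X$ with $\textup{Ver}_{\underline{P}}$ and $V_X$ with $\textup{Fr}_{\underline{P}}$, and the morphisms $\mathbb{F}$ and $\mathbb{V}$ on both crystals are defined via these operators together with the canonical isomorphism (\ref{eq-crystalbc}). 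Hodge filtration compatibility reduces to the trivial PD-thickening $R \to R$, where both sides realize the exact sequence $0 \to \textup{Fil}^1 \to \mathbb{D}(-)_{R/R} \to \textup{Lie}(X) \to 0$ with $\textup{Lie}(X)$ identified with $P/\Fil P$ by the very definition of $\Phi_R$.

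The main obstacle is the crystal-theoretic glueing in the second-to-last step: showing that the isomorphism is independent of the chosen Grothendieck-Messing lift $\tilde{X}$ and respects arbitrary PD-morphisms. Conceptually, this amounts to identifying deformation theory for nilpotent Zink displays (via relative Witt frames, \cite[Thm.~44]{Zink2002}) with Grothendieck-Messing deformation theory for formal $p$-divisible groups. Since both deformation theories are controlled by the Hodge filtration and both lifting procedures are intertwined by the base-change-compatible equivalence $\Phi$ of \cite[Prop.~2.1]{Lau2013}, the required compatibility ultimately reduces to the functoriality and base-change statements built into \emph{loc.~cit.}
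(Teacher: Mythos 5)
Your overall strategy is close to the paper's, but at the pivotal step you take a genuinely different and more delicate route. The paper never lifts the $p$-divisible group: it invokes \cite[Cor.~2.7]{Lau2013}, which identifies the underlying module $\tilde{P}$ of the unique window lift over $\mathcal{W}(B/A)$ with the canonical module $\mathbb{D}(X)_{W(B)/A}$ (the evaluation of the crystal on the $p$-adic PD-thickening $W(B)\to A$), so that the isomorphism $\mathbb{D}(X)_{B/A}\cong \tilde{P}/I(B)\tilde{P}$ is just the crystal property applied to $(W(B)\to A)\to(B\to A)$; no choices enter, and compatibility with transition maps follows from the cocycle condition together with uniqueness of window lifts. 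You instead choose a Grothendieck--Messing lift $\tilde{X}$ of $X_A$ to $B$ and route the identification through $\Phi_B(\tilde{X})$.

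This creates two problems. First, the existence of a lift $\tilde{X}$ along an arbitrary PD-thickening of $p$-nilpotent rings is not free: classical Grothendieck--Messing theory requires (locally) nilpotent divided powers, and its extension to general PD-thickenings for formal $p$-divisible groups is, in Lau's development, itself derived from the display-theoretic unique lifting you are trying to exploit, so you risk circularity or at least an uncited input. Second, and more seriously, the step you flag as ``the main obstacle'' --- independence of the choice of $\tilde{X}$ and compatibility with arbitrary PD-morphisms --- is precisely the mathematical content of \cite[Cor.~2.7]{Lau2013}; saying it ``ultimately reduces to'' the functoriality in \cite[Prop.~2.1]{Lau2013} is not a proof, since that proposition only compares the two theories over the thickening $W(R)\to R_0$ via $\Theta_R\cong\Upsilon_R\circ\Phi_R$, not over a general $B\to A$. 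The clean repair is the paper's: replace your chosen $\mathbb{D}(\tilde{X})_{B/B}$ by the canonical $\mathbb{D}(X)_{W(B)/A}\otimes_{W(B)}B$ and cite Lau's corollary. Your treatment of the Frobenius and Verschiebung via (\ref{eq-fv}) after reduction mod $p$, and of the Hodge filtration via (\ref{eq-hodgeseqX}) and $\Fil P=\ker(P\to\textup{Lie}(X))$, agrees with the paper.
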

\begin{proof}
	The first statement proven in \cite[Thm. 94]{Zink2002} for the restriction of these crystals to the nilpotent crystalline site, and in \cite[Cor. 97]{Zink2002} for the restriction to PD-thickenings $B \to A$ which have nilpotent kernel. In general, it follows from the results of \cite{Lau2013}. Indeed, it is enough to show that the functors $X \mapsto \mathbb{D}(X)$ and $X \mapsto \mathbb{D}(\Phi_R(X))$ from infinitesimal $p$-divisible groups to $\textup{LFCrys}(\Spec R/\zz_p)$ are naturally isomorphic. For any given PD-thickening $B \to A$ over $R$, there is an isomorphism of $B$-modules
	\begin{align}\label{eq-crystalisom}
	\mathbb{D}(X)_{B/A} \cong \mathbb{D}(\Phi_R(X))_{B/A}
	\end{align}
	by \cite[Cor. 2.7]{Lau2013}. Explicitly, by the results of \cite{Lau2013}, if $\underline{\tilde{P}}$ is the unique lift of $\Phi_R(X)$ to $\mathcal{W}(B/A)$, then we can identify $\tilde{P} = \mathbb{D}(X)_{W(B)/A}$, so (\ref{eq-crystalisom}) is obtained from the crystal property applied to the morphism of PD-thickenings $(W(B) \to A) \to (B \to A)$. That (\ref{eq-crystalisom}) is compatible with the transition isomorphisms follows from the cocycle condition for $\mathbb{D}(X)$ and uniqueness of liftings along $\mathcal{W}(B/A) \to \mathcal{W}(A)$. Functoriality in $X$ follows from functoriality of $X \mapsto \mathbb{D}(X)$, and if $\Phi_R(X) = (P, \Fil P, F_0, F_1)$, then $\Fil P = \ker(\mathbb{D}(X)_{W(R)/R} \to \textup{Lie}(X))$, so it follows from (\ref{eq-hodgeseqX}) that (\ref{eq-crystalisom}) preserves the Hodge filtrations.
	
	Finally to prove compatibility with the Frobenius and Verschiebung one reduces to the case where $R$ is an $\mathbb{F}_p$-algebra, in which case we have $\mathbb{F}_{\mathbb{D}(X)} = \mathbb{D}(V_X)$ and $\mathbb{V}_{\mathbb{D}(X)} = \mathbb{D}(F_X)$. Then the result follows from functoriality of the isomorphism $\mathbb{D}(X) \cong \mathbb{D}(\Phi_R(X))$ along with (\ref{eq-fv}) and compatibility of $\Phi_R$ with base change.
\end{proof}
\section{$G$-displays} \label{section-Gdisplays}
Let $G = \Spec \mathcal{O}_G$ be a flat affine group scheme of finite type over $\zz_p$, and let $\mu: \mathbb{G}_{m,W(k_0)} \to G_{W(k_0)}$ be a cocharacter of $G_{W(k_0)}$. In section \textsection \ref{sub-gdisplau} we define the stack of $G$-displays of type $\mu$ over an \'etale sheaf of frames, following \cite{Lau2018}, and in \textsection \ref{sub-gdispdaniels} we develop Tannakian analogs of these objects. If $R$ is in $\textup{Nilp}_{\zz_p}$, and $\underline{S}$ is an \'etale sheaf of frames on $\Spec R$ which satisfies descent for displays (see Definition \ref{def-descent}), we prove (Theorem \ref{thm-equiv}) that our Tannakian framework is equivalent to Lau's torsor-theoretic framework. This is closely analogous to \cite[Thm. 3.16]{Daniels2019}, and throughout we provide references to \cite{Daniels2019} in lieu of proofs whenever the arguments mimic those in \textit{loc. cit.}.

In \textsection \ref{sub-hodgefilt} we define the Hodge filtration for Tannakian $(G,\mu)$-displays, compare it to the Hodge filtration for $G$-displays of type $\mu$, and explain (following \cite{Lau2018}) how lifts of the Hodge filtration relate to lifts of a Tannakian $(G,\mu)$-display. In \textsection \ref{sub-lifting}, under the additional assumptions that $G$ is reductive and $\mu$ is minuscule, we recall Lau's unique lifting lemma (Proposition \ref{prop-lifting}) for adjoint nilpotent $(G,\mu)$-displays. The unique lifting lemma is a crucial component of the construction of the crystal associated with an adjoint nilpotent $(G,\mu)$-display in \textsection \ref{sub-crystalGdisp}.
\subsection{$G$-displays of type $\mu$}\label{sub-gdisplau}
Recall \cite[\textsection 5]{Lau2018} a frame $\underline{S}$ is a frame over $W(k_0)$ if $S$ is a graded $W(k_0)$-algebra and $\sigma: S \to S_0$ extends the Frobenius of $W(k_0)$. In particular, if $R$ is in $\textup{Nilp}_{W(k_0)}$ and $B \to A$ is a PD-thickening over $R$, then the frames $\underline{W}(R)$ and $\underline{W}(B/A)$  are $W(k_0)$-frames. See \cite[Ex. 5.0.2]{Lau2018} for details.

If $X = \text{Spec }A$ is an affine $W(k_0)$-scheme, then an action of $\mathbb{G}_m$ on $X$ is equivalent to a $\zz$-grading on $A$ (see \cite[\textsection 5.1]{Lau2018} and \cite[\textsection 3.1]{Daniels2019} for details). If $\mathbb{G}_m$ acts on $X$, and $S$ is a $\zz$-graded $W(k_0)$-algebra, denote by $X(S)^0 \subseteq X(S)$ the set of $\mathbb{G}_m$-equivariant sections $\text{Spec }S \to X$ over $W(k_0)$. In other words, $X(S)^0$ is the set $\textup{Hom}_{W(k_0)}^0(A,S)$ of homomorphisms $A \to S$ of graded $W(k_0)$-algebras. 

Suppose $\underline{S}$ is an \'etale sheaf of frames on $\text{Spec }R$. If $R \to R'$ is \'etale, write 
\begin{align*}
\underline{S}(R') = (S(R'), \sigma(R'), \tau(R')),
\end{align*} 
so $S(R')$ is a $\zz$-graded ring, and $\sigma(R')$ and $\tau(R')$ are ring homomorphisms $S(R') \to S(R')_0$ as in Definition \ref{def-frame}.
To $X$ and $\underline{S}$ we associate two functors on \'etale $R$-algebras:
\begin{align*}
X(\underline{S})^0: R' \mapsto X(S(R'))^0, \text{ and }
X(\underline{S}_0): R' \mapsto X(S(R')_0).
\end{align*}
\begin{lemma}\label{lem-sheaves}
	Let $\underline{S}$ be an \'etale sheaf of frames on $\textup{Spec }R$, and let $X$ be an affine scheme of finite type over $W(k_0)$. Then the functors $X(\underline{S})^0$ and $X(\underline{S}_0)$ are \'etale sheaves on $\textup{Spec }R$.
\end{lemma}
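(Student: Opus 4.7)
The plan is to exhibit both functors as finite limits (products and equalizers) of the basic sheaves $R' \mapsto S(R')_n$, which are \'etale sheaves by the assumption that $\underline{S}$ is an \'etale sheaf of frames on $\Spec R$. Since the category of presheaves of sets satisfies the sheaf axiom under arbitrary limits, this will immediately give the conclusion once the presentation is in place.

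First I would handle $X(\underline{S}_0)$, which is the simpler case. Writing $X = \Spec A$ and choosing a presentation $A = W(k_0)[x_1,\dots,x_n]/(f_j)_{j \in J}$ by finite type, one has
\begin{align*}
X(\underline{S}_0)(R') = \Hom_{W(k_0)\text{-alg}}(A,S(R')_0) = \{(a_1,\dots,a_n) \in S(R')_0^n : f_j(a_1,\dots,a_n) = 0 \text{ for all } j\}.
\end{align*}
This exhibits $X(\underline{S}_0)$ as the equalizer of two evident natural transformations between finite products of copies of the sheaf $R' \mapsto S(R')_0$, so it is an \'etale sheaf.

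For $X(\underline{S})^0$, the same strategy works provided we can choose \emph{homogeneous} generators of $A$ with respect to the $\mathbb{G}_m$-action. Since the $\mathbb{G}_m$-action on $X$ corresponds to a $\zz$-grading $A = \bigoplus_n A_n$, and every finite-dimensional $\mathbb{G}_m$-stable $W(k_0)$-submodule of $A$ decomposes into weight spaces, any finite generating set can be replaced by a finite set of homogeneous generators $x_i \in A_{d_i}$ satisfying finitely many homogeneous relations $f_j$. A $\mathbb{G}_m$-equivariant map $\Spec S(R') \to X$ corresponds to a graded $W(k_0)$-algebra homomorphism $A \to S(R')$, which is in turn determined by a tuple $(a_1,\dots,a_n)$ with $a_i \in S(R')_{d_i}$ satisfying $f_j(a_1,\dots,a_n) = 0$ in the appropriate graded piece. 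Thus
\begin{align*}
X(\underline{S})^0(R') = \left\{(a_1,\dots,a_n) \in \prod_{i=1}^n S(R')_{d_i} : f_j(a_1,\dots,a_n) = 0 \text{ for all } j\right\},
\end{align*}
which is the equalizer of two natural transformations between finite products of the sheaves $R' \mapsto S(R')_{d_i}$. Since each of these is an \'etale sheaf by hypothesis, the same general principle gives the conclusion.

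The only subtle point is the passage to homogeneous generators in the graded case; once that is in hand, both arguments reduce to the observation that finite limits of \'etale sheaves of sets are again \'etale sheaves. I would expect this to be written out briefly, pointing to \cite[\S3.1]{Daniels2019} or \cite[\S5.1]{Lau2018} for the translation between $\mathbb{G}_m$-actions and gradings.
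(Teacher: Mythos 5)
Your proof is correct and is essentially the argument the paper relies on: the paper simply cites \cite[Lem.\ 5.3.1]{Lau2018}, whose proof is exactly this reduction — choose finitely many homogeneous generators and relations for $\mathcal{O}_X$, realize $X(\underline{S})^0$ and $X(\underline{S}_0)$ as equalizers of finite products of the graded-piece sheaves $R'\mapsto S(R')_n$, and conclude since limits of sheaves are sheaves. No substantive difference in approach.
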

\begin{proof}
	The proof is formally the same as that of \cite[Lem. 5.3.1]{Lau2018}.
\end{proof}

Let us recall the definition of the display group associated with $G$ and $\mu$ with values in a $\zz$-graded ring $S$. For details we refer the reader to \cite[\textsection 3.1]{Daniels2019} and \cite[\textsection 5.1]{Lau2018}. The cocharacter $\mu$ defines a right action of $\mathbb{G}_{m,W(k_0)}$ on $G_{W(k_0)}$ by
\begin{align*}
g \cdot \lambda := \mu(\lambda)^{-1} g \mu(\lambda) 
\end{align*}
for any $W(k_0)$-algebra $R$, $g \in G_{W(k_0)}(R)$ and $\lambda \in \mathbb{G}_{m,W(k_0)}(R)$. If $S$ is a $\zz$-graded ring, define  
\begin{align*}
G(S)_\mu := G(S)^0,
\end{align*}
i.e., $G(S)_\mu$ is the subset of $G_{W(k_0)}(S) = \text{Hom}_{W(k_0)}(\mathcal{O}_G,S)$ consisting of $W(k_0)$-algebra homomorphisms which preserve the respective gradings. Similarly, if $S$ is an \'etale sheaf of frames on $\text{Spec }R$, define
\begin{align*}
G(\underline{S})_\mu := G(\underline{S})^0,
\end{align*}	
so $G(\underline{S})_\mu$ is an \'etale sheaf of groups on $\Spec{R}$.

Suppose $\underline{S} = (S,\sigma,\tau)$ is a $W(k_0)$-frame. Then the $\zz_p$-algebra homomorphisms $\sigma, \tau: S \to S_0$ induce group homomorphisms
\begin{align*}
\sigma, \tau: G(S)_\mu \to G(S_0)
\end{align*}
as follows: if $g \in G(S)_\mu$, then $\sigma(g)$ (resp. $\tau(g)$) is defined by post-composing $g \in \text{Hom}_{W(k_0)}(\mathcal{O}_G,S)$ with $\sigma: S \to S_0$ (resp. $\tau:S \to S_0$). Using $\sigma$ and $\tau$, we define an action of $G(S)_\mu$ on $G(S_0)$:
\begin{align}\label{eq-action}
G(S_0) \times G(S)_\mu \to G(S_0), \ (x, g) \mapsto \tau(g)^{-1} x \sigma(g).
\end{align}
If $\underline{S}$ is an \'etale sheaf of $W(k_0)$-frames on $\text{Spec }R$, this action sheafifies to provide an action of $G(\underline{S})_\mu$ on $G(\underline{S}_0)$. 

\begin{Def}\label{def-gdisp}
	Let $R$ be a $p$-nilpotent $W(k_0)$-algebra, and suppose $\underline{S}$ is an \'etale sheaf of $W(k_0)$-frames on $\text{Spec }R$. The \textit{stack of $G$-displays of type $\mu$ over $\underline{S}$} is the \'etale quotient stack
	\begin{align*}
	G\text{-}\textup{Disp}_{{\underline{S}},\mu} := [G(\underline{S}_0) / G(\underline{S})_{\mu}]
	\end{align*}
	over $\textup{\'Et}_R$, where $G(\underline{S})_{\mu}$ acts on $G(\underline{S}_0)$ via the action (\ref{eq-action}).
\end{Def}

Explicitly, for an \'etale $R$-algebra $R'$, $G\text{-}\textup{Disp}_{\underline{S},\mu}(R')$ is the groupoid of pairs $(Q,\alpha)$, where $Q$ is an \'etale locally trivial $G(\uS)_\mu$-torsor over $\text{Spec }R'$ and $\alpha: Q \to G(\uS_0)$ is a $G(\uS)_\mu$-equivariant morphism for the action (\ref{eq-action}).

Let us point out the case which will be of particular interest to us. Suppose $B \to A$ is a PD-thickening of $p$-nilpotent $W(k_0)$-algebras. If $A \to A'$ is \'etale, let $B(A')$ be the unique \'etale $B$-algebra with $B(A') \otimes_B A = A'$ (see e.g., \cite[\href{https://stacks.math.columbia.edu/tag/039R}{Tag 039R}]{stacks-project}). If $J = \ker(B \to A)$, then $\ker(B(A') \to A') = JB'$, and the divided powers on $B \to A$ extend to $B(A') \to A'$ by flatness of $B \to B(A')$, see \cite[\href{https://stacks.math.columbia.edu/tag/07H1}{Tag 07H1}]{stacks-project}. Denote by $\underline{W}_{B/A}$ the \'etale sheaf of frames defined by
\begin{align}\label{eq-W_{B/A}}
	\underline{W}_{B/A}(A') = \underline{W}(B(A')/A')
\end{align}
for $A \to A'$ \'etale (see Lemma \ref{lem-sheafqf}). By taking $\uS = \underline{W}_{B/A}$ in Definition \ref{def-gdisp} we obtain the stack of $G$-displays of type $\mu$ for $B\to A$
\begin{align*}
\GdispBA. 
\end{align*}

Following \cite[\textsection 7.4]{Lau2018}, we have a notion of a Hodge filtration for $G$-displays of type $\mu$, which will be useful later on for understanding deformations of $G$-displays along nilpotent thickenings. Let us recall this notion.

Let $\underline{S}$ be a frame for $R$, and let $(Q,\alpha)$ be a $G$-display of type $\mu$ over $\underline{S}$. Let $\bar{\tau}$ be the composition of $\tau: S \to S_0$ with the quotient $S_0 \to R$. Then $\bar{\tau}$ defines a morphism of \'etale sheaves on $\Spec R$
\begin{align}\label{eq-bartau}
	\bar{\tau}: G(\underline{S})_\mu \to G_R.
\end{align}
We write $Q_R$ for the $G_R$-torsor induced from $Q$ by $\tau$. 

Let $P_\mu \subset G$ be the subgroup scheme defined by $\mu$, that is
\begin{align}\label{eq-parabolic}
	 P_\mu(R) = \{h \in H(R) \mid \lim_{t \to 0} \mu(t)h\mu(t)^{-1} \text{ exists }\},
\end{align}
see \cite[Thm. 4.1.17]{Conrad2014}. By \cite[Prop. 6.2.2]{Lau2018}, the morphism (\ref{eq-bartau}) has image inside of $P_\mu$; write $\bar{\tau}_0$ for the resulting morphism $G(\underline{S})_\mu \to P_{\mu,R}$. 

\begin{Def}\label{def-hodgefiltG}
	Let $(Q,\alpha)$ be a $G$-display of type $\mu$ over $\underline{S}$. The \textit{Hodge filtration for $(Q,\alpha)$} is the $P_{\mu,R}$-torsor $Q_\mu \subset Q_R$ induced from $Q$ by $\bar{\tau}_0$. 
\end{Def}

We close this section by recalling the stack of $G$-displays of type $\mu$ over the Witt frame. Let $\underline{W}$ be the fpqc sheaf in frames on $\textup{Nilp}_{\zz_p}$ given by $R\mapsto \underline{W}(R).$ associated with $G$, $\mu$, and $W$ we have two group-valued functors on $\textup{Nilp}_{\zz_p}$:
\begin{align*}
L^+G := G(\underline{W}_0), \text{ and } L^+_\mu G := G(\underline{W})^0.
\end{align*}
By \cite[Lem. 5.4.1]{Lau2018} these are representable functors.

\begin{Def}\label{def-GDispW}
	The stack of \textit{$G$-displays of type $\mu$ over $\underline{W}$} is the \'etale quotient stack
	\begin{align*}
	\Gdisp := [L^+G / L^+_\mu G]
	\end{align*}
	over $\textup{Nilp}_{W(k_0)}$, where $L^+_\mu G$ acts on $L^+G$ via the action (\ref{eq-action}).
\end{Def}

\begin{rmk}
	One could also take the quotient stack with respect to the fpqc topology, which is the perspective used in \cite{Daniels2019}. The point is that the \'etale stack given by Definition \ref{def-GDispW} is an fpqc stack by \cite[Lem. 5.4.2]{Lau2018}.
\end{rmk}

\subsection{Tannakian $G$-displays}\label{sub-gdispdaniels}
Continuing the notation of the previous section, let $G$ be a flat affine group scheme of finite type over $\zz_p$, and let $\mu: \mathbb{G}_{m,W(k_0)} \to G_{W(k_0)}$ be a cocharacter for $G_{W(k_0)}$. Let us recall some definitions from \cite{Daniels2019}. If $(V,\pi)$ is any representation of $G$, then $V_{W(k_0)} = V\otimes_{\zz_p} W(k_0)$ is graded by the action of the cocharacter $\mu$, and for any $W(k_0)$-algebra $R$ we obtain an exact tensor functor $\mathscr{C}(\underline{W})_{\mu,R}$ (denoted $\mathscr{C}_{\mu,R}$ in \cite{Daniels2019}), given by
\begin{align*}
\mathscr{C}(\underline{W})_{\mu,R}: \textup{Rep}_{\zz_p}G \to \textup{PGrMod}(W(R)^\oplus), \ (V,\pi) \mapsto V_{W(k_0)} \otimes_{W(k_0)} W(R)^\oplus.
\end{align*}
We refer to an exact tensor functor $\mathscr{F}: \textup{Rep}_{\zz_p}G \to \textup{PGrMod}(W(R)^\oplus)$ as a \textit{graded fiber functor} over $W(R)^\oplus$, and we say $\mathscr{F}$ is \textit{of type $\mu$} if $\mathscr{F}$ is \'etale locally isomorphic to $\mathscr{C}(\underline{W})_{\mu,R}$. Let $\upsilon_R$ denote the forgetful functor $\textup{Disp}(\underline{W}(R)) \to \textup{PGrMod}(W(R)^\oplus)$. Recall the following definition (see \cite[Def. 3.14]{Daniels2019}).
\begin{Def}
	A \textit{Tannakian $(G,\mu)$-display over $\underline{W}(R)$} is an exact tensor functor
	\begin{align*}
	\mathscr{P}: \textup{Rep}_{\zz_p}G \to \textup{Disp}(\underline{W}(R))
	\end{align*}
	such that $\upsilon_R \circ \mathscr{P}$ is a graded fiber functor of type $\mu$.
\end{Def}
Denote the stack of Tannakian $(G,\mu)$-displays on $\textup{Nilp}_{W(k_0)}$ by $ G$-\textup{Disp}$_{\underline{W},\mu}^\otimes$. From a Tannakian $(G,\mu)$-display over $\underline{W}(R)$ we obtain a $G$-display $\mathscr{P}$ of type $\mu$ by taking $Q_\mathscr{P}$ to be the $L^+_\mu G$-torsor of trivializations of the underlying fiber functor of type $\mu$ of $\mathscr{P}$ and $\alpha_\mathscr{P}$ the morphism $Q \to L^+G$ coming from the Frobenius for $\mathscr{P}$, see \cite[Cons. 3.15]{Daniels2019} for details. The following is a consequence of the main theorem of \cite[\textsection 3]{Daniels2019}:
\begin{thm}\label{thm-thesis}
	The morphism 
	\begin{align*}
		G\textup{-Disp}_{\underline{W},\mu}^\otimes \to \Gdisp, \ \mathscr{P} \mapsto (Q_\mathscr{P},\alpha_\mathscr{P})
	\end{align*}
	is an equivalence of \'etale stacks on $\textup{Nilp}_{W(k_0)}$. 
\end{thm}
\begin{proof}
	This is proved in \cite[Thm. 2.16]{Daniels2019} in the case where $\Gdisp$ is given as the quotient for the fpqc topology and graded fiber functors of type $\mu$ are defined to be fpqc-locally isomorphic to $\mathscr{C}(\underline{W})_{\mu}$. The result follows in general because any fpqc-locally trivial $L^+_\mu G$-torsor is \'etale locally trivial (hence any graded fiber functor which is fpqc-locally trivial is \'etale locally trivial) by \cite[Lem. 5.4.2]{Lau2018}.
\end{proof}

In this section we prove a theorem analogous to Theorem \ref{thm-thesis} for $G$-displays of type $\mu$ over \'etale sheaves of frames with good descent properties. Let $R$ be a ring, and let $S$ be an \'etale sheaf of $\zz$-graded rings over $\Spec{R}$. We will denote by $\textup{PGrMod}_{S}$ the fibered category over $\textup{\'Et}_R$ whose fiber over an \'etale $R$-algebra $R'$ is $\textup{PGrMod}(S(R'))$. Further, if $\uS$ is a sheaf of frames, let $\textup{Disp}_{\uS}$ denote the fibered category of displays over $\uS$. 

\begin{Def}\label{def-descent}
	We say:
	\begin{itemize}
		\item An \'etale sheaf of $\zz$-graded rings $S$ on $\text{Spec }R$ \textit{satisfies descent for modules} if $\textup{PGrMod}_{S}$ is an \'etale stack over $\textup{\'Et}_R$. 
		\item An \'etale sheaf of frames $\underline{S}$ on $\Spec{R}$ \textit{satisfies descent for displays} if $\textup{Disp}_{\uS}$ is an \'etale stack over $\textup{\'Et}_R$.
	\end{itemize}
\end{Def}

\begin{lemma}\label{lem-descentmoddisp}
	Let $\underline{S}$ be an \'etale sheaf of frames on $\Spec{R}$ such that $\uS(R')$ is a frame for $R'$ for all \'etale $R$-algebras $R'$. If the underlying sheaf of $\mathbb{Z}$-graded rings $S$ satisfies descent for modules, then $\underline{S}$ satisfies descent for displays.
\end{lemma}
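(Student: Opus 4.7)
The strategy is to deduce the stack property for $\textup{Disp}_{\underline{S}}$ from that of $\textup{PGrMod}_S$, treating gluing of morphisms (prestack condition) and effective descent of objects separately. Fix an \'etale cover $\{R \to R_i\}$ with fiber products $R_{ij} = R_i \otimes_R R_j$. A morphism $\underline{M} \to \underline{M}'$ in $\textup{Disp}(\underline{S}(R))$ is a morphism $f: M \to M'$ in $\textup{PGrMod}(S(R))$ satisfying $(\tau^* f) \circ F = F' \circ f$. Given morphisms $f_i$ over $R_i$ agreeing on each $R_{ij}$, the stack property of $\textup{PGrMod}_S$ produces a unique glue $f : M \to M'$; the intertwining equation for $F, F'$ then holds globally because it holds after pullback to the faithfully flat cover and the restriction functor on morphisms in $\textup{PGrMod}_S$ is faithful.

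For effective descent, let $(\underline{M}_i, \psi_{ij})$ be a descent datum. Applying descent for $\textup{PGrMod}_S$ to the underlying graded modules produces $M \in \textup{PGrMod}(S(R))$ with $M \otimes_{S(R)} S(R_i) \cong M_i$ compatibly with $\psi_{ij}$. To construct $F$, I reinterpret each $F_i$ as an $S(R_i)_0$-linear isomorphism $F_i^\sharp : \sigma^* M_i \to \tau^* M_i$. Because base change along the frame homomorphisms $\sigma, \tau$ commutes with \'etale base change in $R$, these $F_i^\sharp$ satisfy the descent cocycle induced by $\psi_{ij}$, with $\sigma^* M_i$ and $\tau^* M_i$ restricting from global $S(R)_0$-modules $\sigma^* M$ and $\tau^* M$.

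To glue the $F_i^\sharp$, I extend scalars along $S(R_i)_0 \hookrightarrow S(R_i)$: set $\widetilde{N_i} := \sigma^* M_i \otimes_{S(R_i)_0} S(R_i)$ and $\widetilde{N_i'} := \tau^* M_i \otimes_{S(R_i)_0} S(R_i)$, which lie in $\textup{PGrMod}(S(R_i))$ since finite projective $S(R_i)_0$-modules extend to finite projective graded $S(R_i)$-modules. The morphisms $F_i^\sharp$ extend to degree-preserving $S$-linear morphisms $\widetilde{F_i^\sharp} : \widetilde{N_i} \to \widetilde{N_i'}$, which glue by the prestack condition already proved to a global morphism $\widetilde{F^\sharp}$. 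Restricting to the degree-zero part recovers the desired $F^\sharp : \sigma^* M \to \tau^* M$, and re-interpretation yields the $\sigma$-linear bijection $F : M \to \tau^* M$. The main obstacle is bookkeeping: one must check that each extension/restriction step is compatible with the descent data coming from the $\psi_{ij}$, and that the resulting $F^\sharp$ is an isomorphism (which follows because this property can be tested \'etale-locally on $R$). Once the naturality diagrams are in place, everything else follows formally from the assumed stack property of $\textup{PGrMod}_S$.
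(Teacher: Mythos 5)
Your proof is correct and follows the same overall decomposition as the paper's: first glue morphisms of displays using descent for the underlying graded modules plus the fact that the intertwining condition with $F$ can be checked after faithfully flat base change (this is exactly Lemma \ref{lem-localproperties}\ref{lem-dispmorphism} in the paper), then descend the underlying module via the stack property of $\textup{PGrMod}_S$ and glue the linearized Frobenius. The one place where you take a genuinely different route is the gluing of $F^\sharp$. The paper works directly in degree zero: since $\underline{S}$ is an \'etale sheaf of frames, $R' \mapsto S(R')_0$ is an \'etale sheaf of rings, so for any finite projective $S(R)_0$-module $N$ the sequence $0 \to N \to N \otimes_{S(R)_0} S(R')_0 \rightrightarrows N \otimes_{S(R)_0} S(R'\otimes_R R')_0$ is exact, which makes $\underline{\Hom}(\sigma^\ast M, \tau^\ast M)$ a sheaf and lets the $F_i^\sharp$ glue immediately. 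You instead tensor $\sigma^\ast M_i$ and $\tau^\ast M_i$ back up along $S(R_i)_0 \hookrightarrow S(R_i)$ and invoke the stack property of $\textup{PGrMod}_S$ for morphisms, then restrict to degree zero. Both work; the paper's device is more economical (it needs only the sheaf property of $S_0$, not the full module-descent hypothesis, for this step), while yours trades that for extra bookkeeping verifying that $\sigma^\ast$ and $\tau^\ast$ commute with \'etale base change and that the degree-zero part of $\sigma^\ast M \otimes_{S(R)_0} S(R)$ recovers $\sigma^\ast M$ --- checks you correctly flag and which do go through. Your closing remark that bijectivity of $F^\sharp$ can be tested \'etale-locally is also fine (glue the inverses and use faithfulness of restriction on morphisms).
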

\begin{proof}
	That morphisms descend follows from Lemma \ref{lem-localproperties} \ref{lem-dispmorphism} and the fact that $S$ satisfies descent for modules. To prove that objects descend we need only to show that isomorphisms $\sigma^\ast M \xrightarrow{\sim} \tau^\ast M$ form an \'etale sheaf. But since $\underline{S}$ is an \'etale sheaf of frames, the functor $S_0: R'\mapsto S(R')_0$ is an \'etale sheaf of rings on $\Spec R$, and so for any finite projective $S(R)_0$-module $N$ the following sequence is exact:
	\begin{align*}
	0 \to N \to N \otimes_{S(R)_0} S(R')_0 \rightrightarrows N \otimes_{S(R)_0} S(R'\otimes_R R')_0,
	\end{align*}
	and the result follows.
\end{proof}

\begin{rmk}
	The frame of interest for the purposes of this paper is the relative Witt frame $\underline{W}(B/A)$ associated with a $p$-adic PD-thickening $B \to A$ (Example \ref{ex-relwittframe}). The \'etale sheaf of frames $A' \mapsto \underline{W}(B'/A')$ associated with $\underline{W}(B/A)$ (see \textsection \ref{sub-descent}) satisfies descent for modules (hence for displays as well, by Lemma \ref{lem-descentmoddisp}) by Proposition \ref{prop-descent}. The other primary example of a sheaf of frames which satisfies descent for modules is the \'etale sheaf of frames on $\Spec R$ associated with a $p$-adic frame $\underline{S}$ over $R$ (see \cite[Lem. 4.3.1]{Lau2018}). The Zink frame $\mathbb{W}(R)$ over an admissible ring $R$ \cite[Ex. 2.1.13]{Lau2018} and its relative analog \cite[Ex. 2.1.14]{Lau2018} for a PD-thickening $B \to A$ of admissible rings, as well as the truncated Witt frames over $\mathbb{F}_p$-algebras \cite[Ex. 2.1.6]{Lau2018} and their relative analogs are all examples of $p$-adic frames. The relative Witt frame $\underline{W}(B/A)$ for $B \to A$ is also a $p$-adic frame, but the \'etale sheaf of frames associated with it by \cite[Lem. 4.2.3]{Lau2018} using the $p$-adic topology differs from the one we consider here, which uses the natural topology for the Witt vectors (see \cite[Ex. 4.2.7]{Lau2018}).
\end{rmk}

\begin{Def}
	Let $S$ be a $\zz$-graded $W(k_0)$-algebra. A \textit{graded fiber functor} over $S$ is an exact tensor functor
	\begin{align*}
	\mathscr{F}: \textup{Rep}_{\zz_p}G \to \textup{PGrMod}(S).
	\end{align*}
\end{Def}

Denote by $\textup{GFF}(S)$ the category of graded fiber functors over $S$. Suppose $S$ is an \'etale sheaf of $\zz$-graded rings on $\text{Spec }R$. If $R \to R'$ is a homomorphism of \'etale $R$-algebras, the natural base change $M \mapsto M\otimes_{S(R)} S(R')$ induces a base change functor $\textup{GFF}(S(R)) \to \textup{GFF}(S(R'))$. In this way we obtain a fibered category $\textup{GFF}_{S}$ over $\textup{\'Et}_R$. 

\begin{lemma}\label{lem-gffstack}
	Let $\uS$ be an \'etale sheaf of frames such that the underlying sheaf of graded rings $S$ satisfies descent for modules. Then the fibered category $\textup{\textup{GFF}}_{S}$ is an \'etale stack over $\textup{\textup{\'Et}}_R$.
\end{lemma}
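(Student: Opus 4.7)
The plan is to verify the two stack axioms for $\textup{GFF}_S$ over $\textup{\'Et}_R$: descent of morphisms (the presheaf of morphisms $\underline{\textup{Hom}}(\mathscr{F},\mathscr{F}')$ is an \'etale sheaf) and effective descent for objects. Throughout we exploit that $\textup{PGrMod}_S$ is already known to be an \'etale stack.

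For descent of morphisms, I would fix two graded fiber functors $\mathscr{F},\mathscr{F}'$ over $S(R')$ and observe that a natural transformation $\alpha:\mathscr{F} \to \mathscr{F}'$ consists of a morphism $\alpha_V:\mathscr{F}(V) \to \mathscr{F}'(V)$ in $\textup{PGrMod}(S(R'))$ for each $(V,\pi)$, subject to naturality in $V$ and compatibility with the tensor structure and unit. Since $\textup{PGrMod}_S$ is an \'etale stack, each $\textup{Hom}(\mathscr{F}(V),\mathscr{F}'(V))$ defines an \'etale sheaf on the small \'etale site of $R'$. The naturality and tensor-compatibility conditions are described by equalities of morphisms in $\textup{PGrMod}(S(R'))$, hence are themselves closed conditions on an intersection of \'etale sheaves of morphisms; thus they descend. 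This shows $\underline{\textup{Hom}}(\mathscr{F},\mathscr{F}')$ is an \'etale sheaf.

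For effective descent of objects, let $\{R' \to R'_i\}$ be an \'etale covering, and suppose we are given graded fiber functors $\mathscr{F}_i$ over $S(R'_i)$ together with descent data, i.e., isomorphisms $\varphi_{ij}: \mathscr{F}_i|_{R'_{ij}} \xrightarrow{\sim} \mathscr{F}_j|_{R'_{ij}}$ satisfying the cocycle condition. For each representation $(V,\pi)$ of $G$, the modules $\mathscr{F}_i(V)$ with the induced isomorphisms form descent data in $\textup{PGrMod}_S$, hence glue (by hypothesis on $S$) to a finite projective graded $S(R')$-module $\mathscr{F}(V)$. Functoriality in $V$ and the tensor/unit/exactness structures are constructed by applying the same gluing to each structure morphism: given a map $V \to V'$ in $\textup{Rep}_{\zz_p}G$, the maps $\mathscr{F}_i(V) \to \mathscr{F}_i(V')$ are compatible with $\varphi_{ij}$ by hypothesis, so descend to a map $\mathscr{F}(V)\to \mathscr{F}(V')$ by the morphism part proved above. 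The tensor isomorphisms $\mathscr{F}(V)\otimes\mathscr{F}(V') \xrightarrow{\sim} \mathscr{F}(V\otimes V')$ and the unit isomorphism are defined analogously, and the coherence axioms (associativity, unit, symmetry) hold because they hold on each $R'_i$ and morphisms descend.

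The main subtlety will be exactness: one must check that the assembled $\mathscr{F}$ sends short exact sequences of representations to exact sequences in $\textup{PGrMod}(S(R'))$. This reduces to exactness at the level of the underlying modules, which can be checked after the faithfully flat \'etale base change $S(R') \to \prod S(R'_i)$; there it holds because each $\mathscr{F}_i$ is exact. Once this is in place, $\mathscr{F}$ is a graded fiber functor over $S(R')$ and the $\varphi_{ij}$ realize it as the gluing of the given descent data, completing the verification that $\textup{GFF}_S$ is an \'etale stack.
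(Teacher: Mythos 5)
Your proof is correct and is essentially the argument the paper intends: it only cites the analogous statement for the Witt frame in \cite{Daniels2019} and notes that the descent sequence of Lemma \ref{lem-localproperties}~\ref{lem-descentseq} (which underlies your sheaf property for $\underline{\textup{Hom}}$, with part (iii) of the same lemma giving your exactness-after-base-change step) replaces the corresponding fpqc statement there. The details you spell out — gluing the values $\mathscr{F}(V)$ via descent for modules, descending the structure maps and coherence via the Hom sheaves, and checking exactness faithfully flat locally — are exactly the content of that referenced proof.
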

\begin{proof}
	The proof is the same as that of \cite[Lem. 3.5]{Daniels2019}, with Lemma \ref{lem-localproperties} \ref{lem-descentseq} replacing \cite[Lem. 2.12]{Daniels2019}.
\end{proof}

Suppose $R$ is a $W(k_0)$-algebra, and that $\uS$ is an \'etale sheaf of $W(k_0)$-frames over $\Spec{R}$ which satisfies descent for modules. For any cocharacter $\mu$ of $G$ defined over $W(k_0)$ and any \'etale $R$-algebra $R'$, we define a distinguished graded fiber functor over $S(R')$. Given a representation $(V,\pi)$ in $\textup{Rep}_{\zz_p}G$, let 
\begin{align*}
V_{W(k_0)}^i = \{v \in V_{W(k_0)} \mid (\pi \circ \mu)(z) \cdot v = z^i v \text{ for all } z \in \mathbb{G}_m(W(k_0))\}.
\end{align*} 
Then $\mu$ induces a canonical weight decomposition 
\begin{align}
V_{W(k_0)} = \bigoplus_{i \in \zz} V_{W(k_0)}^i.
\end{align}
Since any morphism of representations preserves the grading induced by $\mu$, we obtain an exact tensor functor
\begin{align}\label{eq-fiberfunctor}
\mathscr{C}(\uS)_{\mu,R'}: \textup{Rep}_{\zz_p}G \to \textup{PGrMod}_{S}(R'), \ V \mapsto V_{W(k_0)} \otimes_{W(k_0)} S(R').
\end{align}
If $R'$ is an \'etale $R$-algebra, then $\mathscr{C}(\uS)_{\mu,R'}$ is given by the composition of functors
\begin{align*}
\textup{Rep}_{\zz_p}G \xrightarrow{\mathscr{C}(\uS)_{\mu,R}} \textup{PGrMod}_{S}(R) \to \textup{PGrMod}_{S}(R'),
\end{align*}
where the second functor is the canonical base change. If $R$ is understood, we will suppress it in the notation and write $\mathscr{C}(\uS)_\mu$ for $\mathscr{C}(\uS)_{\mu,R}$.

\begin{Def}\label{def-typemu}
	A graded fiber functor $\mathscr{F}$ over $S(R)$ is \textit{of type $\mu$} if for some faithfully flat \'etale extension $R \to R'$ there is an isomorphism $\mathscr{F}_{R'}\cong \mathscr{C}(\uS)_{\mu,R'}$. 
\end{Def}

Let $\textup{GFF}_{S,\mu}$ denote the fibered category of graded fiber functors of type $\mu$. Since the property of being type $\mu$ is \'etale-local, $\textup{GFF}_{S,\mu}$ forms a substack of $\textup{GFF}_{S}$. If $\mathscr{F}_1$ and $\mathscr{F}_2$ are two graded fiber functors over $\uS$, denote by $\textup{\underline{Isom}}^\otimes(\mathscr{F}_1, \mathscr{F}_2)$ the \'etale sheaf of isomorphisms of tensor functors $\mathscr{F}_1 \xrightarrow{\sim} \mathscr{F}_2$. Let $\textup{\underline{Aut}}^\otimes(\mathscr{F}) = \textup{\underline{Isom}}^\otimes(\mathscr{F},\mathscr{F})$. The following is the analog of the main theorems of \cite[\textsection 3.2]{Daniels2019}. 

\begin{thm}\label{thm-isom}
	Let $\uS$ be an \'etale sheaf of $W(k_0)$-frames which satisfies descent for modules. The assignment $g \mapsto (\pi(g))_{(V,\pi)}$ defines an isomorphism of \'etale sheaves on $\Spec{R}$
	\begin{align*}
	G(\uS)_\mu \xrightarrow{\sim} \textup{\underline{Aut}}^\otimes(\mathscr{C}(\uS)_{\mu}),
	\end{align*}
	which, in turn, induces an equivalence of stacks
	\begin{align*}
	\textup{\textup{GFF}}_{\uS,\mu} \xrightarrow{\sim} \textup{\textup{Tors}}_{G(\uS)_\mu}, \ \mathscr{F} \mapsto \textup{\underline{Isom}}^\otimes(\mathscr{C}(S)_\mu, \mathscr{F}).
	\end{align*}
\end{thm}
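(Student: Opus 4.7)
The plan is to adapt the argument of \cite{Daniels2019}, where the analogous statement is proved for the absolute Witt frame, by systematically replacing the fpqc topology on $\textup{Nilp}_{W(k_0)}$ with the \'etale topology on $\textup{\'Et}_R$ and invoking the descent-for-modules hypothesis on $\uS$ wherever \cite{Daniels2019} invokes fpqc descent for modules over the Witt ring. The structure of the proof breaks naturally into (i) the sheaf-level isomorphism $G(\uS)_\mu \cong \underline{\textup{Aut}}^\otimes(\mathscr{C}(\uS)_\mu)$, and (ii) the resulting equivalence of stacks, with the descent hypothesis entering only in one place in step (ii).

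For (i), I would first verify that the assignment $g\mapsto (\pi(g))_{(V,\pi)}$ is well defined: by definition, an element $g\in G(S(R'))_\mu$ is a grading-preserving $W(k_0)$-algebra homomorphism $\mathcal{O}_G\to S(R')$, so $\pi(g)$ acts on $V_{W(k_0)}\otimes_{W(k_0)} S(R')$ as a graded $S(R')$-linear automorphism, and tensor-compatibility plus naturality in $(V,\pi)$ are immediate. Since both $G(\uS)_\mu$ and $\underline{\textup{Aut}}^\otimes(\mathscr{C}(\uS)_\mu)$ are \'etale sheaves by Lemma \ref{lem-sheaves} and Lemma \ref{lem-gffstack}, it suffices to check that the assignment is bijective on sections over each \'etale $R$-algebra $R'$. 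This is a Tannakian reconstruction statement: a tensor automorphism of the graded fiber functor $\mathscr{C}(\uS)_{\mu,R'}$ corresponds to a graded $W(k_0)$-algebra homomorphism $\mathcal{O}_G\to S(R')$, i.e., to an element of $G(S(R'))_\mu$. The argument is the same as in the Witt case in \cite{Daniels2019}, since the ring $S(R')$ plays no role beyond carrying the $\zz$-grading.

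For (ii), the functor $\mathscr{F}\mapsto\underline{\textup{Isom}}^\otimes(\mathscr{C}(\uS)_\mu,\mathscr{F})$ lands in $G(\uS)_\mu$-torsors because when $\mathscr{F}$ is of type $\mu$ it is \'etale-locally isomorphic to $\mathscr{C}(\uS)_\mu$, and the Isom sheaf then becomes \'etale-locally isomorphic to $\underline{\textup{Aut}}^\otimes(\mathscr{C}(\uS)_\mu) \cong G(\uS)_\mu$ by (i). I would construct a quasi-inverse by contraction: to a $G(\uS)_\mu$-torsor $Q$ assign the fiber functor
\begin{align*}
\mathscr{F}_Q : (V,\pi) \mapsto Q \wedge^{G(\uS)_\mu} \bigl(V_{W(k_0)}\otimes_{W(k_0)} S\bigr),
\end{align*}
where $G(\uS)_\mu$ acts on the right-hand factor through $\pi$. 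Once $\mathscr{F}_Q$ is shown to be well-defined, the fact that the two constructions are mutually quasi-inverse reduces, via (i), to checking on a trivializing \'etale cover of $Q$, where it is tautological.

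The main obstacle, and the one place the descent hypothesis enters in an essential way, is verifying that $\mathscr{F}_Q(V)$ really defines an object of $\textup{PGrMod}(S(R))$ rather than merely a graded $S$-module equipped with local data. On an \'etale cover $\{R\to R_i\}$ trivializing $Q$, the contraction reduces to the finite projective graded module $V_{W(k_0)}\otimes_{W(k_0)} S(R_i)$ equipped with a descent datum coming from the transition cocycle of $Q$ acting through $\pi$; the descent-for-modules hypothesis on $\uS$ then glues these to a finite projective graded $S(R)$-module $\mathscr{F}_Q(V)$. Exactness and tensor-compatibility of $\mathscr{F}_Q$ can likewise be checked on the trivializing cover, so they follow from the corresponding properties of $\mathscr{C}(\uS)_\mu$. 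Everything else is formal Tannakian bookkeeping.
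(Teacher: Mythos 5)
Your proposal is correct and follows essentially the same route as the paper, whose proof simply states that the arguments of \cite[\S 3.2]{Daniels2019} go through after replacing the Witt frame by $\uS$ and the fpqc topology by the \'etale topology. Your elaboration of where the descent-for-modules hypothesis enters (gluing the contracted product $Q\wedge^{G(\uS)_\mu}(V_{W(k_0)}\otimes_{W(k_0)}S)$ into an object of $\textup{PGrMod}(S(R))$) is exactly the point at which the cited arguments use descent, so this is a faithful unpacking of the intended proof rather than a different one.
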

\begin{proof}
	The arguments of \cite[\textsection 3.2]{Daniels2019} go through nearly verbatim, after replacing the Witt frame with $\uS$, and the fpqc topology with the \'etale topology.
\end{proof}

For any \'etale $R$-algebra $R'$ we have a forgetful functor
\begin{align}\label{eq-forgetful}
\upsilon_{S(R')}: \textup{Disp}_{\uS}(R') \to \textup{PGrMod}_{S}(R'), \ (M,F) \mapsto M.
\end{align}

\begin{Def}\label{def-gmu}
	Let $R$ be a $p$-nilpotent $W(k_0)$-algebra.
	\begin{itemize}
		\item A \textit{Tannakian $G$-display over $\uS(R)$} is an exact tensor functor
		\begin{align*}
		\mathscr{P}: \textup{Rep}_{\zz_p}G \to \textup{Disp}_{\uS}(R).
		\end{align*}
		\item A \textit{Tannakian $(G,\mu)$-display} over $\uS(R)$ is a Tannakian $G$-display $\mathscr{P}$ over $\uS(R')$ such that $\upsilon_{S(R)} \circ \mathscr{P}$ is a graded fiber functor of type $\mu$.
	\end{itemize}	
\end{Def}

If $R \to R'$ is \'etale, denote by $G$-\textup{Disp}$^\otimes(\uS(R'))$, resp. $G$-\textup{Disp}$_\mu^\otimes(\uS(R'))$ the category of Tannakian  $G$-displays, resp. the full subcategory of Tannakian $(G,\mu)$-displays over $\uS(R')$. By an analog of Lemma \ref{lem-gffstack} we see that Tannakian  $G$-displays form an \'etale stack $G$-\textup{Disp}$_{\uS}^\otimes$ over $\textup{\'Et}_R$, and Tannakian $(G,\mu)$-displays define a substack $G$-\textup{Disp}$_{\uS,\mu}^\otimes$. 

There are a number of useful functorialities between categories of Tannakian $G$-displays. If $\mathscr{P}$ is a Tannakian $G$-display over $\uS(R)$ and $\psi: R \to R'$ is homomorphism of $p$-nilpotent $W(k_0)$-algebras, we denote by $\psi^\ast \mathscr{P}$ or $\mathscr{P}_{\underline{S}(R')}$ the base change of $\mathscr{P}$, which is given by
\begin{align*}
\textup{Rep}_{\zz_p}G \xrightarrow{\mathscr{P}} \textup{Disp}_{\uS}(R) \to \textup{Disp}_{\uS}(R').
\end{align*}
Similarly, if $\alpha: \uS \to \underline{S'}$ is a morphism of \'etale sheaves of frames, we obtain a base change functor 
\begin{align}\label{eq-changeofframe}
\alpha^\ast: G\textup{-Disp}_{\uS}^\otimes \to G\textup{-Disp}_{\underline{S'}}^\otimes
\end{align}	
given by post-composition with $\textup{Disp}(\underline{S}(R)) \to \textup{Disp}(\underline{S'}(R))$. Finally, if $\gamma:G \to G'$ is a homomorphism of $\zz_p$-group schemes, and $\mathscr{P}$ is a Tannakian $G$-display over $\underline{S}(R)$, we denote by $\gamma( \mathscr{P})$ the  $G'$-display
\begin{align*}
\textup{Rep}_{\zz_p}G' \xrightarrow{\text{res}} \textup{Rep}_{\zz_p}G \xrightarrow{\mathscr{P}} \textup{Disp}_{\underline{S}}(R).
\end{align*}
If $\mathscr{P}$ is a Tannakian $(G,\mu)$-display, then $\gamma(\mathscr{P})$ is a Tannakian $(G', \gamma\circ\mu)$-display. 

To any Tannakian $(G,\mu)$-display we can associate a $G$-display of type $\mu$. Let us summarize the construction (see \cite[Constr. 3.15]{Daniels2019} for details). Let $\mathscr{P}$ be a Tannakian $(G,\mu)$-display over $\uS(R)$. By Theorem \ref{thm-isom}, 
\begin{align*}
Q_{\mathscr{P}}:= \underline{\textup{Isom}}^\otimes(\mathscr{C}(\uS)_{\mu,R},\upsilon_{S(R)}\circ\mathscr{P})
\end{align*}
is a $G(\uS)_\mu$-torsor over $R$. If $R'$ is an \'etale $R$-algebra, write $\mathscr{P}_{R'}(V,\pi) = (M(\pi)',F(\pi)')$ for any $(V,\pi)$ in $\textup{Rep}_{\zz_p}G$. Given an isomorphism of tensor functors $\lambda: \mathscr{C}(\uS)_{\mu,R'} \xrightarrow{\sim} \upsilon_{S(R')} \circ \mathscr{P}_{R'}$, we obtain an automorphism 
\begin{align*}
\alpha_{\mathscr{P}}(\lambda)^\pi := \tau^\ast(\lambda^\pi) \circ (F(\pi)')^\sharp \circ \sigma^\ast(\lambda^{\pi})
\end{align*}
of $V \otimes_{\zz_p} S(R')_0$ for every $(V,\pi)$ in $\textup{Rep}_{\zz_p}G$. If $\omega_{S(R')_0}$ denotes the canoncial fiber functor $(V,\pi) \mapsto V \otimes_{\zz_p} S(R')_0$, then the collection $(\alpha_{\mathscr{P}}(\lambda)^\pi)_{(V,\pi)}$ constitutes an element of $\text{Aut}^\otimes(\omega_{S(R')_0})$. By Tannakian duality \cite[Thm. 44]{Cornut2014}, the map $g \mapsto (\pi(g))_{(V,\pi)}$ determines an isomorphism
\begin{align*}
G(S(R')_0) \cong \text{Aut}^\otimes(\omega_{S(R')_0}),
\end{align*}
so there is some $\alpha_{\mathscr{P}}(\lambda) \in G(S(R')_0) = G(\uS_0)(R')$ such that $\pi(\alpha_{\mathscr{P}}(\lambda)) = \alpha_{\mathscr{P}}(\lambda)^\pi$ for every $(V,\pi)$. Altogether the assignment $\lambda \mapsto \alpha_{\mathscr{P}}(\lambda)$ defines a morphism of \'etale sheaves
\begin{align}\label{eq-alpha}
\alpha_{\mathscr{P}}: Q_{\mathscr{P}} \to G(\uS_0). 
\end{align}
As in \cite[Constr. 3.15]{Daniels2019} one checks that the association $\mathscr{P} \mapsto (Q_{\mathscr{P}},\alpha_{\mathscr{P}})$ is functorial in $\mathscr{P}$ and compatible with base change, so we obtain a morphism of stacks
\begin{align}\label{eq-morphism}
G\text{-}\textup{Disp}_{\uS,\mu}^\otimes \to G\text{-}\textup{Disp}_{\uS,\mu}, \ \mathscr{P} \mapsto (Q_{\mathscr{P}}, \alpha_{\mathscr{P}}).
\end{align}
The following is the analog of \cite[Thm. 3.16]{Daniels2019}.
\begin{thm}\label{thm-equiv}
	If $\uS$ satisfies descent for modules, the morphism $($\ref{eq-morphism}$)$ is an equivalence of \'etale stacks over $\textup{\textup{\'Et}}_R$.
\end{thm}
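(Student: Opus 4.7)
The plan is to follow the strategy of \cite[Thm. 3.16]{Daniels2019}, adapted to the present level of generality, by constructing an explicit quasi-inverse to the morphism (\ref{eq-morphism}) and verifying that the two constructions are mutually inverse. Given a $G$-display of type $\mu$ over $\uS(R)$, i.e., a pair $(Q,\alpha)$ with $Q$ an étale $G(\uS)_\mu$-torsor on $\Spec{R}$ and $\alpha:Q\to G(\uS_0)$ a $G(\uS)_\mu$-equivariant morphism for the action (\ref{eq-action}), I would associate a $(G,\mu)$-display $\mathscr{P}=\mathscr{P}_{(Q,\alpha)}$ by twisting the distinguished fiber functor $\mathscr{C}(\uS)_{\mu}$ of (\ref{eq-fiberfunctor}). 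Concretely, for each $(V,\pi)\in\textup{Rep}_{\zz_p}G$, take $M(\pi)$ to be the étale-local descent datum given on a trivializing étale cover $R\to R'$ (with $q\in Q(R')$) by $V_{W(k_0)}\otimes_{W(k_0)}S(R')$, glued via the cocycle on $R'\otimes_R R'$ coming from $Q$ and the representation $\pi$; over the same cover, define $F(\pi)$ to be the $\sigma$-linear map whose linearization $F(\pi)^\sharp:\sigma^\ast M(\pi)\to\tau^\ast M(\pi)$ corresponds, under the trivialization, to multiplication by $\pi(\alpha(q))\in\textup{GL}(V\otimes_{\zz_p} S(R')_0)$.

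Next, I would verify that this étale-local description descends to an honest $(G,\mu)$-display over $\uS(R)$. This is the step that forces the descent-for-modules hypothesis: the underlying graded $S(R)$-module $M(\pi)$ exists globally precisely because $\textup{PGrMod}_{S}$ is an étale stack, while the Frobenius structure assembles into a global $\sigma$-linear bijection by Lemma \ref{lem-descentmoddisp}. Exactness of $(V,\pi)\mapsto(M(\pi),F(\pi))$ follows from the exactness of the twisted construction on finite projective graded modules (applied representation-wise), and compatibility with tensor products and duals is automatic from the fact that $Q$ acts through $G$, with $\alpha$ taking values in the group $G(\uS_0)$. By construction, $\upsilon_{S(R)}\circ\mathscr{P}$ is étale-locally isomorphic to $\mathscr{C}(\uS)_{\mu}$, hence of type $\mu$.

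To finish, I would check that $\mathscr{P}\mapsto(Q_{\mathscr{P}},\alpha_{\mathscr{P}})$ and $(Q,\alpha)\mapsto\mathscr{P}_{(Q,\alpha)}$ are mutually quasi-inverse. In one direction, applying the torsor construction to $\mathscr{P}_{(Q,\alpha)}$ recovers $Q$ by Theorem \ref{thm-isom} (the identifications $\underline{\textup{Isom}}^\otimes(\mathscr{C}(\uS)_{\mu},\upsilon_{S}\circ\mathscr{P}_{(Q,\alpha)})\cong Q$ being tautological from the twisting), and unwinding the definition (\ref{eq-alpha}) of $\alpha_{\mathscr{P}}$ on a local section $q\in Q(R')$ gives back $\alpha(q)$. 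In the other direction, starting with a $(G,\mu)$-display $\mathscr{P}$, the twisted fiber functor built from $(Q_{\mathscr{P}},\alpha_{\mathscr{P}})$ recovers $\upsilon_{S(R)}\circ\mathscr{P}$ by Theorem \ref{thm-isom}, and the reconstruction of the Frobenius from $\alpha_{\mathscr{P}}$ matches $F(\pi)$ by the very formula defining $\alpha_{\mathscr{P}}(\lambda)^\pi=\tau^\ast(\lambda^\pi)\circ(F(\pi))^\sharp\circ\sigma^\ast(\lambda^\pi)$. Both reconstructions commute with morphisms and base change, yielding the claimed equivalence of stacks.

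The main obstacle, and the only point at which the argument truly departs from \cite[Thm. 3.16]{Daniels2019}, is the descent step in the second paragraph: in \emph{loc.\ cit.} fpqc descent of modules over the Witt frame was used implicitly, whereas here the hypothesis that $\uS$ satisfies descent for modules is what allows the étale-local twist of $\mathscr{C}(\uS)_{\mu}$ by $Q$ to produce an actual object of $\textup{PGrMod}(S(R))$, and then Lemma \ref{lem-descentmoddisp} upgrades this to an object of $\textup{Disp}_{\uS}(R)$. Once that is in hand, the bookkeeping in the Tannakian verification is formal and follows the template of the Witt-frame case verbatim.
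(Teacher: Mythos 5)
Your proposal is correct, and it rests on exactly the same two pillars as the paper's argument: the Tannakian identification of Theorem \ref{thm-isom} and the hypothesis that $\uS$ satisfies descent for modules (upgraded to descent for displays via Lemma \ref{lem-descentmoddisp}). The organization differs, though. The paper never constructs a quasi-inverse: it checks faithfulness and fullness directly from Theorem \ref{thm-isom} (with the Frobenius compatibility of a descended morphism verified étale-locally via Lemma \ref{lem-localproperties}), and then for essential surjectivity it only shows that every $G$-display of type $\mu$ is \emph{étale-locally} in the image --- namely, where $Q$ is trivial it is hit by a banal $\mathscr{P}_U$ --- and lets the fact that both sides are étale stacks do the globalization. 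You instead perform that globalization by hand, gluing the contracted product $Q\times^{G(\uS)_\mu}\mathscr{C}(\uS)_\mu$ together with the Frobenius determined by $\alpha$ into an honest object of $\textup{Disp}_{\uS}(R)$, and then verifying the two constructions are mutually inverse. The two routes are logically interchangeable: your explicit twist is precisely the witness whose existence the paper infers abstractly from full faithfulness plus local essential surjectivity, and in both cases the descent-for-modules hypothesis enters at the same point (your gluing step; the paper's appeal to $\textup{Disp}_{\uS}$ being a stack). Your version is somewhat more self-contained and makes the role of the hypothesis more visible; the paper's is shorter because it delegates the bookkeeping of the mutual-inverse check to the general yoga of stacks.
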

\begin{proof}
	The proof of \cite[Thm. 3.16]{Daniels2019} goes through here as well, after replacing the Witt frame by the frame $\uS$, and the fpqc topology by the \'etale topology. Let us sketch the argument.
	
	By the first part of Theorem \ref{thm-isom}, the functor is faithful. If $\mathscr{P}_1$ and $\mathscr{P}_2$ are Tannakian $(G,\mu)$-displays over $R$, and $\eta: (Q_{\mathscr{P}_1},\alpha_{\mathscr{P}_1}) \to (Q_{\mathscr{P}_2},\alpha_{\mathscr{P}_2})$ is a morphism, then the second part of Theorem \ref{thm-isom} provides us with a morphism $\psi: \upsilon_R \circ \mathscr{P}_1 \to \upsilon_R \circ \mathscr{P}_2$ which induces $Q_{\mathscr{P}_1} \to Q_{\mathscr{P}_2}$. It remains only to check this morphism is compatible with the respective Frobeneius morphisms, but by Lemma \ref{lem-localproperties} \ref{lem-dispmorphism} it is enough to check this after some faithfully flat \'etale extension $R \to R'$. By choosing an extension such that $Q_{\mathscr{P}_1}(R')$ is nonempty, the result follows from the definitions of the $\alpha_{\mathscr{P}_i}$. Finally, to complete the proof it is enough to show that every $G$-display of type $\mu$ over $\uS(R)$ is \'etale locally in the essential image of (\ref{eq-morphism}), which is done using Theorem \ref{thm-isom}. 
\end{proof}

\begin{cor}
	Let $B\to A$ be a PD-thickening of $p$-nilpotent $W(k_0)$-algebras. Then $(\ref{eq-morphism})$ induces an equivalence
	\begin{align*}
	G\text{-}\textup{\textup{Disp}}_{\underline{W}(B/A),\mu}^\otimes \xrightarrow{\sim} G\text{-}\textup{\textup{Disp}}_{\underline{W}(B/A),\mu}.
	\end{align*}	
\end{cor}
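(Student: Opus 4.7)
The plan is to deduce the corollary as a direct application of Theorem~\ref{thm-equiv} to the particular \'etale sheaf of frames $\underline{S} = \underline{W}_{B/A}$ introduced just before Definition~\ref{def-gdisp}. Recall that for a PD-thickening $B\to A$ of $p$-nilpotent $W(k_0)$-algebras, $\underline{W}_{B/A}$ is the \'etale sheaf on $\textup{Spec}\,A$ whose value on an \'etale $A$-algebra $A'$ is the relative Witt frame $\underline{W}(B(A')/A')$, where $B(A')$ is the unique \'etale lift of $A\to A'$ to $B$. Since $B$ is $p$-nilpotent it is $p$-adic, and $\underline{W}(B/A)$ is the relative Witt frame of Example~\ref{ex-relwittframe}, which is a $W(k_0)$-frame.

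First I would note that Theorem~\ref{thm-equiv} gives the claimed equivalence whenever $\underline{S}$ is an \'etale sheaf of $W(k_0)$-frames on $\textup{Spec}\,A$ which satisfies descent for modules in the sense of Definition~\ref{def-descent}. So the entire content of the corollary boils down to checking this descent property for the underlying sheaf of graded rings of $\underline{W}_{B/A}$.

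The verification of descent for modules is not carried out here but is deferred to Appendix~\ref{section-appendix}: by Proposition~\ref{prop-descent}, the sheaf of graded rings $A'\mapsto W(B(A')/A')^\oplus$ on $\textup{Spec}\,A$ satisfies descent for modules. This is the main technical input, and it is the only nontrivial step in the proof. Granted this, applying Theorem~\ref{thm-equiv} to $\underline{S} = \underline{W}_{B/A}$ yields the equivalence
\begin{align*}
G\text{-}\textup{Disp}_{\underline{W}(B/A),\mu}^\otimes \xrightarrow{\sim} G\text{-}\textup{Disp}_{\underline{W}(B/A),\mu},
\end{align*}
since taking global sections of the equivalence of stacks on $\textup{\'Et}_A$ recovers the two categories in question. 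The main obstacle is thus already abstracted away into the appendix; the corollary itself is purely formal given Theorem~\ref{thm-equiv} and Proposition~\ref{prop-descent}.
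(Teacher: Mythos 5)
Your proposal is correct and is exactly the paper's argument: the corollary is obtained by combining Theorem~\ref{thm-equiv} with Proposition~\ref{prop-descent}, applied to the \'etale sheaf of frames $A'\mapsto \underline{W}(B(A')/A')$ on $\Spec A$. The only substance is the descent-for-modules input from the appendix, which you correctly identify as the sole nontrivial step.
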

\begin{proof}
	Combine Theorem \ref{thm-equiv} with Proposition \ref{prop-descent}.
\end{proof}

\begin{rmk}\label{rmk-GLn}
	Let $\Lambda$ be a finite free $\zz_p$-module, and let $\mu$ be a cocharacter of $\textup{GL}(\Lambda)$. Let us say a display $\underline{M}=(M,F)$ over $\underline{S}(R)$ is of type $\mu$ if, \'etale locally, there is an isomorphism $M \cong \Lambda \otimes_{\zz_p} S(R)$ of graded $S(R)$-modules, where $\Lambda$ is graded by the weight space decomposition of the cocharacter $\mu$. Denote by $\textup{Disp}_\mu(\underline{S}(R))$ the category of displays over $\underline{S}(R)$ which are of type $\mu$. Then one checks (as in \cite[Thm. 5.15]{Daniels2019}, for example) that the functor 
	\begin{align*}
	\textup{GL}(\Lambda)\text{-}\textup{Disp}^\otimes_{\underline{S},\mu}(R) \to \textup{Disp}_\mu(\underline{S})
	\end{align*}
	induced by evaluation on the standard representation is an equivalence of categories. If $I = (i_1, i_2, \dots, i_n) \in \zz^n$ with $i_1 \le i_2 \le \cdots \le i_n$, and $\mu_I$ is the cocharacter $t \mapsto \text{diag}(t^{i_1}, t^{i_2}, \dots, t^{i_n})$ for some choice of basis of $\Lambda$, then this is compatible with the equivalence between $\text{GL}_n\text{-}\textup{Disp}_{\underline{S},\mu_I}$ and the stack of displays of type $I$ over $\underline{S}$ described in \cite[Ex. 5.3.5]{Lau2018} (see also \cite[Rmk. 3.2]{Daniels2019}). 
	
	Suppose now $\underline{S}(R)$ extends some $1$-frame $\mathcal{S}$. We say that a window over $\mathcal{S}$ is of type $\mu$ if the corresponding $1$-display is of type $\mu$. If $\mu$ is minuscule, the functor described above is valued in $1$-displays, and therefore induces an equivalence between  Tannakian $(\textup{GL}(\Lambda),\mu)$-displays over $\underline{S}(R)$ and windows over $\mathcal{S}$ of type $\mu$. In particular, if $I = (0^{(d)}, 1^{(h-d)})$ for some $d$, and $\mu = \mu_I$, then $\textup{GL}_h\text{-}\textup{Disp}^\otimes_{\underline{S},\mu_I}(R)$ is equivalent to the category of windows $(P_0, P_1, F_0, F_1)$ over $\mathcal{S}$ with $\text{rk}_{S_0} P_0 =h$ and $\text{rk}_{R}(P_0/P_1) = d$. 
\end{rmk}

Let us now summarize the local description of the stack $G$-\textup{Disp}$_{\uS,\mu}^\otimes$. Let us again assume that $R$ is a $W(k_0)$-algebra, and that $\uS$ is an \'etale sheaf of $W(k_0)$-frames over $\Spec R$ which satisfies descent for modules. 

\begin{Def}\label{def-banal}
	A  Tannakian $(G,\mu)$-display $\mathscr{P}$ over $\uS(R)$ is \textit{banal} if there is an isomorphism $\upsilon_{S(R)} \circ \mathscr{P} \cong \mathscr{C}(\uS)_{\mu,R}$.
\end{Def}

If $\mathscr{P}$ is a  Tannakian $(G,\mu)$-display over $R$, then $\mathscr{P}$ is banal locally for the \'etale topology on $R$. Given any $U \in G(S(R)_0)$ we can define a banal  Tannakian $(G,\mu)$-display $\mathscr{P}_U$ on $\uS(R)$ as follows: to the representation $(V,\pi)$ we associate the display over $\uS(R)$ defined from the standard datum 
\begin{align*}
(V \otimes_{\zz_p} S(R)_0, \pi(U) \circ (\id \otimes \sigma_0)),
\end{align*}
where $V \otimes_{\zz_p} S(R)_0 = V_{W(k_0)} \otimes_{W(k_0)} S(R)_0$ is endowed with the grading induced by the cocharacter $\mu$. 

\begin{prop}\label{prop-banal} \hspace{2cm}
	\begin{enumerate}[\textup{(}i\textup{)}]
		\item Every banal  Tannakian $(G,\mu)$-display $\mathscr{P}$ over $R$ is isomorphic to $\mathscr{P}_U$ for some $U \in G(S(R)_0)$.
		\item The category of banal  Tannakian $(G,\mu)$-displays over $R$ is equivalent to the category whose objects are $U \in G(S(R)_0)$ and whose morphisms are given by
		\begin{align*}
		\textup{Hom}(U,U') = \{h \in G(\uS)_\mu(R) \mid \tau(h)^{-1} U' \sigma(h) = U\}.
		\end{align*}
	\end{enumerate}
\end{prop}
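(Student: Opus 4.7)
The plan is to deduce Proposition~\ref{prop-banal} from Theorem~\ref{thm-equiv}, which identifies $(G,\mu)$-displays over $\uS(R)$ with pairs $(Q,\alpha)$ consisting of a $G(\uS)_\mu$-torsor $Q$ and a $G(\uS)_\mu$-equivariant morphism $\alpha \colon Q \to G(\uS_0)$. Since the torsor attached to $\mathscr{P}$ by (\ref{eq-morphism}) is $Q_{\mathscr{P}} = \underline{\textup{Isom}}^\otimes(\mathscr{C}(\uS)_{\mu,R},\upsilon_{S(R)}\circ \mathscr{P})$, banal $(G,\mu)$-displays correspond precisely to those pairs for which the torsor $Q$ is trivial. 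The proposition will then just be the concrete unwinding of this observation.

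For part (i), given a banal $\mathscr{P}$, the definition of banality lets me fix an isomorphism $\lambda \colon \mathscr{C}(\uS)_{\mu,R} \xrightarrow{\sim} \upsilon_{S(R)}\circ \mathscr{P}$, and I set $U := \alpha_{\mathscr{P}}(\lambda) \in G(S(R)_0)$ via (\ref{eq-alpha}). The key point is that, once $\lambda$ is fixed, for each representation $(V,\pi)$ the Frobenius of $\mathscr{P}(V,\pi)$ transports back through $\lambda$ to a $\sigma$-linear endomorphism of $V \otimes_{\zz_p} S(R)$ whose linearization is identified, via the canonical trivializations of $\sigma^\ast$ and $\tau^\ast$ on modules of the form $L \otimes_{S(R)_0} S(R)$, with the operator $\pi(U)$. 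This is exactly the Frobenius produced by the standard datum $(V \otimes_{\zz_p} S(R)_0,\, \pi(U)\circ(\id \otimes \sigma_0))$ defining $\mathscr{P}_U$, so $\lambda$ upgrades to a tensor-functorial isomorphism $\mathscr{P}_U \xrightarrow{\sim} \mathscr{P}$.

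For part (ii), essential surjectivity onto banal $(G,\mu)$-displays is immediate from (i). For the hom-sets, I again invoke Theorem~\ref{thm-equiv}: under that equivalence, $\mathscr{P}_U$ corresponds to the pair $(G(\uS)_\mu, \alpha_U)$, where $G(\uS)_\mu$ is the trivial torsor (acting on itself by right translation) and $\alpha_U$ is the unique $G(\uS)_\mu$-equivariant map with $\alpha_U(1)=U$, explicitly $\alpha_U(g)=\tau(g)^{-1} U \sigma(g)$ by (\ref{eq-action}). A morphism of such pairs is a $G(\uS)_\mu$-equivariant self-morphism $\phi$ of the trivial torsor, which must be left translation by some $h \in G(\uS)_\mu(R)$, subject to $\alpha_{U'}\circ \phi = \alpha_U$. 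Evaluating this last condition at the identity section yields $\tau(h)^{-1} U' \sigma(h) = U$, as claimed.

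The main technical point is the explicit identification in (i). Unwinding the formula $\alpha_{\mathscr{P}}(\lambda)^\pi = \tau^\ast(\lambda^\pi)\circ (F(\pi)')^\sharp \circ \sigma^\ast(\lambda^\pi)$ and matching it against the Frobenius extracted from the standard datum of $\mathscr{P}_U$ requires careful bookkeeping of graded tensor products and the various linearizations. However, once one observes that for a module $M = L \otimes_{S(R)_0} S(R)$ both $\sigma^\ast M$ and $\tau^\ast M$ canonically reduce to $\sigma_0^\ast L$ and $L$, respectively, the comparison collapses to the definitional identity $\Phi = \pi(U) \circ (\id \otimes \sigma_0)$, and Tannakian duality (as used in the construction of $\alpha_{\mathscr{P}}$ in \textsection\ref{sub-gdispdaniels}) ensures that the pointwise identifications assemble into a tensor-functorial isomorphism.
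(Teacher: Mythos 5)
Your proof is correct and follows essentially the same route as the paper, which simply defers to the arguments at the end of \cite[\textsection 3.3]{Daniels2019}: extract $U=\alpha_{\mathscr{P}}(\lambda)$ from a trivialization $\lambda$ via Tannakian duality, and read off the Hom-sets from the torsor-theoretic description of Theorem \ref{thm-equiv} applied to trivial $G(\uS)_\mu$-torsors. The use of Theorem \ref{thm-equiv} is legitimate here since it is established earlier in the section without appeal to Proposition \ref{prop-banal}.
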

\begin{proof}
	The proof follows from the arguments at the end of \cite[\textsection 3.3]{Daniels2019}. 
\end{proof}

\begin{rmk}
	If $(Q,\alpha)$ is the $G$-display of type $\mu$ corresponding to a Tannakian $(G,\mu)$-display $\mathscr{P}$, then $\mathscr{P}$ is banal if and only if $Q$ is a trivial torsor, and if $\beta: G(\uS)_\mu \xrightarrow{\sim} Q$ is a trivialization, then giving $U$ as in Proposition \ref{prop-banal} is equivalent to giving $\alpha(\beta(1))$. 
\end{rmk}

\subsection{The Hodge filtration for Tannakian $G$-displays}\label{sub-hodgefilt}

Let $G$ be a flat affine group scheme of finite type over $\zz_p$ and let $\mu$ be a cocharacter for $G_{W(k_0)}$. Suppose $R$ is in $\textup{Nilp}_{W(k_0)}$, and let $\underline{S}$ be an \'etale sheaf of frames on $\Spec R$. In this section we define the Hodge filtration for Tannakian $G$-displays over $\underline{S}(R)$ and compare it to the Hodge filtration for $G$-displays of type $\mu$ as defined in \textsection \ref{sub-gdisplau}. 

For any ring $R$, let us denote by $\textup{Fil}(R)$ the category of finite projective $R$-modules $M$ equipped with a descending filtration by direct summands $(\textup{Fil}^n(M))_{n\in\zz}$. This is an exact tensor category with tensor product on the filtrations defined as in (\ref{eq-hodgetensor}). To any graded fiber functor $\mathscr{F}$ over $S(R)$ we can attach an exact tensor functor
\begin{align}\label{eq-hodgefunctor}
	\textup{Fil}_\mathscr{F}: \textup{Rep}_{\zz_p} G \to \textup{Fil}(R)
\end{align}
by assigning to any $(V,\rho)$ the filtered $R$-module obtained by tensoring the evaluation of $\mathscr{F}$ on $(V,\rho)$ along $\bar{\tau}: S \xrightarrow{\tau} S_0 \to R$. In particular, if $\mathscr{F} = \mathscr{C}(\underline{S})_{\mu}$ for some cocharacter $\mu$ of $G$, then for any representation $(V,\rho)$, the $i$th filtered piece of $\textup{Fil}_{\mathscr{F}}(V,\rho)$ is given by 
\begin{align*}
	\textup{Fil}^i = \bigoplus_{i \ge n} V^i \otimes_{W(k_0)} R.
\end{align*}
Hence in this case $\textup{Fil}_\mathscr{\mathscr{C}(\underline{S})_\mu}$ is the canonical functor 
\begin{align}\label{eq-filmu}
	\textup{Fil}_\mu: \textup{Rep}_{\zz_p}(G) \to \textup{Fil}(R)
\end{align}
associated to the cocharacter $\mu$.

\begin{Def}
	A \textit{fiber functor} for $R$ is an exact tensor functor 
\begin{align*}
	\omega: \textup{Rep}_{\zz_p} G \to \textup{Mod}(R)
\end{align*}
such that $\omega_{\mathscr{F}}$ is \'etale locally (on $\Spec R$) isomorphic to the functor
\begin{align}\label{eq-usualfiberfunctor}
	\omega_R: \textup{Rep}_{\zz_p} G \to \textup{Mod}(R), \ (V,\rho) \mapsto V\otimes_{\zz_p} R.
\end{align}
If $\omega$ is a fiber functor for $R$, a functor $\textup{Fil}: \textup{Rep}_{\zz_p} G \to \textup{Fil}(R)$ is a \textit{filtration} of $\omega$ if $\omega$ factors into the composition
	\begin{align*}
		\textup{Rep}_{\zz_p} G \xrightarrow{\textup{Fil}} 	\textup{Fil}(R) \to \textup{Mod}(R).
	\end{align*}
\end{Def}

Given a graded fiber functor $\mathscr{F}$ of type $\mu$ over $S(R)$, define the exact tensor functor
\begin{align*}
	\omega_{\mathscr{F}}: \textup{Rep}_{\zz_p} G \to \textup{Mod}(R)
\end{align*}
by postcomposing $\textup{Fil}_{\mathscr{F}}$ with the forgetful functor $\textup{Fil}(R) \to \textup{Mod}(R)$. Since $\mathscr{F}$ is \'etale locally isomorphic to $\mathscr{C}(\underline{S})_\mu$, $\omega_\mathscr{F}$ is a fiber functor. Moreover, $\textup{Fil}_\mathscr{F}$ is obviously a filtration of $\omega_{\mathscr{F}}$. 

By Tannakian duality, there is a natural isomorphism $\textup{Aut}^\otimes(\omega_R) \xrightarrow{\sim} G_R$ (see e.g. \cite[Thm. 44]{Cornut2014}). It follows that \begin{align}\label{eq-GRtors}
	Q_{\omega_{\mathscr{F}}} := \textup{Isom}^\otimes(\omega_R,\omega_{\mathscr{F}})
\end{align}
is a $G_R$-torsor on $\Spec R$. If $Q_\mathscr{F}$ is the $G(\underline{S})_\mu$-torsor associated to $\mathscr{F}$ by Theorem \ref{thm-isom}, then $Q_{\omega_\mathscr{F}}$ is isomorphic to the $G_R$-torsor $Q_{\mathscr{F},R}$ induced from $Q_\mathscr{F}$ by $\bar{\tau}: G(\underline{S})_\mu \to G_R$ (see (\ref{eq-bartau})). Indeed, base change along $\bar{\tau}: S \to R$ induces a $G_R$-equivariant morphism
\begin{align}\label{eq-GRtorsors}
	Q_{\mathscr{F}} = \textup{Isom}^\otimes(\mathscr{C}(\underline{S})_\mu, \mathscr{F}) \to \textup{Isom}^\otimes(\omega_R, \omega_{\mathscr{F}}) = Q_{\omega_\mathscr{F}},
\end{align} 
which is necessarily an isomorphism of $G_R$-torsors.

Let $\mathscr{P}$ be a Tannakian $G$-display over $\underline{S}(R)$. Since $\upsilon_{S(R)} \circ \mathscr{P}$ is a graded fiber functor, we can define from $\mathscr{P}$ functors $\omega_\mathscr{P}$ and $\textup{Fil}_\mathscr{P}$ by
\begin{align}\label{eq-hodgefiltdef}
	\omega_\mathscr{P} := \omega_{\upsilon_{S(R)} \circ \mathscr{P}} \text{  and  } \textup{Fil}_\mathscr{P} := \textup{Fil}_{\upsilon_{S(R)} \circ \mathscr{P}}.
\end{align}

\begin{Def}\label{def-hodgefunctor}
	Let $\mathscr{P}$ be a Tannakian $G$-display over $\underline{S}(R)$. The \textit{Hodge filtration} for $\mathscr{P}$ is the exact tensor functor $\textup{Fil}_\mathscr{P}$ defined in (\ref{eq-hodgefiltdef}). 
\end{Def}

We can equivalently define $\textup{Fil}_\mathscr{P}$ as the functor that assigns to every $(V,\rho)$ the Hodge filtration of its corresponding display over $\underline{S}(R)$ as in (\ref{eq-hodgedisp}). 

Suppose now $G$ is reductive and $\mu$ is a cocharacter for $G$. Let $\mathscr{P}$ be a Tannakian $(G,\mu)$-display over $\underline{S}(R)$. Let us compare the Hodge filtration of $\mathscr{P}$ to that of its associated $G$-display of type $\mu$, $(Q_\mathscr{P}, \alpha_\mathscr{P})$, see Definition \ref{def-hodgefiltG}. By \cite[Thm. 60 and Rmk. 54]{Cornut2014}, the \'etale sheaf on $\Spec R$ of automorphisms of the tensor functor $\textup{Fil}_\mu$ (see (\ref{eq-filmu})) is isomorphic to the sheaf associated to the parabolic subgroup $P_{\mu,R}$ of $G_R$ (see (\ref{eq-parabolic})). Thus the subsheaf \begin{align}\label{eq-Pmutors}
	\textup{Isom}^\otimes(\textup{Fil}_\mu, \textup{Fil}_\mathscr{P}) \subset Q_{\omega_\mathscr{F}}
\end{align}
of tensor-isomorphisms $\textup{Fil}_\mu \xrightarrow{\sim} \textup{Fil}_\mathscr{P}$ is an \'etale $P_{\mu,R}$-torsor on $\Spec R$, which we denote by $Q_{\textup{Fil}_\mathscr{P}}$. If $Q_{\mathscr{P},\mu} \subset Q_{\mathscr{P},R}$ denotes the Hodge filtration of the associated $G$-display of type $\mu$, then the natural map
\begin{align*}
	Q_\mathscr{P} = \textup{Isom}^\otimes(\mathscr{C}(\underline{S}_{\mu,R}, \upsilon_{S(R)}\circ \mathscr{P}) \to \textup{Isom}^\otimes(\textup{Fil}_{\mu}, \textup{Fil}_\mathscr{P})=Q_{\textup{Fil}_{\mathscr{P}}}
\end{align*} 
given by base change along $\bar{\tau}: S \to R$ induces $P_{\mu,R}$-equivariant map
\begin{align} \label{eq-hodgecompare}
	Q_{\mathscr{P},\mu} \to Q_{\textup{Fil}_{\mathscr{P}}}
\end{align}
which is therefore necessarily an isomorphism of $P_{\mu, R}$-torsors. Moreover, (\ref{eq-hodgecompare}) is compatible with the inclusions $Q_{\mathscr{P},\mu} \subset Q_{\mathscr{P},R}$ and $Q_{\textup{Fil}_\mathscr{P}} \subset Q_{\omega_{\mathscr{P}}}$ and the isomorphism (\ref{eq-GRtorsors}). 

We close this section by explaining the way that the Hodge filtration controls lifts along certain homomorphisms of \'etale sheaves of frames, following \cite[\textsection 7.4]{Lau2018}. Let $B \to A$ be a homomorphism of $p$-nilpotent $W(k_0)$-algebras such that $J = \ker(B \to A)$ is locally nilpotent; i.e., such that $x^n = 0$ for some $n$ for all $ x \in J$ (for example, $B \to A$ could be a PD-thickening). Let $\underline{S}'$ be an \'etale sheaf of frames on $\Spec B$, and let $\underline{S}$ be an \'etale sheaf of frames on $\Spec A$. Since $J$ is locally nilpotent, for every \'etale $A$-algebra $A'$ there exists a unique \'etale $B$-algebra $B(A')$ lifting $A'$, so we can consider $\underline{S}'$ as an \'etale sheaves of frames on $\Spec A$. We assume that for all \'etale $A$-algebras $A'$, $\underline{S}(A')$ is a frame for $A'$, and $\underline{S}'(B(A'))$ is a frame for $B(A')$. 

Suppose now that we have a morphism of \'etale sheaves of frames $\beta:\underline{S}' \to \underline{S}$ such that, for all \'etale $A$-algebras $A'$, $\beta_0: \underline{S}'(B(A'))_0 \to \underline{S}(A')_0$ is bijective. The morphism $\beta$ determines a morphism of stacks on $\Spec A$
\begin{align}\label{eq-reduction2}
	G\textup{-Disp}^\otimes_{\underline{S}',\mu} \to G\textup{-Disp}^\otimes_{\underline{S},\mu}.
\end{align}

Let us continue to assume that $G$ is reductive over $\zz_p$ and let us suppose now that $\mu$ is minuscule. Let $\mathscr{P}'$ be a Tannakian $(G,\mu)$-display over $\underline{S}'(B)$. Applying the morphism (\ref{eq-reduction2}) to $\mathscr{P}'$, we obtain a Tannakian $(G,\mu)$-display $\mathscr{P}$ over $\underline{S}(A)$. Associated with $\mathscr{P}'$ is the Hodge filtration $\textup{Fil}_{\mathscr{P}'}$ of $\mathscr{P}'$ from Definition \ref{def-hodgefunctor}, which is a filtration of the functor $\omega_{\mathscr{P}'}: \textup{Rep}_{\zz_p} G \to \textup{Mod}(B)$, see (\ref{eq-hodgefiltdef}). Moreover, we have the Hodge filtration $\textup{Fil}_{\mathscr{P}}$ of $\mathscr{P}$. Then $\textup{Fil}_{\mathscr{P}'}$ is a lift of $\textup{Fil}_\mathscr{P}$ along $B \to A$.

Since $\beta_0$ is bijective, we have a morphism $\tilde{\tau}: S(A) \to S(A)_0 \xrightarrow{\sim} S'(B)_0 \to B$, with the property that the composition of $\tilde{\tau}$ with the natural map $S'(B) \to S(A)$ is the map $\bar{\tau}_0: S(B) \to B$. Therefore base change along the map $\tilde{\tau}$ induces a fiber functor
\begin{align*}
	\omega_{\mathscr{P},B}: \textup{Rep}_{\zz_p} G \to \textup{Mod}(B),
\end{align*}
with the property that $\omega_{\mathscr{P},B}$ composed with the base change $\textup{Mod}(B) \to \textup{Mod}(A)$ is the fiber functor $\omega_{\mathscr{P}}$ associated to $\mathscr{P}$. Since the composition $S(B) \to S(A) \xrightarrow{\tilde{\tau}} B \to B$ is $\bar{\tau}: S(B) \to B$, we see that in fact $\omega_{\mathscr{P},B} = \omega_{\mathscr{P}'}$ is the fiber functor associated to $\mathscr{P}'$ in the case where $\mathscr{P}$ is obtained from base change from $\mathscr{P}'$ along $\underline{S}' \to \underline{S}$.

\begin{prop}\label{prop-Gdisplift}
	Suppose $\beta: \underline{S}' \to \underline{S}$ is a morphism of frames as above such that $\beta_1: \underline{S}'(B(A'))_1 \to \underline{S}(A')_1$ is injective for all \'etale $A$-algebras $A'$. Then the assignment
	\begin{align*}
		\mathscr{P}' \mapsto (\mathscr{P}, \textup{Fil}_{\mathscr{P}'})
	\end{align*}
	described above determines an equivalence of categories between Tannakian $(G,\mu)$-displays over $\underline{S}'(B)$ and Tannakian $(G,\mu)$-displays over $\underline{S}(A)$ together with a filtration $\textup{Fil}$ of the fiber functor $\omega_{\mathscr{P},B}$.
\end{prop}
\begin{proof}
	By Theorem \ref{thm-thesis} and Theorem \ref{thm-equiv} along with the comparison of the respective Hodge filtrations (\ref{eq-hodgecompare}), it is enough to show the result for $G$-displays of type $\mu$. This follows from the arguments of \cite[\textsection 7.4]{Lau2018} with the following remarks. In \textit{loc. cit.} this is shown for any morphism of $p$-adic frames $\underline{S}' \to \underline{S}$ for $R'$ and $R$, respectively, over $W(k_0)$ with $S_0' = S_0$, such that the property 
	\begin{align}\label{eq-property}
		S_1' \to S_1 \text{ is injective, and } S_1/S_1' = \ker(R' \to R)
	\end{align}
	is satisfied (recall that we are assuming $G$ is reductive and $\mu$ is minuscule). In the case of \cite{Lau2018}, it follows from the fact that $\underline{S}'$ and $\underline{S}$ are $p$-adic frames that the property (\ref{eq-property}) is preserved after \'etale base change, and therefore the result follows from \cite[Lem. 7.4.2]{Lau2018}. In our case, it is preserved by assumption. Thus once again the result follows from \cite[Lem. 7.4.2]{Lau2018}.
\end{proof}

In particular, this result applies when $B\to A$ is a PD-thickening, $\underline{S}' = \underline{W}(B)$ is the Witt frame for $B$, and $\underline{S} = \underline{W}(B/A)$ is the relative Witt frame for $B \to A$. Indeed, $W(B)^\oplus_0 = W(B/A)^\oplus_0 = W(B)$, and $W(B)^\oplus_1 \to W(B/A)^\oplus_1$ is the inclusion $I(B) \hookrightarrow I(B/A)$, see Example \ref{ex-relwittframe}. Moreover, these properties clearly hold for all \'etale $A$-algebras $A'$. 

\subsection{Adjoint nilpotence and liftings}\label{sub-lifting}
In this section we assume that $G$ is a reductive group scheme over $\zz_p$ and that $\mu: \mathbb{G}_{m,W(k_0)} \to G_{W(k_0)}$ is a minuscule cocharacter of $G_{W(k_0)}$. Let $B \to A$ be a PD-thickening of $p$-nilpotent $W(k_0)$-algebras. We first fit the adjoint nilpotence condition of \cite[\textsection 3.4]{BP2017} into the present context, and state Lau's unique lifting lemma for adjoint nilpotent Tannakian $(G,\mu)$-displays along $\underline{W}(B/A) \to \underline{W}(A)$ (Proposition \ref{prop-lifting}). We then explain (in our context) Lau's classification of lifts of Tannakian $(G,\mu)$-displays along $\underline{W}(B) \to \underline{W}(B/A)$ by lifts of the Hodge filtration.

Recall that $G$-\textup{Disp}$_{\underline{W},\mu}^\otimes$ (equiv. $G$-\textup{Disp}$_{\underline{W},\mu}$) is a stack for the \'etale topology on $\textup{Nilp}_{W(k_0)}$. For a $p$-nilpotent $W(k_0)$-algebra $A$, we can restrict the stack $ G$-\textup{Disp}$_{\underline{W},\mu}^\otimes$ (resp. $G$-\textup{Disp}$_{\underline{W},\mu}$) to obtain an \'etale stack on $\Spec{A}$, which we will denote by  $G$-\textup{Disp}$_{\underline{W}(A),\mu}^\otimes$ (resp. $G$-\textup{Disp}$_{\underline{W}(A),\mu}$). Alternatively this is $G$-\textup{Disp}$_{\underline{W}_A,\mu}$, where $\underline{W}_A$ is the \'etale sheaf on $\Spec A$ defined by
\begin{align}\label{eq-W_A}
	\underline{W}_A(A') = W(A')
\end{align}
for all \'etale $A$-algebras $A'$.

Let $k$ be a perfect field of characteristic $p$, and let $K = W(k)[1/p]$. The Frobenius $\sigma$ of $W(k)$ naturally extends to $K$. Denote by $\textup{$F$-Isoc}(k)$ the category $F$-isocrystals over $k$, i.e., the category of pairs $(M,\varphi)$ consisting of a finite-dimensional $K$-vector space $M$ and an isomorphism of $K$-vector spaces $\varphi: \sigma^\ast M \xrightarrow{\sim} M$. When $k$ is algebraically closed, $\textup{$F$-Isoc}(k)$ is a semi-simple category with simple objects parametrized by $\lambda \in \qq$ (see e.g., \cite{Demazure1972}). In that case, for $\lambda \in \qq$, we write $M_\lambda$ for the $\lambda$-isotypic component of $M$, and if $M_\lambda$ is nonzero we will say $\lambda$ is a \textit{slope} of $(M,\varphi)$. 

Let $R$ be a $k_0$-algebra, and let $\mathscr{P}$ be a  Tannakian $(G,\mu)$-display over $\underline{W}(R)$. For every point $x \in \Spec{R}$, choose an algebraic closure $k(x)$ of the residue field of $x$. The base change $\mathscr{P}_{k(x)}$ of $\mathscr{P}$ to $k(x)$ is banal, since the $L^+_\mu G$-torsor $\underline{\textup{Isom}}^\otimes(\mathscr{C}(\underline{W})_{\mu,k(x)},\upsilon_{k(x)} \circ \mathscr{P}_{k(x)})$ over $k(x)$ is trivial. Hence by Proposition \ref{prop-banal} there is some $u(x) \in L^+G(k(x)) = G(W(k(x))$ such that $u(x)$ determines $\mathscr{P}_{k(x)}$. Let $K(x) = W(k(x))[1/p]$, and define $b(x) = u(x)\mu^\sigma(p) \in G(K(x))$. To $b(x)$ we can associate an exact tensor functor
\begin{align*}
N_{b(x)}: \textup{Rep}_{\qq_p}(G) \to \textup{$F$-Isoc}(k(x)), \ (V,\pi) \mapsto (V\otimes_{\qq_p} K(x),\pi(b(x)) \circ (\id_V \otimes \sigma)).
\end{align*}
Let us denote by $(\mathfrak{g},\text{Ad}^G)$ the adjoint representation of $G$. 
\begin{Def}\label{def-adjnilp}
	Let $R$ be a $p$-nilpotent $W(k_0)$-algebra. A  Tannakian $(G,\mu)$-display $\mathscr{P}$ over $\underline{W}(R)$ is \textit{adjoint nilpotent} if for all $x \in \Spec R/pR$ all slopes of the isocrystal $N_{b(x)}(\mathfrak{g},\textup{Ad}^G)$ are greater than $-1$. 
\end{Def}
We will likewise say that $U \in L^+G(R)$ is adjoint nilpotent over $\underline{W}(R)$ if the associated  Tannakian $(G,\mu)$-display $\mathscr{P}_U$ is adjoint nilpotent. See \cite[\textsection 3.4]{BP2017} for a discussion of this condition. 

Let $\Lambda$ be a finite free $\zz_p$-module. Let us briefly recall the relationship between adjoint nilpotence and Zink's nilpotence condition in the case where $G = \textup{GL}(\Lambda)$ (cf. \cite[Rmk. 3.4.5]{BP2017}). If $R$ is a $p$-nilpotent $\zz_p$-algebra, then by Remark \ref{rmk-GLn}, evaluation on the standard representation $(\Lambda, \iota)$ defines an equivalence of categories between  Tannakian $(\textup{GL}(\Lambda),\mu)$-displays and 1-displays of type $\mu$ over $\underline{W}(R)$. We will say that a 1-display is nilpotent if its corresponding Zink display (under the equivalence in Lemma \ref{lem-windows}) satisfies Zink's nilpotence condition (see \cite[Def. 11]{Zink2002}). 

\begin{lemma}\label{lem-nilpotent}
	Suppose $\mathscr{P}$ is a  $(\textup{GL}(\Lambda), \mu)$-display over $\underline{W}(R)$ such that $\mathscr{P}(\Lambda, \iota)$ is a nilpotent 1-display. Then $\mathscr{P}$ is adjoint nilpotent over $\underline{W}(R)$.
\end{lemma}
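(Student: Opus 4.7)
The plan is to verify adjoint nilpotence pointwise and reduce, via Zink--Lau, to the standard slope estimate for formal $p$-divisible groups.

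First, fix a geometric point $\bar{x}$ of $\Spec R/pR$. Since $G$-torsors over a field are trivial, $\mathscr{P}_{k(\bar{x})}$ is banal (Proposition \ref{prop-banal}), so $\mathscr{P}_{k(\bar{x})} \cong \mathscr{P}_{u(\bar{x})}$ for some $u(\bar{x}) \in G(W(k(\bar{x})))$, giving $b(\bar{x}) = u(\bar{x})\mu^\sigma(p)$. Zink nilpotence is a condition on $V^\sharp \bmod p$, which is stable under base change, so the $1$-display $\mathscr{P}_{k(\bar{x})}(\Lambda, \iota) = (\mathscr{P}(\Lambda, \iota))_{k(\bar{x})}$ is again nilpotent. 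By Lemma \ref{lem-windows} together with \cite{Zink2002} and \cite{Lau2008}, this $1$-display corresponds to a formal $p$-divisible group $X_{\bar{x}}$ over $k(\bar{x})$.

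Next, I would identify the $F$-isocrystal $N_{b(\bar{x})}(\Lambda, \iota)$ with the covariant Dieudonn\'e $F$-isocrystal of $X_{\bar{x}}$. Unwinding the standard datum $(\Lambda \otimes W(k(\bar{x})), \pi(u(\bar{x})) \circ \sigma)$ via the normal-representation construction of \textsection\ref{sub-frames} shows that the linearization $F_0^\sharp$ of the associated Zink Frobenius is $u(\bar{x})\mu(p)$, and Lemma \ref{lem-zinkdieudonne} then identifies the resulting $F$-isocrystal with $\mathbb{D}(X_{\bar{x}})_{W(k(\bar{x}))/k(\bar{x})}[1/p]$.

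The key input is then a standard slope bound: for a formal $p$-divisible group over an algebraically closed field of characteristic $p$, the covariant Dieudonn\'e $F$-isocrystal has all Newton slopes strictly less than $1$. The slopes lie in $[0,1]$ since $\mathbb{F}\circ\mathbb{V} = \mathbb{V}\circ\mathbb{F} = p$ with both $\mathbb{F}$ and $\mathbb{V}$ integral, and a slope-$1$ summand would correspond covariantly to an \'etale quotient of $X_{\bar{x}}$, contradicting formality. Since $(\mathfrak{g}, \textup{Ad}^G) \cong (\textup{End}(\Lambda), \text{conj})$ for $G = \textup{GL}(\Lambda)$, the slopes of $N_{b(\bar{x})}(\mathfrak{g}, \textup{Ad}^G)$ are the differences $\lambda - \lambda'$ of slopes of $N_{b(\bar{x})}(\Lambda, \iota)$; all lie in $(-1, 1)$, and in particular exceed $-1$. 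As this holds at every geometric point, $\mathscr{P}$ is adjoint nilpotent.

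The main subtlety is the bookkeeping around the standard-datum-to-normal-representation bijection, where the weight-$0$ and weight-$1$ pieces of $\mu$ must be correctly matched to the tangent and filtration pieces of the Zink window so that $F_0^\sharp$ comes out as $u(\bar{x})\mu(p)$ rather than some neighboring expression.
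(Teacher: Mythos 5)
Your argument is correct and is essentially the one the paper invokes: the proof in the text simply defers to \cite[Rmk. 3.4.5]{BP2017}, which is exactly the slope computation you carry out (Zink nilpotence forces the slopes of $N_{b(\bar{x})}(\Lambda,\iota)$ into $[0,1)$, so the adjoint slopes, being differences, lie in $(-1,1)$ and in particular exceed $-1$). The only stylistic difference is your detour through the formal $p$-divisible group to obtain the bound on slopes, which one can instead read off directly from topological nilpotence of $V^\sharp$ (slopes of $V^\sharp$ positive, hence slopes of $F_0^\sharp$ strictly less than $1$); both routes are fine.
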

\begin{proof}
	This follows from the arguments in \cite[Rmk. 3.4.5]{BP2017}.
\end{proof}

We extend this definition to the relative Witt frame as follows. Let $B \to A$ be a PD-thickening of $p$-nilpotent $W(k_0)$-algebras. The $W(k_0)$-algebra homomorphism $W(B) \to W(A)$ induces a morphism of frames $\alpha: \underline{W}(B/A) \to \underline{W}(A)$, and base change along $\alpha$ (see (\ref{eq-changeofframe})) determines a morphism 
\begin{align}\label{eq-reduction}
G\text{-}\textup{Disp}_{\underline{W}(B/A),\mu}^\otimes \to G\text{-}\textup{Disp}_{\underline{W}(A),\mu}^\otimes
\end{align}
of \'etale stacks on $\Spec A$. If $\mathscr{P}$ is a  Tannakian $(G,\mu)$-display over $\underline{W}(B/A)$, we denote its base change to $\underline{W}(A)$ by $\alpha^\ast\mathscr{P}$ or $\mathscr{P}_{\underline{W}(A)}$.

\begin{Def}
	Let $B \to A$ be a PD thickening of $p$-nilpotent $W(k_0)$-algebras. A   Tannakian $(G,\mu)$-display $\mathscr{P}$ over $\underline{W}(B/A)$ is \textit{adjoint nilpotent} if $\mathscr{P}_{\underline{W}(A)}$ is adjoint nilpotent in the sense of Definition \ref{def-adjnilp}. 
\end{Def}

Likewise, an element $U \in G(W(B))$ is said to be adjoint nilpotent over $\underline{W}(B/A)$ if the associated  Tannakian $(G,\mu)$-display $\mathscr{P}_U$ over $\underline{W}(B/A)$ is adjoint nilpotent.

\begin{rmk}\label{rmk-adjnilp}
	If $U \in G(W(B))$, we obtain banal  Tannakian $(G,\mu)$-displays $\mathscr{P}_U$ over $\underline{W}(B)$ and $\mathscr{P}_U'$ over $\underline{W}(B/A)$ corresponding to $U$. Since $B\to A$ induces a homeomorphism $\Spec{A} \to \Spec{B}$, $\mathscr{P}_U$ is adjoint nilpotent over $\underline{W}(B)$ if and only if $\mathscr{P}_U'$ is adjoint nilpotent over $\underline{W}(B/A)$. Hence there is no ambiguity in the statement ``$U \in G(W(B))$ is adjoint nilpotent''.
\end{rmk}

We will denote by $G\text{-}\textup{Disp}_{\underline{W}(B/A),\mu}^{\otimes,\text{ad}}$, resp. $G\text{-}\textup{Disp}_{\underline{W}(A),\mu}^{\otimes,\text{ad}}$ the substack of adjoint nilpotent objects in $G\text{-}\textup{Disp}_{\underline{W}(B/A),\mu}^\otimes$, resp. $G\text{-}\textup{Disp}_{\underline{W}(A),\mu}^\otimes$. The morphism (\ref{eq-reduction}) induces a morphism
\begin{align}\label{eq-lifting}
G\text{-}\textup{Disp}_{\underline{W}(B/A),\mu}^{\otimes,\text{ad}} \to G\text{-}\textup{Disp}_{\underline{W}(A),\mu}^{\otimes,\text{ad}}.
\end{align}

\begin{prop}\label{prop-lifting}
	The morphism $($\ref{eq-lifting}$)$ is an equivalence of \'etale stacks on $\Spec{A}$.
\end{prop}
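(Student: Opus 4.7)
The plan is to reduce to a local, explicit situation using the banal description of Proposition \ref{prop-banal}, then separately establish essential surjectivity by lifting an element $U \in G(W(A))$ along the smooth group $G$, and finally deduce full faithfulness by solving a conjugation equation in the kernel of $G(\underline W(B/A))_\mu \to G(\underline W(A))_\mu$ via a contraction argument powered by the adjoint nilpotence hypothesis. Since both sides of (\ref{eq-lifting}) are \'etale stacks on $\Spec A$, it suffices to check the equivalence on sections over some \'etale cover; by Proposition \ref{prop-banal} we may therefore work with banal adjoint nilpotent $(G,\mu)$-displays described by elements $U \in G(W(A))$ (resp.\ $\tilde U \in G(W(B))$), with morphisms $h$ satisfying $\tau(h)^{-1}U'\sigma(h) = U$ inside $G(\underline W(A))_\mu$ (resp.\ inside $G(\underline W(B/A))_\mu$).

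\textbf{Essential surjectivity.} Given an adjoint nilpotent banal display $\mathscr P_U$ over $\underline W(A)$, first lift $U$ to some $\tilde U \in G(W(B))$. This is possible \'etale-locally on $\Spec A$ because $G$ is smooth over $\zz_p$ and the ring map $W(B) \to W(A)$ is a surjection with kernel $W(J)$ (on which the standard divided-power / $\log$ description from \textsection\ref{sub-frames} gives a topologically nilpotent, hence henselian, ideal). The resulting element $\tilde U$ defines a banal $(G,\mu)$-display $\mathscr P_{\tilde U}$ over $\underline W(B/A)$ whose base change along $\alpha$ is $\mathscr P_U$, and Remark \ref{rmk-adjnilp} shows $\mathscr P_{\tilde U}$ is automatically adjoint nilpotent (the condition only sees geometric points of $\Spec A/pA = \Spec B/pB$).

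\textbf{Full faithfulness.} Now suppose $\tilde U_1, \tilde U_2 \in G(W(B))$ are two lifts of the same $U \in G(W(A))$. A morphism $\mathscr P_{\tilde U_1} \to \mathscr P_{\tilde U_2}$ over $\underline W(B/A)$ lifting the identity morphism over $\underline W(A)$ is the same as an element
\[
h \in \ker\bigl(G(\underline W(B/A))_\mu(A) \longrightarrow G(\underline W(A))_\mu(A)\bigr)
\]
satisfying $\tau(h)^{-1} \tilde U_2 \sigma(h) = \tilde U_1$. Using the smoothness of $G$, the minusculity of $\mu$, and the explicit description of $\underline W(B/A)$ in Example \ref{ex-relwittframe}, this kernel is a pro-nilpotent group that sits in an exact sequence built from the Lie algebra of $G$ (the weight $\ge 0$ part inside $W(J)$ coming from $S_0 = W(B)$, and weight-shifted pieces coming from $S_1 = I(B) \oplus J$). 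The condition $\tau(h)^{-1}\tilde U_2 \sigma(h) = \tilde U_1$ rewrites, after setting $h = 1+x$ infinitesimally and passing to successive graded pieces of this filtration, as a fixed-point equation of the form $x = \Phi(x)$, where $\Phi$ is $\sigma$-conjugation by $\tilde U_1$ acting on the adjoint representation restricted to the filtered kernel.

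\textbf{The main obstacle.} The crux is to show that $\Phi$ is contracting, so that $x$ exists and is unique. This is precisely where the hypothesis that $\mathscr P$ is adjoint nilpotent enters: at every geometric point $x$ of $\Spec A/pA$, the slopes of $N_{b(x)}(\mathfrak g, \textup{Ad}^G)$ exceed $-1$, which is equivalent to saying that the $\sigma$-semilinear operator $\textup{Ad}(\tilde U_1)\circ\sigma$ raises the $p$-adic (or, on $W(J)$-parts, the $\log$-coordinate) filtration strictly enough that its iterates converge to zero on the kernel. Granting this contraction, the Banach-style fixed point argument produces the required $h$ uniquely, and naturality guarantees compatibility with composition and base change, so (\ref{eq-lifting}) is fully faithful. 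Combined with essential surjectivity and étale descent, this completes the proof.
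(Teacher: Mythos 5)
Your strategy is not the one the paper uses: the paper's proof is a two-line reduction, first passing from $(G,\mu)$-displays to $G$-displays of type $\mu$ via Theorem \ref{thm-equiv} and \cite[Thm. 3.16]{Daniels2019}, and then citing Lau's unique lifting lemma \cite[Rmk. 7.1.8]{Lau2018}. What you have written is, in effect, a sketch of the proof of that cited result (cf. also \cite[Thm. 3.5.4]{BP2017} in the reductive case with nilpotent kernel). The overall architecture is sound: reduction to the banal case via Proposition \ref{prop-banal} is legitimate because both sides are \'etale stacks; the lift of $U$ to $\tilde U\in G(W(B))$ exists because $J$ is a nil ideal (as $p$ is nilpotent in $B$ and $J$ carries divided powers, so $x^{n}=n!\,\gamma_n(x)$ kills high powers), hence $W(J)$ is nil and $(W(B),W(J))$ is a henselian pair, so $G(W(B))\to G(W(A))$ is surjective for smooth affine $G$; and adjoint nilpotence of the lift is immediate from the definition, since it is defined via base change to $\underline{W}(A)$ (Remark \ref{rmk-adjnilp}).

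The gap is in the step you label ``the main obstacle,'' which is where the entire content of the proposition lives. You assert, but do not prove, that the slope condition of Definition \ref{def-adjnilp} makes the operator $\Phi$ topologically nilpotent on the kernel of $G(\underline{W}(B/A))_\mu\to G(\underline{W}(A))_\mu$. The delicate point is the degree-one graded piece of the display group: there $\sigma_1$ on $S_1=I(B)\oplus J$ acts by $(v(a),x)\mapsto a$, i.e.\ it involves a division by $p$ (it is $\dot\sigma$, not $\sigma_0$), so the operator $\mathrm{Ad}(\tilde U_1)\circ\sigma$ is \emph{not} obviously contracting; the threshold ``all slopes $>-1$'' is calibrated exactly to compensate for this one division by $p$ coming from the weight-one part of $\mathfrak g$ under $\mathrm{Ad}\circ\mu$, and one must also check pointwise slope bounds on $\Spec R/pR$ globalize to a uniform topological nilpotence statement over $W(J)$ (this is the technical heart of \cite[Lem. 7.1.5]{Lau2018}). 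Writing ``granting this contraction'' leaves the proposition unproved. Two smaller omissions: full faithfulness for a general morphism $g$ (not lifting the identity) also requires surjectivity of $G(\underline{W}(B/A))_\mu\to G(\underline{W}(A))_\mu$, which you use implicitly; and uniqueness of the lift $h$, not just existence, must come out of the same fixed-point argument in order to get faithfulness. If you do not want to redo Lau's estimate, the efficient route is the paper's: translate via Theorem \ref{thm-equiv} and invoke the torsor-theoretic lifting lemma directly.
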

\begin{proof}
	By Theorem \ref{thm-thesis} and Theorem \ref{thm-equiv}, it is enough to show the result for the respective stacks of $G$-displays of type $\mu$. Hence the proposition follows from \cite[Rmk. 7.1.8]{Lau2018}.	
\end{proof}

\begin{rmk}
	In the case where $J = \ker(B\to A)$ is a nilpotent ideal the proposition follows from \cite[Thm. 3.5.4]{BP2017}.
\end{rmk}

\section{Crystals and $G$-displays}\label{section-crystals}
Let $R$ be a $p$-nilpotent $\zz_p$-algebra, let $G$ be a reductive group scheme over $\zz_p$, and let $\mu$ be a minuscule cocharacter for $G_{W(k_0)}$. In \textsection \ref{sub-crystalGdisp}, we construct and study the functorial properties of a $G$-crystal associated with any adjoint nilpotent Tannakian $(G,\mu)$-display over $\underline{W}(R)$. If $\Lambda$ is a finite free $\zz_p$-module, and $G = \textup{GL}(\Lambda)$, this construction recovers the crystal associated with a nilpotent Zink display as in \textsection \ref{sub-zinkcrystals}, see Lemma \ref{lem-comparecrystals}. In \textsection \ref{sub-hodge} we narrow our focus to the case where $(G,\mu)$ is a Hodge type pair (see Definition \ref{def-hodgetype}).
\subsection{The crystals associated with $G$-displays} \label{sub-crystalGdisp}

Let $G$ be a reductive group scheme over $\zz_p$, and let $\mu: \mathbb{G}_{m,W(k_0)} \to G_{W(k_0)}$ be a minuscule cocharacter of $G_{W(k_0)}$. Let $R$ be a $p$-nilpotent $W(k_0)$-algebra, and suppose $\mathscr{P}$ is an adjoint nilpotent Tannakian $(G,\mu)$-display over $\underline{W}(R)$. If $A$ is an $R$-algebra, denote by $\mathscr{P}_{\underline{W}(A)}$ the base change of $\mathscr{P}$ to $\underline{W}(A)$. 

Let $B \to A$ be a PD-thickening over $R$. By Proposition \ref{prop-lifting}, there exists a lift $\mathscr{P}_{B/A}$ of $\mathscr{P}_{\underline{W}(A)}$ to a  Tannakian $(G,\mu)$-display over $\underline{W}(B/A)$, and $\mathscr{P}_{B/A}$ is unique up to a unique isomorphism which lifts $\id_{\mathscr{P}_{\underline{W}(A)}}$. For every representation $(V,\pi)$ of $G$, write 
\begin{align}\label{eq-liftnotation}
\underline{M}_{B/A}^\pi = (M_{B/A}^\pi, F_{B/A}^\pi)
\end{align}
for the evaluation of $\mathscr{P}_{B/A}$ on $(V,\pi)$. By base change along the composition $W(B/A)^\oplus \xrightarrow{\tau} W(B) \xrightarrow{w_0} B$, we obtain a finite projective $B$-module
\begin{align*}
\mathbb{D}(\mathscr{P})_{B/A}^\pi:= (\tau^\ast M_{B/A}^\pi) \otimes_{W(B)} B.
\end{align*}
We claim that the assignment $(B\to A) \mapsto \mathbb{D}(\mathscr{P})_{B/A}^\pi$ defines a crystal of finite locally free $\mathcal{O}_{\Spec{R}/W(k_0)}$-modules for every representation $(V,\pi)$. Indeed, we need to show that if $(B \to A) \to (B' \to A')$ is a morphism of PD-thickenings, then there is an isomorphism of $B'$-modules
\begin{align}\label{eq-transition}
\mathbb{D}(\mathscr{P})^\pi_{B/A} \otimes_B B' \xrightarrow{\sim} \mathbb{D}(\mathscr{P})^\pi_{B'/A'},
\end{align}
and that these isomorphisms satisfy the cocycle condition with respect to compositions. But to obtain an isomorphism (\ref{eq-transition}) it is enough to exhibit an isomorphism
\begin{align*}
(\mathscr{P}_{B/A})_{\underline{W}(B'/A')} \xrightarrow{\sim} \mathscr{P}_{B'/A'}
\end{align*}
of Tannakian $(G,\mu)$-displays over $\underline{W}(B'/A')$. Such an isomorphism is readily found using uniqueness of lifts, since both $(\mathscr{P}_{B/A})_{\underline{W}(B'/A')}$ and $\mathscr{P}_{B'/A'}$ lift $\mathscr{P}_{\underline{W}(A')}$.  
It is straightforward to check that compositions of the transition isomorphisms obtained in this way satisfy the cocycle condition, so by Remark \ref{rmk-crystals} we obtain a crystal of finite locally free $\mathcal{O}_{\Spec{R}/W(k_0)}$-modules $\mathbb{D}(\mathscr{P})^\pi$ for every $(V,\pi)$. 

\begin{lemma}\label{lem-crystal}
	The association
	\begin{align*}
	\mathbb{D}(\mathscr{P}): \textup{Rep}_{\zz_p}(G) \to \textup{LFCrys}({R}/W(k_0)), \ (V,\pi) \mapsto \mathbb{D}(\mathscr{P})^\pi,
	\end{align*}
	defines an exact tensor functor. 
\end{lemma}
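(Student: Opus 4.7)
The plan is to verify the three properties (functoriality, exactness, tensor structure) in turn, in each case reducing to the corresponding property of $\mathscr{P}_{B/A}$ as a tensor functor into $\textup{Disp}(\underline{W}(B/A))$, and then checking that the verifications are compatible with the transition isomorphisms \eqref{eq-transition}. The bookkeeping with transitions is handled once and for all using the uniqueness clause in Proposition \ref{prop-lifting}.

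First, I will establish functoriality. A morphism $\varphi: (V,\pi)\to(V',\pi')$ in $\textup{Rep}_{\zz_p}G$ is sent by $\mathscr{P}_{B/A}$ to a morphism of displays $\underline{M}^\pi_{B/A}\to\underline{M}^{\pi'}_{B/A}$ over $\underline{W}(B/A)$; applying the additive functors $\tau^\ast(-)$ and $(-)\otimes_{W(B)}B$ yields a $B$-linear map $\mathbb{D}(\mathscr{P})_{B/A}^\pi\to\mathbb{D}(\mathscr{P})_{B/A}^{\pi'}$. For compatibility with the transition maps, note that if $(B\to A)\to(B'\to A')$ is a morphism of PD-thickenings, then both $(\mathscr{P}_{B/A})_{\underline{W}(B'/A')}$ and $\mathscr{P}_{B'/A'}$ lift $\mathscr{P}_{\underline{W}(A')}$, so by Proposition \ref{prop-lifting} there is a \emph{unique} isomorphism between them lifting the identity. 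Naturality in $\varphi$ then forces the squares needed for $\mathbb{D}(\mathscr{P})^\varphi$ to be a morphism of crystals to commute.

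Next, I will verify exactness. Suppose $0\to(V_1,\pi_1)\to(V_2,\pi_2)\to(V_3,\pi_3)\to0$ is exact in $\textup{Rep}_{\zz_p}G$. Since $\mathscr{P}_{B/A}$ is exact, the resulting sequence of underlying graded $W(B/A)^\oplus$-modules $0\to M^{\pi_1}_{B/A}\to M^{\pi_2}_{B/A}\to M^{\pi_3}_{B/A}\to 0$ is exact, and each term is finite projective. Hence base change along $\tau: W(B/A)^\oplus\to W(B)$ followed by base change along $w_0:W(B)\to B$ preserves exactness, giving the required short exact sequence of $B$-modules in every PD-thickening. Since exactness of a sequence of crystals can be checked on evaluations, this yields exactness of the functor $\mathbb{D}(\mathscr{P})$.

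Third, the tensor structure: since $\mathscr{P}_{B/A}$ is a tensor functor, there are natural isomorphisms $\underline{M}^{\pi\otimes\pi'}_{B/A}\cong \underline{M}^\pi_{B/A}\otimes \underline{M}^{\pi'}_{B/A}$ (with the tensor product of displays recalled in \textsection\ref{sub-frames}), and the trivial representation is sent to the unit display $\underline{W}(B/A)$. Both $\tau^\ast$ and $(-)\otimes_{W(B)}B$ commute with tensor products of finite projective modules, so we obtain natural isomorphisms $\mathbb{D}(\mathscr{P})^{\pi\otimes\pi'}_{B/A}\xrightarrow{\sim}\mathbb{D}(\mathscr{P})^{\pi}_{B/A}\otimes_B\mathbb{D}(\mathscr{P})^{\pi'}_{B/A}$, and the image of the trivial representation is canonically identified with $B=\mathbbm{1}_{B/A}$. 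The associativity, unit, and symmetry constraints all transport from $\textup{Disp}(\underline{W}(B/A))$, and compatibility with the transition isomorphisms is again forced by the uniqueness of lifts in Proposition \ref{prop-lifting}.

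The only delicate point — and thus the main technical obstacle — is to make sure that all the isomorphisms manufactured above (for morphisms, for the tensor structure, and for base change of crystals) really do fit together coherently across towers of PD-thickenings. This is exactly what uniqueness in Proposition \ref{prop-lifting} buys us: any two lifts of $\mathscr{P}_{\underline{W}(A)}$ are related by a unique isomorphism lifting the identity, so any diagram of natural transformations between such lifts automatically commutes, and the cocycle condition for \eqref{eq-transition} as well as its compatibility with morphisms and tensor products becomes formal.
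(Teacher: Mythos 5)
Your proof is correct and follows essentially the same route as the paper: functoriality and the tensor structure are inherited from $\mathscr{P}_{B/A}$ and pushed through the base change $W(B/A)^\oplus \to W(B) \to B$, coherence with the transition maps is forced by the uniqueness of lifts in Proposition \ref{prop-lifting}, and exactness reduces to the splitting of short exact sequences of finite projective modules. The paper's own proof is just a more compressed version of the same argument.
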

\begin{proof}
	A $G$-equivariant morphism $(V_1,\pi_1) \to (V_2,\pi_2)$ induces a morphism of finite projective graded $W(B/A)^\oplus$-modules $M_{B/A}^{\pi_1} \to M_{B/A}^{\pi_2}$, and by base change to $B'$ we obtain $\mathbb{D}(\mathscr{P})^{\pi_1} \to \mathbb{D}(\mathscr{P})^{\pi_2}$. If $(B'\to A') \to (B \to A)$ is a PD-morphism, then the transition map $\mathbb{D}(\mathscr{P})^\pi_{B/A} \otimes_B B'\xrightarrow{\sim} \mathbb{D}(\mathscr{P})^\pi_{B'/A'}$ is induced from the natural transformation of functors $(\mathscr{P}_{B/A})_{\underline{W}(B'/A')} \xrightarrow{\sim} \mathscr{P}_{B'/A'}$, which is compatible with the induced morphisms of representations. It follows that $\mathbb{D}(\mathscr{P})^{\pi_1} \to \mathbb{D}(\mathscr{P})^{\pi_2}$ is a morphism of crystals. Compatibility with tensor products follows from the definition of $\mathbb{D}(\mathscr{P})$ and the compatibility of $\mathscr{P}$ with tensor products. Exactness follows similarly, using that all modules are projective and hence all exact sequences in question split.
\end{proof}
\begin{Def}
	If $\mathscr{P}$ is an adjoint nilpotent  Tannakian $(G,\mu)$-display over $\underline{W}(R)$ for some $p$-nilpotent $W(k_0)$-algebra $R$, then the functor $\mathbb{D}(\mathscr{P})$ defined in Lemma \ref{lem-crystal} is the \textit{$G$-crystal associated with $\mathscr{P}$}. 
\end{Def}

\begin{lemma}\label{lem-crystalfunctorial}
	The assignment $\mathscr{P} \mapsto \mathbb{D}(\mathscr{P})$ is functorial in $\mathscr{P}$ and compatible with base change.
\end{lemma}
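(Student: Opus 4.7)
The goal is twofold: given a morphism $\varphi: \mathscr{P} \to \mathscr{P}'$ of adjoint nilpotent $(G,\mu)$-displays over $\underline{W}(R)$, produce a natural transformation $\mathbb{D}(\varphi): \mathbb{D}(\mathscr{P}) \to \mathbb{D}(\mathscr{P}')$ of exact tensor functors compatible with composition and identities; and given a homomorphism $\psi: R \to R'$ of $p$-nilpotent $W(k_0)$-algebras, produce a natural isomorphism $\mathbb{D}(\mathscr{P}_{\underline{W}(R')}) \cong \psi_{\textup{CRIS}}^\ast \mathbb{D}(\mathscr{P})$. In both cases the strategy is the same as in the construction of $\mathbb{D}(\mathscr{P})$ itself: fix a PD-thickening $B \to A$ over $R$ (resp.\ over $R'$), invoke Proposition \ref{prop-lifting} to extend the relevant datum uniquely from $\underline{W}(A)$ to $\underline{W}(B/A)$, and then base change via $W(B/A)^\oplus \xrightarrow{\tau} W(B) \xrightarrow{w_0} B$.

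For functoriality, start with $\varphi: \mathscr{P}\to\mathscr{P}'$ and let $B\to A$ be a PD-thickening over $R$. Base changing along $\underline{W}(R)\to\underline{W}(A)$ gives a morphism $\varphi_{\underline{W}(A)}: \mathscr{P}_{\underline{W}(A)}\to\mathscr{P}'_{\underline{W}(A)}$ of adjoint nilpotent $(G,\mu)$-displays. Because Proposition \ref{prop-lifting} is an equivalence of stacks (not merely essentially surjective), morphisms also lift uniquely: there is a unique morphism $\varphi_{B/A}: \mathscr{P}_{B/A}\to\mathscr{P}'_{B/A}$ over $\underline{W}(B/A)$ whose reduction to $\underline{W}(A)$ is $\varphi_{\underline{W}(A)}$. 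Evaluating on a representation $(V,\pi)$ and base changing along $\tau$ and $w_0$ yields $\mathbb{D}(\varphi)^\pi_{B/A}: \mathbb{D}(\mathscr{P})^\pi_{B/A}\to\mathbb{D}(\mathscr{P}')^\pi_{B/A}$. Uniqueness of the lift $\varphi_{B/A}$ guarantees compatibility with the transition isomorphisms $(\ref{eq-transition})$ (the two lifts of $\varphi_{\underline{W}(A')}$ to $\underline{W}(B'/A')$ obtained from $\varphi_{B/A}$ and $\varphi_{B'/A'}$ must coincide), so $\mathbb{D}(\varphi)^\pi$ is a morphism of crystals; naturality in $(V,\pi)$ is immediate from the tensor functoriality of $\mathscr{P}\mapsto \mathscr{P}_{B/A}$. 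Compatibility with composition $\mathbb{D}(\varphi'\circ\varphi)=\mathbb{D}(\varphi')\circ\mathbb{D}(\varphi)$ and with the identity again follows from uniqueness of lifts.

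For base change along $\psi: R\to R'$, let $B'\to A'$ be a PD-thickening over $R'$; composition with $\psi$ makes it into a PD-thickening over $R$ as well. By construction, $(\mathscr{P}_{\underline{W}(R')})_{\underline{W}(A')}$ coincides with $\mathscr{P}_{\underline{W}(A')}$, and both $\mathscr{P}_{B'/A'}$ (the lift relative to the $(G,\mu)$-display $\mathscr{P}$ viewed over $R$) and $(\mathscr{P}_{\underline{W}(R')})_{B'/A'}$ (the lift relative to $\mathscr{P}_{\underline{W}(R')}$) are lifts of $\mathscr{P}_{\underline{W}(A')}$ to $\underline{W}(B'/A')$. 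By the uniqueness part of Proposition \ref{prop-lifting} they are canonically isomorphic, and evaluating on $(V,\pi)$ and base changing produces a canonical isomorphism
\begin{equation*}
\mathbb{D}(\mathscr{P}_{\underline{W}(R')})^\pi_{B'/A'} \xrightarrow{\sim} \mathbb{D}(\mathscr{P})^\pi_{B'/A'} = (\psi_{\textup{CRIS}}^\ast \mathbb{D}(\mathscr{P})^\pi)_{B'/A'}.
\end{equation*}
These isomorphisms are compatible with the transition maps for morphisms of PD-thickenings over $R'$ by the same uniqueness argument, hence assemble into an isomorphism of crystals, natural in $(V,\pi)$ and compatible with the tensor product.

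The main obstacle is a bookkeeping one: all of the compatibilities boil down to verifying that various pairs of lifts of the same object (or morphism) over $\underline{W}(A)$ coincide, and the content is entirely contained in the uniqueness clause of Proposition \ref{prop-lifting}. Once this is granted, the induced maps automatically respect the tensor structure and the crystal structure because both are preserved termwise at each PD-thickening, and the cocycle conditions for transition isomorphisms transfer directly from those used to define $\mathbb{D}(\mathscr{P})$.
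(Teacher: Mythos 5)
Your proposal is correct and follows essentially the same route as the paper: lift the morphism (resp.\ compare the two lifts over a PD-thickening pulled back along $R\to R'$) uniquely via Proposition \ref{prop-lifting}, evaluate on $(V,\pi)$, base change along $W(B/A)^\oplus \xrightarrow{\tau} W(B) \to B$, and let the uniqueness clause handle all compatibilities with transition maps, composition, and the tensor structure. The paper is slightly more terse on base change, observing that both crystals are literally given by $\mathbb{D}(\mathscr{P})^\pi_{\alpha_!(B/A)}$ by definition, but this is the same observation you make.
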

\begin{proof}
	Suppose $\psi: \mathscr{P} \to \mathscr{P}'$ is a morphism of Tannakian $(G,\mu)$-displays. If $B\to A$ is a PD-thickening over $R$, denote by $\mathscr{P}_{B/A}$ the lift of $\mathscr{P}_{\underline{W}(A)}$ to $\underline{W}(B/A)$, and by $\mathscr{P}'_{B/A}$ the lift of $\mathscr{P}'_{\underline{W}(A)}$. By Theorem \ref{prop-lifting}, $\psi_{\underline{W}(A)}$ lifts uniquely to a morphism of Tannakian $(G,\mu)$-displays $\mathscr{P}_{B/A} \to \mathscr{P}'_{B/A}$ over $\underline{W}(B/A)$. In particular, for every $(V,\pi)$ we have a morphism
	\begin{align*}
	\psi_{B/A}^\pi: M_{B/A}^\pi \to (M_{B/A}')^\pi,
	\end{align*}
	where here we use notation as in (\ref{eq-liftnotation}). Tensoring this along $W(B/A)^\oplus \xrightarrow{\tau} W(B) \to B$ gives us a morphism
	\begin{align*}
	\mathbb{D}(\psi)^\pi_{B/A} : \mathbb{D}(\mathscr{P})^\pi_{B/A} \to \mathbb{D}(\mathscr{P}')^\pi_{B/A}
	\end{align*}
	for every $B\to A$ and every $(V,\pi)$. That this determines a morphism of crystals
	$\mathbb{D}(\psi)^\pi: \mathbb{D}(\mathscr{P})^\pi \to \mathbb{D}(\mathscr{P}')^\pi$ follows from the definition of the transition morphisms and Proposition \ref{prop-lifting}.
	Moreover, that the resulting morphism $\mathbb{D}(\psi): \mathbb{D}(\mathscr{P}) \to \mathbb{D}(\mathscr{P}')$ is a natural transformation and is compatible with tensor products both follow from the corresponding properties of the morphism $\psi_{B/A}$.
	
	If $\alpha: R \to R'$ is a $W(k_0)$-algebra homomorphism, write $\alpha^\ast \mathbb{D}(\mathscr{P})$ for the base change of $\mathbb{D}(\mathscr{P})$ to $R'$. Explicitly, for any PD-thickening $B \to A$ over $R'$ and representation $(V,\pi)$,
	\begin{align*}
	\alpha^\ast \mathbb{D}(\mathscr{P})^\pi_{B/A} = \mathbb{D}(\mathscr{P})^\pi_{\alpha_!(B/A)},
	\end{align*}
	where we write $\alpha_!(B/A)$ for the PD-thickening $B \to A$ over $R$ given by viewing $A$ as an $R$-algebra via restriction of scalars. Compatibility with base change follows, since by definition $\mathbb{D}(\mathscr{P}_{\underline{W}(R')})_{B/A}^\pi$ is also given by $\mathbb{D}(\mathscr{P})^\pi_{\alpha_!(B/A)}$.
\end{proof}

\begin{rmk}\label{lem-banalcrystal}
	Suppose that $\mathscr{P}$ is a banal Tannakian $(G,\mu)$-display over $\underline{W}(R)$ (see Definition \ref{def-banal}), so there exists an isomorphism $\psi: \mathscr{P}_U \xrightarrow{\sim} \mathscr{P}$ for $U \in L^+G(R)$ by Proposition \ref{prop-banal}. Fix a PD-thickening $B \to A$ over $R$, and denote by $U_A$ the image of $U$ under $G(W(R)) \to G(W(A))$. Any choice of lift $U_B$ of $U_A$ to $G(W(B))$ determines a Tannakian $(G,\mu)$-display $\mathscr{P}_{U_B}$ over $\underline{W}(B/A)$ which lifts $\mathscr{P}_U$. Hence by Proposition \ref{prop-lifting}, there exists a unique isomorphism $\psi_{U_B}: \mathscr{P}_{U_B} \xrightarrow{\sim} \mathscr{P}_{B/A}$ lifting $\psi_A$, where $\mathscr{P}_{B/A}$ is the unique lift of $\mathscr{P}_{\underline{W}(A)}$. From the definitions of $\mathscr{P}_{U_B}$ and of $\mathbb{D}(\mathscr{P})_{B/A}$, we obtain from $\psi_{U_B}$ an isomorphism of tensor functors $\overline{\psi}_{U_B}: \omega_B \xrightarrow{\sim} \mathbb{D}(\mathscr{P})_{B/A}$, where $\omega_B$ is the usual fiber functor (\ref{eq-usualfiberfunctor}).	
\end{rmk}

Suppose $\Lambda$ is a finite free $\zz_p$-module, $G = \textup{GL}(\Lambda)$ and $\mu$ is a minuscule cocharacter for $G$ whose weights are contained in $\{0,1\}$. Then by Remark \ref{rmk-GLn}, the category of  $(\text{GL}(\Lambda), \mu)$-displays over a $p$-adic $W(k_0)$-algebra $R$ is equivalent to the category of Zink displays of type $\mu$ over $R$. In \textsection \ref{sub-zinkcrystals} we recalled the definition of the crystal $\mathbb{D}(\underline{P})$ associated with a nilpotent Zink display $\underline{P}$.

Denote by $\Z_R$ the functor which gives the equivalence between  $(\text{GL}(\Lambda), \mu)$-displays over $\underline{W}(R)$ and Zink displays of type $\mu$ over $R$. By Lemma \ref{lem-nilpotent}, if $\mathscr{P}$ is a  $(\textup{GL}(\Lambda), \mu)$-display over $\underline{W}(R)$ such that $\Z_R(\mathscr{P})$ is nilpotent, then $\mathscr{P}$ is adjoint nilpotent. The following lemma describes the relationship between the $G$-crystal associated with $\mathscr{P}$ and the crystal associated with $\Z_R(\mathscr{P})$.

\begin{lemma}\label{lem-comparecrystals}
	Let $\mathscr{P}$ be a  $(\textup{GL}(\Lambda), \mu)$-display over $\underline{W}(R)$ such that the associated Zink display $\Z_R(\mathscr{P})$ is nilpotent, and denote by $(\Lambda, \iota)$ the standard representation of $\textup{GL}(\Lambda)$. Then there is a natural isomorphism of crystals
	\begin{align*}
	\mathbb{D}(\mathscr{P})^{\iota} \cong \mathbb{D}(\Z_R(\mathscr{P})).
	\end{align*}
\end{lemma}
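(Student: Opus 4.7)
The strategy is to match, on each PD-thickening $B \to A$ over $R$, the lift of $\mathscr{P}$ used to define $\mathbb{D}(\mathscr{P})^\iota$ with the lift of $\Z_R(\mathscr{P})$ used to define $\mathbb{D}(\Z_R(\mathscr{P}))$, and then to read off the identification of $B$-modules. Since $\Z_R(\mathscr{P})$ is nilpotent, the $(\textup{GL}(\Lambda),\mu)$-display $\mathscr{P}$ is adjoint nilpotent by Lemma \ref{lem-nilpotent}, so Proposition \ref{prop-lifting} furnishes a lift $\mathscr{P}_{B/A}$ of $\mathscr{P}_{\underline{W}(A)}$ to a $(\textup{GL}(\Lambda),\mu)$-display over $\underline{W}(B/A)$, unique up to unique isomorphism lifting the identity. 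Writing $\underline{M}_{B/A}^\iota = (M_{B/A}^\iota,F_{B/A}^\iota)$ for its value on the standard representation, by definition $\mathbb{D}(\mathscr{P})^\iota_{B/A} = \tau^\ast M_{B/A}^\iota \otimes_{W(B)} B$.

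Because $\mu$ is minuscule, Remark \ref{rmk-GLn} identifies $\underline{M}_{B/A}^\iota$ with a $1$-display of type $\mu$ over $\underline{W}(B/A)$, and Lemma \ref{lem-windows} then produces a window $\underline{P}' := P_{\underline{W}(B/A)}(\underline{M}_{B/A}^\iota)$ over $\mathcal{W}(B/A)$ whose underlying $W(B)$-module is exactly $\tau^\ast M_{B/A}^\iota$. Naturality of $P_{\underline{S}}$ in $\underline{S}$ applied to the frame homomorphism $\underline{W}(B/A) \to \underline{W}(A)$ implies that the reduction of $\underline{P}'$ along $\mathcal{W}(B/A) \to \mathcal{W}(A)$ equals the base change to $A$ of the nilpotent Zink display $\Z_R(\mathscr{P})$; in particular, $\underline{P}'$ is itself a nilpotent lift of $\Z_R(\mathscr{P})_A$ to $\mathcal{W}(B/A)$. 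By the uniqueness of such lifts (\cite[Thm. 44]{Zink2002}; see also \cite[Prop. 10.4]{Lau2010}), there is a canonical isomorphism $\underline{P}' \xrightarrow{\sim} \underline{\tilde{P}}$ lifting the identity, where $\underline{\tilde{P}}$ is the lift used to define $\mathbb{D}(\Z_R(\mathscr{P}))_{B/A} = \tilde{P}/I(B)\tilde{P}$. Since $B \cong W(B)/I(B)$, reducing the resulting $W(B)$-module isomorphism $\tau^\ast M_{B/A}^\iota \cong \tilde{P}$ mod $I(B)$ yields an isomorphism of $B$-modules
\begin{align*}
\mathbb{D}(\mathscr{P})^\iota_{B/A} \xrightarrow{\sim} \mathbb{D}(\Z_R(\mathscr{P}))_{B/A}.
\end{align*}

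To upgrade this into a natural isomorphism of crystals, I would check that these pointwise isomorphisms commute with transition maps along PD-morphisms $(B'\to A') \to (B\to A)$, and that the construction is functorial in $\mathscr{P}$. On both sides, the transition maps are the unique morphisms of the respective lifts which cover the canonical identification over $\underline{W}(A')$ (resp. $\mathcal{W}(A')$); compatibility is then forced by the uniqueness assertions of Proposition \ref{prop-lifting} and \cite[Thm. 44]{Zink2002}, and functoriality in $\mathscr{P}$ follows from the functoriality of $P_{\underline{W}(B/A)}$ together with the same uniqueness statements. The principal point of the argument, and the only step requiring genuine input, is the matching of the two lifting procedures: one must observe that Lau's torsor-theoretic lift and Zink's window-theoretic lift correspond under $P_{\underline{W}(B/A)}$. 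Once nilpotence of $\underline{P}'$ is confirmed via the identification of its reduction with $\Z_R(\mathscr{P})_A$, everything else is forced by uniqueness.
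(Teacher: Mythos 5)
Your proposal is correct and follows essentially the same route as the paper: lift $\mathscr{P}_{\underline{W}(A)}$ uniquely to $\underline{W}(B/A)$ via Proposition \ref{prop-lifting}, observe that evaluating on $(\Lambda,\iota)$ produces a window over $\mathcal{W}(B/A)$ lifting $\Z_R(\mathscr{P})_A$, and invoke uniqueness of nilpotent lifts (\cite[Thm. 44]{Zink2002}) to identify it with $\underline{\tilde{P}}$, whence $\tau^\ast M^\iota_{B/A}\cong \tilde{P}$ and the $B$-module identification follows. Your write-up is somewhat more explicit than the paper's about verifying nilpotence of the lifted window and about the cocycle/functoriality checks, but the underlying argument is identical.
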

\begin{proof}
	Let $B \to A$ be a PD-thickening over $R$, and let $\mathscr{P}_{B/A}$ be the unique lift of $\mathscr{P}_{\underline{W}(A)}$ to a  $(\textup{GL}(\Lambda),\mu)$-display over $\underline{W}(B/A)$. Then $\mathscr{P}_{B/A}(\Lambda,\iota)$ corresponds to a window over $\mathcal{W}(B/A)$ which lifts the Zink display corresponding to $\mathscr{P}_{\underline{W}(A)}(\Lambda,\iota)$. But $\underline{\tilde{P}}$ is the unique window over $\mathcal{W}(B/A)$ with this property, so it is isomorphic to the window associated with $\mathscr{P}_{B/A}(\Lambda,\iota)$. In particular, we obtain an isomorphism $\tau^\ast M^\iota_{B/A} \cong \tilde{P}_0$. The result follows.
\end{proof}
\subsection{$G$-displays of Hodge type}\label{sub-hodge}
Let us continue to assume that $G$ is a reductive group scheme over $\zz_p$ and that $\mu:\mathbb{G}_{m,W(k_0)} \to G_{W(k_0)}$ is a minuscule cocharacter for $G_{W(k_0)}$. 
\begin{Def}\label{def-hodgetype}
	We say the pair $(G,\mu)$ is \textit{of Hodge type} if there exists a closed embedding of $\zz_p$-group schemes $\eta: G \hookrightarrow \textup{GL}(\Lambda)$ for a finite free $\zz_p$-module $\Lambda$, such that after a choice of basis $\Lambda_{W(k_0)} \xrightarrow{\sim} W(k_0)^h$, the composition $\eta \circ \mu$ is the minuscule cocharacter $a \mapsto \textup{diag}(1^{(d)}, a^{(h-d)})$ of $\textup{GL}_h$ for some $d$. In this case, the representation $(\Lambda, \eta)$ is called a \textit{Hodge embedding} for $(G,\mu)$.
\end{Def}

If $(G,\mu)$ is of Hodge type, and $\mathscr{P}$ is a  Tannakian $(G,\mu)$-display over $\underline{W}(R)$, then $\mathscr{P}(\Lambda, \eta)$ is a 1-display over $\underline{W}(R)$. Let $\textup{Z}_{\eta,R}(\mathscr{P})$ denote the Zink display associated with this 1-display via Lemma \ref{lem-windows}. If the ring $R$ is clear from context, we will write simply $\Z_\eta(\mathscr{P})$.

\begin{Def}
	We say a Tannakian $(G,\mu)$-display $\mathscr{P}$ over $\underline{W}(R)$ is \textit{nilpotent with respect to $\eta$} if $\Z_\eta(\mathscr{P})$ is a nilpotent Zink display.
\end{Def}

This condition is local for the fpqc topology, and we denote by $G\textup{-}\textup{Disp}_{\underline{W},\mu}^{\otimes,\eta}$ the stack of Tannakian $(G,\mu)$-displays which are nilpotent with respect to $\eta$. 

\begin{lemma}\label{lem-wrteta}
	Suppose $(G,\mu)$ is of Hodge type, and let $\mathscr{P}$ be a Tannakian $(G,\mu)$-display over $R$. If $\mathscr{P}$ is nilpotent with respect to $\eta$, then it is adjoint nilpotent.
\end{lemma}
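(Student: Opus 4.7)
My plan is to reduce the statement to the general linear case via the Hodge embedding, and then compare the adjoint isocrystals at each point of $\Spec R/pR$.

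First, consider the pushforward $\eta(\mathscr{P})$, which is a $(\textup{GL}(\Lambda), \eta \circ \mu)$-display over $\underline{W}(R)$. Its evaluation on the standard representation $(\Lambda, \iota)$ is precisely $\mathscr{P}(\Lambda, \eta)$, and by the definition of $\Z_\eta(\mathscr{P})$ (via Lemma \ref{lem-windows}) the hypothesis that $\Z_\eta(\mathscr{P})$ is a nilpotent Zink display says that this $1$-display is nilpotent. Lemma \ref{lem-nilpotent} therefore yields that $\eta(\mathscr{P})$ is adjoint nilpotent: for every $x \in \Spec R/pR$, every slope of $N_{b'(x)}(\mathfrak{gl}(\Lambda), \textup{Ad}^{\textup{GL}(\Lambda)})$ is $>-1$, where $b'(x)$ is the element of $\textup{GL}(\Lambda)(K(x))$ produced from $\eta(\mathscr{P})$ by the recipe of \textsection\ref{sub-lifting}.

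Next I will relate $b'(x)$ to the element $b(x) \in G(K(x))$ attached to $\mathscr{P}$. If $u(x) \in G(W(k(x)))$ trivializes $\mathscr{P}_{k(x)}$ in the sense of Proposition \ref{prop-banal}, then $\eta(u(x))$ trivializes $\eta(\mathscr{P})_{k(x)}$ and, using $\eta \circ \mu^\sigma = (\eta \circ \mu)^\sigma$, one has
\begin{align*}
b'(x) \;=\; \eta(u(x))\,(\eta\circ\mu)^\sigma(p) \;=\; \eta(u(x)\,\mu^\sigma(p)) \;=\; \eta(b(x)).
\end{align*}
The differential $d\eta : \mathfrak{g} \hookrightarrow \mathfrak{gl}(\Lambda)$ is a closed embedding of $G$-representations (for the adjoint action on the target pulled back along $\eta$), so tensoring with $K(x)$ and using the identity $\textup{Ad}^{\textup{GL}(\Lambda)}(\eta(g)) \circ d\eta = d\eta \circ \textup{Ad}^G(g)$ gives an $F$-equivariant inclusion
\begin{align*}
N_{b(x)}(\mathfrak{g},\textup{Ad}^G) \;\hookrightarrow\; N_{b'(x)}(\mathfrak{gl}(\Lambda),\textup{Ad}^{\textup{GL}(\Lambda)})
\end{align*}
of $F$-isocrystals over $k(x)$.

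Finally, since $k(x)$ is algebraically closed, the category $\textup{$F$-Isoc}(k(x))$ is semisimple, isoclinic components are functorial, and the set of slopes of a subobject is contained in the set of slopes of the ambient object. Therefore every slope of $N_{b(x)}(\mathfrak{g},\textup{Ad}^G)$ is a slope of $N_{b'(x)}(\mathfrak{gl}(\Lambda),\textup{Ad}^{\textup{GL}(\Lambda)})$, hence $>-1$. This holds for every $x \in \Spec R/pR$, so $\mathscr{P}$ is adjoint nilpotent. The only step that requires a bit of care is the identification $b'(x) = \eta(b(x))$ together with the verification that the embedding $\mathfrak{g} \hookrightarrow \mathfrak{gl}(\Lambda)$ intertwines the respective Frobenius operators; the rest is formal.
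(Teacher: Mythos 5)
Your proof is correct and follows the same route as the paper: reduce to the $(\textup{GL}(\Lambda),\eta\circ\mu)$-display $\eta(\mathscr{P})$ via Lemma \ref{lem-nilpotent}, then pass from adjoint nilpotence of $\eta(\mathscr{P})$ back to $\mathscr{P}$. The only difference is that the paper cites \cite[3.7.1]{BP2017} for this last implication, whereas you spell it out via the identification $b'(x)=\eta(b(x))$ and the $F$-equivariant inclusion $N_{b(x)}(\mathfrak{g},\textup{Ad}^G)\hookrightarrow N_{b'(x)}(\mathfrak{gl}(\Lambda),\textup{Ad}^{\textup{GL}(\Lambda)})$, which is exactly the content of that reference.
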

\begin{proof}
	Notice $(\eta(\mathscr{P}))(\Lambda,\iota) = \mathscr{P}(\Lambda,\eta)$, so since $\Z_\eta(\mathscr{P})$ is nilpotent, it follows from Lemma \ref{lem-nilpotent} that $\eta(\mathscr{P})$ is an adjoint nilpotent $(\textup{GL}(\Lambda), \eta\circ\mu)$-display over $R$. Then $\mathscr{P}$ is an adjoint nilpotent Tannakian $(G,\mu)$-display over $\underline{W}(R)$ (cf. \cite[3.7.1]{BP2017}).
\end{proof}

In the remainder of this section, we assume $(G,\mu)$ is of Hodge type with Hodge embedding $(\Lambda, \eta)$. Let $\mathscr{P}$ be a Tannakian $(G,\mu)$-display over $\underline{W}(R)$ which is nilpotent with respect to $\eta$, so in particular $\mathscr{P}$ is adjoint nilpotent by Lemma \ref{lem-wrteta}, and we can associate a $G$-crystal $\mathbb{D}(\mathscr{P})$ to $\mathscr{P}$ as in the previous section. It is easy to see $\mathbb{D}(\mathscr{P})^\eta \cong \mathbb{D}(\eta_\ast \mathscr{P})^\iota$, so by Lemma \ref{lem-comparecrystals} we have a canonical isomorphism
\begin{align}\label{eq-eta}
\mathbb{D}(\mathscr{P})^\eta \cong \mathbb{D}(\Z_\eta(\mathscr{P})).
\end{align}
As a result we can endow $\mathbb{D}(\mathscr{P})^\eta$ with the structure of a Dieudonn\'e crystal using the Dieudonn\'e crystal structure on $\mathbb{D}(\Z_\eta(\mathscr{P}))$ as in Section \ref{sub-zinkcrystals}. Denote by 
\begin{align*}
\mathbb{F}: \phi^\ast\mathbb{D}(\mathscr{P})^\eta \to \mathbb{D}(\mathscr{P})^\eta \text{ and } \mathbb{V}:\mathbb{D}(\mathscr{P})^\eta \to \phi^\ast \mathbb{D}(\mathscr{P})^\eta
\end{align*}
the Frobenius and Verschiebung for $\mathbb{D}(\mathscr{P})^\eta$.

Suppose $pR=0$ and that $\mathscr{P}$ is banal, so there exists an isomorphism $\psi: \mathscr{P}_U \xrightarrow{\sim} \mathscr{P}$ for $U \in L^+G(R)$ by Proposition \ref{prop-banal}. As in Remark \ref{lem-banalcrystal}, for any PD-thickening $B \to A$ over $R$ and choice of lift $U_B$ to $G(W(B))$ of the image $U_A$ of $U$ in $G(W(A))$, we obtain an isomorphism of tensor functors $\overbar{\psi}_{U_B}: \omega_B \xrightarrow{\sim} \mathbb{D}(\mathscr{P})_{B/A}$. In particular, by evaluating $\overbar{\psi}_{U_B}$ on $(\Lambda, \eta)$, we have an isomorphism
\begin{align}\label{eq-etacrystal}
	\Lambda \otimes_{\zz_p} B \xrightarrow{\sim} \mathbb{D}(\mathscr{P})^\eta_{B/A}.
\end{align}

Let $\mathscr{P}^{(p)}$ denote the base change of $\mathscr{P}$ along $\phi: R \to R$. From the trivialization $\psi$ we obtain an isomorphism $\psi^{(p)}: (\mathscr{P}_U)^{(p)} \xrightarrow{\sim} \mathscr{P}^{(p)}$. Moreover, if $f(U)$ is the image of $U$ in $L^+G(R)$ under the Witt vector Frobenius $f$, then we have an isomorphism $\varepsilon: \mathscr{P}_{f(U)} \xrightarrow{\sim} (\mathscr{P}_U)^{(p)}$ given by
\begin{align}\label{eq-epsilon}
	V \otimes_{\zz_p} W(R)^\oplus \xrightarrow{\sim} V \otimes_{\zz_p} W(R)^\oplus \otimes_{W(R)^\oplus, W(\phi)^\oplus} W(R)^\oplus, \ x \otimes \xi \mapsto x \otimes 1 \otimes \xi
\end{align}
for every representation $(V,\pi)$. Hence $\mathscr{P}^{(p)}$ is banal, with trivialization $\psi^{(p)} \circ \varepsilon$. By functoriality, $f(U_B)$ is a lift of $f(U)$, so by Remark \ref{lem-banalcrystal}, we obtain an isomorphism $\overbar{\psi}^{(p)}_{f(U_B)}: \omega_B \xrightarrow{\sim} \mathbb{D}(\mathscr{P})_{B/A}$, which evaluates on $(\Lambda,\eta)$ to give a trivialization of $\mathbb{D}(\mathscr{P}^{(p)})^\eta_{B/A}$:
\begin{align}\label{eq-crystalp}
	\Lambda \otimes_{\zz_p} B \xrightarrow{\sim} \mathbb{D}(\mathscr{P}^{(p)})_{B/A}^\eta.
\end{align}
Moreover, there is a natural identification 
\begin{align}\label{eq-phistar}
	\mathbb{D}(\mathscr{P}^{(p)}) \xrightarrow{\sim} \phi^\ast\mathbb{D}(\mathscr{P}).
\end{align}
Indeed, this follows essentially from the definitions: since $\phi^\ast\mathbb{D}(\mathscr{P})_{B/A} = \mathbb{D}(\mathscr{P})_{\phi_!(B/A)}$, (see (\ref{eq-shriek})), in order to evaluate $\phi^\ast\mathbb{D}(\mathscr{P})$ on $B \to A$, we first base change $\mathscr{P}$ to $\underline{W}(A)$ along $R \xrightarrow{\phi} R \to A$. But this is exactly how we evaluate $\mathbb{D}(\mathscr{P}^{(p)})$ on $B \to A$. Thus combining (\ref{eq-crystalp}) with the identification (\ref{eq-phistar}), we obtain a trivialization of $\phi^\ast\mathbb{D}(\mathscr{P})^\eta_{B/A}$:
\begin{align}\label{eq-crystalptriv}
	\Lambda \otimes_{\zz_p} B \xrightarrow{\sim} \phi^\ast \mathbb{D}(\mathscr{P})^\eta_{B/A}.
\end{align}
Write $\bar{U}_{B}$ for the image of $U_B$ in $G(B)$ under $w_0: W(B) \to B$. 

\begin{lemma}\label{lem-explicitfrob}
	Let $B \to A$ be a PD-thickening over $R$. Then with respect to the trivializations $($\ref{eq-etacrystal}$)$ and $($\ref{eq-crystalptriv}$)$, the Frobenius and Verschiebung for $\mathbb{D}(\mathscr{P})^\eta_{B/A}$ are given by 
	\begin{align*}
	\eta(\bar{U}_{B}) \circ (\id_{\Lambda^0_{B}} \oplus p\cdot \id_{\Lambda^1_{B}}) \text{ and } (p\cdot \id_{\Lambda^0_{B}} \oplus \id_{\Lambda^1_{B}}) \circ \eta(\bar{U}_{B})^{-1},
	\end{align*}
	respectively. 
\end{lemma}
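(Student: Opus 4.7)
The plan is to reduce to the banal case $\mathscr{P} = \mathscr{P}_U$ and then compute directly using the explicit normal representation of the unique lift of $\Z_\eta(\mathscr{P}_U)$ to a window over $\mathcal{W}(B/A)$. First, by Lemma \ref{lem-crystalfunctorial} and naturality of the Dieudonn\'e structure induced by (\ref{eq-eta}) and Lemma \ref{lem-zinkdieudonne}, the isomorphism $\psi\colon \mathscr{P}_U \xrightarrow{\sim} \mathscr{P}$ and the composition $\psi^{(p)} \circ \varepsilon\colon \mathscr{P}_{f(U)} \xrightarrow{\sim} \mathscr{P}^{(p)}$ with $\varepsilon$ as in (\ref{eq-epsilon}) intertwine the Frobenius and Verschiebung on $\mathbb{D}(\mathscr{P}_U)^\eta$ with those on $\mathbb{D}(\mathscr{P})^\eta$, so it suffices to prove the formulas when $\mathscr{P} = \mathscr{P}_U$.

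In that case, by Proposition \ref{prop-lifting} the unique lift of $(\mathscr{P}_U)_{\underline{W}(A)}$ to a $(G,\mu)$-display over $\underline{W}(B/A)$ is $\mathscr{P}_{U_{B/A}}$; applying $\eta$ and passing through Lemma \ref{lem-windows} and Proposition \ref{prop-banal}, the unique lift $\underline{\tilde P}$ of the base change of $\Z_\eta(\mathscr{P}_U)$ to a window over $\mathcal{W}(B/A)$ has $\tilde P = \Lambda \otimes_{\zz_p} W(B)$ and admits the normal representation $(\tilde L_0, \tilde L_1, \tilde\Psi)$ with $\tilde L_i = \Lambda^i_{W(k_0)} \otimes_{W(k_0)} W(B)$ and $\tilde\Psi = \eta(U_{B/A}) \circ (\id_\Lambda \otimes f_B)$, where $f_B$ is the Witt vector Frobenius. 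Under the natural identification $f_B^\ast \tilde P \cong \Lambda \otimes_{\zz_p} W(B)$ sending $v \otimes a \otimes b \mapsto v \otimes f_B(a)\,b$, the linearization $\tilde\Psi^\sharp$ becomes the $W(B)$-linear automorphism $\eta(U_{B/A})$. Formulas (\ref{eq-F0sharp}) and (\ref{eq-Vsharp}) applied to $\underline{\tilde P}$ then give
\[
F_0^\sharp = \eta(U_{B/A}) \circ (\id_{\tilde L_0} \oplus p\cdot\id_{\tilde L_1}), \qquad V^\sharp = (p\cdot\id_{\tilde L_0} \oplus \id_{\tilde L_1}) \circ \eta(U_{B/A})^{-1}.
\]
Reducing modulo $I(B)$ via $w_0\colon W(B)\to B$ replaces $\eta(U_{B/A})$ by $\eta(\bar U_{B/A})$, producing the claimed formulas for $\mathbb{F}_{B/A}$ and $\mathbb{V}_{B/A}$.

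The main obstacle is verifying that the identifications in (\ref{eq-etacrystal}) agree with those produced above. For the target of $\mathbb{F}$ (equivalently, the source of $\mathbb{V}$) this is immediate from Lemma \ref{lem-comparecrystals}, which identifies $\tau^\ast M_{B/A}^\eta$ with $\tilde P$. For the source of $\mathbb{F}$, one unwinds the canonical isomorphism $\phi^\ast \mathbb{D}(\mathscr{P}_U) \cong \mathbb{D}((\mathscr{P}_U)^{(p)})$ from the Dieudonn\'e theory and composes with $\mathbb{D}(\varepsilon)^{-1}$ to land in $\mathbb{D}(\mathscr{P}_{f(U)})$, so that Remark \ref{lem-banalcrystal} yields the second identification of (\ref{eq-etacrystal}); checking that this matches the natural identification $f_B^\ast \tilde P \cong \Lambda \otimes_{\zz_p} W(B)$ used above amounts to tracing through the definition of $\varepsilon$ in (\ref{eq-epsilon}) at the level of standard data, which modulo $I(B)$ becomes the identity on $\Lambda \otimes_{\zz_p} B$. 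Once this compatibility is established, the formulas above transport to the required formulas for $\mathbb{F}_{B/A}$ and $\mathbb{V}_{B/A}$.
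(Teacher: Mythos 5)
Your proof is correct and follows essentially the same route as the paper's: reduce to the banal display $\mathscr{P}_U$, identify the unique window lift over $\mathcal{W}(B/A)$ via its explicit normal representation, apply (\ref{eq-F0sharp}) and (\ref{eq-Vsharp}), and reconcile the Frobenius-twist identification through $\varepsilon$. The paper merely packages the same computation using functoriality of $\textup{Ver}$ and uniqueness of lifts of morphisms of windows rather than computing directly on the lifted window, so the two arguments are interchangeable.
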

\begin{proof}	
	If $\mathscr{P}$ is a Tannakian $(G,\mu)$-display over $\underline{W}(R)$ (resp. over  $\underline{W}(B/A)$ for some PD-thickening $B \to A$) which is nilpotent with respect to $\eta$, then we will denote by $\Z_\eta(\mathscr{P})$ the associated Zink display (resp. window over $\mathcal{W}(B/A)$). If $\underline{P} = \Z_\eta(\mathscr{P})$ is the Zink display associated with $\mathscr{P}$, then by compatibility of $\Z_\eta$ with base change we have $\Z_\eta(\mathscr{P}^{(p)}) \cong \underline{P}^{(p)}$. For any Zink display $\underline{P}$ over $R$ and any PD-thickening $B \to A$, denote by $\underline{P}_{B/A}$ the unique lift of of $\underline{P}_A$ to a window over $\mathcal{W}(B/A)$. Recall from the proof of Lemma \ref{lem-comparecrystals} that if $\underline{P} = \Z_\eta(\mathscr{P})$, then $\underline{P}_{B/A} = \mathscr{P}_{B/A}(\Lambda, \eta)$, where $\mathscr{P}_{B/A}$ is the unique lift of $\mathscr{P}_{\underline{W}(A)}$ to $\underline{W}(B/A)$. 
	
	Let $\mathscr{P}$ be a banal Tannakian $(G,\mu)$-display over $R$ with trivialization isomorphism $\psi: \mathscr{P}_U \xrightarrow{\sim} \mathscr{P}$. By replacing $\mathscr{P}$ by $\mathscr{P}_{\underline{W}(A)}$, we may assume $A = R$. Let us start by proving the lemma for the Frobenius. We want an explicit description of the map
	\begin{align}\label{eq-lemmafrob}
	\Lambda \otimes_{\zz_p} B \xrightarrow{\sim} \phi^\ast\mathbb{D}(\mathscr{P})^\eta_{B/R} \xrightarrow{\mathbb{F}_{B/R}} \mathbb{D}(\mathscr{P})^\eta_{B/R} \xrightarrow{\sim} \Lambda \otimes_{\zz_p} B,
	\end{align}
	where the first arrow is (\ref{eq-crystalptriv}) and the last is (\ref{eq-etacrystal}). The unique lift $\psi_{U_B}: \mathscr{P}_{U_B} \xrightarrow{\sim} \mathscr{P}_{B/R}$ of $\psi$ induces an isomorphism of $\mathcal{W}(B/R)$ windows
	\begin{align}\label{eq-Zetapsi} 
		\Z_\eta(\mathscr{P}_U)_{B/R} \xrightarrow{\sim} \Z_\eta(\mathscr{P})_{B/R}.
	\end{align}
	Similarly, the trivialization $\psi^{(p)}\circ \varepsilon$ (see (\ref{eq-epsilon})) induces an isomorphism \begin{align}\label{eq-Zetapsip}
		\Z_\eta(\mathscr{P}_{f(U)})_{B/R} \xrightarrow{\sim} \Z_\eta(\mathscr{P}^{(p)})_{B/R}.
	\end{align}
	Denote by $\tilde{F}_0^\sharp$ the unique lift of the display Verschiebung $\textup{Ver}_{\Z_\eta(\mathscr{P})}:\Z_\eta(\mathscr{P}^{(p)}) \to \Z_\eta(\mathscr{P})$ to a morphism of $\mathcal{W}(B/R)$-windows. Then (\ref{eq-lemmafrob}) is the reduction modulo $I_B$ of the following composition:
	\begin{align}\label{eq-frobcomp1}
	\Z_\eta(\mathscr{P}_{f(U)})_{B/R} \xrightarrow{\sim}  \Z_\eta(\mathscr{P}^{(p)})_{B/R} \xrightarrow{\tilde{F}_0^\sharp} \Z_\eta(\mathscr{P})_{B/R} \xleftarrow{\sim} \Z_\eta(\mathscr{P}_U)_{B/R},
	\end{align}
	where the first arrow is (\ref{eq-Zetapsip}), and the last is the inverse of (\ref{eq-Zetapsi}).	In turn, (\ref{eq-frobcomp1}) is the unique lift of the composition 
	\begin{align}\label{eq-frobcomp2}
	\Z_\eta(\mathscr{P}_{f(U)}) \xrightarrow{\Z_\eta(\varepsilon)} \Z_\eta(\mathscr{P}_U)^{(p)} \xrightarrow{\Z_\eta(\psi)^{(p)}} \Z_\eta(\mathscr{P})^{(p)} \xrightarrow{\textup{Ver}_{\Z_\eta(\mathscr{P})}} \Z_\eta(\mathscr{P}) \xrightarrow{\Z_\eta(\psi^{-1})} \Z_\eta(\mathscr{P}_U).
	\end{align}
	Hence to prove the lemma for the Frobenius it is enough to show (\ref{eq-frobcomp2}) is given by $\eta(U) \circ (\id \oplus p \cdot \id)$. By functoriality of $\textup{Ver}$, we can rewrite (\ref{eq-frobcomp2}) as the composition $\textup{Ver}_{\Z_\eta(\mathscr{P}_U)} \circ \Z_\eta(\varepsilon)$, and we have an explicit description of $\textup{Ver}_{\Z_\eta(\mathscr{P}_U)}$ (see (\ref{eq-F0sharp})):
	\begin{align*}
	\textup{Ver}_{\Z_\eta(\mathscr{P}_U)} = ((\eta(U)\circ (\id_{\Lambda} \otimes f))^\sharp \circ (\id_{f^\ast \Lambda^0_{W(R)}} \oplus p \cdot \id_{f^\ast \Lambda^1_{W(R)}}).
	\end{align*}
	The result for the Frobenius follows because $(\eta(U)\circ (\id_\Lambda \otimes f))^\sharp = \eta(U) \circ Z(\varepsilon)^{-1}$, and $\Z_\eta(\varepsilon)^{-1}$ commutes with $\id \oplus p\cdot \id$. The computation is nearly identical for the Verschiebung, using the explicit description (\ref{eq-Vsharp}) of the Frobenius for $\Z_\eta(\mathscr{P}_U)$.
\end{proof}

For any finite free $\zz_p$-module $\Lambda$, let $\Lambda^\otimes = \bigoplus_{m,n} \Lambda^{\otimes m} \otimes_{\zz_p} (\Lambda^\vee)^{\otimes n}$ denote the total tensor algebra of $\Lambda \oplus \Lambda^\vee$. For any element $s \in \Lambda^\otimes$ and $\zz_p$-algebra $R$, write $(s \otimes 1)_R$ for the map $R \to \Lambda^\otimes \otimes_{\zz_p} R$ given by $1 \mapsto s \otimes 1$. If the pair $(G,\mu)$ is of Hodge type, then by \cite[Prop. 1.3.2]{Kisin2010} and \cite{Deligne2011}, there exists a finite collection of tensors $\underline{s} = (s_1, \dots, s_r)$ with $s_i \in \Lambda^\otimes$ such that, for all $\zz_p$-algebras $R$, 
\begin{align*}
G(R) = \{ g \in \textup{GL}(\Lambda\otimes_{\zz_p} R) \mid g (s_i \otimes 1)_R = (s_i \otimes 1)_R \text{ for all }i\}.
\end{align*}
We say the collection of tensors $\underline{s}$ defines the group $G$ inside $\textup{GL}(\Lambda)$. Without loss of generality, we may assume that, for each $i$, we have  
\begin{align*}
s_i \in \Lambda^{\otimes m_i} \otimes (\Lambda^\vee)^{\otimes n_i}
\end{align*}
for some $m_i$ and $n_i$. Let $\Lambda(i) = \Lambda^{\otimes m_i} \otimes (\Lambda^\vee)^{\otimes n_i}$. This is a $G$-stable submodule of $\Lambda^\otimes$, and we will denote by $(\Lambda(i), \eta(i))$ the corresponding representation. For every $i$, $s_i$ defines a morphism of representations
\begin{align}\label{eq-si}
s_i: \zz_p \to \Lambda(i), \ 1 \mapsto s_i,
\end{align}
where $\zz_p$ denotes the trivial representation. Each $\Lambda(i)$ is canonically graded by the action of the cocharacter $\mu$, and since $s_i$ is $G$-invariant, we see $s_i \in (\Lambda(i))^0$. 

\begin{Def}\label{def-lochodge}
	A \textit{local Hodge embedding datum} is a tuple $\underline{G} = (G,\mu,\Lambda, \eta, \underline{s})$, where
	\begin{itemize}
		\item $(G,\mu)$ is a pair consisting of a reductive $\zz_p$-group scheme and a minuscule cocharacter $\mu$ of $G_{W(k_0)}$ such that $(G,\mu)$ is of Hodge type,
		\item $\eta: G \hookrightarrow \textup{GL}(\Lambda)$ is a Hodge embedding for $(G,\mu)$, and
		\item $\underline{s} = (s_1, \dots, s_r)$ is a collection of tensors which define $G$ inside of $\textup{GL}(\Lambda)$.
	\end{itemize}
\end{Def}

If $\mathscr{P}$ is an adjoint nilpotent Tannakian $(G,\mu)$-display over $R$, we may apply $\mathbb{D}(\mathscr{P})$ to $s_i$ to obtain a morphism of crystals
\begin{align*}
t_i: \mathbbm{1} \to \mathbb{D}(\mathscr{P})^{\eta(i)}.
\end{align*}
Notice that $\phi^\ast \mathbbm{1}$ is canonically identified with $\mathbbm{1}$, so we likewise obtain a morphism $t_i: \mathbbm{1} \to \phi^\ast \mathbb{D}(\mathscr{P})^{\eta(i)}$.

If $\mathscr{P}$ is nilpotent with respect to $\eta$, we have an identification $\mathbb{D}(\mathscr{P})^\eta \cong \mathbb{D}(\underline{P})$, where $\underline{P} = \Z_\eta(\mathscr{P})$. Since $\mathbb{D}(\mathscr{P})$ is compatible with tensor products, we see
\begin{align*}
\mathbb{D}(\mathscr{P})^{\eta(i)} = \mathbb{D}(\underline{P})^{\otimes m_i} \otimes (\mathbb{D}(\underline{P})^\vee)^{\otimes n_i}.
\end{align*}
The Frobenius $\mathbb{F}$ on $\mathbb{D}(\Z_\eta(\mathscr{P}))$ (see (\ref{eq-frobdef})) extends to tensor products, and it extends to (linear) duals after we pass to the associated isocrystal. By the relation $\mathbb{FV} = p$, we see that the resulting extension of $\mathbb{F}$ to $\mathbb{D}(\underline{P})^\vee[1/p]$ is given by $\mathbb{F} = p^{-1} \mathbb{V}^t$. Hence $\mathbb{F}$ extends to a morphism of isocrystals
\begin{align}\label{eq-tensorfrob}
\mathbb{F}_i: \phi^\ast\mathbb{D}(\mathscr{P})^{\eta(i)}[1/p] \to \mathbb{D}(\mathscr{P})^{\eta(i)}[1/p]
\end{align}

\begin{prop}\label{prop-frobeq}
	For each $i$, $t_i$ is Frobenius equivariant, i.e., $\mathbb{F}_i \circ t_i = t_i$. 
\end{prop}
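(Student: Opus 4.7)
My plan is to prove Frobenius equivariance by reducing to the banal case and carrying out an explicit representation-theoretic calculation.

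Since equality of morphisms of isocrystals is local for the \'etale topology by Lemma \ref{lem-etalelocal}, and every $(G,\mu)$-display is \'etale locally banal by Proposition \ref{prop-banal}, I may assume that $\mathscr{P} = \mathscr{P}_U$ is banal for some $U \in L^+G(R)$. Under the trivialization furnished by Remark \ref{lem-banalcrystal}, the crystal $\mathbb{D}(\mathscr{P})^{\eta(i)}_{B/A}$ is identified with the constant module $\Lambda(i) \otimes_{\zz_p} B$ for every PD-thickening $B \to A$ over $R$, and the morphism $t_i$ is identified with the assignment $1 \mapsto s_i \otimes 1$. Thus the proposition reduces to verifying that $\mathbb{F}_i(s_i \otimes 1) = s_i \otimes 1$ inside $\Lambda(i) \otimes_{\zz_p} B[1/p]$.

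Next I would invoke Lemma \ref{lem-explicitfrob} to identify the Frobenius $\mathbb{F}_\eta$ on $\mathbb{D}(\mathscr{P})^\eta_{B/A}$ with $\eta(\bar U_{B/A}) \circ (\id_{\Lambda^0 \otimes B} \oplus p \cdot \id_{\Lambda^1 \otimes B})$. Because $\eta \circ \mu$ is the cocharacter $a \mapsto \textup{diag}(1^{(d)}, a^{(h-d)})$, the operator $\id_{\Lambda^0 \otimes B} \oplus p \cdot \id_{\Lambda^1 \otimes B}$ coincides with the action of $\mu(p)$ through $\eta$. Consequently $\mathbb{F}_\eta$ is the action of $\bar U_{B/A}\,\mu(p) \in G(B)$ through $\eta$. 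By functoriality of the tensor-and-dual extension of the Dieudonn\'e crystal structure in the isocrystal category, combined with the representation-theoretic identities $(\pi_1 \otimes \pi_2)(X) = \pi_1(X) \otimes \pi_2(X)$ and $\pi^\vee(X) = (\pi(X)^{-1})^t$, the induced Frobenius $\mathbb{F}_i$ on $\Lambda(i) \otimes_{\zz_p} B[1/p]$ acts as $\eta(i)(\bar U_{B/A}\,\mu(p))$, where the inverse appearing in the dual factors is taken inside $G(B[1/p])$.

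The proposition then follows from the $G$-invariance of $s_i$: since $\underline{s}$ defines $G$ inside $\textup{GL}(\Lambda)$, every element of $G(B[1/p])$ fixes $s_i \otimes 1$, so in particular
\[
\mathbb{F}_i(t_i(1)) \;=\; \eta(i)\!\left(\bar U_{B/A}\,\mu(p)\right)(s_i \otimes 1) \;=\; s_i \otimes 1 \;=\; t_i(1).
\]
The main obstacle is to justify the identification of $\mathbb{F}_i$ with $\eta(i)(\bar U_{B/A}\,\mu(p))$; this requires tracking how the Dieudonn\'e Frobenius on $\mathbb{D}(\mathscr{P})^\eta$ extends Tannakianly to tensor powers and duals, and is precisely the reason we must pass to isocrystals in order to invert $\mathbb{F}_\eta$ on the dual factors. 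Once this identification is in place, $G$-invariance of the tensors $s_i$ delivers the result immediately and uniformly for all $i$.
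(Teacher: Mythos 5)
Your core computation is the same as the paper's and your packaging of it is actually cleaner: the paper verifies by hand, via the weight decomposition of $\Lambda(i)\otimes_{\zz_p} D_n$ and explicit bookkeeping of the powers of $p$, that $\mathbb{F}'_{i,n}=\mathbb{F}^{\otimes m_i}\otimes(\mathbb{V}^t)^{\otimes n_i}$ acts as $p^{n_i}\cdot\eta(i)(\bar{U})$ on the weight-zero summand containing $s_i\otimes 1$, whereas you observe once and for all that $\id_{\Lambda^0}\oplus p\cdot\id_{\Lambda^1}=\eta(\mu(p))$, so that $\mathbb{F}_i=\eta(i)(\bar{U}_{B/A}\,\mu(p))$ on all of $\Lambda(i)\otimes B[1/p]$ and $G$-invariance of $s_i$ finishes the argument. (Note only that $\mu(p)$ lies in $G(B[1/p])$, not $G(B)$, since $p$ is not a unit; your later phrasing shows you know this.) However, there are two gaps in your reduction. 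First, Lemma \ref{lem-explicitfrob} is stated under the hypothesis $pR=0$; you must first pass from $R$ to $R/pR$ via the equivalence (\ref{eq-crystalequiv}) between $\textup{Isoc}(R)$ and $\textup{Isoc}(R/pR)$ before invoking it, as the paper does.

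Second, and more seriously, your proposed verification target is empty: you reduce to checking $\mathbb{F}_i(s_i\otimes 1)=s_i\otimes 1$ inside $\Lambda(i)\otimes_{\zz_p}B[1/p]$ for PD-thickenings $B\to A$ over $R$, but every such $B$ is $p$-nilpotent, so $B[1/p]=0$ and the check is vacuous. Equality of morphisms of isocrystals cannot be tested on the rationalized evaluations at $p$-nilpotent thickenings. The paper resolves this by evaluating on the $p$-adic PD-thickening $D^\wedge\to R$ (the completed divided power envelope of a polynomial presentation of $R$), where $D^\wedge[1/p]\neq 0$, and by proving in Lemma \ref{lem-faithful} that the functor $(-)_{D^\wedge/R}[1/p]$ is injective on $\textup{Hom}(\mathbbm{1},\mathbb{D}(\mathscr{P})^{\eta(i)})[1/p]$; the explicit computation is then carried out on the finite quotients $D_n=D^\wedge/p^nD^\wedge$ and assembled in the inverse limit before inverting $p$. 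Your argument needs this (or an equivalent faithfulness statement for some $p$-adic thickening) to carry any content; once it is inserted, your identification $\mathbb{F}_i=\eta(i)(\bar{U}_{D_n/R}\,\mu(p))$ and the $G$-invariance of $s_i$ do complete the proof.
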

\begin{proof}
	By the equivalence (\ref{eq-crystalequiv}) between $\textup{Isoc}(R)$ and $\textup{Isoc}(R/pR)$, we may assume $pR = 0$. Moreover, by Lemma \ref{lem-faithful}, it is enough to show the result after applying the functor $(-)_{D^\wedge/R}[1/p]: \textup{Isoc}(R) \to \textup{Mod}(D^\wedge[1/p])$, so it is enough to show $\mathbb{F}_i$ fixes 
	\begin{align*}
		(t_i)_{D^\wedge/R} : D^\wedge \to \mathbb{D}(\mathscr{P})^{\eta(i)}_{D^\wedge/R}[1/p].
	\end{align*}
	Notice $\mathbb{F}_i$ is given by $p^{-n_i}\mathbb{F}_i'$, where $\mathbb{F}_i' = \mathbb{F}^{\otimes m_i} \otimes (\mathbb{V}^t)^{\otimes n_i}$ is a morphism of crystals $\phi^\ast \mathbb{D}(\mathscr{P})^{\eta(i)} \to \mathbb{D}(\mathscr{P})^{\eta(i)}$. As in \textsection \ref{sub-reviewcrystals}, let write $D_n = D^\wedge / p^n D^\wedge$. If we denote by $\mathbb{F}'_{i,n}$ the evaluation of $\mathbb{F}_i'$ on $\phi^\ast\mathbb{D}(\mathscr{P})^{\eta(i)}_{D_n/R}$, then 
	\begin{align*}
	\mathbb{F}_i (t_i) = p^{-n_i}\cdot(\mathbb{F}'_{i,n}(t_i))_{k\in \zz_{>0}} \in \left( \varprojlim \mathbb{D}(\mathscr{P})^{\eta(i)}_{D_n/R}\right) [1/p].
	\end{align*}
	Hence we see it is enough to show $\mathbb{F}'_{i,n}(t_i) = p^{n_i}t_i$ for all $n \in \mathbb{Z}_{>0}$. 
	
	By Lemma \ref{lem-etalelocal} we may replace $R$ by an \'etale faithfully flat extension, and since every Tannakian $(G,\mu)$-display is \'etale locally banal (see the proof of \cite[Lem. 5.4.2]{Lau2018}), we may assume $\mathscr{P}$ is banal. Let $\psi: \mathscr{P}_U \xrightarrow{\sim} \mathscr{P}$ be a trivialization of $\mathscr{P}$ for some $U \in L^+G(R)$. We obtain also a trivialization of $\mathscr{P}^{(p)}$ by $\psi^{(p)} \circ \varepsilon: \mathscr{P}_{f(U)} \xrightarrow{\sim} \mathscr{P}^{(p)}$, where $\varepsilon$ is defined in (\ref{eq-epsilon}). If we choose a lift $U_{D_n}$ of $U$ to $G(W(D_n))$, then these trivializations induce isomorphisms
	\begin{align}\label{eq-trivDk}
		\Lambda \otimes_{\zz_p} D_n \xrightarrow{\sim} \mathbb{D}(\mathscr{P})^\eta_{D_n/R} \text{  and  } \Lambda \otimes_{\zz_p} D_n \xrightarrow{\sim} \phi^\ast \mathbb{D}(\mathscr{P})^\eta_{D_n/R}
	\end{align}
	as in (\ref{eq-etacrystal}) and (\ref{eq-crystalptriv}). Under these identifications, the morphisms $t_i$ correspond to 
	\begin{align*}
		D_n \to \Lambda(i) \otimes_{\zz_p} D_n, \ 1 \mapsto s_i \otimes 1,
	\end{align*}
	since the isomorphisms (\ref{eq-trivDk}) are induced by isomorphisms of tensor functors.
	
	Denote by $\mathbb{F}_{D_n}$ and $\mathbb{V}_{D_n}$ the evaluations of $\mathbb{F}$ and $\mathbb{V}$ respectively on $D_n \to R$. By Lemma \ref{lem-explicitfrob}, with respect to the trivializations (\ref{eq-trivDk}) we have
	\begin{align}\label{eq-explicit}
		\mathbb{F}_{D_n/R} = \eta(\bar{U}_{D_n}) \circ (\id_{\Lambda_{D_n}^0} \oplus p \cdot \id_{\Lambda_{D_n}^1}) \text{ and }\mathbb{V}_{D_n/R} = (p \cdot \id_{\Lambda_{D_n}^0} \oplus \id_{\Lambda_{D_n}^1})\circ \eta(\bar{U}_{D_n})^{-1},
	\end{align}
	where $\bar{U}_{D_n}$ is the image of $U_{D_n}$ under $w_0: W(D_n) \to D_n$. 
	
	Since $\Lambda_{W(k_0)}$ decomposes as $\Lambda_{W(k_0)} = \Lambda^0 \oplus \Lambda^1$, we see that $\Lambda(i)\otimes_{\zz_p} D_n$ can be written as a direct sum of terms of the form
	\begin{align*}
	(\Lambda^0_{D_n})^{\otimes j} \otimes_{D_n} (\Lambda^1_{D_n})^{\otimes m_i - j} \otimes_{D_n} ((\Lambda^0_{D_n})^\vee)^{\otimes k} \otimes_{D_n} ((\Lambda^1_{D_n})^\vee)^{\otimes n_i - k}.
	\end{align*}
	Moreover, since $s_i \in (\Lambda(i))^0$, each $s_i \otimes 1$ is contained in a direct sum of terms which satisfy $m_i-j = n_i - k$. By (\ref{eq-explicit}), $\mathbb{F}'_{i,n}$ acts on such a term by 
	\begin{align}\label{eq-etai}
	\eta(\bar{U}_{D_n})^{\otimes j} \otimes p^{m_i - j} \eta(\bar{U}_{D_n})^{\otimes m_i - j} \otimes p^k \eta^\vee(\bar{U}_{D_n})^{\otimes k} \otimes \eta^\vee(\bar{U}_{D_n})^{\otimes n_i - k},
	\end{align}
	where $\eta^\vee$ denotes the contragradient representation. Since $m_i-j = n_i - k$, (\ref{eq-etai}) is equal to $p^{n_i}\cdot\eta(i)(\bar{U}_{D_n/R})$, so
	\begin{align*}
	\mathbb{F}_{i,n}'(s_i \otimes 1) = p^{n_i} \cdot \eta(i)(\bar{U}_{D_n/R})(s_i \otimes 1) = p^{n_i} \cdot (s_i \otimes 1),
	\end{align*}
	with the last equality following because $\eta(i)(\bar{U}_{D_n/R})$ fixes $s_i\otimes 1$ for every $n$. Thus $\mathbb{F}_i\circ t_i = t_i$.
\end{proof}
\section{$G$-displays and formal $p$-divisible groups}\label{section-main}
Let $G$ be a reductive $\zz_p$-group scheme and let $\mu$ be a minuscule cocharacter for $G_{W(k_0)}$. Moreover, assume that the pair $(G,\mu)$ is of Hodge type, and that $\underline{G} = (G,\mu,\Lambda, \eta, \underline{s})$ is a local Hodge embedding datum. In \textsection \ref{sub-tatetensors} we define a notion of $p$-divisible groups with $(\underline{s},\mu)$-structure (Definition \ref{def-smu}) and prove that these objects form an \'etale stack on $\textup{Nilp}_{W(k_0)}$ (Lemma \ref{lem-smudescent}). In \textsection \ref{sub-functor}, we define a functor from Tannakian $(G,\mu)$-displays which are nilpotent with respect to $\eta$ to formal $p$-divisible groups with $(\underline{s},\mu)$-structure over $R$ in $\textup{Nilp}_{W(k_0)}$ (Lemma \ref{lem-tensors}). In \textsection \ref{sub-mainthm} we prove the functor is an equivalence if $R/pR$ has a $p$-basis \'etale locally, (Theorem \ref{mainthm}). In sections \ref{sub-rzspaces} and \ref{sub-deformations} we establish corollaries of the main theorem. In particular, in \textsection \ref{sub-rzspaces}, using Theorem \ref{mainthm}, we prove that the RZ-functors of Hodge type defined in \cite{Kim2018} and in \cite{BP2017} are naturally equivalent, and in \textsection \ref{sub-deformations} we study the deformation theory of $p$-divisible groups with $(\underline{s},\mu)$-structure.
\subsection{Crystalline Tate tensors}\label{sub-tatetensors}
Let $R$ be a $p$-nilpotent $W(k_0)$-algebra, and let $\underline{\mathbb{D}} = (\mathbb{D},\mathbb{F},\mathbb{V})$ be a Dieudonn\'e crystal on $\Spec R$. Suppose $\mathbb{D}_{R/R}$ is equipped with a filtration by finite projective $R$-modules
\begin{align}\label{eq-filt}
\textup{Fil}^0(\mathbb{D}) = \mathbb{D}_{R/R} \supset \textup{Fil}^1(\mathbb{D}) \supset \textup{Fil}^2(\mathbb{D}) = 0.
\end{align}
Extending the notation of the previous section, let us denote by $\mathbb{D}^\otimes$ the total tensor algebra of $\mathbb{D} \oplus \mathbb{D}^\vee$. This is a crystal of finite locally free $\mathcal{O}_{\Spec R/W(k_0)}$-modules, and the filtration (\ref{eq-filt}) naturally extends to a filtration for $\mathbb{D}^\otimes_{R/R}$. Further, the Frobenius for $\mathbb{D}$ endows the associated isocrystal $\mathbb{D}^\otimes[1/p]$ with the structure of an $F$-isocrystal as in (\ref{eq-tensorfrob}).

\begin{Def}
	A \textit{crystalline Tate tensor} for $\underline{\mathbb{D}}$ over $\Spec{R}$ is a morphism $t: \mathbbm{1} \to \mathbb{D}^\otimes$ of locally free crystals of $\mathcal{O}_{\Spec R/W(k_0)}$-modules such that $t_{R}(R) \subset \textup{Fil}^0(\mathbb{D}^\otimes)$	and such that the induced morphism of isocrystals $\mathbbm{1} \to \mathbb{D}^\otimes[1/p]$ is Frobenius equivariant.
\end{Def}

Let $\underline{G} = (G,\mu,\Lambda, \eta, \underline{s})$ be a local Hodge embedding datum in the sense of Definition \ref{def-lochodge}. As in the previous section, we have $s_i \in \Lambda^{\otimes m_i} \otimes (\Lambda^\vee)^{\otimes n_i} = \Lambda(i)$ for every $i$. More generally, throughout this section, we fix the pair $(m_i, n_i)$ associated with each $i$, and for any object $N$ in a rigid tensor category we define $N(i) := N^{\otimes m_i} \otimes (N^\vee)^{\otimes n_i}$. If $\psi$ is a morphism $N \to N'$, write $\psi(i)$ for the induced morphism $N(i) \to N'(i)$. 

\begin{Def}\label{def-smu}
	Let $R$ be a $p$-nilpotent $W(k_0)$-algebra, and let $\underline{\mathbb{D}}$ be a Dieudonn\'e crystal over $R$ whose $R$-sections are equipped with a filtration (\ref{eq-filt}). An \textit{$(\underline{s},\mu)$-structure on $\underline{\mathbb{D}}$} over $\Spec R$ is a finite collection of crystalline Tate tensors $\underline{t} = (t_1, \dots, t_r)$ satisfying the following conditions:
	\begin{enumerate}[(i)]		
		\item For every PD-thickening $B \to A$ over $R$, there is an extension $B \to B'$ which is faithfully flat and of finite presentation such that there is an isomorphism 
		\begin{align*}
		(\Lambda \otimes_{\zz_p} B', (\underline{s} \otimes 1)_{B'}) \xrightarrow{\sim} (\mathbb{D}_{B'/A'}, \underline{t}_{B'}),
		\end{align*}
		where $A' = A \otimes_B B'$. 
		\item For some faithfully flat \'etale extension $R \to R'$, there is an isomorphism
		\begin{align*}
		(\Lambda \otimes_{\zz_p} R', (\underline{s} \otimes 1)_{R'}) \xrightarrow{\sim} (\mathbb{D}_{R/R} \otimes_R R', \underline{t}_{R'})
		\end{align*}
		respecting the tensors, such that the filtration $\textup{Fil}^1(\mathbb{D})\otimes_R R' \subset \mathbb{D}_{R/R} \otimes_R R'  \xrightarrow{\sim} \Lambda \otimes_{\zz_p} R'$ is induced by $\mu$.
	\end{enumerate}
\end{Def}

\begin{rmk}\label{rmk-consequences}
	Let us derive a few consequences of Definition \ref{def-smu}, compare \cite[Def. 2.3.3 and Rmk. 2.3.5(b)]{HP2017}. Suppose $\underline{t} = (t_1, \dots, t_r)$ is an $(\underline{s},\mu)$-structure on a Dieudonn\'e crystal $\underline{\mathbb{D}}$ over $R$. Let $B \to A$ be a PD-thickening over $R$, and let  $T_{\mathbb{D}_{B/A}}$ denote the $B$-scheme of $B$-module isomorphisms $\Lambda \otimes_{\zz_p} B \xrightarrow{\sim} \mathbb{D}_{B/A}$ which respect the tensors. That is, for an $B$-algebra $B'$, 
	\begin{align*}
		T_{\mathbb{D}_{B/A}}(B') = \textup{Isom}((\Lambda\otimes_{\zz_p} B', (\underline{s} \otimes 1)_{B'}), (\mathbb{D}_{B/A} \otimes_A B', \underline{t}_B \otimes 1)).
	\end{align*}
	Definition \ref{def-smu} (i) implies that $T_{\mathbb{D}_{B/A}}$ is an fppf-locally trivial $G_A$-torsor. 

	Moreover, denote by $T_{\mathbb{D}_{R/R},\mu}$ the subscheme of $T_{\mathbb{D}_{R/R}}$ classifying $R$-module isomorphisms which identify the canonical filtration $\textup{Fil}^1(\Lambda \otimes_{\zz_p} R)$ defined by $\mu$ with the Hodge filtration $\textup{Fil}^1(\mathbb{D})$ of $\mathbb{D}$. Then Conditions (i) and (ii) of Definition \ref{def-smu} together imply that $T_{\mathbb{D}_{R/R},\mu}$ is a $P_{\mu,R}$-torsor, where $P_\mu$ is the parabolic subgroup of $G$ defined by $\mu$ as in (\ref{eq-parabolic}).
\end{rmk}

\begin{Def} Let $R$ be a $p$-nilpotent $W(k_0)$-algebra.
	\begin{enumerate}[(i)]
		\item A \textit{ $p$-divisible group with $(\underline{s},\mu)$-structure} over $R$ is a pair $(X,\underline{t})$ consisting of a $p$-divisible group $X$ over $R$ and an $(\underline{s},\mu)$-structure $\underline{t}$ on $\mathbb{D}(X)$ over $\Spec R$.
		\item A \textit{nilpotent Zink display with $(\underline{s},\mu)$-structure} over $R$ is a pair $(\underline{P}, \underline{t})$ consisting of a nilpotent Zink display $\underline{P}$ over $R$ and an $(\underline{s},\mu)$-structure $\underline{t}$ on $\mathbb{D}(\underline{P})$ over $\Spec R$.
	\end{enumerate}
\end{Def}

Denote by $\textup{fpdiv}_{\underline{s},\mu}(R)$ the category whose objects are formal $p$-divisible groups with $(\underline{s},\mu)$-structure and whose morphisms $(X,\underline{t}) \to (X',\underline{t'})$ are isomorphisms of $p$-divisible groups $X \to X'$ such that the composition of the tensor $t_i$ with the induced morphism $\mathbb{D}(X)^\otimes \to \mathbb{D}(X')^\otimes$ is the tensor $t_i'$ for every $i$. Similarly, let $\textup{nZink}_{\underline{s},\mu}(R)$ denote the category of nilpotent Zink displays with $(\underline{s},\mu)$-structure over $R$. As $R$ varies in $\textup{Nilp}_{W(k_0)}$, these determine fibered categories $\textup{fpdiv}_{\underline{s},\mu}$ and $\textup{nZink}_{\underline{s},\mu}$.

\begin{lemma}\label{lem-smudescent}
	The fibered categories $\textup{\textup{fpdiv}}_{\underline{s},\mu}$ and $\textup{\textup{nZink}}_{\underline{s},\mu}$ form stacks for the \'etale topology on $\textup{\textup{Nilp}}_{W(k_0)}$.
\end{lemma}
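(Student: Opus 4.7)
The plan is to prove descent for each of the two stacks separately, reducing in both cases to the fact that their underlying objects (formal $p$-divisible groups, respectively nilpotent Zink displays) form \'etale stacks, and that the extra data consists of morphisms of crystals, which descend by Lemma~\ref{lem-etalelocal}.

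First I would verify that morphisms form a sheaf. A morphism $(X,\underline{t}) \to (X',\underline{t'})$ is an isomorphism $\varphi: X \to X'$ such that the induced morphism $\mathbb{D}(\varphi)^\otimes: \mathbb{D}(X)^\otimes \to \mathbb{D}(X')^\otimes$ satisfies $\mathbb{D}(\varphi)(i) \circ t_i = t_i'$ for each $i$. Since $p$-divisible groups (resp. nilpotent Zink displays) form an fppf (resp. \'etale) stack, isomorphisms $\varphi$ already form a sheaf on $\textup{\'Et}_R$. The compatibility condition $\mathbb{D}(\varphi)(i) \circ t_i = t_i'$ is an equality of morphisms in $\textup{LFCrys}(R/W(k_0))$, and by Lemma~\ref{lem-etalelocal}, $\textup{Hom}_{\textup{LFCrys}}(\mathbbm{1}, \mathbb{D}(X')^{(i)})$ is an \'etale sheaf on $\Spec R$. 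Hence the subsheaf of tensor-compatible isomorphisms is itself an \'etale sheaf.

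Next I would check that objects satisfy descent. Let $\{R \to R_j\}$ be an \'etale cover, and suppose we have a descent datum consisting of objects $(X_j, \underline{t}_j) \in \textup{fpdiv}_{\underline{s},\mu}(R_j)$ together with isomorphisms of restrictions to $R_j \otimes_R R_{j'}$ respecting the tensors. The underlying $p$-divisible groups $X_j$ glue to a formal $p$-divisible group $X$ over $R$ by \'etale descent for $p$-divisible groups (e.g., via Zink displays and descent for finite projective modules). Applying $\mathbb{D}$ and using Lemma~\ref{lem-etalelocal}, the tensors $t_{i,j}: \mathbbm{1} \to \mathbb{D}(X_j)^{(i)}$ glue to morphisms of crystals $t_i: \mathbbm{1} \to \mathbb{D}(X)^{(i)}$ over $R$. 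I would then verify that $\underline{t} = (t_1,\dots,t_r)$ is an $(\underline{s},\mu)$-structure on $\mathbb{D}(X)$: the condition that each $t_i$ is a crystalline Tate tensor (preservation of the Hodge filtration on $R$-sections and Frobenius equivariance of the induced isocrystal morphism) can be checked after pulling back to the cover $\{R \to R_j\}$, since both the filtration on $\mathbb{D}(X)_{R/R}$ and the $F$-isocrystal structure are themselves compatible with \'etale base change. For the same reason, conditions (i) and (ii) of Definition~\ref{def-smu} are \'etale-local: (i) is phrased in terms of existence of an fppf extension, so it is automatically inherited from the cover; (ii) is an \'etale-local condition by definition.

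The argument for $\textup{nZink}_{\underline{s},\mu}$ is entirely parallel, using \'etale descent for nilpotent Zink displays in place of descent for formal $p$-divisible groups, and relying on the same descent for crystals. The main subtlety I anticipate is simply bookkeeping: ensuring that the glued tensors actually land in $\mathbb{D}(X)^{(i)}$ rather than some isocrystal completion, and that the Frobenius equivariance (which a priori is only a statement about isocrystals) is preserved under gluing. This is handled by observing that the forgetful functor $\textup{LFCrys}(R/W(k_0)) \to \textup{Isoc}(R)$ is faithful on Hom-sets into a fixed target when restricted to the relevant morphisms (Remark~\ref{rmk-lastarrow}), so Frobenius equivariance, being the vanishing of a single morphism of isocrystals, descends along the \'etale cover.
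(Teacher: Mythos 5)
Your overall strategy matches the paper's: reduce to descent for the underlying objects plus descent for morphisms of crystals via Lemma~\ref{lem-etalelocal}, and check the remaining conditions of Definition~\ref{def-smu} locally. The treatment of morphisms, the gluing of the tensors, the Hodge-filtration and Frobenius-equivariance checks, and condition (ii) all go through exactly as in the paper.

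The one place where you gloss over the real content is condition (i) of Definition~\ref{def-smu}. You write that since (i) is ``phrased in terms of existence of an fppf extension, it is automatically inherited from the cover,'' but this is not automatic: condition (i) for $(X,\underline{t})$ over $R$ quantifies over \emph{all} PD-thickenings $B \to A$ with $A$ an $R$-algebra, whereas the hypothesis only gives you the condition for PD-thickenings over the cover $R'$. To bridge the two you must, given $B \to A$ over $R$, form $A' = A\otimes_R R'$, lift the faithfully flat \'etale map $A \to A'$ to a faithfully flat \'etale map $B \to B'$, and check that the divided powers on $\ker(B\to A)$ extend to $\ker(B'\to A')$ (this uses flatness of $B \to B'$), so that $B'\to A'$ is a PD-thickening over $R'$. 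Only then does condition (i) for $(X',\underline{t'})$ produce an fppf cover $\Spec B'' \to \Spec B'$ trivializing $(\mathbb{D}(X')_{B'/A'},\underline{t'})$, and the composite $\Spec B'' \to \Spec B' \to \Spec B$ gives the required fppf cover of $\Spec B$. This lifting-and-composition argument is the only non-formal step in the proof, so it should be made explicit rather than dismissed.
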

\begin{proof}
	It is well known that $p$-divisible groups form an fpqc stack on $\textup{Nilp}_{W(k_0)}$ (see e.g., \cite[Rmk. 2.4.2]{Messing1972}), and formal $p$-divisible groups form a substack because the property of being a formal $p$-divisible group is fpqc local on the base. Further, nilpotent Zink displays form an fpqc stack by \cite[Thm. 37]{Zink2002}. For the remainder of the proof, the same arguments work for both $\textup{fpdiv}_{\underline{s},\mu}$ and $\textup{nZink}_{\underline{s},\mu}$, so we give the proof only for the former.
	
	Let $R \to R'$ be a faithfully flat \'etale homomorphism of $p$-nilpotent $W(k_0)$-algebras. Denote by $\textup{fpdiv}_{\underline{s},\mu}(R'/R)$ the category of formal $p$-divisible groups with $(\underline{s},\mu)$-structure equipped with descent data from $\Spec R'$ down to $\Spec R$. We want to show the natural functor $\textup{fpdiv}_{\underline{s},\mu}(R) \to \textup{fpdiv}_{\underline{s},\mu}(R'/R)$ is an equivalence. That the functor is faithful is immediate from the corresponding property for $p$-divisible groups. Moreover, morphisms in $\textup{fpdiv}_{\underline{s},\mu}(R'/R)$ automatically descend to isomorphisms of $p$-divisible groups over $R$, and these isomorphisms must be compatible with the tensors by Lemma \ref{lem-etalelocal}. 
	
	It remains to prove that objects descend. Let $(X',\underline{t'})$ be a formal $p$-divisible group with $(\underline{s},\mu)$-structure over $R'$, equipped with a descent datum. We obtain an object $(X,\underline{t})$ over $R$ by descent for $p$-divisible groups and Lemma \ref{lem-etalelocal}. Frobenius equivariance of each $t_i$ follows from another application of Lemma \ref{lem-etalelocal}, and \'etale descent for $R$-modules implies that each $t_i$ preserves the filtrations. Condition (ii) of Definition \ref{def-smu} holds for $(X,\underline{t})$ because \'etale covers are stable under composition. To finish the proof we need only check that the first condition of Definition \ref{def-smu} holds for $(X,\underline{t})$. If $B \to A$ is a PD-thickening over $R$ then $A' = A \otimes_R R'$ is faithfully flat \'etale over $A$, and we can lift $B \to A$ to $B' \to A'$ with $B'$ faithfully flat \'etale over $B$. By the flatness of $B \to B'$, the divided powers extend to divided powers on the kernel of $B' \to A'$. Hence $B' \to A'$ is a PD-thickening over $\Spec R'$, so by condition (ii) for $(X', \underline{t'})$, there is an fppf cover $\Spec B'' \to \Spec B'$ trivializing $(\mathbb{D}(X')_{B'/A'}, \underline{t'})$. Then the composition $\Spec B'' \to \Spec B' \to \Spec B$ provides an fppf cover which trivializes $(\mathbb{D}(X)_{B/A}, \underline{t})$.
\end{proof}

\begin{rmk} \label{rmk-equiv}
	It is a consequence of the theorem of Zink and Lau (see \cite[Thm. 1.1.]{Lau2008}) and the compatibility of crystals (see Lemma \ref{lem-zinkdieudonne}) that the natural functor $(\underline{P},\underline{t}) \mapsto (\textup{BT}_R(\underline{P}), \underline{t})$ defines an equivalence between the stacks $\textup{nZink}_{\underline{s},\mu}$ and $\textup{fpdiv}_{\underline{s},\mu}$. 
\end{rmk}

Let us now study Grothendieck-Messing deformation theory in this setting. Let $R$ be a $p$-nilpotent $W(k_0)$-algebra and let $R_0 = R/pR$. If $X$ is a $p$-divisible group over $R$, define
\begin{align}\label{eq-TX}
	T(X)_{R/R_0} := T_{\mathbb{D}(X)_{R/R_0}}, \text{ and } T(X)_{R/R,\mu} := T_{\mathbb{D}(X)_{R/R},\mu},
\end{align}
where $T_{\mathbb{D}(X)_{R/R_0}}$ and $T_{\mathbb{D}(X)_{R/R},\mu}$ are the $G_R$- and $P_{\mu,R}$-torsors respectively defined in Remark \ref{rmk-consequences}. 

Suppose now $(X,\underline{t})$ is a formal $p$-divisible group with $(\underline{s},\mu)$-structure over $R$, and let $(X_{R_0},\underline{t}_{R_0})$ denote the formal $p$-divisible group with $(\underline{s},\mu)$-structure over $R_0$ obtained by base change. Then we have a canonical identification $T(X)_{R/R} \xrightarrow{\sim} T(X_{R_0})_{R/R_0}$ induced by the isomorphism $\mathbb{D}(X_{R_0})_{R/R_0} \xrightarrow{\sim} \mathbb{D}(X)_{R/R}$. It follows that the $P_{\mu,R}$-torsor $T(X)_{R/R,\mu}$ associated with $(X,\underline{t})$ determines a lift of $T(X_{R_0})_{R_0/R_0,\mu}$ inside of $T(X)_{R/R_0}$. The same constructions can all be carried out for nilpotent Zink displays with $(\underline{s},\mu)$-structure, and we denote the resulting $G_R$ and $P_{\mu,R}$-torsors by $T(\underline{P})_{R/R_0}$ and $T(\underline{P})_{R/R_0,\mu}$, respectively.

Define a groupoid $\textup{fpdiv}_{\underline{s},\mu}(R/R_0)$ as follows. For objects take pairs consisting of a formal $p$-divisible group with $(\underline{s},\mu)$-structure $(X,\underline{t})$ over $R_0$ and a lift $T_\mu \subset T(X)_{R/R_0}$ of $T(X)_{R_0/R_0,\mu} \subset T(X)_{R_0/R_0}$, and for morphisms take pairs of isomorphisms $\alpha: (X, \underline{t}) \xrightarrow{\sim} (X', \underline{t}')$ and $\beta: T(X)_{R/R_0,\mu} \xrightarrow{\sim} T(X')_{R/R_0,\mu}$ such that the isomorphism $T(X)_{R/R_0} \xrightarrow{\sim} T(X')_{R/R_0}$ induced by $\alpha$ restricts to $\beta$. Define similarly the category $\textup{nZink}_{\underline{s},\mu}(R/R_0)$.

\begin{lemma}\label{lem-liftspdiv}
	The functor 
	\begin{align}\label{eq-pdivBA}
		\textup{fpdiv}_{\underline{s},\mu}(R) \to \textup{fpdiv}_{\underline{s},\mu}(R/R_0)
	\end{align}
	defined by assigning to a $p$-divisible group with $(\underline{s},\mu)$-structure $(X,\underline{t})$ its reduction $(X_{R_0},\underline{t}_{R_0})$ mod $p$ along with the $P_{\mu,R}$-torsor $T(X)_{R/R,\mu}$ inside of $T(X)_{R/R}$ is an equivalence of categories. Moreover, the analogous result holds for nilpotent Zink displays with $(\underline{s},\mu)$-structure. 

\end{lemma}
\begin{proof}
	By Grothendieck-Messing theory, the functor $X \mapsto (X_{R_0}, \textup{Fil}^1(\mathbb{D}(X)))$ determines an equivalence of categories between formal $p$-divisible groups $X$ over $R$ and pairs $(X_0, E)$ consisting of a formal $p$-divisible group $X_0$ over $R_0$ and a lift of the Hodge filtration of $X_0$ to a direct summand $E \subset \mathbb{D}(X_0)_{R/R_0}$. Indeed, for $p \ge 3$, the divided powers for $R \to R_0$ are nilpotent so this follows from \cite[V. Thm. 1.6]{Messing1972}. For $p=2$ it holds because we are restricting our attention to formal $p$-divisible groups, see \cite[Rmk. 2.6]{Lau2013}. The analogous result holds for nilpotent Zink displays as well, see \cite[Thm. 48]{Zink2002}. We will give the remainder of the proof for formal $p$-divisible groups; the case of nilpotent Zink displays follows from the same arguments. 
	
	Grothendieck-Messing theory implies that the functor (\ref{eq-pdivBA}) is faithful. Let us prove it is full, so suppose $(X, \underline{t})$ and $(X', \underline{t}')$ are formal $p$-divisible groups over $R$ with $(\underline{s},\mu)$-structure, and suppose we have isomorphisms
	\begin{align*}
		\alpha: (X_{R_0},\underline{t}_{R_0}) \xrightarrow{\sim} (X_{R_0}', \underline{t}_{R_0}') \text{ and } \beta: T(X_{R_0})_{R/R_0,\mu} \xrightarrow{\sim} T(X'_{R_0})_{R/R_0,\mu}
	\end{align*}
	such that the induced isomorphism $\alpha_\ast: T(X_{R_0})_{R/R_0} \xrightarrow{\sim} T(X'_{R_0})_{R/R_0}$ restricts to $\beta$. By definition of $T(X_{R_0})_{R/R_0,\mu}$ and $T(X'_{R_0})_{R/R_0,\mu}$, it follows that $\alpha(\textup{Fil}^1(\mathbb{D}(X))) = \textup{Fil}^1(\mathbb{D}(X'))$, \'etale locally on $\Spec R$. By \'etale descent for finite projective $R$-modules the Hodge filtration will be preserved over $R$ as well, so $\alpha$ lifts to a morphism $X \to X'$ by Grothendieck-Messing theory. The tensors are preserved by the lift because of the equivalence (\ref{eq-crystalequiv}) between $\textup{LFCrys}(R_0/W(k_0))$ and $\textup{LFCrys}(R/W(k_0))$. 
	
	Let $(X_0, \underline{t}_0)$ be a formal $p$-divisible group with $(\underline{s},\mu)$-structure over $R_0$ with a lift $T_\mu$ of $T(X_0)_{R_0/R_0,\mu}$. By \'etale descent, it is enough to prove essential surjectivity \'etale locally, so we may assume $T_\mu$ is a trivial $P_{\mu,R}$-torsor. Then any $\psi \in T_\mu \subset T(X_0)_{R/R_0}$ induces an isomorphism $(\Lambda \otimes_{\zz_p} R, (\underline{s} \otimes 1)_R) \xrightarrow{\sim} (\mathbb{D}(X_0)_{R/R_0}, \underline{t}_R)$ such that the base change $\psi_0$ of $\psi$ along $R \to R_0$ identifies the Hodge filtrations, i.e.,
	\begin{align}\label{eq-hodgefilt0}
		\psi_0(\textup{Fil}^1(\Lambda \otimes_{\zz_p} R_0)) = \textup{Fil}^1(\mathbb{D}(X_0)),
	\end{align}
	where $\textup{Fil}^1(\Lambda \otimes_{\zz_p} R_0)$ is the filtration defined by $\mu$. Define $E = \psi(\textup{Fil}^1(\Lambda \otimes_{\zz_p} R)) \subset \mathbb{D}(X_0)_{R/R_0}$. By (\ref{eq-hodgefilt0}), $E$ is a lift of the Hodge filtration for $X_0$, and therefore the pair $(X_0, E)$ lifts to a formal $p$-divisible group $X$ over $R$ by Grothendieck-Messing theory. It is immediate from (\ref{eq-crystalequiv}) that the tensors $\underline{t}_0$ lift to a set of tensors $\underline{t}$ for $X$, so it remains only to show that conditions (i) and (ii) of Definition \ref{def-smu} are satisfied.
	
	For condition (i), let $B \to A$ be a PD-thickening over $R$. Then $B \to A \to A/p = A_0$ is a PD-thickening over $R_0$, so there exists a homomorphism $B \to B'$ which is faithfully flat and of finite presentation such that there exists an isomorphism
	\begin{align*}
		(\Lambda \otimes_{\zz_p} B, (\underline{s} \otimes 1)_B) \xrightarrow{\sim} (\mathbb{D}(X_0)_{B/A_0}, \underline{t_0}_B).
	\end{align*}
	Then condition (i) follows from the identification $(\mathbb{D}(X_0)_{B/A_0}, \underline{t_0}_B) \xrightarrow{\sim} (\mathbb{D}(X)_{B/A}, \underline{t}_B)$. Condition (ii) is satisfied because the isomorphism $\psi$ respects the tensors, and it respects the Hodge filtration by definition of $X$. 
\end{proof}

\subsection{From $G$-displays to $p$-divisible groups}\label{sub-functor}
Let $\underline{G} = (G,\mu,\Lambda,\eta,\underline{s})$ be a local Hodge embedding datum in the sense of Definition \ref{def-lochodge}. Let $\mathscr{P}$ be a Tannakian $(G,\mu)$-display over $\underline{W}(R)$ which is nilpotent with respect to $\eta$, let $\underline{P} = \Z_{\eta,R}(\mathscr{P})$ be the associated Zink display, and let $X = \textup{BT}_R(\underline{P})$ be the associated formal $p$-divisible group. As in the previous section, the tensors $s_i$, viewed as morphisms $\zz_p \to \Lambda(i)$, induce morphisms of crystals
\begin{align}\label{eq-ti}
t_i := \mathbb{D}(\mathscr{P})(s_i): \mathbbm{1} \to \mathbb{D}(\mathscr{P})^{\eta(i)}.
\end{align}
Following the notation of the previous section, we write $\mathbb{D}(X)(i)= \mathbb{D}(X)^{\otimes m_i} \otimes (\mathbb{D}(X)^\vee)^{\otimes n_i}$. If $B \to A$ is a $p$-adic PD-thickening, then $\mathbb{D}(X)_{B/A}$ is $p$-adically complete and separated, since same holds for any finite projective $B$-module. The same is true of $\left(\mathbb{D}(X)_{B/A}\right)(i)$, and hence the natural map
\begin{align}\label{eq-mini}
\left(\mathbb{D}(X)_{B/A}\right)(i) \to \left(\mathbb{D}(X)(i)\right)_{B/A}
\end{align}
is an isomorphism.

By combining (\ref{eq-eta}) with Lemma \ref{lem-zinkdieudonne} and applying the compatibility of $\mathbb{D}(\mathscr{P})$ with tensor products, we have $\mathbb{D}(\mathscr{P})^{\eta(i)} \cong \mathbb{D}(X)(i)$, and hence we obtain morphisms of crystals
\begin{align}\label{eq-tensors}
t_i: \mathbbm{1} \to \mathbb{D}(X)^\otimes
\end{align}
for each $i$. By Lemma \ref{lem-zinkdieudonne}, it is equivalent to view $t_i$ as a morphism $\mathbbm{1} \to \mathbb{D}(\underline{P})^\otimes$.

\begin{lemma}\label{lem-tensors}
	The pair $(X,\underline{t})$ $($resp. $(\underline{P}, \underline{t})$$)$ defines a formal $p$-divisible group $($resp. nilpotent Zink display$)$ with $(\underline{s},\mu)$-structure.
\end{lemma}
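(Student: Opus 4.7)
The proof reduces to three verifications: that each $t_i$ is a crystalline Tate tensor, that condition (ii) of Definition \ref{def-smu} holds, and that condition (i) of Definition \ref{def-smu} holds. By Lemma \ref{lem-zinkdieudonne} the crystals $\mathbb{D}(X)$ and $\mathbb{D}(\Z_{\eta,R}(\mathscr{P}))$ are canonically isomorphic in a manner preserving Hodge filtrations and Frobenius/Verschiebung, so the two assertions of the lemma are equivalent, and it suffices to verify them once on $\mathbb{D}(X)^{\otimes}$.

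For the crystalline Tate tensor property, the map $t_i = \mathbb{D}(\mathscr{P})(s_i)$ is a morphism of locally free crystals by functoriality of $\mathbb{D}(\mathscr{P})$ (Lemma \ref{lem-crystal}), the containment $t_{i,R/R}(R) \subset \textup{Fil}^0(\mathbb{D}(X)^\otimes)$ is automatic, and Frobenius equivariance of the induced morphism of isocrystals is precisely Proposition \ref{prop-frobeq}.

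For Definition \ref{def-smu}(ii), since $(G,\mu)$-displays are \'etale locally banal, there exists an \'etale faithfully flat extension $R \to R'$ and an isomorphism $\mathscr{P}_{R'} \cong \mathscr{P}_U$ for some $U \in L^+G(R')$. Remark \ref{lem-banalcrystal}, together with the identifications $\mathbb{D}(\mathscr{P})^{\eta(i)} \cong (\mathbb{D}(\mathscr{P})^\eta)(i)$ (tensor functor), $\mathbb{D}(\mathscr{P})^\eta \cong \mathbb{D}(X)$ coming from (\ref{eq-eta}) and Lemma \ref{lem-zinkdieudonne}, and (\ref{eq-mini}), yields a trivialization $(\mathbb{D}(X)_{R'/R'})(i) \cong \Lambda(i) \otimes_{\zz_p} R'$ under which $t_i$ corresponds to $s_i \otimes 1$. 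Taking $i$ to range over all indices gives the required isomorphism of pairs $(\Lambda \otimes R',\underline{s}\otimes 1) \xrightarrow{\sim} (\mathbb{D}(X)_{R/R}\otimes R', \underline{t})$. For the Hodge filtration compatibility, the standard datum for the banal display $\mathscr{P}_U$ applied to $(\Lambda, \eta)$ is the pair $(\Lambda \otimes_{\zz_p} W(R')^{\oplus}, \eta(U)\circ (\id \otimes \sigma))$ with the weight grading $\Lambda_{W(k_0)} = \Lambda^0 \oplus \Lambda^1$ determined by $\mu$, so by the formula $\textup{Fil}^n(\underline{M}) = \bigoplus_{i \ge n} L_i \otimes R'$ combined with (\ref{eq-samehodge}) and Lemma \ref{lem-zinkdieudonne}, the Hodge filtration on $\mathbb{D}(X)_{R'/R'}$ is identified with the one induced by $\mu$.

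Condition (i) is handled in the same spirit one level further: for a PD-thickening $B \to A$ over $R$, Proposition \ref{prop-lifting} produces the unique lift $\mathscr{P}_{B/A}$ of $\mathscr{P}_{\underline{W}(A)}$ to a $(G,\mu)$-display over $\underline{W}(B/A)$, and by Theorem \ref{thm-equiv} (applicable because $\underline{W}(B/A)$ satisfies descent for modules by Proposition \ref{prop-descent}) this lift is \'etale locally banal. Choosing an \'etale faithfully flat $B \to B'$ (which is in particular fppf and of finite presentation) that trivializes $\mathscr{P}_{B/A}$, the same banal computation as in the previous paragraph, carried out over $\underline{W}(B'/A')$ rather than $\underline{W}(R')$, yields an isomorphism $\mathbb{D}(\mathscr{P})^{\eta(i)}_{B'/A'} \cong \Lambda(i) \otimes_{\zz_p} B'$ identifying $t_i$ with $s_i \otimes 1$. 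The main technical point, and the place where the machinery of the paper really enters, is the coherent compatibility between the tensor structure on $\mathbb{D}(\mathscr{P})^{\eta(i)}_{B'/A'}$ and on $(\mathbb{D}(X)_{B'/A'})(i)$; this is supplied by the tensor functor property of $\mathbb{D}(\mathscr{P})$, Lemma \ref{lem-zinkdieudonne}, and (\ref{eq-mini}). Once this matching is in hand, both conditions of Definition \ref{def-smu} are established and the lemma follows.
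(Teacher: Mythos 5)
Your overall strategy coincides with the paper's: reduce to the $p$-divisible group statement via Lemma \ref{lem-zinkdieudonne}, obtain Frobenius equivariance from Proposition \ref{prop-frobeq}, and verify conditions (i) and (ii) of Definition \ref{def-smu} by passing to an \'etale cover where the display (respectively its lift over $\underline{W}(B/A)$) is banal and computing with the standard datum. Those parts track the paper's argument closely and are fine.

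There is, however, one genuine gap: you assert that the containment $t_{i,R/R}(R) \subset \textup{Fil}^0(\mathbb{D}(X)^\otimes)$ is ``automatic.'' It is not. Since $n_i>0$ in general, $\mathbb{D}(X)(i)$ involves duals, and the filtration on $\mathbb{D}(X)^\vee_{R/R}$ induced from the Hodge filtration begins in degree $-1$; consequently $\textup{Fil}^0(\mathbb{D}(X)(i)_{R/R})$ is a \emph{proper} submodule of $\mathbb{D}(X)(i)_{R/R}$ whenever $\textup{Fil}^1(\mathbb{D}(X))\neq 0$. This containment is exactly the ``preserves the Hodge filtration'' half of the definition of a crystalline Tate tensor, and it is the portion of the proof to which the paper devotes the most care. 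The argument the paper gives is: $(t_i)_{R/R}$ is by construction the reduction of the morphism of displays $\underline{S}\to \underline{M}^{\eta(i)}$ induced by $s_i$; morphisms of displays preserve Hodge filtrations and the Hodge filtration is compatible with tensor products (Remark \ref{rmk-hodge}); the filtration on $\mathbb{D}(X)(i)_{R/R}$ is identified with the Hodge filtration of $\underline{M}^{\eta(i)}$ via (\ref{eq-samehodge}) and Lemma \ref{lem-zinkdieudonne}; and $\textup{Fil}^0(\mathbbm{1})=R$ agrees with $\textup{Fil}^0$ of the unit display, so the image of $R$ lands in $\textup{Fil}^0(\underline{M}^{\eta(i)})=\textup{Fil}^0(\mathbb{D}(X)(i)_{R/R})$. (Equivalently: $s_i$ lies in the weight-zero part $(\Lambda(i))^0$ for the $\mu$-grading, and in a banal trivialization $\textup{Fil}^0$ is the sum of the weight-$\geq 0$ pieces.) You already invoke precisely this machinery for the filtration statement in condition (ii), so the repair is short, but as written the verification of the first defining property of a crystalline Tate tensor is missing.
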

\begin{proof}
	It is enough to prove that $(X, \underline{t})$ is a $p$-divisible group with $(\underline{s},\mu)$-structure. Let us write $\underline{M}^\pi$ for the evaluation of $\mathscr{P}$ on a representation $(V,\pi)$. We have isomorphisms $\mathbb{D}(X) \cong \mathbb{D}(\Z_{\eta,R}(\mathscr{P})) \cong \mathbb{D}(\mathscr{P})^\eta$, which all preserve the respective filtrations (see (\ref{eq-samehodge}) and Lemma \ref{lem-zinkdieudonne}), and since the Hodge filtrations of displays are compatible with tensor products (see Remark \ref{rmk-hodge}), we can conclude that the filtration on $\mathbb{D}(X)(i)_{R/R}$ induced from the filtration on $\mathbb{D}(X)_{R/R}$ agrees with the Hodge filtration of $\underline{M}^{\eta(i)}$. Similarly, the filtration on $\mathbbm{1}$ agrees with the one on the unit display $\underline{S} = (S,\sigma)$, so it is enough to show the map
	\begin{align*}
	(t_i)_{R}: R \to \tau^\ast M^{\eta(i)} \otimes_{W(R)} R
	\end{align*}
	preserves the filtrations of the corresponding displays. But the map $(t_i)_{R/R}$ is defined as the reduction of the map $\underline{S} \to \underline{M}^{\eta(i)}$ induced by $s_i$, so this is automatic (see again Remark \ref{rmk-hodge}). Frobenius equivariance follows from Proposition \ref{prop-frobeq} and the comparison of crystals, so we can conclude $\underline{t}$ is a collection of crystalline Tate tensors on $\underline{\mathbb{D}(X)}$ over $\Spec R$.
	
	The lift $\mathscr{P}_{B/A}$ of $\mathscr{P}_{\underline{W}(A)}$ is \'etale locally banal for any PD-thickening $B \to A$ over $R$. Thus for some \'etale faithfully flat extension $B \to B'$, there is an isomorphism of tensor functors
	\begin{align*}
		\mathbb{D}(\mathscr{P})_{B'/A'} \xrightarrow{\sim} \omega_{B'},
	\end{align*}
	where $\omega_{B'}$ is the usual fiber functor, see (\ref{eq-usualfiberfunctor}). Condition (i) follows. 
	
	For condition (ii), by (\ref{eq-eta}) and Lemma \ref{lem-zinkdieudonne}, we have a canonical isomorphism
	\begin{align}\label{eq-isomhodgefilt}
		\mathbb{D}(X)_{R/R} \xrightarrow{\sim} \mathbb{D}(\mathscr{P})_{R/R}^\eta.
	\end{align}
	Write $\mathscr{P}(\Lambda, \eta) = \underline{M}^\eta$, and endow $\mathbb{D}(\mathscr{P})^\eta_{R/R} = \tau^\ast M^\eta \otimes_{W{R}} R$ with the Hodge filtration as in (\ref{eq-hodgedisp}). Then (\ref{eq-isomhodgefilt}) preserves the respective Hodge filtrations (see (\ref{eq-samehodge})). Now choose a faithfully flat \'etale extension $R \to R'$ such that $\mathscr{P}_{\underline{W}(R')}$ is banal, with a trivialization $\psi: \mathscr{P}_U \xrightarrow{\sim} \mathscr{P}_{\underline{W}(R')}$ for some $U \in L^+G(R')$. Then $\psi$ induces an isomorphism
	\begin{align}\label{eq-condii}
		 \Lambda \otimes_{\zz_p} R' = \mathbb{D}(\mathscr{P}_U)^\eta_{R'/R'} \xrightarrow{\sim} \mathbb{D}(\mathscr{P})^\eta_{R'/R'}.
	\end{align}
	Thus by (\ref{eq-isomhodgefilt}) and (\ref{eq-condii}), it is enough to show $\textup{Fil}^1(\mathbb{D}(\mathscr{P}_U)^\eta) = \Lambda^1 \otimes_{W(k_0)} R'$. But if $\mathscr{P}_U(\Lambda, \eta) = \underline{M}^\eta$, then
	\begin{align*}
	\textup{Fil}^1(\mathbb{D}(\mathscr{P}_U)^\eta) = \textup{im}(\bar{\theta_1}),
	\end{align*}
	where $\bar{\theta}_1$ is the map $M_1^\eta \to \tau^\ast M^\eta \to \tau^\ast M^\eta \otimes_{W(R')} R'$. Since $\mathscr{P}_U$ is banal, we have 
	\begin{align*}
	M_1^\eta = (\Lambda^0 \otimes_{W(k_0)} I_{R'}) \oplus (\Lambda^1 \otimes_{W(k_0)} W(R')),
	\end{align*}	
	and $\bar{\theta}_1$ is reduction modulo $I_{R'}$, so the result follows.
\end{proof}

If $\mathscr{P} \to \mathscr{P}'$ is a morphism Tannakian $(G,\mu)$-displays over $\underline{W}(R)$ which are nilpotent with respect to $\eta$, then it follows from the natural transformation property that the resulting morphisms $\underline{P} \to \underline{P}'$ and $X \to X'$ are compatible with the $(\underline{s},\mu)$-structure. Hence we obtain functors
\begin{align}\label{eq-BTG}
\textup{BT}_{\underline{G},R}: G\textup{-}\textup{Disp}_{\underline{W},\mu}^{\otimes, \eta}(R) \to \textup{fpdiv}_{\underline{s},\mu}(R), \ \mathscr{P} \mapsto (\textup{BT}_R(\Z_{\eta,R}(\mathscr{P})), \underline{t}),
\end{align}
and
\begin{align}\label{eq-ZG}
\Z_{\underline{G},R}: G\textup{-}\textup{Disp}_{\underline{W},\mu}^{\otimes, \eta}(R) \to \textup{nZink}_{\underline{s},\mu}(R), \ \mathscr{P} \mapsto (\Z_{\eta,R}(\mathscr{P}), \underline{t}).
\end{align}

The following lemmas will be useful in the proofs of Theorem \ref{thm-1} and Corollary \ref{introcor1}. Following \cite[\textsection 2.2]{Zink2002}, if $\mathbb{D}$ is a crystal of $\mathcal{O}_{\Spec R/ W(k_0)}$-modules, then we define $\mathbb{D}_{W(R)/R}$ by
\begin{align*}
	\mathbb{D}_{W(R)/R} = \varprojlim \mathbb{D}_{W_n(R)/R}.
\end{align*}
By \cite[Prop. 53]{Zink2002}, if $\underline{P}$ is a nilpotent Zink display over $\underline{W}(R)$, there is a canonical isomorphism
\begin{align}\label{eq-zeta}
	\zeta: \mathbb{D}(\underline{P})_{W(R)/R} \xrightarrow{\sim} P.
\end{align}
Explicitly, the isomorphism is defined as follows. The Cartier homomorphism $\Delta: W(R) \to W(W(R))$ (see \cite[(90)]{Zink2002}) defines a morphism of 1-frames $\mathcal{W}(R) \to \mathcal{W}(W(R)/R)$, and the base change $\Delta^\ast \underline{P}$ is lift of $\underline{P}$ to a $\mathcal{W}(W(R)/R)$-window (note that by \cite[Lem. 2.12]{Lau2010} we can freely pass between $\mathcal{W}(W(R)/R)$-windows and compatible systems of $\mathcal{W}(W_n(R)/R)$-windows for varying $n$ as defined in \cite{Zink2002}). Such a lift is unique up to unique isomorphism lifting $\id_{\underline{P}}$ by \cite[Thm. 44]{Zink2002}, so we have an isomorphism of $\mathcal{W}(W(R)/R)$-windows
\begin{align}\label{eq-tildezeta}
	\tilde{\zeta}: \underline{P}_{W(R)/R} \xrightarrow{\sim} \Delta^\ast \underline{P},
\end{align}
which reduces to the identity after base change along $\mathcal{W}(W(R)/R) \to \mathcal{W}(R)$. Here $\underline{P}_{W(R)/R}$ is the lift of $\underline{P}$ used to define $\mathbb{D}(\underline{P})$. Then (\ref{eq-zeta}) is obtained by tensoring (\ref{eq-tildezeta}) along $\hat{w}_0: W(W(R)) \to W(R)$, where $\hat{w}_0$ denotes the zeroth ghost coordinate for $W(W(R))$. It is clear from this description and uniqueness of lifts to $\mathcal{W}(W(R)/R)$ that $\zeta$ is functorial in $\underline{P}$. In other words, if $\underline{P}'$ is another nilpotent Zink display with corresponding homomorphism $\zeta'$ as in (\ref{eq-zeta}), and $\beta: \underline{P} \to \underline{P}'$ is a morphism of displays, then
\begin{align}\label{eq-zetafunctorial}
	\beta \circ \zeta = \zeta' \circ \mathbb{D}(\beta).
\end{align}

If $\mathscr{P}$ is a Tannakian $(G,\mu)$-display over $\underline{W}(R)$ which is nilpotent with respect to $\eta$, and $\mathscr{P}(\Lambda, \eta) = (M^\eta, F^\eta)$, then by Lemma \ref{lem-comparecrystals} there is an isomorphism $\mathbb{D}(\mathscr{P})^\eta \xrightarrow{\sim} \mathbb{D}(\Z_\eta(\mathscr{P}))$. Combining this with (\ref{eq-zeta}), we obtain an isomorphism $\mathbb{D}(\mathscr{P})^\eta_{W(R)/R} \xrightarrow{\sim} \tau^\ast M$, which we also denote by $\zeta$. Using compatibility of $\mathscr{P}$ and $\mathbb{D}(\mathscr{P})$ with tensor products, $\zeta$ extends to 
\begin{align}\label{eq-zetai}
	\zeta(i): \mathbb{D}(\mathscr{P})^{\eta(i)}_{W(R)/R} \xrightarrow{\sim} \tau^\ast M^{\eta(i)}.
\end{align}

\begin{lemma}\label{lem-Dpsi}
	Let $\mathscr{P}$ be a Tannakian $(G,\mu)$-display over $\underline{W}(R)$ which is nilpotent with respect to $\eta$. Suppose $\mathscr{P}$ is banal, with a trivialization given by $\psi: \mathscr{P}_U \xrightarrow{\sim} \mathscr{P}$ for some $U \in L^+G(R)$. Then there exists a unique isomorphism of tensor functors
	\begin{align*}
		\Psi: \omega_{W(R)} \xrightarrow{\sim} \mathbb{D}(\mathscr{P})_{W(R)/R}
	\end{align*}
	such that $\zeta \circ \Psi^\eta = \tau^\ast \psi^\eta$. 
\end{lemma}
\begin{proof}
	Uniqueness follows immediately from the identity $\zeta \circ \Psi^\eta = \tau^\ast \psi^\eta$ because the representation $(\Lambda, \eta)$ is a tensor generator for the category $\textup{Rep}_{\zz_p}(G)$ (see for example \cite[Thm. 2.2.8]{Wilson}), and any two morphisms of tensor functors which agree after evaluation on a tensor generator will agree in general. 
	
	Next we prove existence. For every $n \ge 1$, denote by $r_n$ the natural quotient $W(R) \to W_n(R)$, and let $U_n = (W(r_n)\circ \Delta)(U) \in L^+G(W_n(R))$. For each $n$, the trivialization $\psi$ lifts to a trivialization $\psi_n: \mathscr{P}_{U_n} \xrightarrow{\sim} \mathscr{P}_n$, where $\mathscr{P}_n$ is the unique lift of $\mathscr{P}$ to an adjoint nilpotent Tannakian $(G,\mu)$-display over $\underline{W}(W_n(R)/R)$-display, see Remark \ref{lem-banalcrystal}. Hence we obtain isomorphisms $V \otimes_{\zz_p} W_n(R) \xrightarrow{\sim} \mathbb{D}(\mathscr{P})^\rho_{W_n(R)/R}$ for every representation $(V,\rho)$. Moreover, these are compatible with the natural maps $V \otimes_{\zz_p} W_n(R) \to V \otimes_{\zz_p} W_{n-1}(R)$ induced by $r_{n-1}$ because $U_n$ is a compatible system of lifts. In this way we obtain an isomorphism of tensor functors 
	\begin{align*}
		\Psi: \omega_{W(R)} \xrightarrow{\sim} \mathbb{D}(\mathscr{P})_{W(R)/R}.
	\end{align*}

	It remains to show 
	\begin{align}\label{eq-want}
		\zeta \circ \Psi^\eta = \tau^\ast \psi^\eta
	\end{align}
	Let $\Z_\eta(\mathscr{P})_{W(R)/R}$ and $\Z_\eta(\mathscr{P}_U)_{W(R)/R}$ be the unique lifts of $\Z_\eta(\mathscr{P})$ and $\Z_\eta(\mathscr{P}_U)$, respectively, to windows over $\mathcal{W}(W(R)/R)$ By \cite[Lem 2.12]{Lau2010}, $\Z_\eta(\mathscr{P})_{W(R)/R}$ is the inverse limit of the compatible system of lifts $\Z_\eta(\mathscr{P})_{W_n(R)/R} = \Z_\eta(\mathscr{P}_n)$, and likewise for $\Z_\eta(\mathscr{P}_U)_{W(R)/R}$. We claim $\Psi^\eta$ is the reduction modulo $\hat{w}_0$ of the isomorphism of $\mathcal{W}(W(R)/R)$-windows
	\begin{align*}
		\Z_\eta(\psi)_{W(R)/R}: \Z_\eta(\mathscr{P}_U)_{W(R)/R} = \varprojlim \Z_\eta(\mathscr{P}_{U_n}) \xrightarrow{\lim \tau^\ast(\psi_n)^\eta} \varprojlim \Z_\eta(\mathscr{P}_n) = \Z_\eta(\mathscr{P})_{W(R)/R}.
	\end{align*}
	Indeed, this can be checked after applying $-\otimes_{W(R)} W_n(R)$ to the underlying $W(R)$-modules for every $n$, and therefore the result follows from the identity $r_n \circ \hat{w}_0 = \hat{w}_0 \circ W(r_n)$. Moreover, since $\hat{w}_0 \circ \Delta = \id_{W(R)}$, $\tau^\ast \psi^\eta$ is the reduction modulo $\hat{w}_0$ of the morphism of $\mathcal{W}(W(R)/R)$-windows $\Delta^\ast \tau^\ast \psi^\eta: \Z_\eta(\mathscr{P}_U)_{W(R)/R} = \Delta^\ast \Z_\eta(\mathscr{P}_U) \to \Delta^\ast \Z_\eta(\mathscr{P})$, and $\zeta$ is the reduction of $\tilde{\zeta}$ (see (\ref{eq-tildezeta})). Thus to show (\ref{eq-want}), it is enough to show the identity \
	\begin{align*}
		\Delta^\ast \tau^\ast \psi^\eta = \tilde{\zeta} \circ \Z_\eta(\psi)_{W(R)/R}
	\end{align*}
	of morphisms of $\mathcal{W}(W(R)/R)$-windows. This can be checked after base change to $\mathcal{W}(R)$. But $W(w_0)^\ast \Delta^\ast \tau^\ast \psi^\eta = \tau^\ast \psi^\eta$ because $W(w_0) \circ \Delta = \id_{W(R)}$, and $W(w_0)^\ast \Z_\eta(\psi)_{W(R)/R} = \tau^\ast(\psi_1)^\eta = \tau^\ast \psi^\eta$. The result follows because $\tilde{\zeta}$ lifts the identity of $\Z_\eta(\mathscr{P})$.
\end{proof}

\begin{lemma}\label{lem-tensormatch}
	Let $\mathscr{P}$ be a Tannakian $(G,\mu)$-display over $\underline{W}(R)$ which is nilpotent with respect to $\eta$, and let $t_i = \mathbb{D}(\mathscr{P})(s_i)$ as in $($\ref{eq-ti}$)$. Then $\zeta(i) \circ (t_i)_{W(R)} = \tau^\ast\mathscr{P}(s_i)$. 
\end{lemma}
\begin{proof}
	By Zink's Witt vector descent \cite[Prop. 33]{Zink2002}, the question is fpqc-local on $\Spec R$, so we may assume $\mathscr{P}$ is banal, with a trivialization $\psi: \mathscr{P}_U \to \mathscr{P}$ for some $U \in L^+G(R)$. Then by Lemma \ref{lem-Dpsi} there is an isomorphism $\Psi: \omega_{W(R)/R} \xrightarrow{\sim} \mathbb{D}(\mathscr{P})_{W(R)/R}$ such that $\zeta(\eta) \circ \Psi^\eta = \tau^\ast \psi^\eta$. 
	
	Because $\psi$ is a morphism of tensor functors, we have $\mathscr{P}(s_i) = \tau^\ast \psi^{\eta(i)} \circ (s_i \otimes 1)_{W(R)}$. Likewise $(t_i)_{W(R)} = \Psi^{\eta(i)} \circ (s_i \otimes 1)_{W(R)}$. Then
	\begin{align*}
		\zeta(i) \circ (t_i)_{W(R)} = \zeta(i) \circ \Psi^{\eta(i)} \circ (s_i \otimes 1)_{W(R)} = \tau^\ast\psi^{\eta(i)} \circ (s_i \otimes 1)_{W(R)} = \mathscr{P}(s_i). 
	\end{align*}
\end{proof}

\subsection{Proof of Theorem \ref{thm-1}} \label{sub-mainthm}

In this section we prove Theorem \ref{thm-1}. Our strategy is as follows: We first prove the theorem in the case where $pR = 0$ and $R$ admits a $p$-basis \'etale locally by following the strategy in the proof of \cite[Thm. 5.15]{Daniels2019}. That is, we first prove full-faithfulness of the functor, and then we reduce essential surjectivity to the banal case using descent, see Proposition \ref{prop-pR=0} below. The case of general $R$ is then reduced to the case where $pR = 0$ using the analogs of Grothendieck-Messing theory in the two settings; this is Theorem \ref{mainthm}. Let $\underline{G} = (G,\mu,\Lambda,\eta,\underline{s})$ be a local Hodge embedding datum as in the previous section.

Let $R$ be a $p$-nilpotent $W(k_0)$-algebra, and let $M$ be a finite projective graded $W(R)^\oplus$-module. Suppose we are given a collection $\underline{u} = (u_1, \dots, u_r)$ of $W(R)$-module homomorphisms $u_i: W(R) \to (\tau^\ast M)(i)$. Define
\begin{align*}
Q_{M,\underline{u}} = \underline{\textup{Isom}}^0((\Lambda_{W(R)^\oplus},(\underline{s} \otimes 1)_{W(R)}), (M, \underline{u}))
\end{align*}
to be the fpqc sheaf on $\Spec R$ of isomorphisms of graded $W(R)^\oplus$-modules $\psi: \Lambda_{W(R)^\oplus} \xrightarrow{\sim} M$ which respect the tensors after pulling back by $\tau$, in the sense that $(\tau^\ast \psi)(i) \circ (s_i \otimes 1)_{W(R)} = u_i$ for every $i$. We will denote such an isomorphism by $(\Lambda_{W(R)^\oplus}, (\underline{s}\otimes 1)_{W(R)}) \xrightarrow{\sim} (M,\underline{u})$. We write $\underline{\textup{Aut}}^0(\Lambda_{W(R)^\oplus},(\underline{s} \otimes 1)_{W(R)})$ for the sheaf $Q_{\Lambda_{W(R)^\oplus},(\underline{s}\otimes 1)_{W(R)}}$. When the set of tensors is empty, we denote the corresponding sheaf simply by $\underline{\textup{Aut}}^0(\Lambda_{W(R)^\oplus})$. By the arguments of \cite[Lem. 3.9]{Daniels2019}, we have
\begin{align} \label{eq-gl}
	\underline{\textup{Aut}}^0(\Lambda_{W(R)^\oplus}) \cong L^+_{\eta\circ\mu} \textup{GL}(\Lambda). 
\end{align}
It follows from (\ref{eq-gl}) and Lemma \ref{lem-checkhodge} below that we have an identification
\begin{align}\label{eq-aut} 	
\underline{\textup{Aut}}^0(\Lambda_{W(R)^\oplus},(\underline{s}\otimes 1)_{W(R)}) = L^+_\mu G.
\end{align}

\begin{lemma}\label{lem-checkhodge}
	Let $g \in L^+_{\eta\circ\mu}\textup{GL}(\Lambda)(R)$. Then $g \in L^+_\mu G(R)$ if and only if $\tau(g) \in L^+G(R)$. 
\end{lemma}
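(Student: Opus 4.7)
The forward direction is immediate: a graded ring homomorphism $\mathcal{O}_G \to W(R)^\oplus$ composed with the ring homomorphism $\tau\colon W(R)^\oplus \to W(R)$ yields an element of $L^+G(R)$. The content of the lemma is the reverse direction, for which my plan is to use not only $\tau$ but also the Frobenius-like structure map $\sigma$ of the Witt frame.

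Assume $\tau(g) \in L^+G(R)$. It suffices to show that for each $i$ the residue $r_i := g\cdot(s_i\otimes 1) - s_i\otimes 1$ vanishes in $\Lambda(i)\otimes W(R)^\oplus$. Since $s_i\in \Lambda(i)^0$ and $g$ preserves the grading, $r_i$ lies in the degree-zero piece
\[
(\Lambda(i)\otimes W(R)^\oplus)_0 \;=\; \bigoplus_{w\in \zz}\Lambda(i)^w \otimes (W(R)^\oplus)_{-w}.
\]
The hypothesis immediately gives $\tau(r_i) = \tau(g)\cdot(s_i\otimes 1) - s_i\otimes 1 = 0$. To show $\sigma(r_i) = 0$ as well, I would establish the matrix identity
\[
\sigma(g) \;=\; \mu(p)\cdot f(\tau(g))\cdot \mu(p)^{-1}
\]
under $\eta$; although $\mu(p)^{-1}$ formally has entries in $W(R)[1/p]$, the right-hand side lies in $M_h(W(R))$, since the off-diagonal blocks of $f(\tau(g))$ carry the divisibility by $p$ imposed by the grading (as follows from the explicit formulas for $\sigma_n$ and $\tau_n$ on the Witt frame together with the Witt vector identity $fv = p$). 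Because $G$ is defined over $\zz_p$, Frobenius preserves $G(W(R))$, so $f(\tau(g))$ lies in $G(W(R))$ and fixes $s_i\otimes 1$; and since $s_i$ has weight zero under $\mu$, it is fixed by $\mu(p)$ and by $\mu(p)^{-1}$. Hence $\sigma(g)\cdot(s_i\otimes 1) = s_i\otimes 1$, so $\sigma(r_i) = 0$.

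Finally, I would deduce $r_i = 0$ by observing that the joint map $(\tau,\sigma)$ is injective on $(\Lambda(i)\otimes W(R)^\oplus)_0$: on the summand indexed by weight $w$, the restriction $\tau_{-w}\colon (W(R)^\oplus)_{-w} \to W(R)$ is bijective when $w\ge 0$, while for $w\le -1$ the restriction $\sigma_{-w}\colon I(R)\to W(R)$ is the inverse Verschiebung $v^{-1}$, which is bijective. In either case $(\tau_{-w},\sigma_{-w})$ is injective, and this propagates to injectivity of $(\tau,\sigma)$ on the direct sum. The principal obstacle in this plan is the matrix identity above, which codifies the interplay between the two structure maps of the Witt frame mediated by Frobenius and by the $\mu$-grading; once it is in hand, the rest of the argument is a direct consequence of the weight-zero character of the defining tensors $\underline{s}$.
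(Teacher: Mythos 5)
Your reduction of the statement to the vanishing of $r_i=\eta(i)(g)(s_i\otimes 1)-s_i\otimes 1$, the observation that $r_i$ lives in $\bigoplus_w \Lambda(i)^w\otimes (W(R)^\oplus)_{-w}$, and the final injectivity step (which works because $\tau$ and $\sigma$ preserve the weight decomposition and on each summand at least one of $\tau_{-w},\sigma_{-w}$ is injective) are all sound, and this tensor-theoretic route is genuinely different from the paper's, which argues on the coordinate ring: there one shows the graded homomorphism $g\colon\mathcal{O}_{\textup{GL}(\Lambda)}\to W(R)^\oplus$ kills the ideal $K$ cutting out $G$, using injectivity of $\tau_n$ for $n\le 1$ and, for $n\ge 2$, the multiplicativity of $g$ together with the fact that $\mathcal{O}_{\textup{GL}(\Lambda),n}$ is generated by products of degree-one elements since $\eta\circ\mu$ is minuscule.

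The gap is in the step $\sigma(r_i)=0$. What holds integrally is the relation $\sigma(g)\cdot\eta(\mu)(p)=\eta(\mu)(p)\cdot f(\tau(g))$ in $\textup{End}(\Lambda\otimes W(R))$ (your divisibility observation amounts to $f(v(b))=p\cdot v^{-1}(v(b))$). To convert this into $\eta(i)(\sigma(g))(s_i\otimes 1)=s_i\otimes 1$ you must conjugate, i.e.\ invert $\eta(\mu)(p)$, and on $\Lambda(i)$ the contragredient factors force the computation into $\Lambda(i)\otimes W(R)[1/p]$. So you only learn that $\sigma(r_i)$ is killed by a power of $p$. That is not enough: $W(R)$ has nonzero $p$-torsion for non-reduced $p$-nilpotent $R$ (e.g.\ $R=k[x]/(x^p)$, where $p\cdot[x]=v(f([x]))=0$ but $[x]\neq 0$), and on the only summands where $\sigma$ is needed, namely $w\le -2$, the $\tau$-relation already tells you that $r_i^w$ is $p$-power torsion, since $\tau_n$ on $(W(R)^\oplus)_n=I(R)$ is $p^{n-1}$ times the inclusion for $n\ge 2$; the rational conjugation identity therefore adds no new information exactly where you need it. The inference ``$h\,\eta(\mu)(p)=\eta(\mu)(p)\,h'$ with $h'\in G(W(R))$ implies $h\in G(W(R))$'' is false in the presence of $p$-torsion: for the diagonal torus $T\subset\textup{GL}_2$ and $h=\left(\begin{smallmatrix}1&b\\0&1\end{smallmatrix}\right)$ with $pb=0$, $b\ne 0$, one has $h\,\eta(\mu)(p)=\eta(\mu)(p)\cdot 1$ yet $h\notin T(W(R))$. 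Your argument is complete whenever $W(R)$ is $p$-torsion-free (e.g.\ $R$ reduced of characteristic $p$), but the lemma is used for arbitrary $R$ in $\textup{Nilp}_{W(k_0)}$; closing the gap requires an integral argument in weights $\le -2$, which is what the paper's use of multiplicativity and degree-one generation is designed to supply.
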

\begin{proof}
	For any reductive group scheme $H$ over $\zz_p$ with cocharacter $\lambda: \mathbb{G}_m \to H$, let $P_\lambda \subset H$ be the parabolic subgroup defined by $\lambda$, see (\ref{eq-parabolic}). Following \cite{BP2017}, define also a closed subgroup scheme $H^\lambda \subset L^+H$  by
	\begin{align*}
		H^\lambda(R) = \{h \in H \mid h_0 \in P_\lambda(R)\}
	\end{align*}
	for any $\zz_p$-algebra $R$, where $h_0$ denotes the image of $h$ under $w_0: H(W(R)) \to H(R)$. 
	
	By \cite[Rmk. 6.3.3]{Lau2018}, $\tau$ induces isomorphisms
	\begin{align*}
		L^+_{\eta\circ\mu} \textup{GL}(\Lambda) \xrightarrow{\sim} H^{\eta\circ \mu} \text{ and } L^+_\mu G \xrightarrow{\sim} H^\mu.
	\end{align*}
	Thus we reduce to showing that $H^{\eta\circ\mu} \cap L^+G = H^\mu$, which follows from the identity $P_{\eta\circ\mu} \cap G = P_\mu$, see \cite[Prop. 4.1.10, 1.]{Conrad2014}.
\end{proof}

Suppose $(\underline{P}, \underline{t})$ is a nilpotent Zink display with $(\underline{s},\mu)$-structure over $R$, and let $\underline{M} = M_{\underline{W}(R)}(\underline{P})$ be the $1$-display associated with $\underline{P}$ as in Lemma \ref{lem-windows} (here we use notation as in (\ref{eq-displaytowindow})). Recall the isomorphism $\zeta: \mathbb{D}(\underline{P})_{W(R)/R} \xrightarrow{\sim} P = \tau^\ast M$ (see (\ref{eq-zeta})), which extends to an isomorphism $\zeta(i): \mathbb{D}(\underline{P})_{W(R)/R}(i) \xrightarrow{\sim} \tau^\ast M(i)$. For each $i$, then, we obtain a $W(R)$-module homomorphism 
\begin{align*}
\zeta(i) \circ (t_i)_{W(R)}: W(R) \to (\tau^\ast M)(i),
\end{align*}
which we denote by $u_i$. Notice here that we are using the natural isomorphism (\ref{eq-mini}) to identify
\begin{align*} 
	\left(\mathbb{D}(\underline{P})_{W(R)/R}\right)(i) \xrightarrow{\sim} \left(\mathbb{D}(\underline{P})(i)\right)_{W(R)/R}.
\end{align*}

\begin{lemma}\label{lem-torsor}
	Let $(\underline{P},\underline{t})$ be a nilpotent Zink display with $(\underline{s},\mu)$-structure, and let $\underline{M} = M_{\underline{W}(R)}(\underline{P})$ be the 1-display associated with $\underline{P}$. Let $u_i =\zeta(i) \circ (t_i)_{W(R)}$. Then the fpqc sheaf
	\begin{align*}
	Q_{M,\underline{u}} = \underline{\textup{Isom}}^0((\Lambda_{W(R)^\oplus}, (\underline{s}\otimes 1)_{W(R)}), (M, \underline{u}))
	\end{align*}
	is an \'etale locally trivial $L^+_\mu G$-torsor over $\Spec R$.
\end{lemma}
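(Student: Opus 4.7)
The plan is to first establish the pseudo-torsor structure on $Q_{M,\underline{t}_{W(R)}}$, then to produce sections étale-locally on $\Spec R$.

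For the pseudo-torsor structure: via the identification (\ref{eq-aut}), the group $L^+_\mu G$ acts on $Q_{M,\underline{t}_{W(R)}}$ by precomposition. The action is free, and transitive on any pair of local sections, because elements of $L^+_\mu G = \underline{\textup{Aut}}^0(\Lambda_{W(R)^\oplus}, \underline{s}\otimes 1)$ are by definition the graded automorphisms fixing $\underline{s}\otimes 1$ in every tensor construction after $\tau$-pullback. Thus $Q_{M,\underline{t}_{W(R)}}$ is an $L^+_\mu G$-pseudo-torsor, and it remains to produce sections étale-locally.

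Next, the graded module is trivialized étale-locally. By condition (ii) of Definition \ref{def-smu}, the underlying $1$-display $\underline{M}$ is of type $\eta \circ \mu$; via Remark \ref{rmk-GLn}, $\underline{M}$ corresponds to a $(\textup{GL}(\Lambda), \eta \circ \mu)$-display, which is étale-locally banal by the proof of \cite[Lem. 5.4.2]{Lau2018}. After an étale extension $R \to R'$, fix a graded isomorphism $\varphi : \Lambda_{W(R')^\oplus} \xrightarrow{\sim} M_{R'}$, and set $\underline{u} = ((\tau^\ast \varphi)(i)^{-1} \circ (t_i)_{W(R')})_i$, a tuple of $W(R')$-module maps $W(R') \to \Lambda(i)_{W(R')}$. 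Producing a section of $Q_{M,\underline{t}_{W(R)}}$ reduces to finding, after further étale extension $R' \to R''$, an element $g \in L^+_\mu G(R'')$ with $\tau(g)(i) \circ (s_i \otimes 1) = u_i$ for every $i$.

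I will construct such a $g$ by first solving the problem modulo $I_{R''}$ and then lifting. Condition (ii) of Definition \ref{def-smu} gives, after étale extension $R' \to R''$, a filtration-preserving isomorphism $\bar h : (\Lambda_{R''}, \underline{s}\otimes 1) \xrightarrow{\sim} (\mathbb{D}(\underline{P})_{R/R} \otimes R'', \underline{t})$; composing with $\bar\varphi^{-1}$ (the reduction of $\tau^\ast \varphi$ modulo $I_{R''}$) gives an automorphism of $\Lambda_{R''}$ sending $\underline{s}\otimes 1$ to $\underline{u}\bmod I_{R''}$, and this automorphism lies in the parabolic $P_\mu \subset G_{R''}$ stabilizing the $\mu$-filtration.

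The main obstacle is to lift this $P_\mu$-valued reduction to an element $h \in G(W(R''))$ with $h \cdot (\underline{s}\otimes 1) = \underline{u}$ on the nose. I plan to study the $G_{W(R'')}$-torsor $Q^{\mathrm{ung}}$ of $W(R'')$-linear automorphisms of $\Lambda_{W(R'')}$ sending $\underline{s}\otimes 1$ to $\underline{u}$. Its fpqc-local triviality follows from condition (i) of Definition \ref{def-smu} applied to the PD-thickenings $W_n(R'') \to R''/pR''$, combined with the crystal property $\mathbb{D}(\underline{P})_{W_n(R'')/(R''/pR'')} = \tau^\ast M_{R''} \otimes_{W(R'')} W_n(R'')$ and passage to the limit using the $p$-adic completeness of $W(R'')$; smoothness of $G$ upgrades this to étale-local triviality over $\Spec W(R'')$. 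The mod $I_{R''}$ section constructed above then lifts by smoothness of $G$ to a genuine section $h \in G(W(R''))$ of $Q^{\mathrm{ung}}$, possibly after further étale localization of $R''$. Finally, since the reduction of $h$ lies in $P_\mu$, Lemma \ref{lem-checkhodge} uniquely promotes $h$ to $g \in L^+_\mu G(R'')$ with $\tau(g) = h$, and $\varphi \circ g$ is the desired section of $Q_{M, \underline{t}_{W(R)}}(R'')$.
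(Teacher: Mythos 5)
Your proposal is correct and follows essentially the same route as the paper's proof: condition (ii) of Definition \ref{def-smu} produces the tensor- and filtration-preserving trivialization modulo $I(R)$ after an \'etale extension, condition (i) applied to the PD-thickenings $W_n \to A$ makes the scheme of tensor-preserving trivializations an fppf-locally trivial (hence formally smooth) $G$-torsor over the truncated Witt vectors, and one lifts the section through the nilpotent kernels $W_n \to W_{n-1}$ using smoothness of $G$. The only blemish is at the very end: the element $g$ you produce lies in $L^+_{\eta\circ\mu}\textup{GL}(\Lambda)(R'')$ rather than $L^+_\mu G(R'')$ (it moves $\underline{s}\otimes 1$ to $\underline{u}$, so it cannot fix the tensors), and Lemma \ref{lem-checkhodge} is really what underlies the identification (\ref{eq-aut}) of the structure group rather than this last conversion, which instead uses the equivalence between filtered and graded isomorphisms in the minuscule case; neither point affects the validity of the argument.
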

\begin{proof}
	By (\ref{eq-aut}) it is enough to show that, \'etale locally, there is an isomorphism $\psi: (\Lambda_{W(R)^\oplus},(\underline{s} \otimes 1)_{W(R)}) \xrightarrow{\sim} (M,\underline{t}_{W(R)})$. Moreover, letting
	\begin{align*}
	\Fil \Lambda_{W(R)} := I(R) (\Lambda^0 \otimes_{W(k_0)} W(R)) \oplus (\Lambda^1 \otimes_{W(k_0)} W(R)),
	\end{align*}
	we see that it is enough to show that, \'etale locally, there is an isomorphism $\overline{\psi}: \Lambda_{W(R)} \xrightarrow{\sim} P$ which sends $\Fil \Lambda_{W(R)}$ into $\Fil P$ and which respects the tensors.
	
	Condition (ii) in Definition \ref{def-smu} implies that, after replacing $R$ by some faithfully flat \'etale extension, we have an isomorphism $\Lambda_R \xrightarrow{\sim} \mathbb{D}(\underline{P})_{R/R}$ which sends $\Lambda^1_R$ into $\textup{Fil}^1(\mathbb{D}(\underline{P}))$ and which respects the tensors. Recalling the identifications
	\begin{align*}
	\mathbb{D}(\underline{P})_{R/R} = P / I(R) P, \  \textup{Fil}^1(\mathbb{D}(\underline{P})) = \Fil P / I(R) P, \text{ and } \zeta: \mathbb{D}(\underline{P})_{W(R)/R} \xrightarrow{\sim} P,
	\end{align*}
	we reduce the proof to showing that any such isomorphism lifts to an isomorphism $\Lambda_{W(R)} \xrightarrow{\sim} \mathbb{D}(\underline{P})_{W(R)/R}$ which respects the tensors, since any lift will automatically preserve the filtrations. 
	
	Define $Y$ to be the $W(R)$-scheme whose points in a $W(R)$-algebra $R'$ are isomorphisms $\Lambda_{R'} \xrightarrow{\sim} \mathbb{D}(\underline{P})_{W(R)/R} \otimes_{W(R)} R'$ which respect the tensors, i.e.
	\begin{align*}
	Y(R') = \textup{Isom}((\Lambda_{R'}, (\underline{s}\otimes 1)_{R'}), (\mathbb{D}(\underline{P})_{W(R)/R} \otimes_{W(R)} R', \underline{t}_{W(R)} \otimes \id_{R'})).
	\end{align*}
	We need to show that the natural map $Y(W(R)) \to Y(R)$ is surjective. For every $n$, define the analogous $W_n(R)$-scheme $Y_n$, so for any $W_n(R)$-algebra $R'$ we have
	\begin{align*}
	Y_n(R') = \textup{Isom}((\Lambda_{R'}, (\underline{s}\otimes 1)_{R'}), (\mathbb{D}(X)_{W_n(R)/R} \otimes_{W_n(R)} R', \underline{t}_{W_n(R)} \otimes \id_{R'})).
	\end{align*}
	Then, in particular, $Y_n(R') = Y(R')$ for all $W_n(R)$-algebras, and condition (i) of Definition \ref{def-smu} implies that $Y_n$ is an fppf locally trivial $G_{W_n(R)}$-torsor. In particular, $Y_n$ is formally smooth over $W_n(R)$. Since $W_n(R) \to W_{n-1}(R)$ has nilpotent kernel for all $p$-nilpotent $W(k_0)$-algebras $R$, it follows that the natural map $Y_n(W_n(R)) \to Y_n(W_{n-1}(R))$ is surjective for all $n$. Hence $Y(W_n(R)) \to Y(W_{n-1}(R))$ is surjective for all $n$, and therefore so too is $Y(W(R)) \to Y(R)$. 
\end{proof}	

Continuing with the notation of Lemma \ref{lem-torsor}, so $(\underline{P}, \underline{t})$ is a nilpotent Zink display with $(\underline{s},\mu)$-structure over $R$, and $\underline{M}$ is the corresponding 1-display over $\underline{W}(R)$. Thus we have an identity $P = \tau^\ast M$. Suppose $\beta \in Q_{M,\underline{t}_{W(R)}}(R)$. Then $\tau^\ast \beta$ defines an isomorphism $\Lambda_{W(R)} = \tau^\ast(\Lambda_{W(R)^\oplus}) \xrightarrow{\sim} \tau^\ast M$, and the composition
\begin{align}\label{eq-Ubeta}
	\Lambda_{W(R)} \xrightarrow{\sim} \sigma^\ast \Lambda_{W(R)^\oplus} \xrightarrow{\sigma^\ast \beta} \sigma^\ast M \xrightarrow{F^\sharp} \tau^\ast M \xrightarrow{\tau^\ast \beta^{-1}} \Lambda_{W(R)}
\end{align}
determines an element $U_\beta \in \textup{GL}(\Lambda_{W(R)})$. 

Let $L = \beta(\Lambda \otimes_{\zz_p} W(R)) \subset M$ viewed as a graded $W(R)$-module. Then multiplication induces an isomorphism of graded $W(R)^\oplus$-modules $L\otimes_{W(R)} W(R)^\oplus \xrightarrow{\sim} M$. This gives us an identification of $W(R)$-modules
\begin{align}\label{eq-tauast}
	L \xrightarrow{\sim} \tau^\ast(L\otimes_{W(R)} W(R)^\oplus) \xrightarrow{\sim} \tau^\ast M = P
\end{align}
such that the composition $\Lambda \otimes_{\zz_p} W(R) \xrightarrow{\sim} L \to P$ is equal to $\tau^\ast \beta$. 
Denote by $L_0$ and $L_1$ the images of $\beta(\Lambda_{W(R)}^0)$ and $\beta(\Lambda_{W(R)}^1)$ respectively inside $P$ under (\ref{eq-tauast}), so $P = L_0 \oplus L_1$. If we define $\Psi = F_0 \res_{L_0} \oplus F_1 \res_{L_1}$, then $\Psi$ is an $f$-linear automorphism, and $(L_0, L_1, \Psi)$ is a normal representation for $\underline{P}$. Moreover, we have an isomorphism
\begin{align}\label{eq-sigmaast}
	f^\ast P \xrightarrow{\sim} f^\ast L \xrightarrow{\sim} \sigma^\ast(L \otimes_{W(R)} W(R)^\oplus) \xrightarrow{\sim} \sigma^\ast M,
\end{align}
where the first arrow comes from applying $f^\ast$ to the inverse of (\ref{eq-tauast}). From the definition of the equivalence between 1-displays and Zink displays (see Lemma \ref{lem-windows}), the identification (\ref{eq-sigmaast}) has the property that the composition $f^\ast P \xrightarrow{(\ref{eq-sigmaast})} \sigma^\ast M \xrightarrow{F^\sharp} \tau^\ast M=P$ is equal to $\Psi^\sharp: f^\ast P \xrightarrow{\sim} P$. Hence (\ref{eq-Ubeta}) can be identified with the composition
\begin{align}\label{eq-displaybasis}
	\Lambda_{W(R)} \xrightarrow{\sim} f^\ast \Lambda_{W(R)} \xrightarrow{f^\ast \tau^\ast \beta} f^\ast P \xrightarrow{\Psi^\sharp} P \xrightarrow{\tau^\ast\beta^{-1}} \Lambda_{W(R)}.
\end{align}
By definition of $L_0$ and $L_1$, the isomorphism $\tau^\ast \beta: \Lambda \otimes_{\zz_p} W(R) \xrightarrow{\sim} P$ sends $\Lambda^0_{W(R)}$ to $L_0$ and $\Lambda^1_{W(R)}$ to $L_1$. Thus (\ref{eq-displaybasis}) implies that we have an isomorphism between $P$ and the display $\underline{P}_\beta$ given by $P_{\beta} = \Lambda \otimes_{\zz_p} W(R)$ with normal representation $(\Lambda^0_{W(R)}, \Lambda^1_{W(R)}, U_\beta \circ (\id \otimes f))$. 

Using the isomorphism $\tau^\ast \beta: \underline{P}_\beta \xrightarrow{\sim} \underline{P}$, we can extend the tensors $\underline{t}$ to $\underline{P}_\beta$. Explicitly, let $t_i'$ be the composition
\begin{align}\label{eq-ti'}
	\mathbbm{1} \xrightarrow{t_i} \mathbb{D}(\underline{P})(i) \xrightarrow{(\tau^\ast \beta)^{-1}} \mathbb{D}(\underline{P}_\beta)(i).
\end{align}
 Because $\beta \in Q_{M, \underline{u}}$, we know $\tau^\ast \beta \circ (s_i \otimes 1)_{W(R)} = u_i$. Write $\zeta_\beta$ for the isomorphism $\mathbb{D}(\underline{P}_\beta)_{W(R)/R} \xrightarrow{\sim} P_\beta$ given by (\ref{eq-zeta}). Then $\zeta_\beta(i) \circ (s_i \otimes 1)_{W(R)} = (s_i \otimes 1)_{W(R)}$. Hence by functoriality of $\zeta$ (see (\ref{eq-zetafunctorial})), we see 
 \begin{align}\label{eq-ti'si}
 	(t_i')_{W(R)} = (s_i \otimes 1)_{W(R)}.
 \end{align}
	
Let $\pi$ denote the representation $\textup{GL}(\Lambda) \to \textup{GL}(\Lambda(i))$. 
	
\begin{lemma}\label{lem-Ubeta}
	Suppose $pR=0$ and that $R$ admits a $p$-basis \'etale locally. Then in the situation described above, $\pi(U_\beta)(s_i \otimes 1)_{W(R)} = (s_i \otimes 1)_{W(R)}$. 
\end{lemma}
\begin{proof}
	By descent for Witt vectors \cite[Prop. 33]{Zink2002} we may assume $R$ admits a $p$-basis. In that case the map from $R$ to its perfect closure $R^\textup{perf}$ is faithfully flat by Lemma \ref{lem-pbasisuseful}, so we may further assume that $R$ is perfect. 
	
	Let $\underline{P}_\beta$ be defined as above, and write $\underline{P}_\beta = (P_\beta, \Fil P_\beta, F_{\beta,0}, F_{\beta,1})$. By \cite[Prop. 57]{Zink2002}, the identification $\zeta:\mathbb{D}(\underline{P}_\beta)_{W(R)/R} \xrightarrow{\sim} P_\beta$ (see (\ref{eq-zeta})) is compatible with the Frobenius. Thus the composition
	\begin{align*}
		f^\ast \mathbb{D}(\underline{P}_\beta)_{W(R)/R} \xrightarrow{\sim} \phi^\ast \mathbb{D}(\underline{P}_\beta)_{W(R)/R} \xrightarrow{\mathbb{F}_{W(R)/R}} \mathbb{D}(\underline{P}_\beta)_{W(R)/R}
	\end{align*}
	is given by $F_{\beta,0}^\sharp: f^\ast P_\beta \to P_\beta$. Moreover, $P_\beta = \Lambda \otimes_{\zz_p} W(R)$ and $F_{\beta,0}^\sharp = U_\beta \circ (\id_{\Lambda^0_{W(R)}} \oplus p\cdot \id_{\Lambda^1_{W(R)}})\circ (\id \otimes f)^\sharp$ by definition of $\underline{P}_\beta$. Thus the composition
	\begin{align*}
		\Lambda \otimes_{\zz_p} W(R) \xrightarrow{\sim} f^\ast(\Lambda \otimes_{\zz_p} W(R)) = f^\ast P_\beta \xrightarrow{F_{\beta,0}^\sharp} P = \Lambda \otimes_{\zz_p} W(R)
	\end{align*}
	is given by $U_\beta\circ(\id_{\Lambda^0_{W(R)}} \oplus p\cdot \id_{\Lambda^1_{W(R)}})$. Similarly the evaluation of the Verschiebung on $W(R)\to R$ is identified with $(p\cdot \id_{\Lambda^0_{W(R)}} \oplus \Lambda^1_{W(R)})\circ U_\beta^{-1}$ under the above identifications. 
	
	Since $s_i$ is fixed by $G$ for all $i$, we have in particular $s_i \otimes 1 \subset (\Lambda(i))^0$. Hence we can compute exactly as at the end of Proposition \ref{prop-frobeq} to obtain
	\begin{align*}
		p^{n_i}\mathbb{F}_i(s_i \otimes 1)_{W(R)} = p^{n_i} \pi(U)(s_i \otimes 1)_{W(R)}. 
	\end{align*}
	By (\ref{eq-ti'si}), $(s_i \otimes 1)_{W(R)} = (t_i')_{W(R)}$. Since $t_i'$ is Frobenius invariant, it follows that $p^{n_i}(s_i \otimes 1)_{W(R)} = p^{n_i} \pi(U)(s_i \otimes 1)_{W(R)}$. Thus $\pi(U)(s_i \otimes 1)_{W(R)} = (s_i \otimes 1)_{W(R)}$ because $W(R)$ is $p$-torsion free when $R$ is perfect.
\end{proof}

In the following lemma, we associate a $G$-display of type $\mu$ over $\underline{W}(R)$ to any nilpotent Zink display with $(\underline{s},\mu)$-structure $(\underline{P},\underline{t})$. Continue the notation  of Lemma \ref{lem-torsor}, and denote by $\alpha_{\underline{M},\underline{u}}$ the map $Q_{M,\underline{u}} \to L^+G, \ \beta \mapsto U_\beta$ defined by Lemma \ref{lem-Ubeta}.

\begin{lemma}\label{lem-Gdispmatching}
	Suppose $pR = 0$ and that $R$ admits a $p$-basis \'etale locally. Then the pair $(Q_{M,\underline{u}}, \alpha_{\underline{M},\underline{u}})$ determines a $G$-display of type $\mu$ over $\underline{W}(R)$. Moreover, if $\underline{M} = \mathscr{P}(\Lambda, \eta)$ for some Tannakian $(G,\mu)$-display $\mathscr{P}$ over $W(R)$, then evaluation on $(\Lambda, \eta)$ induces an isomorphism of $G$-displays of type $\mu$
	\begin{align*}
	(Q_{\mathscr{P}},\alpha_{\mathscr{P}}) \xrightarrow{\sim} (Q_{M,\underline{u}}, \alpha_{\underline{M},\underline{u}}).
	\end{align*}
\end{lemma}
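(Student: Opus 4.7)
The plan is to verify the two assertions separately, with the main work being the equivariance check for $\alpha_{\underline{M},\underline{t}_{W(R)}}$ and the identification of tensor structures for the second claim.

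\textbf{First assertion.} By Lemma~\ref{lem-torsor}, $Q_{M,\underline{t}_{W(R)}}$ is an \'etale-locally trivial $L^+_\mu G$-torsor, and by Lemma~\ref{lem-Ubeta} the map $\alpha_{\underline{M},\underline{t}_{W(R)}}$ lands in $L^+G$. What remains is to show equivariance for the twisted action~(\ref{eq-action}). Under the identification~(\ref{eq-aut}), an element $h \in L^+_\mu G(R)$ acts on $\beta \in Q_{M,\underline{t}_{W(R)}}$ by $\beta \mapsto \beta \circ h$, and this is well-defined since $h$ preserves $\underline{s}\otimes 1$ and commutes with $\tau^\ast$. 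Plugging $\beta \circ h$ into the composition~(\ref{eq-Ubeta}) and using $\sigma^\ast(\beta h) = \sigma^\ast\beta \circ \sigma^\ast h$ and $\tau^\ast(\beta h)^{-1} = \tau^\ast h^{-1} \circ \tau^\ast\beta^{-1}$ yields, after identifying $\sigma^\ast \Lambda_{W(R)^\oplus} = \Lambda_{W(R)}$ and $\tau^\ast\Lambda_{W(R)^\oplus} = \Lambda_{W(R)}$, the identity
\[
U_{\beta h} \;=\; \eta(\tau(h))^{-1}\, U_\beta\, \eta(\sigma(h)) \;=\; \tau(h)^{-1}\, U_\beta\, \sigma(h),
\]
where the last equality uses $U_\beta \in G(W(R))$. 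This is precisely~(\ref{eq-action}).

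\textbf{Second assertion.} Assume $\underline{M} = \mathscr{P}(\Lambda,\eta)$, so $\underline{t}$ is the collection of tensors attached to $\mathscr{P}$ as in~(\ref{eq-tensors}). Define the comparison map
\[
\Phi \colon Q_{\mathscr{P}} \longrightarrow Q_{M,\underline{t}_{W(R)}}, \qquad \lambda \longmapsto \lambda^\eta.
\]
First I would verify that $\lambda^\eta$ lies in $Q_{M,\underline{t}_{W(R)}}$. Since $\lambda$ is an isomorphism of tensor functors, applying it to the morphism $s_i\colon \zz_p \to \Lambda(i)$ gives a commutative square identifying $\lambda^{\eta(i)} \circ (s_i \otimes 1)$ with $\mathscr{P}(s_i)$. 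Pulling back by $\tau$, reducing to $W(R)\to R$, and using the construction of $(t_i)_{W(R)}$ in terms of $\tau^\ast \mathscr{P}(s_i)$, we obtain $\tau^\ast(\lambda^\eta)(i) \circ (s_i \otimes 1) = (t_i)_{W(R)}$, so $\lambda^\eta$ respects the tensors.

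\textbf{Equivariance and compatibility with $\alpha$.} Under the chain of isomorphisms $L^+_\mu G = \underline{\textup{Aut}}^\otimes(\mathscr{C}(\underline{W})_\mu) = \underline{\textup{Aut}}^0(\Lambda_{W(R)^\oplus},\underline{s}\otimes 1)$ (Theorem~\ref{thm-isom} and~(\ref{eq-aut})), the action of $h$ on $Q_{\mathscr{P}}$ by pre-composition corresponds, after evaluation on $\eta$, to pre-composition by $\eta(h)$ on $\Lambda_{W(R)^\oplus}$, matching the action on $Q_{M,\underline{t}_{W(R)}}$. For compatibility with $\alpha$: by the definition of $\alpha_{\mathscr{P}}$ and the Tannakian identification of $G(W(R)^\oplus_0)$ with $\textup{Aut}^\otimes(\omega)$, the element $\alpha_{\mathscr{P}}(\lambda) \in G(W(R))$ is characterized by its action on $\Lambda_{W(R)}$ being $\tau^\ast(\lambda^\eta)^{-1} \circ (F^\eta)^\sharp \circ \sigma^\ast(\lambda^\eta)$, which is exactly $\eta(U_{\lambda^\eta})$ by the composition~(\ref{eq-Ubeta}). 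Since $\eta$ is a closed embedding, this forces $\alpha_{\mathscr{P}}(\lambda) = U_{\lambda^\eta} = \alpha_{\underline{M},\underline{t}_{W(R)}}(\Phi(\lambda))$. Finally, an equivariant morphism of $L^+_\mu G$-torsors is automatically an isomorphism, completing the proof.

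The main obstacle I anticipate is the bookkeeping required to identify the tensor structure on $M^{\eta(i)}$ (defined via $\mathscr{P}$ as a tensor functor) with $(M^\eta)(i)$ and then with $(\tau^\ast M^\eta)(i) = \tau^\ast((M^\eta)(i))$, and to verify that these comparison isomorphisms are compatible with the construction of $t_i$ via the crystal $\mathbb{D}(\mathscr{P})$ evaluated at the trivial thickening $W(R)\to R$. Once this coherence is pinned down, both assertions reduce to direct computations with the formula~(\ref{eq-Ubeta}).
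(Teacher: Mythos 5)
Your proof is correct and follows essentially the same route as the paper: the equivariance computation $U_{\beta\cdot h}=\tau(h)^{-1}U_\beta\sigma(h)$ is exactly the paper's argument for the first assertion, and for the second the paper simply cites the proof of \cite[Lem. 5.14]{Daniels2019}, which proceeds by the same evaluation-on-$(\Lambda,\eta)$ comparison you spell out. The coherence point you flag at the end --- that $(t_i)_{W(R)}$ agrees with $\tau^\ast\mathscr{P}(s_i)$ under the identification $\mathbb{D}(\mathscr{P})^{\eta(i)}_{W(R)/R}\cong(\tau^\ast M^\eta)(i)$ --- does need to be checked, and the paper verifies it (by reducing to the banal case via Zink's Witt-vector descent) in the proof of Lemma \ref{lem-RZnattrans}.
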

\begin{rmk}
	Here $(Q_{\mathscr{P}},\alpha_{\mathscr{P}})$ is the $G$-display of type $\mu$ over $\underline{W}(R)$ associated with $\mathscr{P}$ as in (\ref{eq-morphism}) (see also \cite[Constr. 3.15]{Daniels2019}).
\end{rmk}
\begin{proof}
	For the first assertion, we note that if $h \in L^+_\mu G(R)$, then $U_{\beta \cdot h}$ is the composition $\tau^\ast h^{-1} \circ \tau^\ast \beta^{-1} \circ \Phi^\sharp \circ \sigma^\ast h \circ \sigma^\ast \beta$, which is equal to $\tau(h)^{-1} \cdot U_{\beta} \cdot \sigma(h)$. The second assertion follows from the proof of \cite[Lem. 5.14]{Daniels2019}.
\end{proof}

We can now prove Theorem \ref{thm-1} in the case where $pR = 0$. 

\begin{prop}\label{prop-pR=0}
	Suppose $pR=0$ and $R$ admits a $p$-basis \'etale locally. Then the functor $\textup{BT}_{\underline{G},R}$ is an equivalence.
\end{prop}
\begin{proof}
	By Remark \ref{rmk-equiv} it is enough to show the functor $\Z_{\underline{G},R}$ is an equivalence. The proof in this case is formally very similar to the proof of \cite[Thm. 5.15]{Daniels2019}.  Indeed, faithfulness of $\Z_{\underline{G},R}$ follows exactly as in \textit{loc. cit.}. Namely, the problem reduces by descent to faithfulness of the representation $\eta$. For fullness, if $\mathscr{P}$ and $\mathscr{P}'$ are Tannakian $(G,\mu)$-displays over $\underline{W}(R)$ which are nilpotent with respect to $\eta$, and $\varphi: \Z_{\underline{G},R}(\mathscr{P}) \to \Z_{\underline{G},R}(\mathscr{P}')$ is a morphism of $p$-divisible groups with $(\underline{s},\mu)$-structure, one uses Lemma \ref{lem-Gdispmatching} to obtain a morphism $(Q_{\mathscr{P}}, \alpha_{\mathscr{P}}) \to (Q_{\mathscr{P}'},\alpha_{\mathscr{P}'})$ of $G$-displays of type $\mu$ over $\underline{W}(R)$, which is induced from a unique morphism $\xi: \mathscr{P} \to \mathscr{P}'$. As in the proof of \cite[Thm. 5.15]{Daniels2019}, we have $\Z_{\eta,R}(\xi) = \xi^\eta = \varphi$.
	
	Let us now show essential surjectivity. Let $(\underline{P},\underline{s})$ be a nilpotent Zink display with $(\underline{s},\mu)$-structure over $R$. Since $\Z_{\underline{G},R}$ is fully faithful, by descent it is enough to show that $(\underline{P}, \underline{t})$ is \'etale locally in the essential image of $\Z_{\underline{G},R}$. Let $\underline{M} = M_{\underline{W}(R)}(\underline{P})$ be the 1-display corresponding to $\underline{P}$. By Lemma \ref{lem-torsor}, $Q_{M,\underline{u}}(R')$ has a section $\beta$ for some \'etale faithfully flat extension $R'$ of $R$. The composition  $U_\beta = \tau^\ast\beta^{-1} \circ F_{R'}^\sharp \circ \sigma^\ast \beta$ is an element of $L^+G(R')$ by Lemma \ref{lem-Ubeta}, and $\Z_{\eta,R'}(\mathscr{P}_{U_\beta}) = \underline{P}_\beta$, where $\underline{P}_\beta$ is the Zink display with normal representation $(\Lambda_{W(R)}^0, \Lambda_{W(R)}^1, U_\beta \circ (\id \otimes f))$ defined before Lemma \ref{lem-Ubeta}. It follows that $\beta$ determines an isomorphism $\Z_{\eta, R'}(\mathscr{P}_{U_\beta}) = \underline{P}_\beta \xrightarrow{\sim} \underline{P}_{R'}$. It remains to show the induced isomorphism of crystals
	\begin{align}\label{eq-isomofcrystals}
		\mathbb{D}(\underline{P}_\beta) \xrightarrow{\sim} \mathbb{D}(\underline{P}_{R'})
	\end{align}
	sends $t_i$ to $t_{\beta,i}:=\mathbb{D}(\mathscr{P}_{U_\beta})(s_i)$ for all $i$. Let $t_i'$ be the tensor for $\underline{P}_\beta$ induced by (\ref{eq-isomofcrystals}), see also (\ref{eq-ti'}). Since $\beta \in Q_{M,\underline{u}}$, we know that $(t_i')_{W(R)} = (s_i \otimes 1)_{W(R)}$, see (\ref{eq-ti'si}). On the other hand, it follows from Lemma \ref{lem-tensormatch} that $(t_{\beta,i})_{W(R)} = (s_i \otimes 1)_{W(R)}$. Thus we have the equality $t_i' = t_{\beta,i}$ by Lemma \ref{lem-tensorsagree}, so $\Z_{\eta,R'}(\mathscr{P}_{U_\beta}) \cong (\underline{P}, \underline{t})$.
\end{proof}

\begin{thm}\label{mainthm}
	Suppose $R$ is a $p$-nilpotent $W(k_0)$-algebra such that $R/pR$ has a $p$-basis \'etale locally. Then the functor $\textup{BT}_{\underline{G},R}$ is an equivalence. 
\end{thm}
\begin{proof}
	Let $R_0 = R/pR$. We have a commutative diagram of functors
	\begin{center}
		\begin{tikzcd}
			G\textup{-Disp}^{\otimes,\eta}_{\underline{W},\mu}(R) 
			\arrow[r, "\textup{BT}_{\underline{G},R}"] \arrow[d]
			& \textup{fpdiv}_{\underline{s},\mu}(R)
			\arrow[d]
			\\ G\textup{-Disp}^{\otimes,\eta}_{\underline{W},\mu}(R_0)
			\arrow[r, "\textup{BT}_{\underline{G},R_0}"]
			& \textup{fpdiv}_{\underline{s},\mu}(R_0).
		\end{tikzcd}
	\end{center}
	By Proposition \ref{prop-pR=0}, the bottom horizontal arrow is an equivalence. 
	
	By Proposition \ref{prop-lifting} and Proposition \ref{prop-Gdisplift}, lifts of a Tannakian $(G,\mu)$-display $\mathscr{P}$ over $\underline{W}(R_0)$ along the left-hand vertical arrow correspond to lifts of the Hodge filtration $\textup{Fil}_\mathscr{P}$ of the fiber functor $\omega_{\mathscr{P}}$ to a filtration of $\omega_{\mathscr{P},R}$. Equivalently, by the discussion at the end of \textsection \ref{sub-gdispdaniels}, lifts of $\mathscr{P}$ correspond to lifts of the Hodge filtration $Q_{\textup{Fil}_{\mathscr{P}}}$ of the corresponding $G$-display of type $\mu$ to a $P_{\mu,R}$-torsor $Q_\mu$ inside the $G_R$-torsor $Q_{\omega_{\mathscr{P}.R}}$. 
	
	On the other hand, by Lemma \ref{lem-liftspdiv}, we have an analogous description of lifts along the right-hand arrow: Lifts of a formal $p$-divisible group with $(\underline{s},\mu)$-structure $(X, \underline{t})$ over $\mu$ correspond to lifts of the $P_{\mu,R_0}$-torsor $T(X)_{R_0/R_0,\mu}$ associated with $(X,\underline{t})$ to a $P_{\mu,R}$-torsor $T_\mu$ inside the $G_R$-torsor $T(X)_{R/R_0}$. Thus $\textup{BT}_{\underline{G},R}$ will be an equivalence if we can show that the respective torsors correspond under $\textup{BT}_{\underline{G},R}$.
	
	Let $\mathscr{P}$ be a Tannakian $(G,\mu)$-display over $R$ which is nilpotent with respect to $\eta$, and write $\mathscr{P}(V,\rho) = (M^\rho, F^\rho)$ for every representation $(V,\rho)$ of $G$. Set $(X, \underline{t}) = \textup{BT}_{\underline{G},R}(\mathscr{P})$. Then the $G_R$- and $P_{\mu,R}$ torsors associated to $\mathscr{P}$ are
	\begin{align*}
		Q_{\omega_{\mathscr{P}}} = \textup{Isom}^\otimes(\omega_R, \omega_{\mathscr{P}}) \text{ and } \textup{Isom}^\otimes(\textup{Fil}_\mu, \textup{Fil}_{\mathscr{P}}),
	\end{align*}
	respectively, see (\ref{eq-GRtors}) and (\ref{eq-Pmutors}). Any $\beta \in Q_{\omega_{\mathscr{P}}}$ is an isomorphism of tensor functors, so $\beta(s_i \otimes 1)_R = \bar{\tau}^\ast\mathscr{P}(s_i)$ for every $i$. Therefore evaluation on $\eta$ induces an isomorphism of $G_R$-torsors
	\begin{align*}
		Q_{\omega_{\mathscr{P}}} \xrightarrow{\sim} \textup{Isom}((\Lambda \otimes_{\zz_p} R, (\underline{s}\otimes 1)_R), (\tau^\ast M^\eta \otimes_{W(R)} R, \bar{\tau}^\ast\mathscr{P}(\underline{s}))).
	\end{align*}
	But we have an isomorphism $\tau^\ast M^\eta \otimes_{W(R)} R = \mathbb{D}(\mathscr{P})^\eta_{R/R} \xrightarrow{\sim} \mathbb{D}(X)_{R/R}$, and under this isomorphism $\bar{\tau}^\ast(\mathscr{P}(s_i))$ is identified with $(t_i)_{R}$ by definition of $t_i$ (see (\ref{eq-ti})). Thus we have an isomorphism of $G_R$-torsors $Q_{\omega_{\mathscr{P}}} \xrightarrow{\sim} T(X)_{R/R}$.	Similarly, $Q_{\textup{Fil}_{\mathscr{P}}} \xrightarrow{\sim} T(X)_{R/R,\mu}$, since if $\beta \in Q_{\omega_{\mathscr{P}}}$ preserves the Hodge filtration of the Tannakian $(G,\mu)$-display, then its evaluation on $\eta$ will preserve the Hodge filtration of the corresponding formal $p$-divisible group. 
	
	Finally we check that $\mathbb{D}(\mathscr{P})^\eta \xrightarrow{\sim} \mathbb{D}(X)$ induces an isomorphism $Q_{\omega_{\mathscr{P},R}} \xrightarrow{\sim} T(X)_{R/R_0}$. This is similar to the case of $Q_{\omega_\mathscr{P}}$, except here evaluation on $(\Lambda, \eta)$ sends an isomorphism $\beta \in Q_{\omega_{\mathscr{P},R}}$ to an isomorphism
	\begin{align}\label{eq-R/R_0}
		(\Lambda \otimes_{\zz_p} R, (\underline{s} \otimes 1)_R) \xrightarrow{\sim} (\tilde{\tau}^\ast M_{R/R_0}^\eta \otimes_{W(R)} R, \tilde{\tau}^\ast(\mathscr{P}_{R/R_0}(\underline{s}))),
	\end{align}
	where $M_{R/R_0}^\eta$ is the evaluation of the unique lift $\mathscr{P}_{R/R_0}$  of $\mathscr{P}$ to a Tannakian $(G,\mu)$-display over $\underline{W}(R/R_0)$. By definition of $\mathbb{D}(\mathscr{P})$, the right-hand side of (\ref{eq-R/R_0}) is identified with $(\mathbb{D}(\mathscr{P})^\eta_{R/R_0}, \mathbb{D}(\mathscr{P})_{R/R_0}(\underline{s}))$, and hence with $(\mathbb{D}(X)_{R/R_0}, \underline{t}_R)$. Therefore we obtain an isomorphism of $G_R$-torsors $Q_{\omega_{\mathscr{P},R}} \xrightarrow{\sim} T(X)_{R/R_0}$, and the theorem follows.
\end{proof}

\subsection{RZ spaces of Hodge type}\label{sub-rzspaces}

In this section we give an explicit isomorphism between the Rapoport-Zink functor of Hodge type defined using Tannakian $(G,\mu)$-displays as in \cite{BP2017} and \cite{Daniels2019}, and the one defined using crystalline Tate tensors as in \cite{Kim2018} and \cite{HP2017}. We begin by recalling the definition of $G$-quasi-isogenies as in \cite{Daniels2019}, which are used to define the Rapoport-Zink functor in terms of Tannakian $(G,\mu)$-displays. 

If $R$ is a $\zz_p$-algebra, the Frobenius for $W(R)$ naturally extends to $W(R)[1/p]$. An \textit{isodisplay} over $R$ is a pair $(N,\varphi)$ where $N$ is a finitely generated projective $W(R)[1/p]$-module and $\varphi:N \to N$ is an $f$-linear isomorphism. If $\underline{M}$ is a display over $\underline{W}(R)$, then we can associate to $\underline{M}$ an isodisplay $(N,\varphi)$ using the process explained in \cite[\textsection 3.4]{Daniels2019}. Let us review the construction.

If $\underline{M} = (M, F)$ is a display over $\underline{W}(R)$ with standard datum $(L,\Phi)$, then the \textit{depth} of $\underline{M}$ is the smallest integer $d$ such that $L_d$, the $d^\textup{th}$ graded piece of $L = \bigoplus L_i$, is nonzero. By \cite[Lem. 2.7]{Daniels2019}, $d$ does not depend on the choice of normal decomposition. Moreover, by \cite[Lem. 2.8]{Daniels2019}, the natural map $\theta_n: M_n \to \tau^\ast M$ (see \textsection \ref{sub-frames}) is an isomorphism of $W(R)$-modules for all $n \le d$. 

Suppose $\underline{M}$ is a display of depth $d$. Define $\varphi = p^d \circ F_d \circ \theta_d^{-1}$. Then $N = (\tau^\ast M, \varphi)$ is an isodisplay, and the assignment $\underline{M} \mapsto (\tau^\ast M, \varphi)$
determines an exact tensor functor from displays over $\underline{W}(R)$ to isodisplays over $R$. A \textit{quasi-isogeny} of displays over $\underline{W}(R)$ is an isomorphism of their corresponding isodisplays, and a quasi-isogeny is an isogeny if it is induced from a morphism of displays. These notions naturally extend to $G$-displays. Indeed, a \textit{$G$-isodisplay} over $R$ is an exact tensor functor $\textup{Rep}_{\zz_p}(G) \to \textup{Isodisp}(R)$. Any $G$-display $\mathscr{P}$ naturally determines a $G$-isodisplay $\mathscr{P}[1/p]$ by composition of functors, and a \textit{$G$-quasi-isogeny} between two $G$-displays is an isomorphism of their corresponding $G$-isodisplays. See \cite[\textsection 3.4]{Daniels2019} for more details.

Let us now recall the definition of local Shimura data of Hodge type as in \cite{HP2017} and \cite{BP2017} (see also \cite{Daniels2019}). Let $k$ be an algebraic closure of $\mathbb{F}_p$, and let $W(k)$ be the Witt vectors over $k$. Write $K = W(k)[1/p]$, and let $\bar{K}$ be an algebraic closure of $K$. We will write $\sigma$ for the extension of the Frobenius of $W(k)$ to $K$ (hopefully this causes no confusion with the previous definition of $\sigma$).  

Assume that $G$ is a connected reductive group scheme over $\zz_p$, and let $(\{\mu\},[b])$ be a pair such that 
\begin{itemize}
	\item $\{\mu\}$ is a $G(\bar{K})$-conjugacy class of cocharacters ${\mathbb{G}_m}_{\bar{K}} \to G_{\bar{K}}$;
	\item $[b]$ is a $\sigma$-conjugacy class of elements $b \in G(K)$.
\end{itemize}
The local reflex field is the field of definition $E$ of the conjugacy class $\{\mu\}$. Because $G_{\mathbb{Q}_p}$ splits over an unramified extension of $\mathbb{Q}_p$, $E$ is a subfield of $K$ (a priori, $E \subset \bar{K}$), and by \cite[Lem. 1.1.3]{Kottwitz1984}, there is a cocharacter $\mu \in \{\mu\}$ which is defined over $E$. Moreover, we an find a representative $\mu$ which extends to an integral cocharacter defined over the valuation ring $\mathcal{O}_E$ of $E$. Note that if $k_E$ is the residue field of $\mathcal{O}_E$, then $k_E$ is finite, $\mathcal{O}_E = W(k_E)$, and $E = W(k_E)[1/p]$. 

We say the triple $(G,\{\mu\},[b])$ is a \textit{local unramified Shimura datum} if $\{\mu\}$ is minuscule and for some (or equivalently, any) integral representative $\mu$ of $\{\mu\}$, the $\sigma$-conjugacy class $[b]$ has a representative 
\begin{align*}
b \in G(W(k))\sigma(\mu)(p)G(W(k)).
\end{align*}
If these assumptions are satisfied, then we can find an integral representative $\mu$ of $\{\mu\}$ defined over $\mathcal{O}_E$ and a representative $b$ of $[b]$ such that $b = u \sigma(\mu)(p)$ for some $u \in L^+G(k)$. Such a pair $(\mu,b)$ will be called a \textit{framing pair}. 

If $(\mu,b)$ is a framing pair, then we associate to $(\mu,b)$ the \textit{framing object} $\mathscr{P}_0 := \mathscr{P}_u$ where $u \in L^+G(k)$ is the unique element such that $b = u\sigma(\mu)(p)$, and $\mathscr{P}_u$ is defined as in Proposition \ref{prop-banal}. 

\begin{Def}
	Fix a framing pair $(\mu,b)$ for $(G,\{\mu\}, [b])$, and let $\mathscr{P}_0$ be the associated framing object. The \textit{display RZ-functor} associated with $(G,\mu,b)$ is the functor on \textup{Nilp}$_{W(k)}$ which assigns to a $p$-nilpotent $W(k)$-algebra $R$ the set of isomorphism classes of pairs $(\mathscr{P}, \rho)$, where
	\begin{itemize}
		\item $\mathscr{P}$ is a Tannakian $(G,\mu)$-display over $R$,
		\item $\rho: \mathscr{P}_{R/pR} \dashrightarrow (\mathscr{P}_0)_{R/pR}$ is a $G$-quasi-isogeny.
	\end{itemize}
	Denote the display RZ-functor associated with $(G,\mu,b)$ by \textup{RZ}$^{\text{disp}}_{G,\mu,b}$. 
\end{Def}

Let $\textup{Nilp}^\text{fsm}_{W(k)}$ denote the category of adic $W(k)$-algebras in which $p$ is nilpotent, and which are formally finitely generated and formally smooth over $W(k)/p^nW(k)$ for some $n \ge 1$. We extend $\textup{RZ}_{G,\mu,b}^\textup{disp}$ to a functor $\textup{RZ}_{G,\mu,b}^\textup{disp,fsm}$ on $\textup{Nilp}^\text{fsm}_{W(k)}$ by defining 
\begin{align*}
\textup{RZ}_{G,\mu,b}^\textup{disp,fsm}(A) := \varprojlim_n \textup{RZ}_{G,\mu,b}^\textup{disp}(A /I^n),
\end{align*}
where $I$ is an ideal of definition of $A$. 

\begin{rmk}\label{rmk-RZdispfsm}
	Let $A \in \textup{Nilp}_{W(k)}^\textup{fsm}$, and suppose $I$ is an ideal of definition for $A$. Define a Tannakian $(G,\mu)$-display over $\Spf A$ to be a compatible system $(\mathscr{P}_n)_n$ of Tannakian $(G,\mu)$-displays $\mathscr{P}_n$ over $\underline{W}(A/I^n)$. Likewise, a $G$-quasi-isogeny over $\Spf A$ is a compatible system $(\rho_n)_n$ of $G$-quasi-isogenies over $A/I^n$. With these definitions, we see that $\textup{RZ}_{G,\mu,b}^\textup{disp,fsm}(A)$ is the set of isomorphism classes of pairs $((\mathscr{P}_n)_n,(\rho_n)_n)$, with $(\mathscr{P})_n$ a Tannakian $(G,\mu)$-display over $\Spf A$, and $(\rho_n)_n$ a $G$-quasi-isogeny $\mathscr{P}_{A/pA} \dashrightarrow (\mathscr{P}_0)_{A/pA}$ defined over $\Spf A/pA$. In fact, by \cite[Prop. 3.2.11]{BP2017} and \cite[Cor. 3.17]{Daniels2019}, the categories of Tannakian $(G,\mu)$-displays over $\underline{W}(A)$ and over $\Spf A$ are equivalent, so it is equivalent to consider pairs $(\mathscr{P},(\rho_n)_n)$, where $\mathscr{P}$ is a Tannakian $(G,\mu)$-display over $\underline{W}(A)$ and $(\rho_n)_n$ is a $G$-quasi-isogeny over $\Spf A/pA$.
\end{rmk}

Let $(G,\mu)$ be of Hodge type as in Definition \ref{def-hodgetype}, with Hodge embedding $\eta: G \hookrightarrow \textup{GL}(\Lambda)$. Suppose $(\mu,b)$ is a framing pair, and let $\mathscr{P}_0$ be the framing object given by $u \in L^+G(k)$, so $b = u\sigma(\mu)(p)$. Then $G$ is cut out by some collection of tensors $\underline{s}$, and $\underline{G} = (G,\mu,\Lambda,\eta,\underline{s})$ is a local Hodge embedding datum. For the remainder of this section we will assume $\mathscr{P}_0$ is nilpotent with respect to $\eta$. Then by \cite[Thm. 5.1.3]{BP2017}, the restriction of RZ$^{\textup{disp}}_{G,\mu,b}$ to Noetherian algebras in $\textup{Nilp}_{W(k)}$ is representable by a formal scheme $\textup{RZ}^\textup{BP}_{G,\mu,b}$ which is formally smooth and formally locally of finite type over $W(k)$. Applying $\textup{BT}_{\underline{G},k}$ to $\mathscr{P}_0$, we obtain a formal $p$-divisible group with $(\underline{s},\mu)$-structure 
\begin{align*}
(X_0, \underline{t_0}) = (\textup{BT}_k(\Z_k(\mathscr{P}_0)), \mathbb{D}(\mathscr{P}_0)(\underline{s})).
\end{align*}

\begin{Def}\label{def-RZpdiv}
	Let $\underline{G} = (G,\mu,\Lambda,\eta,\underline{s})$, $b$ and $(X_0,\underline{t_0})$ be as above. The \textit{$p$-divisible group RZ-functor} associated with the data $(\underline{G},b)$ is the functor on \textup{Nilp}$_{W(k)}$ which assigns to a $p$-nilpotent $W(k)$-algebra the set of isomorphism classes of triples $(X,\underline{t},\iota)$, where
	\begin{itemize}
		\item $(X,\underline{t})$ is a $p$-divisible group with $(\underline{s},\mu)$-structure,
		\item $\iota: X\otimes_R R/pR \dashrightarrow X_0 \otimes_k R/pR$ is a quasi-isogeny such that, for some nilpotent ideal $J \subset R$ with $p \in J$, the composition of $t_i$ with 
		\begin{align*}
		\mathbb{D}(\iota_{R/J}): \mathbb{D}(X\otimes_R R/J)^\otimes[1/p] \xrightarrow{\sim} \mathbb{D}(X \otimes_k R/J)^\otimes[1/p]	
		\end{align*}
		is equal to $t_{0,i}$ for every $i$.
	\end{itemize}
	Denote the $p$-divisible group RZ-functor associated with $(\underline{G},b)$ by $\textup{RZ}^{p\text{-div}}_{\underline{G},b}$.
\end{Def}

We also extend $\textup{RZ}_{\underline{G},b}^{p\textup{-div}}$ to a functor $\textup{RZ}_{\underline{G},b}^{p\textup{-div,fsm}}$ on $\textup{Nilp}^\text{fsm}_{W(k)}$ by defining 
\begin{align*}
\textup{RZ}_{\underline{G},b}^{p\textup{-div,fsm}}(A) := \varprojlim_n \textup{RZ}_{\underline{G},b}^{p\textup{-div}}(A /I^n),
\end{align*}
where once again $I$ is an ideal of definition of $A$. 
\begin{rmk}
	As in the case of the display RZ-functor, the extension to $\textup{Nilp}_{W(k)}^\textup{fsm}$ can be thought of as classifying objects over $\Spf A$. More precisely, $\textup{RZ}_{\underline{G},b}^{p\textup{-div,fsm}}(A)$ is the set of isomorphism classes of triples $(X,\underline{t},\iota)$, with $(X,\underline{t})$ a $p$-divisible group with $(\underline{s},\mu)$-structure over $\Spf A$, and $\iota = (\iota_n)_n$ a quasi-isogeny over $\Spf A/pA$ such that for every $n$, $\mathbb{D}((\iota_n)_{A/I}) \circ t_i = t_{0,i}$, for all $i$. Since $(\iota_n)_n$ is a compatible system, it is equivalent to assume $\mathbb{D}(\iota_1) \circ t_i = t_{0,i}$ for all $i$. If $I$ is chosen with $p \in I$, then by rigidity of quasi-isogenies along with \cite[Lem. 2.4.4]{deJong1996} and the proof of \cite[Prop. 2.4.8]{deJong1996}, elements of $\textup{RZ}_{\underline{G},b}^{p\textup{-div,fsm}}(A)$ correspond to triples $(X, \underline{t}, \iota)$, with $(X,\underline{t})$ a $p$-divisible group with $(\underline{s},\mu)$-structure over $\Spec A$ and 
	\begin{align*}
	\iota: X \otimes_A A/I \dashrightarrow X_0 \otimes_k A/I
	\end{align*}	
	a quasi-isogeny such that $\mathbb{D}(\iota) \circ t_i = t_{0,i}$ for all $i$ (see \cite[\textsection 2.3.6]{HP2017} for details).
\end{rmk}

Suppose $(\mathscr{P},\rho) \in \textup{RZ}^{\text{disp}}_G(R)$ for $R \in \textup{Nilp}_{W(k)}$. Let us write 
\begin{align*}
\mathscr{P}_{R/pR}(\Lambda, \eta) = \underline{M}^\eta \text{ and } (\mathscr{P}_0)_{R/pR}(\Lambda, \eta) = \underline{M_0}^\eta.
\end{align*}
By evaluating $\rho$ on $(\Lambda, \eta)$, we obtain a quasi-isogeny of 1-displays 
\begin{align*}
\underline{M}^\eta \dashrightarrow \underline{M_0}^\eta.
\end{align*}
By \cite[Prop. 66]{Zink2002}, such a quasi-isogeny is equivalent to an invertible section of \begin{align*}
\textup{Hom}_{\text{Disp}(\underline{W}(R/pR))}(\underline{M}^\eta,\underline{M_0}^\eta)[1/p],
\end{align*}
so the functor $\textup{BT}$ induces a quasi-isogeny of $p$-divisible groups 
\begin{align*}
\iota_\rho: \textup{BT}(\Z_{\eta}(\mathscr{P}_{R/pR})) \dashrightarrow \textup{BT}(\Z_{\eta}((\mathscr{P}_0)_{R/pR})).
\end{align*}
If $(\rho_n)_n$ is a $G$-quasi-isogeny defined over $\Spf A/pA$ for $A \in \textup{Nilp}_{W(k)}^\textup{fsm}$ with ideal of definition $I$ containing $p$, then by taking $R = A/I^n$ as $n$ varies we obtain a quasi-isogeny of $p$-divisible groups $(\iota_{\rho_n})_n$ defined over $\Spf A/pA$.
\begin{lemma}\label{lem-RZnattrans}
	The assignment
	\begin{align*}
	(\mathscr{P}, (\rho_n)_n) \mapsto (\textup{BT}_{\underline{G},R}(\mathscr{P}), (\iota_{\rho_n})_n)
	\end{align*}
	determines a natural transformation $\Psi:\textup{RZ}^{\textup{disp,fsm}}_{G,\mu,b} \to \textup{RZ}^{p\textup{-div,fsm}}_{\underline{G},b}$ of functors on \textup{Nilp}$_{W(k)}^\textup{fsm}$. 
\end{lemma}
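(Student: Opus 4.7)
The plan is to verify two claims: first, that for each $A \in \textup{Nilp}_{W(k)}^{\textup{fsm}}$ and each element $(\mathscr{P}, (\rho_n)_n)$ of $\textup{RZ}^{\textup{disp,fsm}}_{G,\mu,b}(A)$, the pair $(\textup{BT}_{\underline{G},A}(\mathscr{P}), (\iota_{\rho_n})_n)$ actually lies in $\textup{RZ}^{p\textup{-div,fsm}}_{\underline{G},b}(A)$; and second, that the assignment is natural in $A$. By Lemma \ref{lem-tensors} the first factor is already a formal $p$-divisible group with $(\underline{s},\mu)$-structure, so the nontrivial content of the first claim is that the quasi-isogeny $\iota_\rho$ respects the tensor data in the sense of Definition \ref{def-RZpdiv}, i.e., that $\mathbb{D}(\iota_\rho)[1/p] \circ t_i = t_{0,i}$ after reduction modulo a nilpotent ideal $J \supseteq pA$.

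To check this, I would fix such a $J$ and unwind the definition of the $G$-quasi-isogeny. By construction, $\rho_{A/J}$ is an isomorphism of $G$-isodisplays $\mathscr{P}_{A/J}[1/p] \xrightarrow{\sim} (\mathscr{P}_0)_{A/J}[1/p]$, hence a natural tensor isomorphism between the two exact tensor functors from $\textup{Rep}_{\zz_p}G$ to $\textup{Isodisp}(A/J)$. Naturality applied to the morphism of representations $s_i : \zz_p \to \Lambda(i)$ (see (\ref{eq-si})) gives a commutative square in isodisplays expressing that $\rho$ carries $\mathscr{P}(s_i)[1/p]$ to $\mathscr{P}_0(s_i)[1/p]$. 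The task is then to transport this identity across the chain of identifications linking $\mathscr{P}(\Lambda, \eta)$ with $\mathbb{D}(\textup{BT}_{\underline{G}}(\mathscr{P}))$ after inverting $p$.

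For this transport, I would use the construction of $\mathbb{D}(\mathscr{P})$ in Lemma \ref{lem-crystal}: by definition $t_i = \mathbb{D}(\mathscr{P})(s_i)$, and the canonical isomorphism $\mathbb{D}(\mathscr{P})^\eta \cong \mathbb{D}(\Z_\eta(\mathscr{P}))$ from (\ref{eq-eta}), combined with Lemma \ref{lem-zinkdieudonne}, identifies $\mathbb{D}(\mathscr{P})^{\eta(i)}[1/p]$ with $\mathbb{D}(X)(i)[1/p]$ in a manner compatible with tensor products. Under these identifications, the isocrystal map induced by $\iota_\rho$ coincides with the evaluation of $\rho$ on $(\Lambda, \eta)$ after passing through $\Z_\eta$ and $\textup{BT}$, by the analog of Lemma \ref{lem-comparecrystals} and the compatibility of $\Phi_R$ with $\textup{BT}_R$. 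Combined with the commutative square above, this yields $\mathbb{D}(\iota_\rho)[1/p] \circ t_i = t_{0,i}$.

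Naturality in $A$ is largely formal: the base change functors on $(G,\mu)$-displays, on Zink displays, on $\textup{BT}_R$, and on $\mathbb{D}(\mathscr{P})$ (see Lemma \ref{lem-crystalfunctorial}) are all compatible, and the construction $\rho \mapsto \iota_\rho$ via evaluation on $(\Lambda, \eta)$ followed by $\textup{BT}$ manifestly commutes with base change along any $W(k)$-algebra map in $\textup{Nilp}_{W(k)}^{\textup{fsm}}$. The main obstacle is the bookkeeping in the tensor compatibility argument above: one must carefully track several canonical isomorphisms (from the $G$-crystal construction, from (\ref{eq-eta}), from Lemma \ref{lem-zinkdieudonne}, and from the Zink/Lau theorem) and verify that the resulting diagram of isomorphisms commutes in a way that transfers the tensor compatibility on isodisplays to one on crystalline Tate tensors.
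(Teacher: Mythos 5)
Your overall strategy matches the paper's: reduce the content of the lemma to the single identity $\mathbb{D}(\iota_\rho)\circ t_i = t_{0,i}$, obtain from the naturality of the $G$-quasi-isogeny the relation $\rho^{\eta(i)}\circ \mathscr{P}[1/p](s_i) = (\mathscr{P}_0)_R[1/p](s_i)$, and transport it through the identifications $\mathbb{D}(\mathscr{P})^\eta \cong \mathbb{D}(\Z_\eta(\mathscr{P})) \cong \mathbb{D}(X)$ using $\Phi\circ\textup{BT}\cong\id$. The treatment of naturality in $A$ and of the identification of $\mathbb{D}(\iota)_{W(R)/R}$ with $\rho^\eta$ (via reduction to the isogeny case) is also in line with the paper.

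There is, however, one genuine gap. The naturality of $\rho$ is an identity of maps of isodisplays, i.e.\ of $W(R)[1/p]$-modules; after all your identifications it yields the equality $\mathbb{D}(\iota)\circ t_i = t_{0,i}$ only \emph{after evaluation on the single PD-thickening $W(R)\to R$}. But the condition in Definition \ref{def-RZpdiv} is an equality of morphisms of isocrystals, which a priori must be checked on every PD-thickening, and evaluation at $W(R)\to R$ is not faithful for an arbitrary finitely generated $k$-algebra $R = A/I$ (Lemma \ref{lem-tensorsagree} requires $R/pR$ to have a $p$-basis \'etale locally, which such an $R$ need not satisfy). The paper supplies the missing reduction in two steps: by \cite[Lem.~3.2.8]{HP2017}, since $R$ is finitely generated over $k$ it suffices to check the identity of crystal morphisms at a closed point of each connected component of $\Spec R$; and at a closed point the residue field has a $p$-basis, so Lemma \ref{lem-tensorsagree} applies and the identity of crystal morphisms over that point follows from the identity of evaluations on the Witt vectors. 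Without this step (or an equivalent faithfulness argument), your chain of identifications establishes the tensor compatibility only at one thickening, which is strictly weaker than what membership in $\textup{RZ}^{p\textup{-div,fsm}}_{\underline{G},b}(A)$ demands.
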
 
\begin{proof}
	Let $\mathscr{P}$ be a Tannakian $(G,\mu)$-display over $\underline{W}(A)$, and let $(\rho_n)_n$ be a $G$-quasi-isogeny over $\Spf A$ (cf. Remark \ref{rmk-RZdispfsm}). Suppose $I$ is an ideal of definition for $A$ with $p \in I$, and let $R = A/I$. Write $\rho = \rho_1$, and $\iota = \iota_{\rho}$, so $\iota = (\iota_{\rho_n})_{R}$ for every $n$. We need to show $\mathbb{D}(\iota) \circ t_i = t_{0,i}$ for every $i$. We claim it is enough to show this after evaluation on $W(R) \to R$. Indeed, by \cite[Lem. 3.2.8 and its proof]{HP2017}, since $R$ is finitely generated over $k$, it is enough to check the identity holds at a closed point in each connected component of $\Spec R$ (see also \cite[Rmk. 2.3.5 (d)]{HP2017}). But any field $k'$ of characteristic $p$ has a $p$-basis, so if the identity holds after evaluation on $W(k') \to k'$, then it holds over $\Spec k'$ by Lemma \ref{lem-tensorsagree}. 	
 
	Since $\rho$ is a natural transformation of functors $\rho: \mathscr{P}[1/p] \to (\mathscr{P}_0)_{R}[1/p]$, we have an identification
	\begin{align}\label{eq-identity0}
	\rho^{\eta(i)} \circ \mathscr{P}[1/p](s_i) =  (\mathscr{P}_0)_{R}[1/p](s_i).
	\end{align}
	Moreover, by definition, we have $\mathscr{P}[1/p](s_i) = \tau^\ast \mathscr{P}(s_i)$, so (\ref{eq-identity0}) can be rewritten as
	\begin{align}\label{eq-identity}
	\rho^{\eta(i)} \circ \tau^\ast \mathscr{P}(s_i) = \tau^\ast(\mathscr{P}_0)_{R}(s_i).
	\end{align}
	Recall the isomorphism $\zeta(i): \mathbb{D}(\mathscr{P})_{W(R)/R}^{\eta(i)} \xrightarrow{\sim}  \tau^\ast M^{\eta(i)}$ (see (\ref{eq-zetai})). We will write $\zeta_0(i)$ for the analogous isomorphism defined for $\mathscr{P}_0$. By Lemma \ref{lem-tensormatch}, $\tau^\ast \mathscr{P}(s_i) = \zeta(i) \circ (t_i)_{W(R)}$, and $\tau^\ast (\mathscr{P}_0)_R(s_i) = \zeta_0(i) \circ (t_{0,i})_{W(R)}$. Moreover, the isomorphism $\mathbb{D}(\underline{P})\xrightarrow{\sim} \mathbb{D}(\textup{BT}(\underline{P}))$ from Lemma \ref{lem-zinkdieudonne} is functorial in $\underline{P}$, so we can identify $\mathbb{D}(\iota)$ with $\mathbb{D}(\rho^\eta)$. Thus it is enough to show $\zeta_0 \circ \mathbb{D}(\rho^\eta) = \rho^{\eta} \circ \zeta$. But this follows immediately from the functoriality of $\zeta$, see (\ref{eq-zetafunctorial}).
\end{proof}

\begin{thm}\label{thm-RZfunctors}
	The natural transformation $\Psi:\textup{RZ}_{G,\mu,b}^{\textup{disp,fsm}} \to \textup{RZ}_{\underline{G},b}^{p\textup{-div,fsm}}$ defined in Lemma \ref{lem-RZnattrans} is an isomorphism of functors on $\textup{Nilp}_{W(k)}^{\textup{fsm}}$.
\end{thm}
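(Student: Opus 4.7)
My plan is to invert $\Psi_A$ for each $A \in \textup{Nilp}_{W(k)}^\textup{fsm}$ using Theorem \ref{mainthm}. Fix such an $A$ with ideal of definition $I$ containing $p$. By \cite[Lem.~2.1]{Lau2018}, the quotient $(A/I^n)/p(A/I^n)$ has a $p$-basis \'etale locally for every $n$, so Theorem \ref{mainthm} provides an equivalence
\[
\textup{BT}_{\underline{G}, A/I^n}: G\text{-}\textup{Disp}_{\underline{W}, \mu}^{\otimes, \eta}(A/I^n) \xrightarrow{\sim} \textup{fpdiv}_{\underline{s},\mu}(A/I^n).
\]
Combined with \cite[Prop.~3.2.11]{BP2017}, which identifies $(G,\mu)$-displays over $\underline{W}(A)$ with compatible systems over $\underline{W}(A/I^n)$, I would use these equivalences level by level to attach to any $(X,\underline{t},\iota) \in \textup{RZ}_{\underline{G},b}^{p\text{-div,fsm}}(A)$ a $(G,\mu)$-display $\mathscr{P}$ over $\underline{W}(A)$ (nilpotent with respect to $\eta$) whose image under $\textup{BT}_{\underline{G},A/I^n}$ is canonically $(X_n,\underline{t}_n)$.

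Next I would promote $\iota$, together with its tensor compatibility, to a $G$-quasi-isogeny $\rho: \mathscr{P}_{A/pA} \dashrightarrow (\mathscr{P}_0)_{A/pA}$. Rigidity of quasi-isogenies reduces this to constructing $\rho$ after reduction modulo an ideal of definition, and multiplying by $p^N$ for large $N$ I may assume we start from a genuine isogeny. Applying the quasi-inverse $\Phi$ of $\textup{BT}$ recalled in \textsection\ref{sub-zinkcrystals} yields an isogeny of Zink displays, hence (via Lemma \ref{lem-windows}) a morphism of 1-displays $\mathscr{P}(\Lambda,\eta) \to \mathscr{P}_0(\Lambda,\eta)$; inverting $p$ gives an isomorphism of isodisplays which a priori is only a $\textup{GL}(\Lambda)$-quasi-isogeny. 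To upgrade it to a $G$-quasi-isogeny, I would pass to an \'etale cover $\bar R$ over which both $\mathscr{P}$ and $\mathscr{P}_0$ become banal, trivialize to obtain an element $g \in \textup{GL}(\Lambda)(W(\bar R)[1/p])$, and argue, following the tensor computation at the end of the proof of Lemma \ref{lem-RZnattrans}, that $\mathbb{D}(\iota) \circ t_i = t_{0,i}$ forces $g \cdot (s_i \otimes 1) = s_i \otimes 1$ for every $i$. Hence $g \in G(W(\bar R)[1/p])$, and the Tannakian formalism underlying Theorem \ref{thm-isom} reconstructs a unique $G$-quasi-isogeny $\rho$ realizing $g$, which descends to the desired quasi-isogeny over $A/pA$.

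Injectivity of $\Psi_A$ would then follow from Proposition \ref{prop-fullyfaithful}: two preimages share isomorphic underlying $(G,\mu)$-displays, and their $G$-quasi-isogenies must match because the underlying quasi-isogenies of $p$-divisible groups (together with the tensor data) do. That $\Psi_A$ and the constructed inverse are mutually inverse would be checked by the same crystal comparison carried out in the proof of Lemma \ref{lem-RZnattrans}, using $\Phi \circ \textup{BT} \cong \textup{id}$ from \cite[Lem.~8.1]{Lau2013}.

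The hard step will be the Tannakian upgrade of the quasi-isogeny $\iota$ to a $G$-quasi-isogeny $\rho$ --- a ``quasi-isogeny version'' of the tensor-preservation argument used for $(G,\mu)$-displays in Lemmas \ref{lem-torsor} and \ref{lem-Ubeta}, now carried out after inverting $p$. Once this is established, the rest is formal from Theorem \ref{mainthm} and \cite[Prop.~3.2.11]{BP2017}.
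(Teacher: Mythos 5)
Your strategy is essentially the paper's: injectivity from Proposition \ref{prop-fullyfaithful}, and surjectivity by inverting $\textup{BT}_{\underline{G},A}$ via Theorem \ref{mainthm} and then promoting $\iota$ to a $G$-quasi-isogeny by trivializing \'etale-locally and showing the tensor condition forces the resulting element of $\textup{GL}(\Lambda)(W(\cdot)[1/p])$ to lie in $G(W(\cdot)[1/p])$. Two details need adjusting, though. First, you apply Theorem \ref{mainthm} level by level to $A/I^n$, which would require $(A/I^n)/p(A/I^n) = A/(pA+I^n)$ to have a $p$-basis \'etale locally; this is not what \cite[Lem. 2.1]{Lau2018b} provides, and it can genuinely fail for such quotients (already $k[x]/(x^3)$ has no $p$-basis when $p=2$, since the Frobenius module is not free). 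The paper instead applies Theorem \ref{mainthm} once, directly to $A$, whose reduction $A/pA$ is Noetherian, $F$-finite and formally smooth over $\mathbb{F}_p$ and hence has a $p$-basis locally; since by rigidity an element of $\textup{RZ}_{\underline{G},b}^{p\textup{-div,fsm}}(A)$ is already given by a $p$-divisible group with $(\underline{s},\mu)$-structure over $\Spec A$, this fix is available within your setup. Second, the tensor compatibility of $\iota$ is a priori only imposed modulo some nilpotent ideal $J$ containing $p$; to obtain the identity $\mathbb{D}(\iota_n)\circ t_i = t_{0,i}$ modulo $p$, which is what your trivialization argument actually uses, the paper invokes \cite[Lem. 4.6.3]{Kim2018} to show the condition is independent of the chosen nilpotent ideal --- your appeal to rigidity of quasi-isogenies alone does not supply this. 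With these two corrections the argument goes through exactly as in the paper.
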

\begin{proof}
	This is formally similar to \cite[Thm. 5.15]{Daniels2019}. If $A$ is in $\textup{Nilp}_{W(k)}^\textup{fsm}$, then the $\mathbb{F}_p$-algebra $A/pA$ satisfies condition (1.3.1.1) of \cite{deJong1996}, so in particular it is Noetherian, $F$-finite, and formally smooth over $\mathbb{F}_p$. Hence by \cite[Lem. 2.1]{Lau2018b} $A/pA$ has a $p$-basis \'etale (even Zariski) locally.  Thus for any $A$ in $\textup{Nilp}_{W(k)}^\textup{fsm}$, $\Psi_A$ is injective by full-faithfulness of $\textup{BT}_{\underline{G},A}$. For surjectivity, suppose $(X, \underline{t}, (\iota_n)_n) \in \textup{RZ}_{\underline{G},b}^{p\textup{-div,fsm}}(A)$ for $A \in \textup{Nilp}^{\textup{fsm}}_{W(k)}$. Then by Theorem \ref{mainthm} there exists a Tannakian $(G,\mu)$-display $\mathscr{P}$ over $\underline{W}(A)$ such that $\textup{BT}_{\underline{G},A}(\mathscr{P}) \cong (X,\underline{t})$. It remains to define a $G$-quasi-isogeny $(\rho_n)_n$ over $\Spf A$. 
	
	Choose an ideal of definition $I$ with $p \in I$, fix $n$, and let $A_n = A/((p)+ I^n)$. Consider the $G$-quasi-isogeny $\iota_n: X\otimes_A A_n \dashrightarrow X_0 \otimes_k A_n$.
	By the second condition in Definition \ref{def-RZpdiv}, we have $\mathbb{D}((\iota_n)_{A/I}) \circ t_i = t_{0,i}$. Moreover, by \cite[Lem. 4.6.3]{Kim2018}, such an identity lifts along a quotient by a nilpotent ideal, so we obtain
	\begin{align}\label{eq-RZthm}
	\mathbb{D}(\iota_n)\circ t_i = t_{0,i}.
	\end{align}
	By descent it is enough to define the $G$-quasi-isogeny \'etale locally. After an \'etale faithfully flat extension, $\mathscr{P}$ is banal, with trivialization $\psi:\mathscr{P}_U \xrightarrow{\sim} \mathscr{P}$ for some $U \in L^+G(A)$. For every $n$ denote by $\mathscr{P}_n$ the base change of $\mathscr{P}$ to $A_n$, let $U_n$ denote the image of $U$ in $L^+G(A_n)$, and let $\psi_n: \mathscr{P}_{U_n} \xrightarrow{\sim} \mathscr{P}_n$ be the trivialization obtained by base change. By \cite[Thm. 4.7]{Daniels2019}, a $G$-quasi-isogeny $(\mathscr{P}_U)_{A_n/pA_n} = \mathscr{P}_{U_n} \dashrightarrow (\mathscr{P}_0)_{A_n / pA_n}$ is given by $g_n \in G(W(A_n))[1/p]$ such that $U_n \sigma(\mu(p)) = g_n^{-1} b f(g_n)$ in $G(W(A_n)[1/p])$.

	By Lemma \ref{lem-Dpsi}, the trivialization $\psi$ lifts to an isomorphism of tensor functors $\Psi : \omega_{W(A)} \xrightarrow{\sim} \mathbb{D}(\mathscr{P})_{W(A)/A}$ such that $\zeta \circ \Psi^\eta = \tau^\ast \psi^\eta$. Since $W(A) \to W(A_n)$ preserves the divided powers, there is an isomorphism 
	\begin{align*}
		\mathbb{D}(\mathscr{P}_n)_{W(A_n)/A_n} = \mathbb{D}(\mathscr{P})_{W(A)/A} \otimes_{W(A)} W(A_n)
	\end{align*}
	given by the crystal property for $\mathbb{D}(\mathscr{P})$. Thus base change along $A \to A_n$ induces $\Psi_n: \omega_{W(A_n)} \xrightarrow{\sim}  \mathbb{D}(\mathscr{P}_n)_{W(A_n)/A_n}$ such that $\zeta_n \circ \Psi_n^\eta= \tau^\ast \psi_n$. By uniqueness, $\Psi_n$ is the isomorphism associated to $\psi_n$ by Lemma \ref{lem-Dpsi}. 
	
	Let $g_n$ denote the composition
	\begin{align*}
		\Lambda \otimes_{\zz_p} W(A_n) [1/p] \xrightarrow{\Psi_n^\eta} \mathbb{D}(\mathscr{P})^\eta_{W(A_n)/A_n}[1/p] \xrightarrow{\mathbb{D}(\iota_n)_{W(A_n)}} \mathbb{D}(\mathscr{P}_0)^\eta_{W(A_n)/A_n}[1/p] = \Lambda \otimes_{\zz_p} W(A_n) [1/p].
	\end{align*}
	Then $g_n \in \textup{GL}(\Lambda \otimes_{\zz_p} W(A_n)[1/p])$. We claim $g_n \in G(W(A_n)[1/p])$ and $U\sigma(\mu(p)) = g_n^{-1}bf(g_n)$. 
	
	For the first claim, we note that by Lemma \ref{lem-tensormatch}, $\Psi_n^\eta \circ (s_i \otimes 1)_{W(A_n)} = \mathbb{D}(\mathscr{P_n})(s_i)$. Moreover, since $\textup{BT}_{\underline{G},A}(\mathscr{P}) = (X, \underline{t})$, we know $\mathbb{D}(\mathscr{P})(s_i) = t_i$. Hence it follows from the identity (\ref{eq-RZthm}) and the definition of $g_n$ that $g_n(s_i \otimes 1)_{W(A_n)[1/p]} = (s_i \otimes 1)_{W(A_n)[1/p]}$. 
	
	Now let $\mathscr{P}_n(\Lambda, \eta) = (M_n^\eta, F_n^\eta)$, and let $\mathbb{F}_{W(A_n)}'$ be the homomorphism $f^\ast (\Lambda \otimes_{\zz_p} W(A_n)[1/p]) \to \Lambda \otimes_{\zz_p} W(A_n)[1/p]$ induced by the Frobenius $\mathbb{F}_{W(A_n)}$ for $\mathbb{D}(X)$ via $\Psi_n^\eta$. By \cite[Prop. 57]{Zink2002}, the isomorphism $\zeta_n: \mathbb{D}(\mathscr{P}_n)^{\eta}_{W(A_n)/A_n} \xrightarrow{\sim} \tau^\ast M_n^\eta$ is compatible with Frobenius, so $\mathbb{F}_{W(A_n)}'$  is identified with the Frobenius for $\mathscr{P}_{U_n}$ on $(\Lambda, \eta)$, which is given by $U_n \sigma(\mu(p)) \circ (\id \otimes f)^\sharp$ by \cite[Lem. 3.27]{Daniels2019}. Similarly, the Frobenius for $\mathbb{D}(\mathscr{P}_0)^\eta_{W(A_n)/A_n}$ is identified with $b \circ (\id \otimes f)^\sharp$. Thus the identity $U_n\sigma(\mu(p)) = g_n^{-1}bf(g_n)$ follows from the fact that $\mathbb{D}(\iota_n)$ is a morphism of $F$-isocrystals.
	
	The collection $(g_n)_n$ is compatible as $n$ varies because the same is true for $\mathbb{D}(\iota_n)$, and because $\Psi_n$ is induced by base change of $\Psi$ along $A \to A_n$. Let $\rho_n$ be the isogeny induced by $g_n$ for each $n$; thus $(\mathscr{P},(\rho_n)_n) \in \textup{RZ}_{G,\mu,b}^{\textup{disp, fsm}}$. It remains to show that $\rho_n$ induces $\iota_n$ for each $n$. For this it is enough to show $\rho_n^\eta$ and $\Phi(\iota_n)$ define the same quasi-isogeny of $1$-displays (here $\Phi$ is Lau's functor (\ref{eq-laufunctor})). By definition of $\rho_n$, we have $\rho_n^\eta \circ \zeta = \zeta_0 \circ \mathbb{D}(\iota_n)_{W(A_n)}$, where $\zeta_0$ is the analog of $\zeta$ for $\Z_\eta(\mathscr{P}_0)$. On the other hand, by functoriality of $\zeta$ (see (\ref{eq-zetafunctorial})), we have $\Phi(\iota_n) \circ \zeta = \zeta_0 \circ \mathbb{D}(\iota_n)_{W(A_n)}$. Thus $\rho_n = \Phi(\iota_n)$ for all $n$, and the result follows. 
\end{proof}

\begin{rmk}\label{rmk-contravariant}
	The functor in Definition \ref{def-RZpdiv} is formulated using covariant Dieudonn\'e theory, hence it differs slightly from those of \cite{Kim2018} and \cite{HP2017} which are formulated using contravariant Dieudonn\'e theory. In fact, the difference is purely aesthetic, and the functors are isomorphic. Indeed, if $(G,\mu,\Lambda,\eta,\underline{s})$ is a local Hodge embedding datum in our sense, then the embedding $\eta^\vee: G \hookrightarrow \textup{GL}(\Lambda^\vee)$ determines a local Hodge embedding datum for $(G,\{\mu\},[b])$ in the sense of \cite[Def. 2.2.3]{HP2017}. It follows that $(G,b,\mu,\Lambda^\vee)$ is a local unramified Shimura-Hodge datum in the sense of \textit{loc. cit.}, and $X_0^D$ is the unique $p$-divisible group over $k$ associated with this datum by \cite[Lem. 2.2.5]{HP2017}. Moreover, the contravariant Dieudonn\'e crystal of a $p$-divisible group $X$ is given by the covariant Dieudonn\'e crystal of the Serre dual $X^D$ of $X$, and under this relationship the respective Hodge filtrations are identified. Hence the assignment $(X,\iota) \mapsto (X^D, \iota^D)$ provides the isomorphism between our $p$-divisible group RZ-functor and that of \cite{Kim2018} and \cite{HP2017}. 
\end{rmk}

The main theorem of \cite{Kim2018} states that there is a formal scheme $\textup{RZ}_{\underline{G},b}$ over Spf $W(k)$ which is formally smooth and formally locally of finite type which represents $\textup{RZ}_{\underline{G},b}^\textup{fsm}$ in the sense that
\begin{align}\label{eq-rep}
\textup{RZ}_{\underline{G},b}^\textup{fsm}(A) = \textup{Hom}_{\textup{Spf }W(k)}(\textup{Spf }A, \textup{RZ}_{\underline{G},b})
\end{align}
for $A \in \textup{Nilp}^\text{fsm}_{W(k)}$.
\begin{cor}\label{cor}
	The formal schemes $\textup{RZ}^{\textup{BP}}_{G,\mu,b}$ and $\textup{RZ}_{\underline{G},b}$ are isomorphic. 
\end{cor}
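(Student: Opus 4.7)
The plan is to deduce the corollary from Theorem \ref{thm-RZfunctors} via Yoneda's lemma applied to formal schemes which are formally smooth and formally locally of finite type over $\Spf W(k)$. Such a formal scheme $\mathfrak{X}$ is canonically a filtered colimit (along open immersions) of formal spectra $\Spf A_i$ with $A_i \in \textup{Nilp}_{W(k)}^{\textup{fsm}}$; in particular, $\mathfrak{X}$ is determined up to unique isomorphism by the functor it represents on $\textup{Nilp}_{W(k)}^{\textup{fsm}}$ via $A \mapsto \textup{Hom}_{\Spf W(k)}(\Spf A, \mathfrak{X})$. Hence it suffices to exhibit an isomorphism between the functors represented by $\textup{RZ}_{G,\mu,b}^{\textup{BP}}$ and $\textup{RZ}_{\underline{G},b}$ on $\textup{Nilp}_{W(k)}^{\textup{fsm}}$.

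For the side coming from crystalline Tate tensors, the isomorphism (\ref{eq-rep}) recalled from Kim gives
\[
\textup{Hom}_{\Spf W(k)}(\Spf A, \textup{RZ}_{\underline{G},b}) \cong \textup{RZ}_{\underline{G},b}^{p\textup{-div,fsm}}(A)
\]
for every $A$ in $\textup{Nilp}_{W(k)}^{\textup{fsm}}$ (after applying the involution $(X,\iota) \mapsto (X^D,\iota^D)$ of Remark \ref{rmk-contravariant} to switch between the contravariant formulation of \cite{Kim2018} and our covariant one). For the side coming from $(G,\mu)$-displays, the formal scheme $\textup{RZ}_{G,\mu,b}^{\textup{BP}}$ is, by \cite[Thm.\ 5.1.3]{BP2017}, formally smooth and formally locally of finite type over $W(k)$, and represents the restriction of $\textup{RZ}_{G,\mu,b}^{\textup{disp}}$ to Noetherian objects of $\textup{Nilp}_{W(k)}$. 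I would then argue that this representability extends canonically to all of $\textup{Nilp}_{W(k)}^{\textup{fsm}}$: for $A \in \textup{Nilp}_{W(k)}^{\textup{fsm}}$ with an ideal of definition $I$, each quotient $A/I^n$ is Noetherian, so
\[
\textup{Hom}_{\Spf W(k)}(\Spf A, \textup{RZ}_{G,\mu,b}^{\textup{BP}}) \;=\; \varprojlim_n \textup{Hom}_{W(k)}(\Spec A/I^n, \textup{RZ}_{G,\mu,b}^{\textup{BP}}) \;=\; \varprojlim_n \textup{RZ}_{G,\mu,b}^{\textup{disp}}(A/I^n),
\]
and the right-hand side is $\textup{RZ}_{G,\mu,b}^{\textup{disp,fsm}}(A)$ by definition.

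Combining these two identifications with the natural isomorphism $\Psi \colon \textup{RZ}_{G,\mu,b}^{\textup{disp,fsm}} \xrightarrow{\sim} \textup{RZ}_{\underline{G},b}^{p\textup{-div,fsm}}$ of Theorem \ref{thm-RZfunctors} produces an isomorphism of functors on $\textup{Nilp}_{W(k)}^{\textup{fsm}}$ between the functors of points of $\textup{RZ}_{G,\mu,b}^{\textup{BP}}$ and $\textup{RZ}_{\underline{G},b}$. Yoneda then yields the desired isomorphism of formal schemes over $\Spf W(k)$.

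The main obstacle will be the bookkeeping in the passage from representability on Noetherian test objects (as in \cite{BP2017} and \cite{Kim2018}) to representability on all of $\textup{Nilp}_{W(k)}^{\textup{fsm}}$; once both functors are presented as $\varprojlim_n (-)(A/I^n)$, the extension is routine, but one has to be careful that the formal structure on each side matches up (i.e.\ that the chosen ideals of definition on the source $\Spf A$ and the target formal schemes are compatible). Everything else is essentially a bookkeeping exercise, since the hard content is already packaged into Theorem \ref{thm-RZfunctors}.
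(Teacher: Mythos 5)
Your proposal is correct and follows essentially the same route as the paper: both arguments reduce the corollary to Theorem \ref{thm-RZfunctors} together with the fact that a formally smooth, formally locally of finite type formal scheme over $\Spf W(k)$ is determined up to unique isomorphism by the functor it represents on $\textup{Nilp}_{W(k)}^{\textup{fsm}}$ in the sense of (\ref{eq-rep}). Your write-up merely makes explicit the bookkeeping (passing from Noetherian test objects to $\varprojlim_n(-)(A/I^n)$) that the paper leaves implicit.
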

\begin{proof}
	By the results of \cite{Kim2018}, $\textup{RZ}_{\underline{G},b}$ is the unique formally smooth and locally formally of finite type formal scheme over $\textup{Spf }W(k)$ representing the functor $\textup{RZ}_{\underline{G},b}^\textup{fsm}$ on $\textup{Nilp}^\text{fsm}_{W(k)}$ in the sense of (\ref{eq-rep}). But by Theorem \ref{thm-RZfunctors} the same is true of $\textup{RZ}^{\textup{BP}}_{G,\mu,b}$.
\end{proof}

\begin{rmk}
	Corollary \ref{cor} is also known by \cite[Rmk. 5.2.7]{BP2017}. However, in \textit{loc. cit.} no explicit isomorphism is given between the respective RZ-functors.
\end{rmk}

\subsection{Deformations}\label{sub-deformations}
Let $G$ be a reductive group scheme over $\zz_p$, and let $\mu$ be a minuscule cocharacter of $G$ defined over $W(k_0)$. In this section we want to study the infinitesimal deformation theory of $p$-divisible groups with $G$-structure over $k$. We begin by reviewing the deformation theory of adjoint nilpotent Tannakian $(G,\mu)$-displays as in \cite[\textsection 3.5]{BP2017}, so fix a Tannakian $(G,\mu)$-display $\mathscr{P}_0$ which is adjoint nilpotent over $k$. Let $\textup{Art}_{W(k)}$ denote the category of augmented local Artin $W(k)$-algebras, i.e., the category of local artin $W(k)$-algebras $(R,\mathfrak{m})$ together with a fixed isomorphism $R/\mathfrak{m} \xrightarrow{\sim} k$. Such a ring is necessarily a $p$-nilpotent $W(k)$-algebra.  

Let $\mathfrak{Def}(\mathscr{P}_0)$ denote the functor on $\textup{Art}_{W(k)}$ which assigns to $R \in \textup{Art}_{W(k)}$ the set of isomorphism classes of pairs $(\mathscr{P}, \delta)$ where $\mathscr{P}$ is a Tannakian $(G,\mu)$-display over $R$ and $\delta: \mathscr{P}_k \xrightarrow{\sim} \mathscr{P}_0$ is an isomorphism of Tannakian $(G,\mu)$-displays over $\underline{W}(k)$. An isomorphism between pairs $(\mathscr{P},\delta)$ and $(\mathscr{P}', \delta')$ is an isomorphism $\psi: \mathscr{P} \xrightarrow{\sim} \mathscr{P}'$ such that $\delta' \circ \psi_k = \delta$. We will usually omit the fixed isomorphism $\delta$ and refer to the pair $(\mathscr{P},\delta)$ simply as $\mathscr{P}$.

By \cite[3.5.9]{BP2017}, $\mathfrak{Def}(\mathscr{P}_0)$ is prorepresentable by a power series ring over $W(k)$. Let us summarize the theory and describe the universal deformation. Denote by $U_G^\circ$ the opposite unipotent subgroup of $G$ defined by $\mu$. By \cite[Lem. 6.3.2]{Lau2018} (see also \cite[Lem. A.0.5]{BP2017}), there exists a unique $\mathbb{G}_m$-equivariant isomorphism of schemes
\begin{align*}
\textup{log}: U_G^\circ \xrightarrow{\sim} V(\textup{Lie }U_G^\circ)
\end{align*}
which induces the identity on Lie algebras. Moreover, log is an isomorphism of $W(k_0)$-group schemes. Since $U_G^\circ$ is smooth (see e.g. \cite[Thm. 4.1.17]{Conrad2014}), $\textup{Lie }U_G^\circ$ is finite and free as a $W(k_0)$-module, so after a choice of basis log induces an isomorphism of $W(k_0)$-group schemes
\begin{align*}
\textup{log}: U_G^\circ \xrightarrow{\sim} \mathbb{G}_a^\ell,
\end{align*}
where $\ell$ is the dimension of $U_G^\circ$. Let $\textup{Spf}(R_G)$ be the formal completion of $U_G^\circ \otimes_{W(k_0)} W(k)$ at the origin, and note that we have a (non-canonical) isomorphism $R_G \cong W(k)[[t_1, \dots, t_\ell]]$. If $w \in R_G$, denote by $[w]$ the Teichm\"uller lift of $w$ in $W(R_G)$, so $[w] = (w,0,\dots)$ in the usual Witt vector coordinates. Define the element $h_G^\text{univ} \in U_G^\circ(W(R_G))$ to be the unique element such that
\begin{align*}
\textup{log}(h_G^{\text{univ}}) = ([t_1], \dots, [t_\ell]) \in \mathbb{G}_a^\ell(W(R_G)).
\end{align*}
Since $k$ is algebraically closed, $\mathscr{P}_0$ is banal, given by some $u_0 \in L^+G(k)$, and the inclusion $W(k) \hookrightarrow R_G$ allows us to view $u_0$ as an element of $L^+G(R_G)$. Define
\begin{align*}
u_G^\textup{univ} := (h_G^{\textup{univ}})^{-1} u_0 \in L^+G(R_G),
\end{align*}
and let $\mathscr{P}^\textup{univ}$ denote the Tannakian $(G,\mu)$-display over $\underline{W}(R_G)$ defined by $u_G^\textup{univ}$. By the results of \cite[3.5.9]{BP2017}, the ring $R_G$ prorepresents $\mathfrak{Def}(\mathscr{P}_0)$, and $\mathscr{P}^\textup{univ}$ defines the universal deformation of $\mathscr{P}_0$ over $R_G$.

If $G = \textup{GL}_h$, $\mu = \mu_{d,h}$, and $\mathscr{P}_0$ corresponds to a nilpotent Zink display $\underline{P_0}$, then this recovers the deformation theory of \cite[\textsection 2.2]{Zink2002} because any lift of $\underline{P_0}$ to a Zink display over a local Artin $W(k)$-algebra is nilpotent by \cite[Lem. 21]{Zink2002}. 

Suppose now $\underline{G} = (G,\mu,\Lambda,\eta,\underline{s})$ is a local Hodge embedding datum, and suppose that we can choose a basis for $\Lambda_{W(k_0)}$ such that $\eta \circ \mu = \mu_{d,h}$. Let $R_{\textup{GL}} := R_{\textup{GL}(\Lambda)}$, so that $\textup{Spf}(R_{\textup{GL}})$ is the formal completion of $U^\circ_{\textup{GL}(\Lambda)} \otimes_{W(k_0)} W(k)$ at the origin, where $U^\circ_{\textup{GL}(\Lambda)}$ is the opposite unipotent subgroup of $\textup{GL}(\Lambda)$ defined by $\eta\circ\mu$. Then $U^\circ_G \hookrightarrow U^\circ_{\textup{GL}(\Lambda)}$ induces a surjection $R_\textup{GL} \to R_G$, which we denote by $\pi$. Notice that $R_\textup{GL}$ is non-canonically isomorphic to the power series ring $W(k)[[t_1, \dots, t_{d(h-d)}]]$. We choose coordinates for $R_G$ so that $R_G \cong R/(t_{r+1}, \dots, t_{d(h-d)})$.

Let $\mathscr{P}_0$ be a Tannakian $(G,\mu)$-display over $k$ which is nilpotent with respect to $\eta$, and write $\underline{P_0} = \mathscr{P}_0(\Lambda,\eta)$ for the associated Zink display over $k$. Write $\mathfrak{Def}(\underline{P_0})$ for the deformation functor of $(\textup{GL}(\Lambda),\eta \circ \mu)$-displays for $\underline{P}_0$. Then by the above paragraph $\mathfrak{Def}(\underline{P_0})$ is prorepresentable by $R_\textup{GL}$, with universal deformation $\underline{P}^\textup{univ}$ having standard representation $(\Lambda^0\otimes_{W(k_0)} W(R_\textup{GL}), \Lambda^1\otimes_{W(k_0)} W(R_\textup{GL}), \Phi^\textup{univ}_\textup{GL})$, where
\begin{align*}
\Phi^\textup{univ}_{\textup{GL}} = (h^\textup{univ}_\textup{GL})^{-1} \eta(u_0) \circ (\id_\Lambda \otimes f).
\end{align*}	 
\begin{lemma}\label{lem-univcompat}
	If $\mathscr{P}^\textup{univ}$ is the universal deformation of $\mathscr{P}_0$ as a Tannakian $(G,\mu)$-display over $\underline{W}(R_G)$, then 
	\begin{align*}
	\mathscr{P}^\textup{univ}(\Lambda, \eta) = (\underline{P}^\textup{univ})_{R_G}.
	\end{align*}
\end{lemma}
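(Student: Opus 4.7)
The plan is to unwind both sides as explicit Zink displays over $R_G$ via Lemma \ref{lem-windows} and Remark \ref{rmk-GLn}, and then reduce to an identity in $U_{\textup{GL}(\Lambda)}^\circ(W(R_G))$ which can be checked by applying the logarithm. First, since $\mathscr{P}^\textup{univ}$ is the banal $(G,\mu)$-display defined by $u_G^\textup{univ} = (h_G^\textup{univ})^{-1}u_0$, Proposition \ref{prop-banal} gives that $\mathscr{P}^\textup{univ}(\Lambda, \eta)$ is the $1$-display with standard datum $(\Lambda \otimes_{\zz_p} W(R_G),\, \eta(h_G^\textup{univ})^{-1}\eta(u_0) \circ (\id_\Lambda \otimes f))$; passing through Lemma \ref{lem-windows}, I would identify this with the window over $\mathcal{W}(R_G)$ having normal representation $(\Lambda^0 \otimes W(R_G),\, \Lambda^1 \otimes W(R_G),\, \eta(h_G^\textup{univ})^{-1}\eta(u_0)\circ (\id_\Lambda \otimes f))$. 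Similarly, $(\underline{P}^\textup{univ})_{R_G}$ is the window over $\mathcal{W}(R_G)$ with normal representation $(\Lambda^0 \otimes W(R_G),\, \Lambda^1 \otimes W(R_G),\, \pi_\ast(h_\textup{GL}^\textup{univ})^{-1}\eta(u_0) \circ (\id_\Lambda \otimes f))$, where $\pi_\ast$ denotes the map $W(R_\textup{GL}) \to W(R_G)$ induced by $\pi$ (note that $\eta(u_0)$ is unaffected because $u_0 \in L^+G(k)$ and $\pi$ is a $W(k)$-algebra map).

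Comparing these two normal representations, the theorem reduces to the identity
\[
\eta(h_G^\textup{univ}) = \pi_\ast(h_\textup{GL}^\textup{univ})
\]
in $U_{\textup{GL}(\Lambda)}^\circ(W(R_G))$. Since $\eta\colon G \hookrightarrow \textup{GL}(\Lambda)$ is $\mathbb{G}_m$-equivariant for the cocharacters $\mu$ and $\eta\circ\mu$, it restricts to a closed embedding $U_G^\circ \hookrightarrow U_{\textup{GL}(\Lambda)}^\circ$, so both sides live in the same group. Because $\log\colon U_{\textup{GL}(\Lambda)}^\circ \xrightarrow{\sim} V(\textup{Lie } U_{\textup{GL}(\Lambda)}^\circ)$ is an isomorphism, I would verify the identity after applying $\log$. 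The key input is the naturality of the logarithm with respect to $\eta$: the uniqueness characterization in \cite[Lem. 6.3.2]{Lau2018} implies that $\log \circ \eta = d\eta \circ \log$ as morphisms $U_G^\circ \to V(\textup{Lie } U_{\textup{GL}(\Lambda)}^\circ)$.

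Once this naturality is in hand, the computation is straightforward. The identification $R_G \cong R_\textup{GL}/(t_{r+1},\ldots,t_{d(h-d)})$ corresponds to a choice of basis of $\textup{Lie } U_{\textup{GL}(\Lambda)}^\circ$ whose first $r$ elements span $\textup{Lie } U_G^\circ$, so $d\eta$ is the inclusion of the first $r$ coordinates. Using that Teichm\"uller lifts commute with ring homomorphisms, so $\pi_\ast[t_i] = [\pi(t_i)]$, I would compute
\[
\log(\eta(h_G^\textup{univ})) = d\eta([t_1],\ldots,[t_r]) = ([t_1],\ldots,[t_r],0,\ldots,0) = \pi_\ast([t_1],\ldots,[t_{d(h-d)}]) = \log(\pi_\ast h_\textup{GL}^\textup{univ}),
\]
which would complete the proof. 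The main subtlety is securing the naturality of $\log$ with respect to the closed embedding $U_G^\circ \hookrightarrow U_{\textup{GL}(\Lambda)}^\circ$; this is not stated explicitly in \cite[Lem. 6.3.2]{Lau2018}, but it follows from the uniqueness clause there, and once granted the remainder is a direct unwinding of definitions.
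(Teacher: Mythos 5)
Your proposal is correct and follows essentially the same route as the paper: reduce to the identity $W(\pi)(h_{\textup{GL}}^{\textup{univ}}) = \eta(h_G^{\textup{univ}})$ in $U_{\textup{GL}}^\circ(W(R_G))$ and verify it after applying $\log$, using the compatibility $\log\circ\eta = d\eta\circ\log$ together with the chosen coordinates. The only cosmetic difference is that you extract this compatibility from the uniqueness clause of \cite[Lem. 6.3.2]{Lau2018} whereas the paper reads it off from the explicit description of $\log$ given there; both are fine.
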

\begin{proof}
	Given the explicit descriptions of the universal deformations above, it is enough to show 
	\begin{align*}
	W(\pi)(h_\textup{GL}^\textup{univ}) = \eta(h_G^\textup{univ}).
	\end{align*}
	The embedding $\eta$ induces an embedding $U_G^\circ \hookrightarrow U_{\textup{GL}}^\circ$, which we also denote by $\eta$, as well as a map $d\eta:\textup{Lie }U_G^\circ \to \textup{Lie }U_\textup{GL}^\circ$. 
	With the above choice of coordinates, 
	\begin{align*}
	W(\pi)([t_1],\dots,[t_{d(h-d)}]) = ([t_1], \dots, [t_r], 0, \dots 0) = d\eta([t_1], \dots, [t_r]).
	\end{align*}
	From the explicit description of the log map given in \cite[Lem. 6.3.2]{Lau2018} we see that $\textup{log} \circ \eta = d\eta \circ \textup{log}$ as maps $U_G^\circ \to V(\textup{Lie }U_{\textup{GL}}^\circ)$. 
	Hence $W(\pi)(h_\textup{GL}^\textup{univ})$ and $\eta(h_G^\textup{univ})$ agree after applying log, so the result follows because log is an isomorphism.	
\end{proof}

Let $(X_0, \underline{t_0})$ be a formal $p$-divisible group with $(\underline{s},\mu)$-structure over $k$, and let $\mathscr{P}_0$ be the Tannakian $(G,\mu)$-display over $\underline{W}(k)$ corresponding to $(X_0, \underline{t_0})$ by Theorem \ref{mainthm}. We will apply our results to the deformation theory of $(X_0, \underline{t_0})$. Denote by $\mathfrak{Def}(X_0)$ the functor of deformations of the $p$-divisible group $X_0$, so for $R \in \textup{Art}_{W(k)}$, $\mathfrak{Def}(X_0)(R)$ is the set of isomorphism classes of $p$-divisible groups $X$ over $R$ together with an isomorphism $X \otimes_R k \cong X_0$. If $\underline{P_0}$ is the nilpotent Zink display corresponding to $X_0$, then by the equivalence of Zink and Lau (or by \cite[Cor. 4.8(i)]{Illusie1985}) it follows that $R_{\textup{GL}}$ prorepresents $\mathfrak{Def}(X_0)$ with universal deformation given by $\textup{BT}_{R_\textup{GL}}(\underline{P}^\textup{univ})$ over $R_\textup{GL}$.

\begin{cor}\label{thm-def}
	Let $R \in \textup{Art}_{W(k)}$, such that $R/pR$ admits a $p$-basis \'etale locally, and choose a $p$-divisible group $X$ over $R$ which lifts $X_0$. Let $\varpi: R_{\textup{GL}} \to R$ be the homomorphism induced by $X$. Then $\varpi$ factors through $R_G$ if and only if there exists an $(\underline{s},\mu)$-structure on $\mathbb{D}(X)$ lifting the one on $\mathbb{D}(X_0)$. 
\end{cor}
\begin{proof}
	First note that $X$ is infinitesimal since the same is true for $X_0$, and the property can be checked at geometric points in characteristic $p$ (see \cite[II Prop. 4.4]{Messing1972}). Let $\underline{P}$ be the nilpotent Zink display associated with $X$, so $\underline{P} = \varpi^\ast \underline{P}^\textup{univ}$. 
	
	The result will follow from Theorem \ref{mainthm} if we can show that $\varpi$ factors through $R_G$ if and only if $\underline{P} \cong \mathscr{P}(\Lambda, \eta)$ for some Tannakian $(G,\mu)$-display $\mathscr{P}$ over $\underline{W}(R)$. If $\varpi$ factors as $\nu \circ \pi$ for some $\nu: R_G \to R$, then $\underline{P} = \varpi^\ast \underline{P}^\textup{univ} \cong \nu^\ast (\pi^\ast \underline{P}^\textup{univ})$. But then by Lemma \ref{lem-univcompat} we have $\underline{P}\cong(\nu^\ast \mathscr{P}^\textup{univ})(\Lambda, \eta)$. Conversely, if $\underline{P} \cong \mathscr{P}(\Lambda, \eta)$ for some Tannakian $(G,\mu)$-display $\mathscr{P}$, then $\mathscr{P}$ is a deformation of $\mathscr{P}_0$, so there is some $\nu: R_G \to R$ such that $\mathscr{P} = \nu^\ast \mathscr{P}^\textup{univ}$. Then again Lemma \ref{lem-univcompat} implies that $\underline{P} \cong \nu^\ast\pi^\ast \underline{P}^\textup{univ}$, so $\nu \circ \pi = \varpi$ by prorepresentability of $R_\textup{GL}$ and universality of $\underline{P}^\textup{univ}$. 
\end{proof}

\appendix
\section{Descent}\label{section-appendix}
\subsection{Semi-frames and Witt vectors}
For our purposes we find it useful to develop a slightly weaker notion than that of a frame, which we call a semi-frame.
\begin{Def} \label{def-quasiframe}
	A \textit{semi-frame} is a pair $\underline{S} = (S,\tau)$, where $S$ is a $\zz$-graded ring
	\begin{align*}
	S = \bigoplus_{n \in \zz} S_n
	\end{align*}
	and $\tau: S \to S_0$ is a ring homomorphism, such that the following conditions hold:
	\begin{itemize}
		\item The endomorphism $\tau_0$ of $S_0$ is the identity, and $\tau_{-n}: S_{-n} \to S_0$ is a bijection for all $n \ge 1$.
		\item The image of $S_1$ under $\tau$ is contained in the Jacobson radical of $S_0$, \text{Rad}$(S_0)$.
	\end{itemize}
	We say $(S,\tau)$ is a semi-frame for $R = S_0 / \tau(S_1)$. 
\end{Def}

As in \textsection \ref{sub-frames}, we write $\tau(S_1) = tS_1$ since $\tau$ acts on $S_1$ as multiplication by $t$. 

\begin{rmk}\label{rmk-semiframe}
	As in \cite[\textsection 2.1]{Daniels2019} we note that a semi-frame is equivalent to a pair $(\bigoplus_{n\ge 0} S_n, (t_n)_{n\ge 0})$ where $S_{\ge 0}$ is a $\zz_{\ge 0}$-graded ring and $(t_n)_{n\ge 0}$ is a collection of $S_{\ge 0}$-linear maps $t_n: S_{n+1} \to S_n$ such that $t_0(S_1) \subseteq \text{Rad}(S_0)$. 
\end{rmk}

\begin{lemma}\label{lem-frame}
	Let $\underline{S} = (S,\sigma,\tau)$ be a frame. Then $tS_1 \subseteq \textup{Rad}(S_0)$. 
\end{lemma}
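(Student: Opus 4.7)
The plan is to unwind the definitions and exploit the two key properties of the Frobenius-like map $\sigma$ in a frame: namely, that $\sigma(t) = p$ and that $\sigma_0$ reduces to the $p$-power Frobenius modulo $p$.

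Fix an arbitrary element $a \in S_1$ and set $x := \tau(a) = ta \in S_0$. I will show $x \in \textup{Rad}(S_0)$, from which the lemma follows since $a$ was arbitrary. The strategy is to first show that $x^p$ lies in $pS_0$, and then use the hypothesis $p \in \textup{Rad}(S_0)$ together with the fact that the Jacobson radical is a radical ideal (being an intersection of maximal, hence prime, ideals) to conclude $x \in \textup{Rad}(S_0)$.

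For the first step, apply the ring homomorphism $\sigma : S \to S_0$ to the element $ta \in S_0$. Since $\sigma$ is a ring homomorphism and $t \in S_{-1}$, $a \in S_1$, we obtain
\begin{align*}
\sigma(ta) \;=\; \sigma_{-1}(t)\,\sigma_1(a) \;=\; p\,\sigma_1(a),
\end{align*}
using the defining identity $\sigma_{-1}(t)=p$. On the other hand, since $ta \in S_0$, the left-hand side equals $\sigma_0(x)$. Reducing modulo $p$ and invoking the hypothesis that $\sigma_0$ induces the $p$-power Frobenius on $S_0/pS_0$, we get
\begin{align*}
x^p \;\equiv\; \sigma_0(x) \;=\; p\,\sigma_1(a) \;\equiv\; 0 \pmod{pS_0}.
\end{align*}

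For the second step, we have $x^p \in pS_0 \subseteq \textup{Rad}(S_0)$. Since $\textup{Rad}(S_0)$ is the intersection of all maximal ideals of $S_0$, each of which is prime, it is a radical ideal; therefore $x \in \textup{Rad}(S_0)$. As $a \in S_1$ was arbitrary, this gives $tS_1 = \tau(S_1) \subseteq \textup{Rad}(S_0)$, as desired.

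There is no substantive obstacle here: the lemma is essentially a bookkeeping check that the frame axioms force $tS_1$ into $\textup{Rad}(S_0)$, which is precisely the condition needed to promote a frame to a semi-frame in the sense of Definition \ref{def-quasiframe}. The only subtlety is remembering to use that the Jacobson radical is radical — equivalently, that the nilradical of $S_0/pS_0$ is contained in $\textup{Rad}(S_0/pS_0) = \textup{Rad}(S_0)/pS_0$ because $p \in \textup{Rad}(S_0)$.
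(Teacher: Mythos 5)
Your proof is correct and follows essentially the same route as the paper's (which itself reproduces \cite[Lem.\ 3.1.1]{Lau2018}): apply $\sigma$ to $ta$ to land in $pS_0$, use that $\sigma_0$ lifts the Frobenius to get $(ta)^p \in pS_0 \subseteq \textup{Rad}(S_0)$, and conclude since the Jacobson radical is an intersection of primes. The only difference is that you spell out the final radical-ideal step, which the paper leaves implicit.
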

\begin{proof}
	This is proved as part of \cite[Lemma 3.1.1]{Lau2018}. Let us repeat the proof here. Let $a \in tS_1$. Then $\sigma(a) \in pS_0$. Since $\sigma$ lifts the $p$-power Frobenius of $S_0/pS_0$, it follows that $a^p \in pS_0.$ But $p \in \textup{Rad}(S_0)$ by assumption, so $a \in \textup{Rad}(S_0)$ as well. 
\end{proof}

It follows from Lemma \ref{lem-frame} that the assignment $(S,\sigma, \tau) \mapsto (S,\tau)$ defines a forgetful functor from the category of frames to the category of semi-frames. The following lemma provides a way to check that certain quotients of frames are semi-frames.

\begin{lemma} \label{lem-qfcheck}
	Let $S'$ be a $\zz$-graded ring $S' = \bigoplus_{n \in \zz} S_n'$ and $\tau'$ be a ring homomorphism $\tau': S' \to S_0'$ such that the pair $(S',\tau')$ satisfies the first bullet in Definition \ref{def-quasiframe}. If there exists a frame $(S,\sigma,\tau)$ and a surjective homomorphism of graded rings $\varphi: S\to S'$ such that $\tau' \circ \varphi = \varphi \circ \tau$, then $\tau'(S_1') \subseteq \textup{Rad}(S_0')$, i.e., $(S',\tau')$ is a semi-frame.
\end{lemma}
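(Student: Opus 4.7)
The plan is to reduce to Lemma \ref{lem-frame} via the surjective homomorphism $\varphi$. The essential fact I will use is that for any surjective ring homomorphism $\psi: A \twoheadrightarrow B$, we have $\psi(\textup{Rad}(A)) \subseteq \textup{Rad}(B)$: if $\mathfrak{m} \subset B$ is maximal then $\psi^{-1}(\mathfrak{m})$ is maximal in $A$ (since $A/\psi^{-1}(\mathfrak{m}) \cong B/\mathfrak{m}$ because $\psi$ is surjective), so any $r \in \textup{Rad}(A)$ lies in $\psi^{-1}(\mathfrak{m})$, whence $\psi(r) \in \mathfrak{m}$. Since $\varphi$ is a surjective homomorphism of $\zz$-graded rings, its degree-zero component $\varphi_0: S_0 \twoheadrightarrow S_0'$ is in particular surjective, so this observation applies to $\varphi_0$.

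Next, since $\varphi$ is surjective in every degree, the degree-one component $\varphi_1: S_1 \twoheadrightarrow S_1'$ is surjective. Given any $a' \in S_1'$, choose a lift $a \in S_1$ with $\varphi_1(a) = a'$. By Lemma \ref{lem-frame}, $\tau(a) \in tS_1 \subseteq \textup{Rad}(S_0)$. Using the hypothesis $\tau' \circ \varphi = \varphi \circ \tau$, we compute
\begin{align*}
\tau'(a') = \tau'(\varphi_1(a)) = \varphi_0(\tau(a)) \in \varphi_0(\textup{Rad}(S_0)) \subseteq \textup{Rad}(S_0'),
\end{align*}
by the observation of the previous paragraph. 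As $a'$ was arbitrary this gives $\tau'(S_1') \subseteq \textup{Rad}(S_0')$, which together with the first bullet hypothesis on $(S',\tau')$ shows that $(S',\tau')$ is a semi-frame.

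There is no real obstacle here; the argument is essentially a formal transport of the radical containment from $S$ to $S'$ along $\varphi$. The only point worth isolating is the (standard) statement that surjective ring maps carry Jacobson radical into Jacobson radical, which is why the surjectivity hypothesis on $\varphi$ is needed.
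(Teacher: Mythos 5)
Your proof is correct and follows essentially the same route as the paper's: both transport the containment $tS_1 \subseteq \textup{Rad}(S_0)$ from Lemma \ref{lem-frame} along $\varphi$ using the fact that a surjective ring homomorphism maps the Jacobson radical into the Jacobson radical. The only difference is cosmetic — you argue element-wise via lifts in $S_1$ and spell out the radical fact, while the paper phrases the same computation as $t'S_1' = \varphi(tS_1) \subseteq \varphi(\textup{Rad}(S_0)) \subseteq \textup{Rad}(S_0')$.
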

\begin{proof}
	Since $S \to S'$ is surjective, the image of $\textup{Rad}(S_0)$ is contained in $\textup{Rad}(S_0')$. Let $t'$ be the unique element in $S_{-1}'$ with $\tau'(t') = 1$. Then $\varphi(t) = t'$, and surjectivity of $\varphi$ implies that $\varphi(tS_1) = t'S_1'$. Therefore, by Lemma \ref{lem-frame},
	\begin{align*}
	t'S_1' = \varphi(tS_1) \subseteq \varphi(\textup{Rad}(S_0)) \subseteq \textup{Rad}(S_0').
	\end{align*}
\end{proof}

Let $R$ be a ring. Then for every $m \ge 1$ we attach to $R$ the ring of $m$-truncated $p$-typical Witt vectors $W_m(R)$. These rings are equipped with a Frobenius $f_{m,R}: W_{m+1}(R) \to W_{m}(R)$ which is a ring homomorphism, and a Verschiebung $v_{m,R}: W_{m}(R) \to W_{m+1}(R)$ which is additive. We will suppress the subscripts on the Frobenius and the Verschiebung when $m$ and $R$ are clear from context. 

Let $I_m(R)= v(W_{m-1}(R)) = \ker(W_m(R)\to R)$, and let $I(R)= v(W(R)) \subseteq W(R)$. For every finite $m$, the truncation map $r_m: W(R)\to W_m(R)$ induces an isomorphism $W(R)/ v^m(W(R)) \cong W_m(R),$ and these combine to give an isomorphism 
\begin{align*}
W(R)\cong \varprojlim W_m(R).
\end{align*}
Hence $W(R)$ is complete and separated with respect to the topology defined by the ideals $v^m(W(R))$. We will refer to this as the $v$-adic topology. 

For every non-negative integer $m$ we have the following truncated variant of the Witt frame. 

\begin{ex}[Truncated Witt semi-frames]
	For a $p$-adic ring $R$ and a non-negative integer $m$, let $W_m(R)^\oplus$ be the quotient of $W(R)^\oplus$ by the graded ideal
	\begin{align*}
	V_m(R) = \bigoplus_{n \ge 0} (v^m(W(R)) \cdot t^{-n}) \oplus \bigoplus_{n \ge 1} v^m(W(R)).
	\end{align*}
	To be precise, for $n \ge 1$, $V_m(R)_n = v^m(W(R))$ is viewed as a $W(R)$-submodule of $v(W(R)) = I(R) = W(R)^\oplus_n$. The map $\tau: W(R)^\oplus \to W(R)$ extends to a map $\tau^m: W_m(R)^\oplus \to W_m(R)$, so by Lemma \ref{lem-qfcheck} the pair $(W_m(R)^\oplus, \tau)$ constitutes a semi-frame, called the \textit{$m$-truncated Witt semi-frame} for $R$. 
\end{ex}

\begin{rmk}
	The truncated Witt semi-frames are not associated with frames in general. Indeed, the Frobenius $f$ on $W_m(R)$ has image in the smaller ring $W_{m-1}(R)$, and does not determine an endomophism of $W_m(R)$ unless $pR= 0$. In the latter case the semi-frame $(W_m(R)^\oplus, \tau)$ is associated with a frame, but this frame differs slightly from the truncated Witt frame given in \cite[Example 2.1.6]{Lau2018}, which uses $I_{m+1}(R)$ for each graded piece above zero.
\end{rmk}

\begin{ex}[Truncated relative Witt semi-frames] \label{ex-truncated}
	Let $m$ be a non-negative integer and let $B \to A$ be a PD-thickening of $p$-adic rings. Let $W_m(B/A)^\oplus$ be the quotient of $W(B/A)^\oplus$ by the graded ideal
	\begin{align*}
	V_m(B/A)= \bigoplus_{n \ge 1} v^m(W(B))\cdot t^{-n} \oplus \bigoplus_{n \ge 0} v^m(W(B)),
	\end{align*}
	where $v^m(W(B))$ is embedded into $I(B) \oplus J$ via the first factor. Define maps $t_n: (W_m(B/A)^\oplus)_{n+1} \to (W_m(B/A)^\oplus)_n$ as follows: for $n \ge 1$, $t_n$ is multiplication by $p$ on the first component and the identity on $J$, and $t_0$ is the map
	\begin{align*}
	t_0: I_m(B) \oplus J \to W_m(B), \ (v(a), x) \mapsto r_m(v(a)) + r_m(\log^{-1}[x,0,\dots]),
	\end{align*} 
	where $r_m: W(B) \to W_m(B)$ is the truncation homomorphism. These maps determine a pair $\underline{W_m}(B/A) = (W_m(B/A)^\oplus, \tau)$ by Remark \ref{rmk-semiframe}, which constitutes a semi-frame by Lemma \ref{lem-qfcheck}. Since we have an isomorphism
	\begin{align*}
	W_m(B) / t(I_m(B)\oplus J) \cong A,
	\end{align*}
	$\underline{W_m}(B/A)$ defines a semi-frame over $A$, which we call the \textit{$m$-truncated relative Witt semi-frame} for $B\to A$. 
\end{ex}

We close this section by giving a Nakayama lemma for finite graded modules over the graded ring associated with a semi-frame, following \cite[Lemma 3.1.1, Corollary 3.1.2]{Lau2018}. Let $(S, \tau)$ be a semi-frame over a ring $R$. Denote by $\nu: S \to R$ the ring homomorphism which extends the natural projection $S_0 \to R$ by zero on all graded pieces away from $S_0$. 

\begin{lemma}\label{lem-nak}
	Let $(S,\tau)$ be a semi-frame, and let $M$ be a finite graded $S$-module.
	\begin{enumerate}[\textup{(}i\textup{)}]
		\item If $M \otimes_{S,\nu} R = 0$, then $M = 0$.
		\item Let $N$ be another finite graded $S$-module, and suppose $M$ is projective. Then a homomorphism $f: N \to M$ of graded $S$-modules is bijective if and only if its reduction $\bar{f}: N \otimes_{S,\nu}R \to M \otimes_{S, \nu} R$ is bijective.
	\end{enumerate}
\end{lemma}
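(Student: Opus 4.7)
The plan is to deduce (ii) from (i) and then prove (i) by induction on the number of homogeneous generators of $M$. For (ii), the ``only if'' direction is immediate. Conversely, suppose $\bar{f}$ is bijective. Applying (i) to $\mathrm{coker}(f)$, which is a finitely generated graded $S$-module whose reduction $\mathrm{coker}(\bar{f})$ vanishes by right-exactness of $-\otimes_{S,\nu} R$, one concludes that $f$ is surjective. Projectivity of $M$ then splits the surjection $N \twoheadrightarrow M$, exhibiting $\ker(f)$ as a direct summand of $N$ and hence finitely generated; its reduction is identified with $\ker(\bar{f}) = 0$, so a second application of (i) forces $\ker(f) = 0$.

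The essential preliminary for (i) is the observation that the semi-frame axiom $\tau(S_1) \subseteq \textup{Rad}(S_0)$ in fact propagates to all positive degrees: $\tau(S_n) \subseteq \textup{Rad}(S_0)$ for every $n \ge 1$. Indeed, for $s \in S_n$ the identity $\tau(s) = t^n s$ factors as $t \cdot (t^{n-1} s)$ with $t^{n-1} s \in S_1$, which shows $\tau(s) \in \tau(S_1)$. This argument does not use any Frobenius endomorphism and so works in the generality of semi-frames.

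With this in hand, (i) is proved by induction on the number $r$ of generators of $M$. For $r=1$, write $M = Sm_1$ with $m_1$ homogeneous of degree $d$. Using the rank-one freeness $S_{-|n|} = S_0 \cdot t^{|n|}$ over $S_0$ to compute $S_n \cdot S_{-n} = \tau(S_{|n|})$ for $n \ne 0$, the degree-$d$ component of $\ker(\nu) \cdot M$ works out to $\tau(S_{\ge 1}) \cdot m_1$. Hence $\bar{M} = 0$ forces $(1-a) m_1 = 0$ for some $a \in \tau(S_{\ge 1}) \subseteq \textup{Rad}(S_0)$, and since $1-a$ is a unit in $S_0$, one concludes $m_1 = 0$. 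For $r \ge 2$, the quotient $M/Sm_1$ is generated by $r-1$ elements, and its reduction is a quotient of $\bar{M}$ and so vanishes; the inductive hypothesis then yields $M/Sm_1 = 0$, reducing to the cyclic case.

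The main obstacle is the extension of radical containment from $\tau(S_1)$ to all $\tau(S_n)$ with $n \ge 1$: without it, the Nakayama-style invertibility argument closing the cyclic case fails, because the degree-$d$ piece of $\ker(\nu) \cdot M$ naturally mixes contributions from all $\tau(S_n)$ rather than just $\tau(S_1)$.
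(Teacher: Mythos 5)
Your proof is correct and follows essentially the route the paper takes: the paper simply cites Lau's Lemma 3.1.1 and Corollary 3.1.2, remarking that the only input needed is $tS_1 \subseteq \textup{Rad}(S_0)$, and your argument is the standard graded Nakayama induction that this citation stands in for. Your observation that $\tau(S_n) = t\cdot(t^{n-1}s) \in \tau(S_1)$ for all $n \ge 1$, using no Frobenius, is exactly the point that makes the frame argument carry over to semi-frames, and your deduction of (ii) from (i) via $\mathrm{coker}(f)$ and the split kernel matches the intended one.
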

\begin{proof}
	The proof of (i) is identical to the proof of \cite[Lemma 3.1.1]{Lau2018} since $tS_1 \subseteq \textup{Rad}(S_0)$. Part (ii) is an immediate consequence of (i), as in \cite[Corollary 3.1.2]{Lau2018}.
\end{proof}
\subsection{Complete semi-frames}

In this section we develop a technical framework for frames which arise as the limit of a sequence of semi-frames, in a sense which we will make precise. This will be used in the next section to prove descent for displays over relative Witt frames.

For this section, let $S$ be a $\zz$-graded ring, and let $V^\bullet$ be a sequence of graded ideals 
\begin{align*}
V^m = \bigoplus_{n \in \zz} V^m_n
\end{align*}
in $S$ such that $V^{m+1}_n \subseteq V^m_n$ for all $m, n$. For every $m$, denote by $S^m$ the quotient $S / V^m$. Explicitly, 
\begin{align*}
S^m = \bigoplus_{n \in \zz} (S_n / V^m_n).
\end{align*}
If $M$ is a finite projective graded $S$-module, then for every $m$, the quotient $M / V^mM$ is a finite projective graded $S^m$-module, with graded pieces 
\begin{align*}
(M/V^mM)_n = M_n / (M_n \cap V^mM).
\end{align*}

\begin{Def} Let $S$ and $V^\bullet$ be as above, and let $M$ be a graded $S$-module.
	\begin{enumerate}[(a)]
		\item The \textit{graded completion of $S$ with respect to $V^\bullet$} is 
		\begin{align*}
		S^\wedge := \bigoplus_{n \in \zz} \left(\varprojlim_m S_n / V^m_n\right).
		\end{align*}	
		The graded ring $S$ is \textit{$V^\bullet$-adic} if the natural homomorphism of graded rings $S \to S^\wedge$ is an isomorphism.
		\item The \textit{graded completion of $M$ with respect to $V^\bullet$} is
		\begin{align*}
		M^\wedge := \bigoplus_{n \in \zz} \left( \varprojlim_m M_n / (M_n \cap V^mM) \right).
		\end{align*}
		We say $M$ is \textit{$V^\bullet$-adic} if the natural graded $S$-module homomorphism $\varphi_M: M \to M^\wedge$ is an isomorphism.
	\end{enumerate}
\end{Def}
\begin{lemma}\label{lem-complete}
	Let $S$ be $V^\bullet$-adic, and suppose that $M$ is a finite projective graded $S$-module. Then $M$ is $V^\bullet$-adic.
\end{lemma}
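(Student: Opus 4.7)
The plan is to reduce to the case of a finite free graded $S$-module by using that finite projective graded modules are direct summands of finite free graded modules, and then to exploit the fact that graded completion with respect to $V^\bullet$ commutes with finite direct sums.

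First I would verify that the functor $M \mapsto M^\wedge$ sends finite direct sums to finite direct sums, together with the natural map $\varphi_M$. The key calculation is that if $M = N_1 \oplus N_2$ is a decomposition of graded $S$-modules, then $V^mM = V^mN_1 \oplus V^mN_2$ (since $V^m$ is an ideal and the decomposition is $S$-linear), so $(N_i)_n \cap V^m M = (N_i)_n \cap V^m N_i$; passing to inverse limits and direct sums yields a canonical isomorphism $(N_1 \oplus N_2)^\wedge \cong N_1^\wedge \oplus N_2^\wedge$ compatible with the maps $\varphi_{N_1 \oplus N_2} = \varphi_{N_1} \oplus \varphi_{N_2}$. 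In particular, if $M$ is a direct summand of a graded module which is $V^\bullet$-adic, then $M$ is $V^\bullet$-adic.

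Next I would handle the finite free case. For any integer $k$, the shifted module $S(k)$ (with $S(k)_n = S_{n-k}$) satisfies $S(k)_n \cap V^mS(k) = V^m_{n-k}$, so its completion is $\bigoplus_n \varprojlim_m S_{n-k}/V^m_{n-k}$, which is $S(k)$ itself by the hypothesis that $S$ is $V^\bullet$-adic (applied degree by degree). By the first step, any finite direct sum $F = \bigoplus_{i=1}^N S(k_i)$ is then $V^\bullet$-adic as well.

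To conclude, I would note that any finite projective graded $S$-module $M$ admits a graded surjection $F \twoheadrightarrow M$ from a finite free graded module $F$ (generated by lifts of a finite set of homogeneous generators of $M$), and this surjection splits by projectivity of $M$, exhibiting $M$ as a graded direct summand of $F$. Combining the direct-sum step with the free case then gives that $\varphi_M$ is an isomorphism.

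The only point that requires care is the identity $(N_i)_n \cap V^mM = (N_i)_n \cap V^m N_i$ and the compatibility of the isomorphism $(N_1 \oplus N_2)^\wedge \cong N_1^\wedge \oplus N_2^\wedge$ with $\varphi$; everything else is formal once this is established. No Nakayama-type input is needed for this statement, only bookkeeping with graded pieces and the fact that inverse limits commute with finite direct sums of abelian groups.
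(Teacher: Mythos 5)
Your proof is correct and follows essentially the same route as the paper, which simply states that the lemma "reduces to the case where $M$ is a finite free graded $S$-module, which is immediate"; you have merely spelled out the direct-summand reduction and the degree-by-degree verification in the free case. The key identity $(N_i)_n \cap V^mM = (N_i)_n \cap V^mN_i$ is verified correctly using directness of the sum, so no gap remains.
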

\begin{proof}
	The proof reduces to the case where $M$ is a finite free graded $S$-module, which is immediate.
\end{proof}

\begin{Def}\label{def-systems}
	Let $S$ and $V^\bullet$ be as above. Define $\textup{PGrMod}((S^m)_m)$ to be the category whose objects are systems $(M^m)_{m \in \mathbb{N}}$ of finite projective graded $S^m$-modules equipped with graded $S^{m+1}$-module homomorphisms
	\begin{align*}
	\theta^m: M^{m+1} \to M^m
	\end{align*}
	which induce isomorphisms $M^{m+1} \otimes_{S^{m+1}} S^m \xrightarrow{\sim} M^m$. If $(M^m)_{m \in \mathbb{N}}$ and $(N^m)_{m \in \mathbb{N}}$ are two objects in $\textup{PGrMod}((S^m)_m)$, then a morphism between them is a collection of graded $S^m$-module homomorphisms $M^m \to N^m$ which are compatible with the $\theta^m$-maps.
\end{Def}

If $M$ is an object in $\textup{PGrMod}(S)$, then for every $m$, $M/V^mM$ is an object in $\textup{PGrMod}(S^m)$. 
This assignment determines a functor
\begin{align}\label{eq-fun}
\textup{PGrMod}(S) \to \textup{PGrMod}((S^m)_m).
\end{align}

\begin{prop}\label{prop-truncate}
	Let $(S,\tau)$ be a semi-frame for $R$, and let $V^\bullet = (V^m)_{m \in \mathbb{N}}$ be a sequence of graded ideals in $S$ such that $S$ is $V^\bullet$-adic. Suppose
	\begin{enumerate}[$($i$)$]
		\item For each $m$, there exists a ring homomorphism $\tau^m: S^m \to S^m_0$ such that $(S^m,\tau^m)$ is a semi-frame and such that the natural homomorphism of graded rings $S \to S^m$ induces a morphism of semi-frames $(S,\tau) \to (S^m,\tau^m)$.
		\item For every $m$, $S_0^m / tS_1^m = R$, and the homomorphisms $S_0 \to S_0^m$ lift the identity on $R$.
		\item For every finite projective $R$-module $M$ there exists a finite projective $S_0$-module $M'$ along with an isomorphism of $R$-modules $M'\otimes_{S_0} R \cong M$.
	\end{enumerate}
	Then the functor \textup{(}\ref{eq-fun}\textup{)} is an equivalence of categories.
\end{prop}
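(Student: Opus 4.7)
The plan is to show the functor (\ref{eq-fun}) is both fully faithful and essentially surjective.

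For full faithfulness, let $M, N$ be finite projective graded $S$-modules. By Lemma \ref{lem-complete} both are $V^\bullet$-adic, and since any $S$-linear map $M \to N/V^mN$ automatically kills $V^mM$ and so factors uniquely through $M/V^mM$, we obtain
\begin{align*}
\Hom_S(M,N) \cong \varprojlim_m \Hom_S(M, N/V^m N) \cong \varprojlim_m \Hom_{S^m}(M/V^m M, N/V^m N),
\end{align*}
which is precisely the morphism set between the corresponding objects in $\textup{PGrMod}((S^m)_m)$.

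For essential surjectivity, fix $(M^m, \theta^m)$ and let $\nu^m: S^m \to R$ denote the ring homomorphism extending the canonical projection $S^m_0 \twoheadrightarrow R$ by zero on the other graded pieces. By hypothesis (ii) the maps $\nu^m$ are compatible with the quotients $S^{m+1} \twoheadrightarrow S^m$, and the isomorphisms $M^{m+1}\otimes_{S^{m+1}}S^m\xrightarrow{\sim}M^m$ then yield canonical identifications between the reductions $M^m \otimes_{S^m, \nu^m} R$; call the common value $\bar L$. Viewing $M^0$ as a direct summand of a finite free graded $S^0$-module exhibits each graded piece $\bar L_n$ as a finite projective $R$-module, so by hypothesis (iii) we may lift $\bar L$ graded piece by graded piece to $\tilde L = \bigoplus_n \tilde L_n$, a finite projective graded $S_0$-module with $\tilde L \otimes_{S_0} R \cong \bar L$. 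Set $M := \tilde L \otimes_{S_0} S \in \textup{PGrMod}(S)$; its image under (\ref{eq-fun}) consists of $M^{m,\textup{univ}} := \tilde L \otimes_{S_0} S^m$ with the natural transition maps induced by $S^{m+1} \twoheadrightarrow S^m$.

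I construct a compatible family of isomorphisms $\psi^m: M^{m,\textup{univ}} \xrightarrow{\sim} M^m$ by induction on $m$. For $m = 0$, the projectivity of $M^{0,\textup{univ}}$ lifts its canonical projection onto $\bar L$ along the surjection $M^0 \twoheadrightarrow \bar L$ to a morphism $\psi^0$ whose reduction modulo $\nu^0$ is the identity on $\bar L$; Lemma \ref{lem-nak}(ii) applied to the semi-frame $(S^0, \tau^0)$ (with projective target $M^0$) then shows $\psi^0$ is an isomorphism. Given $\psi^m$, the map $\theta^m$ is surjective because $M^m \cong M^{m+1} \otimes_{S^{m+1}} S^m$, so the projectivity of $M^{m+1,\textup{univ}}$ lifts the composition $M^{m+1,\textup{univ}} \to M^{m,\textup{univ}} \xrightarrow{\psi^m} M^m$ along $\theta^m$ to a morphism $\psi^{m+1}$. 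A diagram chase in the commutative square relating $\psi^{m+1}, \psi^m, \theta^m$, and the natural quotient $M^{m+1,\textup{univ}} \twoheadrightarrow M^{m,\textup{univ}}$ shows that $\psi^{m+1}$ again reduces to the identity on $\bar L$; Lemma \ref{lem-nak}(ii) applied to $(S^{m+1}, \tau^{m+1})$ then upgrades it to an isomorphism. Compatibility of the $\psi^m$ with the transition maps on both sides is automatic because $\psi^{m+1}$ was constructed precisely to intertwine them. The main obstacle is the inductive lifting step, but it is resolved uniformly by combining projectivity of $M^{m+1,\textup{univ}}$, surjectivity of $\theta^m$, and the Nakayama-type Lemma \ref{lem-nak}(ii) for each semi-frame $(S^{m+1},\tau^{m+1})$.
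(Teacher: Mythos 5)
Your proof is correct and uses essentially the same ingredients as the paper's: the common reduction $\overline{L}$, its lift to $S_0$ via hypothesis (iii), the candidate module $\tilde{L}\otimes_{S_0}S$, projectivity to lift maps, and the graded Nakayama lemma (Lemma \ref{lem-nak}(ii)) to upgrade them to isomorphisms. The only organizational difference is that the paper constructs an explicit quasi-inverse via the degreewise inverse limit and produces a single lift $N \to \varprojlim_m M^m$ which it then reduces modulo each $V^m$, whereas you prove full faithfulness directly by a limit argument (using $V^\bullet$-adicity from Lemma \ref{lem-complete}) and build the level-$m$ isomorphisms inductively along the surjections $\theta^m$; both routes are sound and of comparable length.
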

\begin{proof}
	We define an quasi-inverse functor as follows: Let $(M^m)_{m \in \mathbb{N}}$ be an object in $\textup{PGrMod}((S^m)_m)$, so $M^m = \bigoplus_{n \in \zz} M_n^m$ for every $m$, and define $M = \bigoplus M_n$, where $M_n = \varprojlim_m M^m_n$. Here the transition maps $M^{m+1}_n \to M^m_n$ are given by $\theta^m_n$, i.e. by the $n$th graded piece of $\theta^m$. We claim $M$ is a finite projective graded $S$-module.	
	
	Consider the finite projective graded $R$-module $\overline{L}:= M^1 \otimes_{S^1, \nu^1} R$. By (iii), there is a finite projective graded $S_0$-module $L$ such that $L \otimes_{S_0} R \cong \overline{L}$. Define $N:= L \otimes_{S_0} S$. Then $N$ is a finite projective graded $S$-module. Because the maps $\theta^m: M^{m+1} \to M^m$ are surjective for every $m$, we see that the induced map $M \to M^k$ sending $(a^m)_m \in M_n = \varprojlim_m M^m_n$ to $a^k \in M^k_n$ is also surjective, and therefore so too is the homomorphism $M \to \overline{L} = M^1 \otimes_{S^1, \nu^1} R$. Then the identity of $\overline{L}$ lifts to a homomorphism of graded $S$-modules $\psi: N \to M.$ Note that conditions (i) and (ii) imply that $\nu: S \to R$ factors through $S^m$ for every $m$, so, in particular, $V^m \subseteq \ker(\nu:S \to R)$ for every $m$. Let us denote by $\nu^m$ the induced map $S^m \to R$. 
	
	The composition $N \to M \to M^m$ factors through $N / V^mN$, inducing a graded $S^m$-module homomorphism $\psi_m: N / V^mN \to M^m$ for every $m$. Further, $N \to \overline{L}$ factors through $N / V^m N$ since $V^m \subseteq \ker(\nu: S \to R)$, and $M \to \overline{L}$ factors through $M^m$, so $\psi_m$ also lifts the identity of $\overline{L}$. Therefore 
	$(N/V^mN) \otimes_{S^m,\nu^m} R \to M^m \otimes_{S^m,\nu^m} R$ is an isomorphism, and by Lemma \ref{lem-nak} (ii), $\psi_m$ is an isomorphism of graded $S^m$-modules. By definition these isomorphisms satisfy $\theta^m \circ \psi_{m+1} = \psi_m \circ (\theta')^m$, where $(\theta')^m$ is the natural surjection $N / V^{m+1}N \to N/ V^m N$. Altogether we see 
	\begin{align*}
	\varprojlim_m N_n / (V^mN\cap N_n) \cong \varprojlim_m M^m_n
	\end{align*}
	for every $n$, so $M \cong N$ as graded $S$-modules by Lemma \ref{lem-complete}, and $M$ is indeed a finite projective graded $S$-module. Also, Lemma \ref{lem-complete} and the isomorphism
	\begin{align*}
	M / V^mM \cong N/ V^mN \cong M^m
	\end{align*}
	show that these functors are quasi-inverse to one another.
\end{proof}

Let $A$, $B$ and $R$ be $p$-adic rings, and let $B \to A$ be a PD-thickening with kernel $J$. Recall the graded ideals $V^\bullet_R = (V_m(R))_{m \ge 1}$ and $V^\bullet_{B/A} = (V_m(B/A))_{m \ge 1}$ defined in Example \ref{ex-truncated}.
\begin{lemma}
	The frames $\underline{W}(R)$ and $\underline{W}(B/A)$ defined in the previous section satisfy conditions $($i$)$ - $($iii$)$ in Proposition \ref{prop-truncate}. 
\end{lemma}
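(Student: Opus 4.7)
The plan is to verify the three conditions directly by inspecting the truncated semi-frames $\underline{W_m}(R)$ and $\underline{W_m}(B/A)$ defined in the examples above. Conditions (i) and (ii) are essentially built into those constructions. The quotient maps $\underline{W}(R) \twoheadrightarrow \underline{W_m}(R)$ and $\underline{W}(B/A) \twoheadrightarrow \underline{W_m}(B/A)$ are morphisms of semi-frames by construction of the structure maps $\tau^m$, giving (i). For (ii), in each case $S_0^m = W_m(B)$ (with $B = R$ in the absolute case), and the identification $W_m(B)/t(I_m(B) \oplus J) \cong A$ follows from the defining formula for $t_0$ together with the fact that $\log^{-1}[x,0,\ldots]$ reduces to $x$ under $w_0$; the homomorphism $S_0 \to S_0^m$ is just the truncation $W(B) \twoheadrightarrow W_m(B)$, which manifestly lifts $\textup{id}_A$. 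I would dispatch these two conditions in a single short paragraph.

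The main work lies in (iii): lifting finite projective $R$-modules (resp.\ $A$-modules) to $W(R)$ (resp.\ $W(B)$). For the absolute Witt frame, the inputs are that $W(R)$ is $p$-adic whenever $R$ is, by \cite[Prop.~3]{Zink2002}, and that $I(R) \subseteq \textup{Rad}(W(R))$. Any finite projective $R$-module is the image of an idempotent matrix $e_0 \in M_n(R)$; I would lift $e_0$ successively to idempotents $e_m \in M_n(W_m(R))$ along the nilpotent surjections $W_{m+1}(R) \twoheadrightarrow W_m(R)$ (whose kernels are $v^m(W_{m+1}(R))$, square-zero), and pass to the inverse limit to produce $e \in M_n(W(R))$ whose image is the desired lift.

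For the relative case we must lift along $W(B) \twoheadrightarrow A$, and I would factor this as $W(B) \twoheadrightarrow B \twoheadrightarrow A$. The first surjection is handled by the absolute case applied to $B$. The second, $B \twoheadrightarrow A$, is where I expect the main obstacle: the kernel $J$ need not be nilpotent, even though it carries divided powers compatible with the canonical ones on $p$. The key observation is that PD-compatibility forces $x^{p^k}$ to be divisible by a large power of $p$ for $x \in J$ (via the identity $x^{p^k} = (p^k)!\,\gamma_{p^k}(x)$), so modulo $p^n$ the image of $J$ is nilpotent. Hence finite projective $A/p^n A$-modules lift step-by-step to $B/p^n B$ by nilpotent-ideal lifting, and assemble via the $p$-adic completeness of $B$ to a lift over $B$. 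Composing with the first step then completes the proof.
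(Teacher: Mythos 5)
Your proposal is correct and follows essentially the same route as the paper: the paper likewise dispatches (i) and (ii) by inspection, handles (iii) for the absolute case by citing that $W(R)$ is complete and separated with respect to $I(R)$ (\cite[Prop.~3]{Zink2002}), and in the relative case factors the lifting through $W(B) \twoheadrightarrow B \twoheadrightarrow A$, asserting that $J$ is locally nilpotent — exactly the PD-power argument you spell out. The only thing the paper's proof additionally records (and which you should too, since it is a standing hypothesis of Proposition \ref{prop-truncate} rather than one of (i)--(iii)) is that $W(R)^\oplus$ and $W(B/A)^\oplus$ are graded complete with respect to the ideals $V_m$, which follows from $v$-adic completeness of $W(R)$ and $I(R)$.
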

\begin{proof}
	Let us first prove that condition (iii) is satisfied. Since $R$ is $p$-adic, it follows from \cite[Proposition 3]{Zink2002} that $W(R)$ is complete and separated with respect to $I(R)$. Then every finite projective $R$-module lifts to a finite projective $W(R)$-module. Similarly every finite projective $B$-module lifts to $W(B)$, and every finite projective $A$-module lifts to $B$ since $J$ is a locally nilpotent ideal.
	
	Now, $W(R)^\oplus$ is graded complete with respect to the ideals $V_m(A)$ because both $W(R)$ and $I(R)$ are complete with respect to the ideals $v^m(W(R))$. Similarly $W(B/A)^\oplus$ is graded complete with respect to $V_m(B/A)$. For the semi-frames $(S^m, \tau^m)$ we take $(W_m(R)^\oplus, \tau^m)$ in the case of $(W(R)^\oplus, \tau)$, and we take $(W_m(B/A)^\oplus, \tau^m)$ in the case of $(W(B/A)^\oplus, \tau)$. Conditions (i) and (ii) of Proposition \ref{prop-truncate} are easily verified in each of these cases.
\end{proof}

\subsection{Descent for the relative Witt frame} \label{sub-descent}

As $R$ varies, the frame $\underline{W}(R)$ is naturally a functor of $R$. In fact, this association determines an fpqc sheaf in frames because the functors $R \mapsto W(R)$ and $R \mapsto I(R)$ both determine fpqc sheaves of abelian groups. Denote by $\textup{PGrMod}_W$ the fibered category over $\textup{Nilp}_{\zz_p}$ whose fiber over $R$ in $\textup{Nilp}_{\zz_p}$ is the category $\textup{PGrMod}(\underline{W}(R))$. By \cite[Lemma 4.3.2]{Lau2018}, \textup{PGrMod}$_W$ is an fpqc stack over $\textup{Nilp}_{\zz_p}$. The goal of this section is to prove the analog of this statement, replacing $W(R)^\oplus$ with $W(B/A)^\oplus$. We need to be a little careful here, because the behavior of the relative Witt frame differs from that of the Witt frame. In particular, we must replace the fpqc topology with the \'etale topology.

Let us begin by checking some \'etale-local properties of finite projective graded modules over semi-frames.  The following lemma is analogous to \cite[Lemmas 2.10 - 2.12]{Daniels2019}. 
\begin{lemma}\label{lem-localproperties}
	Suppose $\underline{S}$ is an \'etale sheaf of semi-frames on $\text{Spec }R$, with the property that $\uS(R') = (S(R'), \tau(R'))$ is a semi-frame for $R'$ for all \'etale $R$-algebras $R'$. If $R \to R'$ is a faithfully flat \'etale ring homomorphism, then the following hold:
	\begin{enumerate}[$($i$)$]
		\item \label{lem-descentseq} If $M$ is a finite projective graded $S(R)$-module, then there is an exact sequence 
		\begin{center}
			\begin{tikzcd}[column sep = small]
			0 
			\arrow[r]
			&M 	
			\arrow[r] 
			&M \otimes_{S(R)} S(R')
			\arrow[r, yshift=2pt] \arrow[r, yshift=-2pt] 
			&M \otimes_{S(R)} S(R' \otimes_R R')
			\end{tikzcd}
		\end{center}
		where the arrows are induced by applying $S$ to the usual exact sequence
		\begin{center}
			\begin{tikzcd}[column sep = small]
			0 
			\arrow[r]
			&R
			\arrow[r] 
			&R'
			\arrow[r, yshift=2pt] \arrow[r, yshift=-2pt] 
			&R'\otimes_R R'
			\end{tikzcd}
		\end{center}
		\item A finite projective graded $S(R)$-module $M$ is of type $I = (i_1, \dots, i_n)$ if and only if $M_{S(R')}$ is of type $I$.
		\item A sequence $ 0 \to M \to N \to P \to 0$ of finite projective graded $S(R)$-modules is exact if and only if it is exact after base change to $S(R')$.
		\item \label{lem-dispmorphism} Suppose additionally that $\uS$ is a sheaf of frames on $\Spec R$. If $\underline{M} = (M,F)$ and $\underline{M}'= (M', F')$ are displays over $\uS$, then a homomorphism $\psi: M \to M'$ of graded $S(R)$-modules is a morphism of displays if and only if $\psi_{R'}$ is a morphism of displays.
	\end{enumerate}
\end{lemma}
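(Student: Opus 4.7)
The plan is to establish the descent exact sequence in part (i) first, and then derive parts (ii), (iii), and (iv) from it by standard arguments. For part (i), the key observation is that since $\underline{S}$ is an \'etale sheaf of semi-frames on $\textup{Spec }R$, each graded piece $S_n$ defines an \'etale sheaf of $S_0$-modules on $\textup{Spec }R$, so for every $n \in \zz$ there is an exact sequence
\begin{align*}
0 \to S_n(R) \to S_n(R') \rightrightarrows S_n(R' \otimes_R R') \to \cdots.
\end{align*}
Taking direct sums with appropriate degree shifts yields the desired sequence whenever $M$ is a finite free graded $S(R)$-module. For an arbitrary finite projective graded $S(R)$-module $M$, I would choose a finite free graded $S(R)$-module $F$ together with a split surjection $F \to M$, giving a decomposition $F = M \oplus M'$. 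The complex associated to $M$ is then a direct summand of the analogous complex for $F$, and since the latter is exact, so is the former.

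For part (ii), being of type $I$ is by definition an \'etale-local condition: there exists a faithfully flat \'etale cover of $R$ over which $M$ becomes isomorphic to a standard graded $S$-module determined by $I$. The "only if" direction is then immediate. For "if", if $M_{S(R')}$ is of type $I$ then there is a faithfully flat \'etale cover $R' \to R''$ trivializing $M_{S(R')}$; the composition $R \to R''$ is again a faithfully flat \'etale cover of $R$, and it trivializes $M$ itself, so $M$ is of type $I$. For part (iii), I would apply part (i) to each of $M$, $N$, and $P$ to obtain a commutative diagram with exact columns; given exactness of the row of base changes, a direct diagram chase using the injections $M \hookrightarrow M \otimes_{S(R)} S(R')$, $N \hookrightarrow N \otimes_{S(R)} S(R')$, and $P \hookrightarrow P \otimes_{S(R)} S(R')$ then yields exactness of the original sequence.

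Finally, for part (iv), a homomorphism of graded $S(R)$-modules $\psi: M \to M'$ is a morphism of displays if and only if the identity $F' \circ \psi = (\tau^\ast \psi) \circ F$ holds as maps $M \to \tau^\ast M'$; applying part (i) to $\tau^\ast M'$ shows that this identity can be checked after faithfully flat \'etale base change. The main obstacle will be part (i): one must handle the infinite descent complex carefully and verify that the direct-summand decomposition preserves exactness at every position, not just the initial segment. Once (i) is in place, the remaining parts follow formally from standard manipulations.
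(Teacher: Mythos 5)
Your treatments of (i), (ii) and (iv) are essentially the paper's: (i) reduces to the free case and then to $S(R)$ itself using that each $S_n$ is an \'etale sheaf (the paper's proof is the same reduction), and (iv) is exactly the paper's argument, namely that the identity between the linearizations can be tested after base change because (i) makes the base-change functor on finite projective $S(R)_0$-modules faithful. For (ii) the paper simply invokes invariance of ranks under base change (the type being read off from $M\otimes_{S,\nu}R$), whereas you argue via composing trivializing covers; both work, yours just presupposes the ``\'etale-locally standard'' characterization of type $I$.

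The genuine problem is (iii). A ``direct diagram chase using the injections $M\hookrightarrow M_{S(R')}$, $N\hookrightarrow N_{S(R')}$, $P\hookrightarrow P_{S(R')}$'' gives you injectivity of $M\to N$ and exactness at $N$ (the latter already needs the equalizer description from (i) together with exactness of the base-changed sequence over $S(R'\otimes_R R')$, which you should justify by noting the sequence over $S(R')$ splits since $P_{S(R')}$ is projective). But it does not give surjectivity of $N\to P$, which is the one nontrivial assertion: a preimage of $p\in P$ chosen in $N_{S(R')}$ need not lie in the equalizer, and correcting it requires killing a class in the degree-one cohomology of the Amitsur complex for $M$ — i.e.\ you would need the full exactness of the sequence in (i) in degrees $\ge 1$ plus a cocycle argument, not just the injections. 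The paper avoids this entirely: since $\underline{S}$ is a sheaf of semi-frames, the graded Nakayama lemma (Lemma \ref{lem-nak}) reduces surjectivity of $N\to P$ to surjectivity of $N\otimes_{S(R),\nu}R\to P\otimes_{S(R),\nu}R$, which follows from ordinary faithfully flat descent for $R\to R'$. You should either adopt that route or carry out the cocycle correction explicitly; as written, the surjectivity step would fail.
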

\begin{proof}
	For (i), since $M$ is finite projective we can reduce to the case where $M$ is finite free as a graded $S(R)$-module. This in turn reduces to the case $M = S(R)$, for which the result holds because $S$ is an \'etale sheaf of $\zz$-graded rings. 
	
	The proof for (ii) follows from the fact that the rank of a finite projective module is invariant under base change.
	
	The proof of (iii) is formally the same as that of \cite[Lemma 2.12]{Daniels2019}. The only nontrivial assertion is that if the sequence is exact after base change, then $M \to P$ is surjective. But since $\uS$ is a sheaf of semi-frames, Nakayama's lemma (Lemma \ref{lem-nak}) applies, so it is enough to check $M \otimes_{S(R), \nu} R \to P \otimes_{S(R),\nu} R$ is surjective. But this follows from surjectivity of $M_{S(R')} \to P_{S(R')}$ and faithful flatness of $R \to R'$.
	
	Let us prove (iv). If $\psi$ is a morphism of displays then $\psi_{R'}$ is as well. For the converse, we need to prove $(F')^\sharp \circ \sigma^\ast \psi$ and $\tau^\ast \psi \circ F^\sharp$ agree as homomorphism of finite projective $S(R)_0$-modules. We know this holds after base change to $S(R')_0$, so it is enough to prove the base change functor from the category of finite projective $S(R)_0$-modules to the category of finite projective $S(R')_0$-modules is faithful. But this is easy to see because by (i) the homomorphism $M \to M \otimes_{S(R)_0} S(R')_0$ is injective.
\end{proof}

Now we narrow our focus to the relative Witt frame. Let $A$ be a ring in $\textup{Nilp}_{\zz_p}$, and let $B \to A$ be a PD-thickening. In order to treat the finite and infinite cases uniformly, denote by $\underline{W_\infty}(B/A)$ the frame $\underline{W}(B/A)$. If $A'$ is any \'etale $A$-algebra, then there exists a unique \'etale $B$-algebra $B'$ along with a isomorphism of $A$-algebras $B'\otimes_B A \cong A'$ (see \cite[\href{https://stacks.math.columbia.edu/tag/039R}{Tag 039R}]{stacks-project}, for example). Moreover, if $J = \ker(B \to A)$, then $\ker(B' \to A') = JB'$, and by flatness the divided powers on $B \to A$ extend to $B' \to A'$, see \cite[\href{https://stacks.math.columbia.edu/tag/07H1}{Tag 07H1}]{stacks-project}. In this way the assignment
\begin{align}\label{eq-b/a}
A' \mapsto \underline{W_m}(B'/A')
\end{align}
becomes a functor from the category of \'etale $A$-algebras to the category of semi-frames for any $m \ge 1$ (including $\infty$).  
\begin{lemma}\label{lem-sheafqf}
	Let $1 \le m \le \infty$. The functor $($\ref{eq-b/a}$)$ defines an \'etale sheaf of semi-frames over $\textup{\textup{\'Et}}_A$.
\end{lemma}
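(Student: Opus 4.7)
The plan is to reduce the statement to verifying that each graded piece of $\underline{W_m}(B'/A')$ is an \'etale sheaf of abelian groups on $\text{\'Et}_A$, and that the ring operations together with $\tau^m$ are natural in the \'etale $A$-algebra $A'$. The two technical inputs are: (a) the assignment $A' \mapsto B'$ itself defines an \'etale sheaf of rings on $\text{\'Et}_A$; and (b) the truncated Witt vector functor $W_m$ (including $m = \infty$) is representable, with $W_m(R) = R^{[0,m)}$ as a set and ring operations given by universal polynomials in these coordinates.

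For (a), given an \'etale covering $A' \to A''$ in $\text{\'Et}_A$, the unique \'etale $B$-algebra $B''$ lifting $A''$ is \'etale and faithfully flat over $B'$. By the uniqueness clause of \cite[\href{https://stacks.math.columbia.edu/tag/039R}{Tag 039R}]{stacks-project}, the tensor products $B'' \otimes_{B'} \cdots \otimes_{B'} B''$ are the unique \'etale lifts of $A'' \otimes_{A'} \cdots \otimes_{A'} A''$, so faithfully flat descent yields exactness of the \v{C}ech complex associated to $B' \to B''$. Hence $A' \mapsto B'$ is an \'etale sheaf of rings, and by flatness of $B' \to B''$ the functor $A' \mapsto J' = \ker(B' \to A')$ is the corresponding quasi-coherent sheaf on $\text{\'Et}_{B'}$, in particular an \'etale sheaf.

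For (b), since $W_m$ is representable on the category of rings, the composition $A' \mapsto B' \mapsto W_m(B')$ is an \'etale sheaf of rings: set-theoretically it is a (possibly infinite) product of copies of the sheaf $A' \mapsto B'$, and all ring-theoretic operations (sum, product, Frobenius, Verschiebung, and the divided-power logarithm $\log^{-1}$) are given by universal polynomials in the Witt coordinates. In particular, $A' \mapsto I_m(B') = v(W_{m-1}(B'))$ is also an \'etale sheaf.

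Assembling these pieces: the degree-$0$ component of $W_m(B'/A')^\oplus$ is $W_m(B')$, each positive-degree component is $I_m(B') \oplus J'$, and each negative-degree component is $W_m(B')$ via the bijection $\tau^m_{-n}$, so every graded piece is an \'etale sheaf of abelian groups. The ring structure on the direct sum, the homomorphism $\tau^m$, and the transition maps $t_n$ are defined by universal formulas in these data, hence are natural in $A'$. The semi-frame axioms for each $\underline{W_m}(B'/A')$ were already checked, so the output lands in the category of semi-frames. The main (modest) obstacle is organizing the identification between \'etale descent for $W_m(B')$ and descent for $B'$; this is exactly where representability of $W_m$ saves work, by reducing everything to product and kernel operations on the sheaf $A' \mapsto B'$.
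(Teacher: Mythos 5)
Your proof is correct and follows essentially the same route as the paper: both reduce the sheaf condition to exactness of the \v{C}ech sequence in each graded degree, with the only real content being \'etale descent for the pieces $I_m(B)\oplus J$ (the paper's proof checks exactly the sequence $0 \to I_m(B)\oplus J \to I_m(B')\oplus J' \rightrightarrows I_m(B'')\oplus J''$ and cites \'etale descent for $B$-modules, while you obtain the same thing by realizing $I_m(B')$ and $J'$ as kernels of morphisms of sheaves). Your additional verification that the degree-$\le 0$ pieces and the structure maps are natural is left implicit in the paper; just note that naturality of $t_0$ rests on the divided powers on $J'$ being induced from those on $J$, not on a universal polynomial formula.
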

\begin{proof}
	Let $A \to A'$ be a faithfully flat \'etale morphism with $B' \to A'$ lifting $B \to A$. Define $A'' = A' \otimes_A A'$, and let $B'' = B' \otimes_B B'$, which is the unique \'etale $B'$-algebra lifting $A''$. Let $J = \ker(B \to A)$ and define $J'$ and $J''$ analogously. Then the proof reduces to showing that
	\begin{align*}
	0 \to I_m(B) \oplus J \to I_m(B') \oplus J' \rightrightarrows I_m(B'') \oplus J''
	\end{align*}
	is exact, which follows from \'etale descent for $B$-modules.
\end{proof}

\begin{rmk}
	The frame $\underline{W}(B/A)$ over $A$ is a $p$-adic frame in the sense of \cite[Def. 4.2.1]{Lau2018}, so by \cite[Lem. 4.2.3]{Lau2018}, we can associate to it an \'etale sheaf of frames $\underline{S}$ such that $\underline{S}(A) = \underline{W}(B/A)$. However, by \cite[Ex. 4.2.7]{Lau2018}, this sheaf will not agree in general with the \'etale sheaf of frames $A' \mapsto \underline{W}(B'/A')$ described above.
\end{rmk}

For $1 \le m \le \infty$, denote by $\textup{PGrMod}^m_{B/A}$ the fibered category over $\textup{\'Et}_A$ whose fiber over an \'etale $A$-algebra $A'$ is $\textup{PGrMod}(W_m(B'/A')^\oplus)$, where $B'$ is the unique \'etale $B$-algebra with $B'\otimes_B A \cong A'$. Before we prove that $\textup{PGrMod}^{m}_{B/A}$ is a stack, let us first prove a useful lemma.

\begin{lemma}\label{lem-basechange}
	Let $1 \le m < \infty$. Suppose $B \to A$ is a PD-thickening in $\textup{\textup{Nilp}}_{\zz_p}$, and suppose $A \to A'$ is \'etale with lift $B \to B'$.
	\begin{enumerate}[\textup{(}i\textup{)}]
		\item The natural graded ring homomorphism $W_m(B/A)^\oplus \to W_m(B'/A')^\oplus$ induces an isomorphism
		\begin{align*}
		W_m(B/A)^\oplus \otimes_{W_m(B)} W_m(B') \xrightarrow{\sim} W_m(B'/A')^\oplus.
		\end{align*}
		\item Let $A'' = A' \otimes_A A'$ and $B'' = B' \otimes_B B'$. Then the natural homomorphism of graded rings
		\begin{align*}
		W_m(B'/A')^\oplus \otimes_{W_m(B/A)^\oplus} W_m(B'/A')^\oplus \to W_m(B''/A'')^\oplus	
		\end{align*}
		is an isomorphism.
		\item Let $A''' = A' \otimes_A A' \otimes_A A'$ and $B''' = B' \otimes_B B' \otimes_B B'$. Then the natural homomorphism of graded rings
		\begin{align*}
			W_m(B'/A')^\oplus \otimes_{W_m(B/A)^\oplus} W_m(B'/A')^\oplus \otimes_{W_m(B/A)^\oplus} W_m(B'/A')^\oplus \to W_m(B'''/A''')^\oplus
		\end{align*}
		is an isomorphism.
		\item If $A \to A'$ is faithfully flat \'etale, then $W_m(B/A)^\oplus \to W_m(B'/A')^\oplus$ is faithfully flat.
	\end{enumerate}
\end{lemma}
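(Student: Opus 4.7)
The plan is to reduce all three parts to well-known properties of the Witt-vector functor under étale base change, together with the concrete description of $W_m(B/A)^\oplus$ in terms of $W_m(B)$, $I_m(B)$ and $J = \ker(B\to A)$.

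For part (i), I would argue one graded component at a time. In degrees $n \le 0$, the piece of $W_m(B/A)^\oplus$ is canonically identified with $W_m(B)$ via $\tau$, so base change along $W_m(B) \to W_m(B')$ manifestly gives $W_m(B')$, matching the corresponding component of $W_m(B'/A')^\oplus$. In degrees $n \ge 1$, the piece is $I_m(B) \oplus J$, and what needs to be checked is that
\[
I_m(B) \otimes_{W_m(B)} W_m(B') \xrightarrow{\sim} I_m(B'),\qquad J \otimes_{W_m(B)} W_m(B') \xrightarrow{\sim} J'.
\]
The first isomorphism follows from flatness of $W_m(B) \to W_m(B')$ applied to the short exact sequence $0\to I_m(B) \to W_m(B) \to B \to 0$, together with the fact that $B \to W_m(B)$ lifts uniquely to an étale morphism and the square relating $W_m$ to the base ring is cocartesian (a standard property of $W_m$ on étale morphisms; see e.g. \cite[Lem. 4.3.1]{Lau2018}), so that $W_m(B') \otimes_{W_m(B)} B = B'$ and hence $I_m(B)W_m(B') = I_m(B')$. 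For the second isomorphism, since $J$ is a $W_m(B)$-module via restriction of scalars along $w_0: W_m(B) \to B$, one computes
\[
J \otimes_{W_m(B)} W_m(B') = J \otimes_B \bigl( B \otimes_{W_m(B)} W_m(B') \bigr) = J \otimes_B B' = J',
\]
where the last equality uses flatness of $B \to B'$ applied to $0 \to J \to B \to A \to 0$ together with $B' \otimes_B A = A'$.

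Part (ii) will follow formally from (i) applied twice. Namely, using (i) for both factors of the tensor product,
\[
W_m(B'/A')^\oplus \otimes_{W_m(B/A)^\oplus} W_m(B'/A')^\oplus \cong W_m(B/A)^\oplus \otimes_{W_m(B)} \bigl( W_m(B') \otimes_{W_m(B)} W_m(B') \bigr),
\]
and the étale base change property of $W_m$ identifies $W_m(B') \otimes_{W_m(B)} W_m(B')$ with $W_m(B' \otimes_B B') = W_m(B'')$. Applying (i) a third time to the étale lift $B \to B''$ of $A \to A''$ then gives the desired isomorphism with $W_m(B''/A'')^\oplus$.

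For part (iii), I would first upgrade faithful flatness from $A \to A'$ to $B \to B'$. Since $B$ is $p$-nilpotent and $J$ carries divided powers compatible with those on $pW(k_0)$, a standard argument using $a^n = n!\,\gamma_n(a)$ and $p$-nilpotency in $B$ shows that $J$ is contained in the nilradical of $B$, so $\Spec B \to \Spec A$ is a homeomorphism and faithful flatness of $A \to A'$ transfers to $B \to B'$. Then $W_m(B) \to W_m(B')$ is faithfully flat (it is étale, and becomes faithfully flat modulo $I_m(B)$ by the cocartesian square), and by (i) the morphism $W_m(B/A)^\oplus \to W_m(B'/A')^\oplus$ is obtained by base change along $W_m(B) \to W_m(B')$, hence is faithfully flat as well. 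The main subtlety throughout is the cocartesian property of $W_m$ under étale extensions, which is what drives all three parts.
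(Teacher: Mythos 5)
Your proposal is correct and follows essentially the same route as the paper's proof: part (i) is handled graded piece by graded piece, reducing to the two isomorphisms $I_m(B)\otimes_{W_m(B)}W_m(B')\cong I_m(B')$ and $J\otimes_{W_m(B)}W_m(B')\cong J'$ via the cocartesian square $B\otimes_{W_m(B)}W_m(B')\cong B'$ (the paper cites \cite[Prop.~A.8]{LZ2004} for this and for \'etaleness of $W_m(B)\to W_m(B')$); part (ii) reduces to (i) together with $W_m(B')\otimes_{W_m(B)}W_m(B')\cong W_m(B'')$, which is Van der Kallen's theorem; and part (iii) reduces via (i) to faithful flatness of $W_m(B)\to W_m(B')$, established exactly as you do by passing modulo nilpotent PD-kernels.
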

\begin{proof}
	For (i), since tensor products commute with direct sums, it is enough to prove this for each graded piece of $W_m(B'/A')^\oplus$. For graded pieces with $n \le 0$ this is clear, so we need only prove
	\begin{align*}
	(I_m(B) \oplus J) \otimes_{W_m(B)} W_m(B') \cong I_m(B') \oplus J'.
	\end{align*}
	This further reduces to proving the statement for $I_m(B')$ and for $J'$. By \cite[Proposition A.8]{LZ2004}, the homomorphism $W_m(B) \to W_m(B')$ is \'etale, and $w_0: W_m(B')\to B'$ induces an isomorphism
	\begin{align}\label{eq-w0}
	B \otimes_{W_m(B)} W_m(B') \cong B'.
	\end{align}
	Then by taking the tensor product of $0 \to I_m(B) \to W_m(B) \to B \to 0$ with $W_m(B')$ and applying the five lemma we obtain $I_m(B) \otimes_{W_m(B)} W_m(B') \cong I_m(B')$. 
	
	Finally, flatness of $W_m(B')$ over $W_m(B)$ implies $J \otimes_{W_m(B)} W_m(B') \cong W_m(B') J$, and by definition of the $W_m(B)$ action on $J$, we have $W_m(B')J = B'J$. Hence
	\begin{align*}
	J \otimes_{W_m(B)} W_m(B') \cong W_m(B')J \cong B'J \cong J \otimes_B B'.
	\end{align*}
	The result follows since $J \otimes_B B'\cong J'$.
	
	To prove (ii) and (iii) we first prove an auxiliary statement. Let $A \to A_1$ and $A \to A_2$ be \'etale ring homomorphisms with lifts $B \to B_1$ and $B \to B_2$. Then $B_1 \otimes_B B_2$ is an \'etale $B$-algebra lifting $A_1 \otimes_A A_2$. We claim the natural homomorphism of graded rings
	\begin{align}\label{eq-aux}
		W_m(B_1/A_1)^\oplus \otimes_{W_m(B/A)^\oplus} W_m(B_2/A_2)^\oplus \to W_m(B_1 \otimes_B B_2 / A_1 \otimes_A A_2)^\oplus
	\end{align}
	is an isomorphism. Granting (\ref{eq-aux}) for the moment, we can prove (ii) and (iii). Indeed, (ii) follows immediately by taking $A_1 = A_2 = A'$, and (iii) follows by combining (ii) with (\ref{eq-aux}) for $A_1 = A''$ and $A_2 = A'$. 
	
	Now let us prove (\ref{eq-aux}). For the sake of brevity let us write $S = W_m(B/A)^\oplus$, $S^1 = W_m(B_1/A_1)^\oplus$, $S^2 = W_m(B_2/A_2)^\oplus$, and $S^{1,2} = W_m(B_1 \otimes_B B_2 / A_1 \otimes_A A_2)^\oplus$. By \cite[Cor. 9.4]{Borger} (take $R = \zz$, and $E = p\zz$ to obtain the $p$-typical Witt vectors in \textit{loc. cit.}), since $B \to B_1$ and $B \to B_2$ are \'etale, the natural map $W_m(B_1) \otimes_{W_m(B)} W_m(B_2) \to W_m(B_1 \otimes_B B_2)$ is an isomorphism. Combining this with part (i), we have a chain of isomorphisms
	\begin{align*}
		S^1 \otimes_S S^2 \xrightarrow{\sim} S \otimes_{S_0} (S^1_0 \otimes_{S_0} S^2_0) \xrightarrow{\sim} S \otimes_{S_0} S^{1,2}_0.
	\end{align*}
	But since $A \to A_1 \otimes_A A_2$ is \'etale with lift $B \to B_1 \otimes_B B_2$, we have $S \otimes_{S_0} S^{1,2}_0 \xrightarrow{\sim} S^{1,2}$ by part (i) again. One checks that the composition $S^1 \otimes_S S^2 \xrightarrow{\sim} S^{1,2}$ is the desired map.
	
	Now, for (iii), to show $W_m(B/A)^\oplus \to W_m(B'/A')^\oplus$ is faithfully flat it is enough to prove the same is true of $W_m(B) \to W_m(B')$, by part (i). But $W_m(B) \to W_m(B')$ is \'etale by \cite[Proposition A.8]{LZ2004} and $\Spec{W_m(B')} \to \Spec{W_m(B)}$ is surjective because $B \to B'$ is faithfully flat and $W_m(B) \to B$ and $W_m(B') \to B'$ are PD-thickenings for $A$ in $\textup{Nilp}_{\zz_p}$. 
\end{proof}

\begin{prop}\label{prop-descent}
	For $1 \le m \le \infty$, the fibered category $\textup{\textup{PGrMod}}^{m}_{B/A}$ is an \'etale stack over $\textup{\textup{\'Et}}_A$. 
\end{prop}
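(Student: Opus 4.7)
The plan is to treat the cases $m<\infty$ and $m=\infty$ separately, handling the finite case by direct faithfully flat descent (leveraging Lemma \ref{lem-basechange}) and then passing to the $m=\infty$ case via the truncation equivalence of Proposition \ref{prop-truncate}. Throughout, morphisms will form an \'etale sheaf by Lemma \ref{lem-localproperties}(\ref{lem-descentseq}), so the real content in each case is descent of objects.

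First I would handle $1 \le m < \infty$. Let $A \to A'$ be a faithfully flat \'etale cover with unique lift $B \to B'$, and set $A'' = A'\otimes_A A'$, $B'' = B'\otimes_B B'$. Lemma \ref{lem-basechange}(ii) identifies $W_m(B'/A')^\oplus \otimes_{W_m(B/A)^\oplus} W_m(B'/A')^\oplus$ with $W_m(B''/A'')^\oplus$, so descent data for graded modules along $W_m(B/A)^\oplus \to W_m(B'/A')^\oplus$ is exactly a cocycle in the usual sense. Lemma \ref{lem-basechange}(iii) asserts that this base-change map is faithfully flat, and all the maps in the diagram are homomorphisms of graded rings, so classical faithfully flat descent for modules—applied graded piece by graded piece—produces a graded $W_m(B/A)^\oplus$-module $M$ from any descent datum on a finite projective graded $W_m(B'/A')^\oplus$-module. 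Finite generation and projectivity descend in the standard way along faithfully flat maps.

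For $m = \infty$, I would reduce to the finite case. Since $W(B/A)^\oplus$ is $V^\bullet_{B/A}$-adic and the hypotheses of Proposition \ref{prop-truncate} are verified in the lemma immediately preceding \textsection \ref{sub-descent}, the functor $M \mapsto (M/V_m(B/A)M)_m$ is an equivalence
\[
\textup{PGrMod}(W(B/A)^\oplus) \xrightarrow{\sim} \textup{PGrMod}((W_m(B/A)^\oplus)_m).
\]
This equivalence is natural in $A' \in \textup{\'Et}_A$ (the base-change operations commute with forming the compatible system), so it promotes to an equivalence of fibered categories between $\textup{PGrMod}^\infty_{B/A}$ and the fibered category $A' \mapsto \textup{PGrMod}((W_m(B'/A')^\oplus)_m)$. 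Given a descent datum on a finite projective graded $W(B'/A')^\oplus$-module $M'$, I would reduce modulo $V_m(B'/A')$ to obtain descent data on each $M'/V_m(B'/A')M'$, descend each to a finite projective graded $W_m(B/A)^\oplus$-module $M^m$ by the $m<\infty$ case, check the resulting system is compatible (since both the descent and the truncation are functorial), and invoke Proposition \ref{prop-truncate} to reassemble a module $M$ over $W(B/A)^\oplus$ realizing the descent.

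The main obstacle I anticipate is bookkeeping: verifying that the equivalence from Proposition \ref{prop-truncate} intertwines the base-change functors along $A \to A'$, so that descending a tower of truncations is the same as descending an object over the full Witt frame. This requires checking that the quasi-inverse functor in the proof of Proposition \ref{prop-truncate} (which assembles a module over $S$ as the graded limit of the compatible system) commutes with base change along graded ring homomorphisms of the form $W(B/A)^\oplus \to W(B'/A')^\oplus$, an observation that is routine but which must be made carefully so that the cocycle condition transports correctly between the two pictures. Everything else is standard faithfully flat descent together with the ingredients already established in Lemma \ref{lem-basechange} and Proposition \ref{prop-truncate}.
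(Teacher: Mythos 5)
Your proposal is correct and follows essentially the same route as the paper: faithfully flat descent for graded modules via Lemma \ref{lem-basechange}(ii),(iii) in the finite case, then reduction of the $m=\infty$ case to the finite truncations via Proposition \ref{prop-truncate}. The bookkeeping issue you flag — that the truncation equivalence must intertwine descent data — is exactly what the paper handles by introducing the auxiliary categories $\textup{PGrMod}(S\to S')$ and $\textup{PGrMod}((S^m \to (S')^m)_m)$ and checking the equivalences (\ref{eq-equiv1}) and (\ref{eq-equiv2}).
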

\begin{proof}
	This proof is similar to the proof of \cite[Lemma 4.3.1]{Lau2018}. Indeed, that the morphisms form a sheaf follows from Lemma \ref{lem-localproperties} (i) by the arguments in \textit{loc. cit.}. 
	
	Let us first prove that objects descend in the case where $m$ is finite. Let $A \to A'$ be an \'etale faithfully flat homomorphism, and let $B'$ be the unique \'etale $B$-algebra such that $B'\otimes_B A \cong A'$. Note that $B \to B'$ is then also faithfully flat. Let $A'' = A'\otimes_A A'$, and $B'' = B'\otimes_B B'$. Suppose $M'$ is a finite projective graded $W_m(B'/A')^\oplus$-module equipped with a descent datum over $W_m(B''/A'')^\oplus$. By parts (ii), (iii), and (iv) of Lemma \ref{lem-basechange}, we may apply faithfully flat descent for graded modules over graded rings (see, e.g. \cite[Corollary III.1.4]{CVO1998}) to obtain a graded $W_m(B/A)^\oplus$-module $M$ such that $M \otimes_{W_m(B/A)^\oplus} W_m(B'/A')^\oplus \cong M'$. Additionally, faithful flatness of $W_m(B/A)^\oplus \to W_m(B'/A')^\oplus$ implies that $M$ is finite and projective as an $W_m(B/A)^\oplus$-module, hence it is finite and projective as a graded $W_m(B/A)^\oplus$-module by \cite[Lemma 3.0.1]{Lau2018}. This completes the proof of descent for finite $m$.
	
	Now let $S = W(B/A)^\oplus$, $S'= W(B'/A')^\oplus$, and $S'' = W(B''/A'')^\oplus$. For every finite $m$, let $V^m = V_m(B/A)$, $S^m = S/ V^m$, and define the variants for $S'$ and $S''$ in the obvious way. Let $p_1: S' \to S''$, resp. $p_2: S' \to S''$ be the map induced by $a \mapsto a \otimes 1$ resp. $a \mapsto 1 \otimes a$ from $A'$ to $A'\otimes_A A'$. Define similarly $p_1^m, p_2^m: (S')^m \to (S'')^m$. Denote by \textup{PGrMod}$(S\to S')$ the category of finite projective graded $S'$ modules $M'$ equipped with descent data, i.e., equipped with isomorphisms	$\alpha: p_1^\ast(M') \xrightarrow{\sim} p_2^\ast(M')$ which satisfy the cocycle condition. We must show that the natural functor $\textup{PGrMod}(S) \to \textup{PGrMod}(S\to S')$ is an equivalence.
	
	Define the category \textup{PGrMod}$((S^m \to (S')^m)_m)$ consisting of systems $(M^m)$ of finite projective graded $(S')^m$-modules $M^m$ along with isomorphisms $\theta^m: M^{m+1} \otimes_{(S')^{m+1}} (S')^m \xrightarrow{\sim} M^m$	and descent data $\alpha^m: (M^m)^{p_1^m} \xrightarrow{\sim} (M^m)^{p_2^m}$ such that $\alpha^m \circ \theta^m = \theta^m \circ (\alpha^{m+1} \otimes \id_{(S'')^m})$ for all $m$. Then the first part of the proof implies that the natural functor
	\begin{align}\label{eq-equiv1}
	\textup{PGrMod}((S^m)_m) \to \textup{PGrMod}((S^m \to (S')^m)_m)
	\end{align}
	is an equivalence of categories. Further, it is straightforward to check that the functor (\ref{eq-fun}) respects descent data, and that the equivalence in Proposition \ref{prop-truncate} extends to an equivalence
	\begin{align}\label{eq-equiv2}
	\textup{PGrMod}(S\to S') \xrightarrow{\sim} \textup{PGrMod}((S^m \to (S')^m)_m).
	\end{align}
	The result follows by combining the equivalence $\textup{PGrMod}(S) \xrightarrow{\sim} \textup{PGrMod}((S^m)_m)$ with (\ref{eq-equiv1}) and (\ref{eq-equiv2}).
\end{proof}

\

\bibliographystyle{amsalpha}
\bibliography{Refs}

\providecommand{\bysame}{\leavevmode\hbox to3em{\hrulefill}\thinspace}
\providecommand{\MR}{\relax\ifhmode\unskip\space\fi MR }
% \MRhref is called by the amsart/book/proc definition of \MR.
\providecommand{\MRhref}[2]{%
  \href{http://www.ams.org/mathscinet-getitem?mr=#1}{#2}
}
\providecommand{\href}[2]{#2}
\begin{thebibliography}{{Sta}17}

\bibitem[BBM82]{BBM1982}
P.~Berthelot, L.~Breen, and W.~Messing, \emph{Th\'eorie de {D}ieudonn\'e
  cristalline. {II}.}, Lecture notes in {Mathematics}, vol. 930, Springer,
  1982.

\bibitem[Ber74]{Berthelot1974}
P.~Berthelot, \emph{Cohomologie cristalline des sch\'emas de caract\'eristique
  $p > 0$}, Lecture notes in {M}athematics, vol. 407, 1974.

\bibitem[BM90]{BM1990}
P.~Berthelot and W.~Messing, \emph{Th\'eorie de {D}ieudonn\'e cristalline.
  {III}. {T}h\'eor\`emes d'\'equivalence et de pleine fid\'elit\'e.}, The
  {G}rothendieck {F}estschrift, {V}ol. I, Progr. Math., no.~86, 1990,
  pp.~173--247.

\bibitem[Bor11]{Borger}
J.~Borger, \emph{The basic geometry of {W}itt vectors, {I}: {T}he affine case},
  Algebra Number Theory \textbf{5} (2011), no.~2, 231 -- 285.

\bibitem[BP17]{BP2017}
O.~B{\"u}ltel and G.~Pappas, \emph{$(${G},$\mu$$)$-displays and
  {R}apoport-{Z}ink spaces}, J. Inst. Math. Jussieu (2017), 1--47.

\bibitem[Con14]{Conrad2014}
B.~Conrad, \emph{Reductive group schemes}, Autour des sch\'emas en groupes.
  Vol. I, Panor. Synth\`eses 42/43, Soc. Math. France, 2014, pp.~93--444.

\bibitem[Cor14]{Cornut2014}
C.~Cornut, \emph{Filtrations and buildings}, arXiv preprint arXiv:1411.5567
  (2014).

\bibitem[CVO88]{CVO1998}
S.~Caenepeel and F.~Van~Oystaeyen, \emph{Brauer groups and the cohomology of
  graded rings}, Dekker, 1988.

\bibitem[Dan21]{Daniels2019}
P.~Daniels, \emph{A {Tannakian} framework for {$G$}-displays and
  {Rapoport-Zink} spaces}, Int. Math. Res. Not. (2021), no.~22, 16963 -- 17024.

\bibitem[Del11]{Deligne2011}
P.~Deligne, \emph{Letter to {K}isin}.

\bibitem[Dem72]{Demazure1972}
M.~Demazure, \emph{Lectures on $p$-divisible groups}, Lecture notes in
  {Mathematics}, vol. 302, Springer, 1972.

\bibitem[dJ95]{deJong1996}
A.~J. de~Jong, \emph{Crystalline {D}ieudonn\'e module theory via formal and
  rigid geometry}, Inst. Hautes \'Etuedes Sci. Publ. Math. (1995), no.~82,
  5--96.

\bibitem[Gro74]{Grothendieck1974}
A.~Grothendieck, \emph{Groupes de {B}arsotti-{T}ate et cristaux de
  {D}ieudonn\'e}, S\'eminaire de {M}ath\'ematiques {S}up\'erieures, No. 45
  (\'Et\'e 1970), 1974.

\bibitem[HP17]{HP2017}
B.~Howard and G.~Pappas, \emph{Rapoport-{Z}ink spaces for spinor groups},
  Compos. {M}ath. \textbf{153} (2017), no.~5, 1050--1118.

\bibitem[Ill85]{Illusie1985}
L.~Illusie, \emph{D\'eformations de groupes de {Barsotti-Tate} (d'apr\`es {A.}
  {Grothendeick})}, Seminar on arithmetic bundles: the {M}ordell conjecture
  ({P}aris, 1983/1984), Ast\'erisque, no. 127, 1985, pp.~151--198.

\bibitem[Kim18]{Kim2018}
W.~Kim, \emph{Rapoport-{Z}ink spaces of {H}odge type}, Forum Math. Sigma
  (2018), no.~6.

\bibitem[Kis10]{Kisin2010}
M.~Kisin, \emph{Integral models for {S}himura varieties of abelian type}, J.
  Amer. Math. Soc. \textbf{23} (2010), no.~4, 967--1012.

\bibitem[Kot84]{Kottwitz1984}
R.~Kottwitz, \emph{Shimura varieties and twisted orbital integrals}, Math. Ann.
  \textbf{269} (1984), no.~3, 287--300.

\bibitem[Lau08]{Lau2008}
E.~Lau, \emph{Displays and formal p-divisible groups}, Invent. {M}ath.
  \textbf{171} (2008), no.~3, 617--628.

\bibitem[Lau10]{Lau2010}
\bysame, \emph{Frames and finite group schemes over complete regular local
  rings}, Doc. {M}ath. (2010), no.~15, 545--569.

\bibitem[Lau13]{Lau2013}
\bysame, \emph{Smoothness of the truncated display functor}, J. Amer. Math.
  Soc. \textbf{26} (2013), no.~1, 129--165.

\bibitem[Lau14]{Lau2014}
\bysame, \emph{Relations between {D}ieudonn{\'e} displays and crystalline
  {D}ieudonn{\'e} theory}, Algebra Number Theory \textbf{8} (2014), no.~9,
  2201--2262.

\bibitem[Lau18]{Lau2018b}
\bysame, \emph{Divided {D}ieudonn\'e crystals}, arXiv preprint arXiv:1811:09439
  (2018).

\bibitem[Lau21]{Lau2018}
\bysame, \emph{Higher frames and $ {G} $-displays}, Algebra Number Theory
  \textbf{15} (2021), no.~9, 2315 -- 2355.

\bibitem[LZ04]{LZ2004}
A.~Langer and T.~Zink, \emph{De {R}ham-{W}itt cohomology for a proper and
  smooth morphism}, J. Inst. Math. Jussieu \textbf{3} (2004), no.~2, 231--314.

\bibitem[Mes72]{Messing1972}
W.~Messing, \emph{The crystals associated to {Barsotti-Tate} groups}, Lecture
  notes in {Mathematics}, vol. 264, Springer, 1972.

\bibitem[{Sta}17]{stacks-project}
The {Stacks Project Authors}, \emph{{Stacks Project}},
  \url{http://stacks.math.columbia.edu}, 2017.

\bibitem[SW20]{SW2020}
P.~Scholze and J.~Weinstein, \emph{Berkeley lectures on $p$-adic geometry:
  $(${AMS}-207$)$}, Annals of {M}athematical {S}tudies, Princeton University
  Press, 2020.

\bibitem[Wil10]{Wilson}
K.~M.~Jr. Wilson, \emph{A {T}annakian description for parahoric {B}ruhat-{T}its
  group schemes}, Thesis (Ph.D.) - University of Maryland, College Park (2010),
  112.

\bibitem[Zin02]{Zink2002}
T.~Zink, \emph{The display of a formal p-divisible group}, Cohomologies
  $p$-adiques et applications arithm\'etiques, I., Ast{\'e}risque, no. 278,
  Soci{\'e}t{\'e} math{\'e}matique de France, 2002, pp.~127--248.

\end{thebibliography}

\end{document}